\def\restriction#1#2{\mathchoice
              {\setbox1\hbox{${\displaystyle #1}_{\scriptstyle #2}$}
              \restrictionaux{#1}{#2}}
              {\setbox1\hbox{${\textstyle #1}_{\scriptstyle #2}$}
              \restrictionaux{#1}{#2}}
              {\setbox1\hbox{${\scriptstyle #1}_{\scriptscriptstyle #2}$}
              \restrictionaux{#1}{#2}}
              {\setbox1\hbox{${\scriptscriptstyle #1}_{\scriptscriptstyle #2}$}
              \restrictionaux{#1}{#2}}}
\def\restrictionaux#1#2{{#1\,\smash{\vrule height .6\ht1 depth .6\dp1}}_{\,#2}} 
\newcommand{\M}{\widetilde{\mathcal{M}}}
\newcommand{\MM}{\overline{\mathcal{M}}}
\newcommand{\z}{\boldsymbol{\mathrm{z}}}
\newcommand{\HH}{\mathcal{H}}
\newcommand{\g}{\widetilde{g}}
\newcommand{\kk}{\widetilde{k}}
\newcommand{\hh}{\widetilde{h}}
\DeclareMathAlphabet{\mathmybb}{U}{bbold}{m}{n}
\newcommand*{\OO}{\mathmybb{\Omega}}
\newtheorem{theorem}{Theorem}[section]
\newtheorem{lemma}{Lemma}[section]
\newtheorem{proposition}{Proposition}[section]
\newtheorem{remark}{Remark}[section]
\def\@fnsymbol#1{\ensuremath{\ifcase#1\or \dagger\or \ddagger\or
		\mathsection\or \mathparagraph\or \|\or **\or \dagger\dagger
		\or \ddagger\ddagger \else\@ctrerr\fi}}
\definecolor{darkgreen}{rgb}{-0.2,0.75,-0.4}
\definecolor{royalblue}{rgb}{0.284,0.464,1}
\date{}
\title{\textbf{Scattering Theory for the Charged Klein-Gordon Equation\\
in the Exterior De Sitter-Reissner-Nordstr\"{o}m Spacetime}}
\author{\textbf{Nicolas BESSET}\thanks{Universit\'{e} Grenoble Alpes, CS 40700, France. Institut Fourier - UMR 5582. E-mail: nicolas.besset@univ-grenoble-alpes.fr}}
\begin{document}
\maketitle
\tikzstyle{int}=[draw, fill=blue!20, minimum size=2em]
\tikzstyle{init} = [pin edge={to-,thin,black}]
\begin{center}
	\textbf{Abstract}
\end{center}
\begin{center}
	\small
	We show asymptotic completeness for the charged Klein-Gordon equation in the exterior De Sitter-Reissner-Nordstr\"{o}m spacetime when the product of the charge of the black hole with the charge of Klein-Gordon field is small enough. We then interpret scattering as asymptotic transports along principal null geodesics in a Kaluza-Klein extension of the original spacetime.
\end{center}
%
%
%
%
%
%
%
%
%
%
%
%
%
%
\maketitle						
%
%
%
%
%
%
%
%
%
%
%
%
%
%
%
%
%
%
%
%
%
%
%
%
\section{Introduction}
\label{Introduction}
Time dependent scattering theory describes large time scale interactions between a physical system (particles, waves) and its environment. The fundamental result that one may wish to establish is then the so-called \textit{asymptotic completeness} which compares dynamics one is interested in to a simpler and well-known one, a "free" dynamics. Many works have considered the case when the Hamiltonian associated to the system is self-adjoint with respect to a Hilbert space structure. The spectrum then provides rich information on the dynamics at large times: absolutely continuous spectrum is associated to scattering properties whereas pure point spectrum describes confinement. On the other hand, the asymptotic behaviour of scattering data can help to understand the classical equation which in turn is a first step in understanding the quantization of the field. For example, the wave operator $\Box_{g}=|g|^{-1/2}\partial_{\mu}g^{\mu\nu}|g|^{1/2}\partial_{\nu}$ can be viewed (up to normalizations) as a quantization of the null geodesic flow \textit{i.e.} the flow of the Hamilton vector field associated to the dual metric function $(x,\xi)\mapsto\xi_{\mu}g^{\mu\nu}(x)\xi_{\nu}$ where $(x,\xi)$ are local coordinates in the cotangent bundle; we talk about the first quantization of the field. We can convince ourselves that both are intrinsically linked together using Duistermaat-H\"{o}rmander theorem \cite{DH} which states that singularities of solutions of $\Box_{g}u=0$ propagate along null geodesics. Other interesting non-classical properties on the underlying spacetime can be obtained using scattering properties of the field: we can mention the Hawking effect, rigorously studied by Bachelot in its series of papers \cite{Ba97}, \cite{Ba99} and \cite{Ba00} for a spherically symmetric gravitational collapse, and by H\"{a}fner \cite{Ha09} in the charged and rotating case. Scattering theory is thus an open door to a possible quantum field theory in black hole spacetimes background. In this sight, we will be interested in this document in the charged Klein-Gordon equation which is a further step to a quantization for massive and charged fields.

In the case when the naturally conserved energy of solutions of the field equations is not positive along the flow of the dynamics, it is not possible to realize the Hamiltonian as self-adjoint operator on the underlying Hilbert space. The generator can have real and complex eigenfrequencies and the energy $\|\cdot\|_{\dot{\mathcal{E}}}$ can grow in time at a polynomial or exponential rate: this is \textit{superradiance}. This happens in the euclidean case, when the scalar field interacts with a strong electromagnetic potential. In this situation however, the Hamiltonian is self-adjoint on a Krein space, see the work of G\'{e}rard \cite{Ge12}. In a quite general setting, boundary value of the resolvent for self-adjoint operators on Krein spaces as well as propagation estimates for the Klein–Gordon equation can be obtained, see the works of Georgescu-G\'{e}rard-H\"{a}fner \cite{GGH13} and \cite{GGH15}. Another difficulty can occur however when the coupling term has two different limits at different ends of the considered manifold. If no positive energy is continuous with respect to the energy $\|\cdot\|_{\dot{\mathcal{E}}}$, then it is no longer possible to use a self-adjoint realization with respect to a Krein structure, see \textit{e.g.} \cite{GGH17}. This situation can already be encountered in the case of the one-dimensional charged Klein-Gordon equation, when coupling the Klein-Gordon field with a step-like electrostatic potential (see the work of Bachelot \cite{Ba}). It can also occur for Klein-Gordon equations on black hole type spacetimes. We can mention here the Klein-Gordon equation on the (De Sitter-)Kerr metric where the coupling between the field with the black hole occurs via the rotation. Asymptotic completeness has been obtained in absence of cosmological constant and mass by Dafermos-Rodnianski-Shlapentokh-Rothman \cite{DaRoSR18} using geometric techniques, and for positive cosmological constant and mass and bounded angular momenta of the field by Georgescu-G\'{e}rard-H\"{a}fner \cite{GGH17} using spectral methods. It turns out that the charged Klein-Gordon equation on the (De Sitter-)Reissner-Nordstr\"{o}m spacetime is also superradiant. This time the coupling comes from the equation itself and not from the geometry.

In the present work, we study the charged Klein-Gordon equation on the exterior De Sitter-Reissner-Nordstr\"{o}m spacetime. We will assume a weak coupling, that is, the product of the charge of the scalar field with the charge of the black hole will be supposed small enough. Although some technical issues existing in the De Sitter-Kerr spacetime vanish in our context, the loss of a natural geometric background prevents us to use a rich variety of geometric tools. As a result, we are left with the task to artificially encode the charge from the equation into the geometry if we wish to give a geometric interpretation of the scattering as asymptotic transport along some special null geodesics. The idea of encoding the electromagnetic interaction in an extra dimension was originally formulated by Kaluza in his paper \cite{Ka21} where he tried to unify gravitation and electromagnetism. He proposed that electromagnetism could be encoded in a fifth dimension and formulated his theory using the \textit{cylinder condition}, stating that no component of the new five-dimensional metric depends on the fifth dimension (actually, this condition makes equations easier to handle and avoid extra degrees of freedom). With the then outbreaks of quantum mechanics, Klein interpreted the cylinder condition as a microscopic curling of the electromagnetic field along the extra dimension (see \cite{Kl26}).

Using the symbol of the charged Klein-Gordon operator, we build in this paper an extended metric with an extra dimension for which the original operator can be seen as a wave operator. This is what we have called the \textit{neutralization procedure}. The extra dimension only needs to be a compact manifold, so that we can content ourselves with the unit circle. This choice of simplicity is actually very similar to the one Kaluza made one century ago. The extended manifold is then nothing but a Kaluza-Klein extension of the original one. This procedure works for a null charge product but fails without the mass term. An interesting fact is that the neutralization affects the original Maxwell tensor which becomes the sum of another Maxwell tensor with an effective charge (which takes into account the field's charge and mass) and a perfect fluid tensor acting in the plane generated by the time coordinate and the added one. The original black hole is also modified and turns into a \textit{black ring}. Black rings are similar to black holes as they prevent causal future-pointing curves to escape to infinity, the difference lying in the topology of the event horizon (which is $\mathbb{S}^{1}\times\mathbb{S}^{2}$ for black rings). They are solutions of Einstein equations in dimension 5 or greater, the first example of such object having been discovered by Emparan-Reall (see \cite{EmRe02}), see also a more recent review of black-hole solutions of higher-dimensional vacuum gravity and higher-dimensional supergravity theories \cite{EmRe08}. Using Hassan-Sen transformation, Elvang constructed in \cite{El03} from a neutral black ring a charged one minimizing an effective action with an electromagnetic tensor. Our neutralization procedure provides (with much less efforts) another example of a black ring solving the coupled Einstein-Maxwell equations, the rotation encoding the electromagnetic interaction.

Having constructed the extended spacetime, we can use the scattering theory in \cite{GGH17} as well as the absence of real resonances proved in \cite{Be18} to obtain asymptotic completeness at fixed non-zero angular momentum for the wave equation (which corresponds to the asymptotic completeness for the initial charge Klein-Gordon equation). Making use of the geometric tools in the Kaluza-Klein extension of the spacetime, we can reinterpret scattering as solving an abstract Goursat problem in energy spaces on the horizons by inverting trace operators. These energies are naturally related to the full dynamics associated to the charge Klein-Gordon operator as they use the limits of the electrostatic potential. However, this interpretation can not be "projected" onto the original spacetime due to the absence of appropriate geodesics; existence and unicity of the abstract Goursat problem is suspected to be false in $1+3$ dimensions.
%
%
%
%
\paragraph{Plan of the paper.}The paper is organized as follows:
\begin{itemize}
	\item \textbf{Section \ref{The extended spacetime}: The extended spacetime.} We construct and present some important properties of the extended spacetime in which the charged Klein-Gordon equation becomes a wave equation.\\[-8mm]
	\item \textbf{Section \ref{Global geometry of the extended spacetime}: Global geometry of the extended spacetime.} We build the Killing horizons with the principal null geodesics as well as the conformal boundary at infinity of the extension.\\[-8mm]
	\item \textbf{Section \ref{Analytic scattering theory}: Analytic scattering theory.} It is devoted to the statement and the proof of the analytic scattering results.\\[-8mm]
	\item \textbf{Section \ref{Geometric interpretation}: Geometric interpretation.} In this last section, we interpret the wave operators as transports along the principal null geodesics. Asymptotic completeness is used to define traces on the horizons and solve an abstract Goursat problem.
\end{itemize}
%
%
%
%
%
\paragraph{Statement of the main scattering results.}We give here simplified versions of the main scattering results of this paper.

In the exterior De Sitter-Reissner-Nordstr\"om spacetime $(\mathcal{M},g)$ (introduced in Subsection \ref{The extended spacetime}), the charged Klein-Gordon equation reads (up to a multiplication by a smooth function)
\begin{align}
\label{Intro 1}
	\left(\partial_{t}^{2}-2\mathrm{i}k\partial_{t}+h\right)u&=0.
\end{align}
Here $t\in\mathbb{R}$ is the time coordinate, $h$ is self-adjoint and $k\equiv k(s)$ is a multiplication operator by a smooth real function which linearly depends on $s$, the product of the charge of the black hole with the Klein-Gordon field charge, and $h$ is a second order partial differential operator such that $h_{0}:=h+k^{2}\geq 0$. The crucial point here is that $h$ contains the mass term $m>0$. Putting $m=0$ would yield to the wilder charged wave operator for which many of the results presented in this paper are not true.

The Hamiltonian form of equation \eqref{Intro 1} is
\begin{align}
\label{Intro 2}
	-\mathrm{i}\partial_{t}v&=\dot{H}v,&&\dot{H}v=\begin{pmatrix}
	0&\mathds{1}\\
	h&2k
	\end{pmatrix}.
\end{align}
Let $\dot{\mathcal{E}}$ be the completion of smooth compactly supported functions with respect to the homogeneous norm
\begin{align}
	\label{Intro 3}
	\|(u_0,u_1)\|_{\dot{\mathcal{E}}}^{2}&=\langle h_{0}u_{0},u_{0}\rangle_{\HH}+\|u_{1}-ku_{0}\|_{\HH}^{2}&&(u_0,u_1)\in\dot{\mathcal{E}}.
\end{align}
Here $\HH$ is a standard $L^{2}$ space. Then $\dot{H}$ generates a continuous one-parameter group on $\dot{\mathcal{E}}$, denoted by $(\mathrm{e}^{\mathrm{i}t\dot{H}})_{t\in\mathbb{R}}$. The operator $\mathrm{e}^{\mathrm{i}t\dot{H}}$ is called \textit{propagator}, \textit{evolution} or \textit{dynamics} (at the time $t$) of $\dot{H}$. The norm \eqref{Intro 3} is not the natural energy $\|\cdot\|$ associated to solutions of \eqref{Intro 2}, which is: 
\begin{align}
\label{Intro 4}
	\|(u_0,u_1)\|^{2}&=\langle hu_{0},u_{0}\rangle_{\HH}+\|u_{1}\|_{\HH}^{2}&&(u_0,u_1)\in\dot{\mathcal{E}}.
\end{align}
It turns out that \eqref{Intro 4} is not in general positive because of negative contributions from $k$ as the radial coordinate approaches horizons. Contrary to De Sitter-Kerr spacetime, there is a lack of geometric tools to explain this phenomenon; Section \ref{The extended spacetime} and more particularly Remark \ref{Dyadorings} provide them. The norm \eqref{Intro 3} is obviously positive but may grow with time. For this reason, we say that the charged Klein-Gordon equation is \textit{superradiant} in this context.

The neutralization procedure of Subsection \ref{The neutralization procedure} turns the charged Klein-Gordon operator \eqref{Intro 1} into the wave operator of an abstract Lorentzian manifold $(\M,\g)$ of dimension $1+4$. The extra dimension is labeled $z$, and we get the original charged operator by restriction on $\ker(\mathrm{i}\partial_{z}+\z)$ for $\z=1$. We will however consider general $\z\in\mathbb{Z}\setminus\{0\}$ in the sequel and denote with the superscript $^{\z}$ the restriction of operators and spaces to $\ker(\mathrm{i}\partial_{z}+\z)$. The first important result is the uniform boundedness of the evolution (see Theorem \ref{Uniform boundedness of the evolution}):
\begin{theorem}
	Let $\z\in\mathbb{Z}\setminus\{0\}$. There exists $s_{0}\equiv s_{0}(\z)>0$ such that for all $s\in\left]-s_{0},s_{0}\right[$, there exists a constant $C\equiv C(\z,s_{0})>0$ such that
	\begin{align*}
		\big\|\mathrm{e}^{\mathrm{i}t\dot{H}^{\z}}u\big\|_{\dot{\mathcal{E}}^{\z}}&\leq C\|u\|_{\dot{\mathcal{E}}^{\z}}\qquad\qquad\forall t\in\mathbb{R},\ \forall u\in\dot{\mathcal{E}}^{\z}.
	\end{align*}
\end{theorem}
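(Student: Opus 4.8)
The plan is to reduce the uniform boundedness of $(\mathrm{e}^{\mathrm{i}t\dot{H}^{\z}})_{t\in\mathbb{R}}$ on $\dot{\mathcal{E}}^{\z}$ to a spectral statement about the generator $\dot{H}^{\z}$. The first step is to observe that $\dot{H}^{\z}$ is selfadjoint for an indefinite inner product (the \emph{charge} or \emph{energy} form) built from the natural energy $\|\cdot\|^{2}=\langle hu_0,u_0\rangle_{\HH}+\|u_1\|_{\HH}^{2}$, so that $(\dot{\mathcal{E}}^{\z},\langle\cdot,\cdot\rangle_{\dot{\mathcal{E}}^{\z}})$ together with this form is a Krein space, with $\dot{H}^{\z}$ a Krein-selfadjoint operator; this is the abstract framework of Georgescu--G\'erard--H\"afner \cite{GGH13,GGH15}. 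In that framework, uniform boundedness of the group is equivalent to showing that $\dot{H}^{\z}$ has (i) real spectrum, (ii) no Jordan blocks (the resolvent is bounded near each real point in the appropriate operator sense, i.e.\ local regularity), and (iii) a global resolvent bound $\|(\dot{H}^{\z}-\lambda)^{-1}\|\lesssim |\mathrm{Im}\,\lambda|^{-1}$ away from a neighbourhood of the real axis, matched with a Hille--Yosida type estimate. Equivalently one exhibits a uniformly equivalent \emph{positive} Hilbertian norm preserved (or almost preserved) by the flow.

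Concretely, I would work as follows. Step 1: pass to the operator $\dot{H}^{\z}$ conjugated so that it acts like $\begin{pmatrix}0&\mathds{1}\\ h_0&0\end{pmatrix}$ plus a relatively bounded perturbation coming from $k$; since $h_0=h+k^{2}\geq 0$ and, by the earlier material of Section \ref{The extended spacetime}, $h_0$ is in fact bounded below by a positive constant on $\dot{\mathcal{E}}^{\z}$ for $\z\neq 0$ (the extra-dimensional frequency $\z$ supplies a mass gap), the ``free'' part generates a bounded unitary group on $\dot{\mathcal{E}}^{\z}$. Step 2: treat $k$ as a perturbation. Writing $\dot{H}^{\z}=\dot{H}_0^{\z}+s\,V$ where $V$ is linear in the charge product $s$ and bounded relative to $\dot{H}_0^{\z}$, one shows that for $|s|$ small the perturbation cannot create complex eigenvalues nor spectral singularities: the key inputs are the limiting absorption principle / absence of real resonances for the model on the extended spacetime, which is exactly what \cite{Be18} supplies, and the fact that the threshold/horizon behaviour of $k$ is controlled. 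Step 3: combine a high-energy estimate (large $|\lambda|$: Neumann series in $sV(\dot{H}_0^{\z}-\lambda)^{-1}$, using that $h_0$ is coercive so the free resolvent decays) with a low/intermediate-energy estimate (compactness arguments plus absence of real eigenvalues and resonances from \cite{Be18}) to get a uniform resolvent bound, and then invoke the Krein-space Hille--Yosida / Gearhart--Pr\"uss type theorem of \cite{GGH13} to conclude uniform boundedness of the group with a constant $C$ depending only on $\z$ and the chosen $s_0$.

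The quantitative choice of $s_{0}(\z)$ is where the weak-coupling hypothesis enters: one needs $|s|$ small enough that the relative bound of $sV$ with respect to $\dot{H}_0^{\z}$ is strictly less than the spectral gap of $h_0$ (which degenerates as $\z\to 0$, hence the $\z$-dependence of $s_0$), and small enough that the perturbed resolvent still satisfies the limiting absorption principle uniformly down to the real axis. I expect the main obstacle to be precisely this last point — propagating the absence of real resonances and the limiting absorption principle from the unperturbed (or reference) operator to $\dot{H}^{\z}$ uniformly in $t$, i.e.\ controlling the boundary values of the resolvent near the horizons where $k$ has distinct nonzero limits. This is the mechanism behind superradiance, and the argument must show that for small $s$ the negative contributions of $k$ near the horizons, although they spoil positivity of $\|\cdot\|^{2}$, are too weak to move spectrum off the real axis or to generate polynomial growth; quantitatively this is a smallness condition trading the size of these limiting values against the mass gap provided by $m>0$ and by $\z\neq 0$.
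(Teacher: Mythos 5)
Your proposal correctly identifies some of the essential inputs (smallness of $s$, the resolvent/resonance results of \cite{Be18}, the role of $m>0$ and $\z\neq 0$), but the framework you build around them is the wrong one and the reduction would fail. The foundational step — realizing $\dot{H}^{\z}$ as a selfadjoint operator on a Krein space associated with the conserved energy and then applying the abstract theory of \cite{GGH13}, \cite{GGH15} — is precisely what is \emph{not} available here. After the gauge transformation the coupling $k=-\mathrm{i}sV(r)\partial_{z}$ has two different limits at the two ends ($sV_{-}\z\neq sV_{+}\z=0$), so no conserved quadratic form $\langle(h_{0}-(k-\ell)^{2})u_{0},u_{0}\rangle_{\HH}+\|u_{1}-\ell u_{0}\|_{\HH}^{2}$ is comparable, even modulo controlled errors, to a positive one; this is exactly the obstruction described in the introduction of the paper and the reason the machinery of \cite{GGH17} (two asymptotic Hamiltonians $\dot{\widetilde{H}}_{\pm}$ which are selfadjoint, cut-offs $i_{\pm}$, weighted resolvents and resonances) is used instead of a Krein-space realization. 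The paper's proof consists in (i) checking the geometric hypotheses (G) of \cite{GGH17} for $h_{0}^{\z}$, $k^{\z}$, (ii) importing from \cite{Be18} the meromorphic continuation of $w^{\delta}(\dot{H}^{\z}-z)^{-1}w^{\delta}$ and $w^{\delta}(\dot{\widetilde{H}}_{\pm}-z)^{-1}w^{\delta}$ through the real axis, the absence of resonances in a strip around $\mathbb{R}$ and the reality of the spectrum for $|s|$ small, and (iii) invoking \cite[Theorem 7.1]{GGH17} following the De Sitter--Kerr argument of Theorem 12.1 there. Your alternative route, a Neumann-series perturbation argument plus a Gearhart--Pr\"uss/Hille--Yosida step, does not substitute for this: for a non-selfadjoint generator on $\dot{\mathcal{E}}^{\z}$, real spectrum together with a bound $\|(\dot{H}^{\z}-\lambda)^{-1}\|\lesssim|\Im\lambda|^{-1}$ away from the axis does not imply uniform boundedness of the group (one would need estimates on all powers of the resolvent, or boundary values of the resolvent on $\mathbb{R}$ together with a representation formula for the propagator — which is precisely the resonance-free-strip input plus the abstract theorem of \cite{GGH17} that the paper uses, not a soft semigroup criterion).

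A second concrete error is the coercivity claim for $h_{0}^{\z}$. On $\ker(\mathrm{i}\partial_{z}+\z)$ one has $h_{0}^{\z}=-\partial_{x}^{2}+F(r)\big(\ell(\ell+1)r^{-2}+F'(r)r^{-1}+m^{2}\z^{2}\big)$ and $F(r)=\mathcal{O}\big(\mathrm{e}^{2\kappa_{\pm}x}\big)$ as $x\to\pm\infty$, so the whole potential, including the mass term $m^{2}\z^{2}F(r)$, decays exponentially at the horizons: $h_{0}^{\z}>0$ but $\inf\sigma(h_{0}^{\z})=0$, and there is no spectral gap. Consequently the statements "the extra-dimensional frequency $\z$ supplies a mass gap", "the free resolvent decays because $h_{0}$ is coercive", and the choice of $s_{0}(\z)$ by comparing the relative bound of $sV$ with "the spectral gap of $h_{0}$" do not hold. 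The actual role of $\z\neq 0$ is different: it guarantees a nonzero effective mass $m\z$, which is what the analysis of \cite{Be18} (meromorphic extension of the weighted resolvents, absence of real resonances, real spectrum for $|s|$ small) requires, and it is from there — not from a gap of $h_{0}^{\z}$ — that the $\z$-dependence of $s_{0}$ and of the constant $C$ originates.
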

Notice that the above result is not uniform in $\z$. In the extended spacetime, we construct \textit{incoming/outgoing principal null geodesics} $\gamma_{\textup{in/out}}$ in a similar way as for (De Sitter-)Kerr spacetime. They generate Killing horizons in $(\M,\g)$ and are $\g$-orthogonal to the extra dimension $z$ (see Section \ref{Global geometry of the extended spacetime}). To these geodesics we associate Hamiltonians $\dot{H}^{\z}_{\mathscr{H}\!/\!\mathscr{I}}$ acting on some energy spaces $\dot{\mathcal{E}}^{\z}_{\mathscr{H}\!/\!\mathscr{I}}$. They are related to transports towards $\mathscr{H}\!/\!\mathscr{I}$. Let $i_{-\!/\!+}$ be two smooth functions supported respectively near $\mathscr{H}\!/\!\mathscr{I}$. We can then show the existence and the completeness of wave operators (see Theorem \ref{Asymptotic completeness, geometric profiles}):
\begin{theorem}
	Let $\z\in\mathbb{Z}\setminus\{0\}$. There exists $s_{0}>0$ such that for all $s\in\left]-s_{0},s_{0}\right[$, the following holds:
	\begin{enumerate}
		\item There exists a dense subspace $\mathcal{D}^{\textup{fin},\z}_{\mathscr{H}\!/\!\mathscr{I}}\subset\dot{\mathcal{E}}^{\z}_{\mathscr{H}\!/\!\mathscr{I}}$ such that for all $u\in\mathcal{D}^{\textup{fin},\z}_{\mathscr{H}\!/\!\mathscr{I}}$, the limits
		\begin{align*}
			\boldsymbol{W}_{\mathscr{H}\!/\!\mathscr{I}}^{f}u&=\lim_{t\to+\infty}\mathrm{e}^{\mathrm{i}t\dot{H}^{\z}}i_{-\!/\!+}\mathrm{e}^{-\mathrm{i}t\dot{H}^{\z}_{\mathscr{H}\!/\!\mathscr{I}}}u,\\
			\boldsymbol{W}_{\mathscr{H}\!/\!\mathscr{I}}^{p}u&=\lim_{t\to-\infty}\mathrm{e}^{\mathrm{i}t\dot{H}^{\z}}i_{-\!/\!+}\mathrm{e}^{-\mathrm{i}t\dot{H}^{\z}_{\mathscr{H}\!/\!\mathscr{I}}}u
		\end{align*}
		exist in $\dot{\mathcal{E}}^{\z}$. The operators $\boldsymbol{W}_{\mathscr{H}\!/\!\mathscr{I}}^{f/p}$ extend to bounded operators $\boldsymbol{W}_{\mathscr{H}\!/\!\mathscr{I}}^{f/p}\in\mathcal{B}(\dot{\mathcal{E}}^{\z}_{\mathscr{H}\!/\!\mathscr{I}},\dot{\mathcal{E}}^{\z})$.
		\item The inverse future/past wave operators
		\begin{align*}
			\boldsymbol{\Omega}_{\mathscr{H}\!/\!\mathscr{I}}^{f}&=\textup{s}-\lim_{t\to+\infty}\mathrm{e}^{\mathrm{i}t\dot{H}^{\z}_{\mathscr{H}\!/\!\mathscr{I}}}i_{-\!/\!+}\mathrm{e}^{-\mathrm{i}t\dot{H}^{\z}},\\
			\boldsymbol{\Omega}_{\mathscr{H}\!/\!\mathscr{I}}^{p}&=\textup{s}-\lim_{t\to-\infty}\mathrm{e}^{\mathrm{i}t\dot{H}^{\z}_{\mathscr{H}\!/\!\mathscr{I}}}i_{-\!/\!+}\mathrm{e}^{-\mathrm{i}t\dot{H}^{\z}}
		\end{align*}
		exist in $\mathcal{B}(\dot{\mathcal{E}}^{\z},\dot{\mathcal{E}}^{\z}_{\mathscr{H}\!/\!\mathscr{I}})$.
	\end{enumerate}
\end{theorem}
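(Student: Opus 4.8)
The plan is to prove existence and completeness of the four wave operators $\boldsymbol{W}^{f/p}_{\mathscr{H}\!/\!\mathscr{I}}$ and $\boldsymbol{\Omega}^{f/p}_{\mathscr{H}\!/\!\mathscr{I}}$ by the standard Cook method, using uniform boundedness of both the full dynamics $\mathrm{e}^{\mathrm{i}t\dot{H}^{\z}}$ (Theorem~\ref{Uniform boundedness of the evolution}) and the comparison dynamics $\mathrm{e}^{\mathrm{i}t\dot{H}^{\z}_{\mathscr{H}\!/\!\mathscr{I}}}$. First I would set up the comparison pair: the comparison Hamiltonians $\dot{H}^{\z}_{\mathscr{H}\!/\!\mathscr{I}}$ are obtained by freezing the coefficients of $\dot{H}^{\z}$ at the horizon $\mathscr{H}$ and at infinity $\mathscr{I}$ respectively (in the Regge--Wheeler variable $x\to\pm\infty$), so that on $\dot{\mathcal{E}}^{\z}_{\mathscr{H}\!/\!\mathscr{I}}$ the comparison evolution is an explicit transport: the solution moves rigidly along the incoming/outgoing principal null geodesics $\gamma_{\textup{in/out}}$ with the limiting electrostatic potential $k_{\mathscr{H}\!/\!\mathscr{I}}$ entering as a phase. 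The uniform boundedness of $\mathrm{e}^{\mathrm{i}t\dot{H}^{\z}_{\mathscr{H}\!/\!\mathscr{I}}}$ on $\dot{\mathcal{E}}^{\z}_{\mathscr{H}\!/\!\mathscr{I}}$ for $|s|<s_0$ follows by exactly the same energy argument as Theorem~\ref{Uniform boundedness of the evolution}, now in the simpler constant-coefficient regime (or is genuinely easier because the profile dynamics is a pure transport). On the dense set $\mathcal{D}^{\textup{fin},\z}_{\mathscr{H}\!/\!\mathscr{I}}$ of data whose evolution under the profile dynamics stays, after a finite time, supported in the region where $i_{-\!/\!+}\equiv 1$ and where $\dot{H}^{\z}$ agrees with $\dot{H}^{\z}_{\mathscr{H}\!/\!\mathscr{I}}$ up to a short-range remainder, I compute the Cook derivative.

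The core computation is as follows. For $u\in\mathcal{D}^{\textup{fin},\z}_{\mathscr{H}\!/\!\mathscr{I}}$ and $t_2>t_1$ large, write
\begin{align*}
	\mathrm{e}^{\mathrm{i}t_2\dot{H}^{\z}}i_{-\!/\!+}\mathrm{e}^{-\mathrm{i}t_2\dot{H}^{\z}_{\mathscr{H}\!/\!\mathscr{I}}}u-\mathrm{e}^{\mathrm{i}t_1\dot{H}^{\z}}i_{-\!/\!+}\mathrm{e}^{-\mathrm{i}t_1\dot{H}^{\z}_{\mathscr{H}\!/\!\mathscr{I}}}u&=\int_{t_1}^{t_2}\mathrm{e}^{\mathrm{i}t\dot{H}^{\z}}\big(\mathrm{i}\dot{H}^{\z}i_{-\!/\!+}-\mathrm{i}\,i_{-\!/\!+}\dot{H}^{\z}_{\mathscr{H}\!/\!\mathscr{I}}\big)\mathrm{e}^{-\mathrm{i}t\dot{H}^{\z}_{\mathscr{H}\!/\!\mathscr{I}}}u\,\mathrm{d}t.
\end{align*}
Using uniform boundedness of $\mathrm{e}^{\mathrm{i}t\dot{H}^{\z}}$ in $\dot{\mathcal{E}}^{\z}$, it suffices to show that $t\mapsto\big\|\big(\dot{H}^{\z}i_{-\!/\!+}-i_{-\!/\!+}\dot{H}^{\z}_{\mathscr{H}\!/\!\mathscr{I}}\big)\mathrm{e}^{-\mathrm{i}t\dot{H}^{\z}_{\mathscr{H}\!/\!\mathscr{I}}}u\big\|_{\dot{\mathcal{E}}^{\z}}$ is integrable at $\pm\infty$. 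The operator $\dot{H}^{\z}i_{-\!/\!+}-i_{-\!/\!+}\dot{H}^{\z}_{\mathscr{H}\!/\!\mathscr{I}}$ splits into two pieces: a commutator $[\dot{H}^{\z},i_{-\!/\!+}]$, which is a first-order operator localized in a compact region of $x$ (where $\mathrm{d}i_{-\!/\!+}\neq 0$), and $i_{-\!/\!+}(\dot{H}^{\z}-\dot{H}^{\z}_{\mathscr{H}\!/\!\mathscr{I}})$, which by construction of the profile Hamiltonians is short-range in $x$ near $\mathscr{H}\!/\!\mathscr{I}$ (the coefficient differences decay, exponentially in $x$ near the horizon by the surface-gravity structure, and at an integrable polynomial rate near $\mathscr{I}$). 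For $u$ in the dense set, $\mathrm{e}^{-\mathrm{i}t\dot{H}^{\z}_{\mathscr{H}\!/\!\mathscr{I}}}u$ is a transported profile whose $x$-support escapes to $\mathscr{H}$ (resp. $\mathscr{I}$) at unit speed as $t\to+\infty$ (resp. the appropriate sign), hence it eventually avoids the compact support of $\mathrm{d}i_{-\!/\!+}$ entirely (killing the commutator term after finite time) and, against the short-range factor, picks up decay $\int^{\infty}\langle t\rangle^{-1-\epsilon}\,\mathrm{d}t<\infty$ near $\mathscr{I}$ or exponential decay near $\mathscr{H}$. This gives Cauchy in $t$ and hence the limits, on $\mathcal{D}^{\textup{fin},\z}_{\mathscr{H}\!/\!\mathscr{I}}$; density of $\mathcal{D}^{\textup{fin},\z}_{\mathscr{H}\!/\!\mathscr{I}}$ plus the uniform bound $\|\boldsymbol{W}^{f/p}_{\mathscr{H}\!/\!\mathscr{I}}u\|_{\dot{\mathcal{E}}^{\z}}\leq C\|u\|_{\dot{\mathcal{E}}^{\z}_{\mathscr{H}\!/\!\mathscr{I}}}$ (from the two uniform boundedness statements) extends $\boldsymbol{W}^{f/p}_{\mathscr{H}\!/\!\mathscr{I}}$ to $\mathcal{B}(\dot{\mathcal{E}}^{\z}_{\mathscr{H}\!/\!\mathscr{I}},\dot{\mathcal{E}}^{\z})$, proving item~1.

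For item~2, the inverse wave operators $\boldsymbol{\Omega}^{f/p}_{\mathscr{H}\!/\!\mathscr{I}}$, the Cook estimate is the symmetric one: differentiate $t\mapsto\mathrm{e}^{\mathrm{i}t\dot{H}^{\z}_{\mathscr{H}\!/\!\mathscr{I}}}i_{-\!/\!+}\mathrm{e}^{-\mathrm{i}t\dot{H}^{\z}}v$ and bound $\big\|\big(\dot{H}^{\z}_{\mathscr{H}\!/\!\mathscr{I}}i_{-\!/\!+}-i_{-\!/\!+}\dot{H}^{\z}\big)\mathrm{e}^{-\mathrm{i}t\dot{H}^{\z}}v\big\|_{\dot{\mathcal{E}}^{\z}_{\mathscr{H}\!/\!\mathscr{I}}}$ for $v$ in a dense set. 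Here the new ingredient is a \emph{minimal velocity estimate} for the full dynamics: one needs to know that $\mathrm{e}^{-\mathrm{i}t\dot{H}^{\z}}v$, after localization by $i_{-\!/\!+}$, escapes to $\mathscr{H}\!/\!\mathscr{I}$ so that again the commutator term eventually vanishes and the coefficient-difference term is integrated against escaping support. This propagation estimate should be imported from the analytic scattering machinery of the preceding section (modeled on \cite{GGH17}), together with the absence of real resonances \cite{Be18} which guarantees that no energy is trapped at finite $x$ in the superradiant regime; combined with asymptotic completeness of the pair $(\dot{H}^{\z},\dot{H}^{\z}_{\mathscr{H}}\oplus\dot{H}^{\z}_{\mathscr{I}})$ already established in that section, $\boldsymbol{\Omega}^{f/p}_{\mathscr{H}\!/\!\mathscr{I}}$ are identified as the components of the inverse of the full wave operator and inherit boundedness and the intertwining relations. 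The main obstacle is precisely this last point: unlike in the self-adjoint setting, neither $\dot{H}^{\z}$ nor $\dot{H}^{\z}_{\mathscr{H}\!/\!\mathscr{I}}$ is self-adjoint on a Hilbert or Krein space, so one cannot simply invoke the RAGE theorem or a spectral projection argument to produce the minimal velocity bound; one must instead work with the resolvent estimates of the analytic part, control the (a priori non-self-adjoint) generator via the smallness $|s|<s_0$ of the charge product, and transfer these to propagation statements by a TT$^*$/Fourier-integral argument — this is where the weak-coupling hypothesis is genuinely used and where the bulk of the technical work lies.
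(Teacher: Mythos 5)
Your treatment of item~1 is essentially the paper's argument: Cook's method on the finite-angular-momentum dense set, killing the commutator term by the exact transport structure of the profile dynamics and beating the coefficient differences by their decay, then extending by density. Two corrections, though. First, in this De Sitter setting $\mathscr{I}$ is the \emph{cosmological horizon}, so the coefficient differences decay \emph{exponentially} in the Regge--Wheeler variable at both ends (rate given by the surface gravities, \textit{cf.} \eqref{Decay r-r_- with kappa_pm}); there is no polynomially short-range end, and your phrasing suggests you are picturing an asymptotically flat infinity. Second, the uniform bound needed to extend $\boldsymbol{W}^{f/p}_{\mathscr{H}\!/\!\mathscr{I}}$ does \emph{not} follow ``from the two uniform boundedness statements'': the cutoff $i_{-\!/\!+}:\dot{\mathcal{E}}^{\z}_{\mathscr{H}\!/\!\mathscr{I}}\to\dot{\mathcal{E}}^{\z}$ is not bounded (see Remark \ref{Remark i_+-}), because $h_{0}$ contains the angular and mass terms that $h_{\mathscr{H}\!/\!\mathscr{I}}$ lacks. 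One must estimate $\|i_{-}\mathrm{e}^{\mathrm{i}t\dot{H}^{\z}_{\mathscr{H}}}u\|_{\dot{\mathcal{E}}^{\z}}$ directly, using that the offending terms carry exponentially decaying factors evaluated along the escaping support; this is also why the dense set must have finitely many spherical harmonics, a point your ``short-range remainder'' framing hides.

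For item~2 there is a genuine gap. You correctly see that a propagation input for the \emph{full} dynamics is the crux, but you leave it as a ``minimal velocity estimate'' to be manufactured by a TT$^{*}$/Fourier-integral argument, and you then close the argument by appealing to ``asymptotic completeness of the pair $(\dot{H}^{\z},\dot{H}^{\z}_{\mathscr{H}}\oplus\dot{H}^{\z}_{\mathscr{I}})$ already established'' to identify $\boldsymbol{\Omega}^{f/p}_{\mathscr{H}\!/\!\mathscr{I}}$ as components of an inverse. The first step is not supplied, and the second is circular: that asymptotic completeness is precisely the statement being proved (the earlier theorems concern $\dot{H}^{\z}_{\pm\infty}$ and $\dot{H}^{\z}_{-\!/\!+}$), and in any case, because of the cutoffs $i_{\pm}$, the direct and inverse wave operators of this theorem are \emph{not} inverses of one another --- that identification only comes later, for the full wave operators, and requires separate work. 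What the paper actually uses is not a pointwise minimal-velocity bound but the integrated weighted propagation estimate of \cite{GGH17} (Proposition 6.8 therein), $\int_{\mathbb{R}}\|w^{\delta}\mathrm{e}^{\mathrm{i}t\dot{H}^{\z}}\chi(\dot{H}^{\z})u\|_{\dot{\mathcal{E}}^{\z}}\,\mathrm{d}t\lesssim\|u\|_{\dot{\mathcal{E}}^{\z}}$, which is already available once the geometric hypotheses (G) are checked; the cutoff $\chi(\dot{H}^{\z})$ is handled by the Helffer--Sj\"ostrand calculus, the error $(1-\chi(\dot{H}^{\z}))u$ by the strong limit $\chi(\dot{H}^{\z}/L)\to\mathds{1}$, and the absence of real resonances from \cite{Be18} (valid for $|s|$ small, $\z\neq 0$) is what removes any support restriction on $\chi$. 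One then writes the Cook derivative with the weight inserted, $(\dot{H}^{\z}_{\mathscr{H}}i_{-}-i_{-}\dot{H}^{\z})w^{-\delta}\cdot w^{\delta}\mathrm{e}^{\mathrm{i}t\dot{H}^{\z}}\chi(\dot{H}^{\z})u$, bounds the first factor via a Hardy-type inequality and Lemma \ref{Injection lemma}, and concludes that the family is Cauchy in $\dot{\mathcal{E}}^{\z}_{\mathscr{H}\!/\!\mathscr{I}}$. Without either importing this estimate explicitly or proving a substitute, your item~2 does not go through.
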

Consider eventually the following abstract \textit{Goursat problem}. Let $\dot{\mathscr{E}}_{\pm}^{\z}$ be the energy spaces on $\mathscr{H}^{\pm}\times\mathscr{I}^{\pm}$ obtained from $\dot{\mathcal{E}}^{\z}_{\mathscr{H}\!/\!\mathscr{I}}$ following the flow of the principal null geodesics. Given $(\xi^{\pm},\zeta^{\pm})\in\dot{\mathscr{E}}_{\pm}^{\z}$, the problem consists in finding a solution $\phi$ to the wave equation in $(\M,\g)$ (and thus to \eqref{Intro 1} with mass $m\z$) such that
\begin{align}
\label{Boundary data intro}
	\phi_{\vert\mathscr{H}^{\pm}}&=\xi^{\pm},\qquad\qquad\phi_{\vert\mathscr{I}^{\pm}}=\zeta^{\pm}.
\end{align}
We then have the following result (see Theorem \ref{Abstract Goursat problem}):
\begin{theorem}
	Let $\z\in\mathbb{Z}\setminus\{0\}$. There exists $s_{0}>0$ such that for all $s\in\left]-s_{0},s_{0}\right[$ the following property: there exist homeomorphisms
	\begin{align*}
		\mathbb{T}^{\pm}&:\dot{\mathcal{E}}^{\z}\longrightarrow\dot{\mathscr{E}}^{\z}_{\pm}
	\end{align*}
	solving the Goursat problem \eqref{Boundary data intro} in the energy spaces, that is, for all $(\xi^{\pm},\zeta^{\pm})\in\dot{\mathscr{E}}^{\z}_{\pm}$, there exists an unique $\phi\in\mathcal{C}^{0}(\mathbb{R}_{t};\dot{\mathcal{E}}^{\z})$ solving the wave equation on $(\M,\g)$ with initial data $\phi(0)=(\phi_{0},\phi_{1})$ such that
	\begin{align*}
		(\xi^{\pm},\zeta^{\pm})&=\mathbb{T}^{\pm}(\phi_{0},\phi_{1}).
	\end{align*}
	The operators $\mathbb{T}^{\pm}$ are the extensions to the energy spaces of traces on the horizons.
\end{theorem}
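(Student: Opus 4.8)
The plan is to construct the homeomorphisms $\mathbb{T}^{\pm}$ by composing the inverse wave operators of the previous theorem with the identifications of the abstract energy spaces $\dot{\mathcal{E}}^{\z}_{\mathscr{H}\!/\!\mathscr{I}}$ with the trace spaces $\dot{\mathscr{E}}^{\z}_{\pm}$ along the flow of the principal null geodesics. Concretely, for the future problem one sets $\mathbb{T}^{+}(\phi_{0},\phi_{1}) = \bigl(U_{\mathscr{H}}\boldsymbol{\Omega}^{f}_{\mathscr{H}}(\phi_{0},\phi_{1}),\, U_{\mathscr{I}}\boldsymbol{\Omega}^{f}_{\mathscr{I}}(\phi_{0},\phi_{1})\bigr)$, where $U_{\mathscr{H}}$ and $U_{\mathscr{I}}$ are the isometric identifications built in Section \ref{Global geometry of the extended spacetime} that transport data along $\gamma_{\textup{in}}$ and $\gamma_{\textup{out}}$ to the Killing horizons $\mathscr{H}^{+}$ and $\mathscr{I}^{+}$ respectively; the past problem is handled symmetrically with $\boldsymbol{\Omega}^{p}$ and $t\to-\infty$. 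One then must verify that $\mathbb{T}^{\pm}$ is bounded, injective with bounded inverse, and surjective.

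The key steps, in order, are as follows. First I would recall from Section \ref{Global geometry of the extended spacetime} that the restriction of a finite-energy solution of the wave equation on $(\M,\g)$ to each horizon is well-defined and that the resulting trace lies in $\dot{\mathscr{E}}^{\z}_{\pm}$, with the trace map continuous; this reduces the statement to an assertion about the operators $\boldsymbol{\Omega}^{f/p}_{\mathscr{H}\!/\!\mathscr{I}}$ and their relation to these traces. Second, I would prove that the trace of $\phi(t)=\mathrm{e}^{-\mathrm{i}t\dot{H}^{\z}}(\phi_{0},\phi_{1})$ on $\mathscr{H}^{+}$ (resp. $\mathscr{I}^{+}$) coincides, after transport along the geodesic flow, with $U_{\mathscr{H}}\boldsymbol{\Omega}^{f}_{\mathscr{H}}(\phi_{0},\phi_{1})$ (resp. $U_{\mathscr{I}}\boldsymbol{\Omega}^{f}_{\mathscr{I}}(\phi_{0},\phi_{1})$): this is the geometric content, and follows by writing the energy flux through a truncated horizon, using the comparison dynamics $\mathrm{e}^{-\mathrm{i}t\dot{H}^{\z}_{\mathscr{H}\!/\!\mathscr{I}}}$ as the large-time profile near the relevant end, and passing to the limit with the help of the cut-offs $i_{-\!/\!+}$ and the existence of the strong limits defining $\boldsymbol{\Omega}^{f/p}_{\mathscr{H}\!/\!\mathscr{I}}$. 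Third, injectivity and the bounded inverse come from the intertwining relations $\boldsymbol{W}^{f/p}_{\mathscr{H}\!/\!\mathscr{I}}\boldsymbol{\Omega}^{f/p}_{\mathscr{H}\!/\!\mathscr{I}} = \mathds{1}_{\dot{\mathcal{E}}^{\z}}$ together with the uniform boundedness of the evolution (Theorem \ref{Uniform boundedness of the evolution}), which gives the reconstruction $(\phi_{0},\phi_{1}) = \boldsymbol{W}^{f}_{\mathscr{H}}\,U_{\mathscr{H}}^{-1}\xi^{+}$ once one knows the ranges match; the essential point is the \emph{geometric asymptotic completeness} relation $\mathrm{Ran}\,\boldsymbol{W}^{f}_{\mathscr{H}} \oplus \mathrm{Ran}\,\boldsymbol{W}^{f}_{\mathscr{I}} = \dot{\mathcal{E}}^{\z}$, i.e. that the two channel wave operators together recover the full space, which must be extracted from the analytic asymptotic completeness of Section \ref{Analytic scattering theory} (this is where the absence of real resonances from \cite{Be18} and the smallness of $s$ enter). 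Fourth, surjectivity of $\mathbb{T}^{\pm}$ onto $\dot{\mathscr{E}}^{\z}_{\pm}$ follows by running the construction backwards: given $(\xi^{\pm},\zeta^{\pm})$, set $(\phi_{0},\phi_{1}) := \boldsymbol{W}^{f}_{\mathscr{H}}U_{\mathscr{H}}^{-1}\xi^{\pm} + \boldsymbol{W}^{f}_{\mathscr{I}}U_{\mathscr{I}}^{-1}\zeta^{\pm}$ and check, using the orthogonality/decoupling of the two channels at infinite time, that its traces are exactly $\xi^{\pm}$ and $\zeta^{\pm}$; continuity in $t$ with values in $\dot{\mathcal{E}}^{\z}$ is immediate since $\phi(t)=\mathrm{e}^{-\mathrm{i}t\dot{H}^{\z}}(\phi_{0},\phi_{1})$ and $\dot{H}^{\z}$ generates a strongly continuous group.

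The main obstacle I anticipate is the second step: rigorously identifying the abstract inverse wave operator $\boldsymbol{\Omega}^{f/p}_{\mathscr{H}\!/\!\mathscr{I}}$, which is defined purely dynamically as a strong limit of $\mathrm{e}^{\mathrm{i}t\dot{H}^{\z}_{\mathscr{H}\!/\!\mathscr{I}}}i_{-\!/\!+}\mathrm{e}^{-\mathrm{i}t\dot{H}^{\z}}$, with the genuinely geometric object, namely the restriction of $\phi$ to the Killing horizon transported back along $\gamma_{\textup{in/out}}$. This requires knowing that the comparison dynamics $\mathrm{e}^{-\mathrm{i}t\dot{H}^{\z}_{\mathscr{H}\!/\!\mathscr{I}}}$ really does describe free transport along the principal null geodesics up to errors vanishing in energy norm, and that no energy is radiated transversally to $z$ (which is why the $\g$-orthogonality of $\gamma_{\textup{in/out}}$ to the extra dimension, established in Section \ref{Global geometry of the extended spacetime}, is used). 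A secondary difficulty is controlling the behaviour near the bifurcation spheres where the two cut-offs $i_{-}$ and $i_{+}$ transition, and ensuring the decoupling of the $\mathscr{H}$ and $\mathscr{I}$ channels is exact in the limit; here one uses a standard propagation-of-singularities / non-stationary-phase argument together with the fact that the two families of geodesics separate in the region $\mathrm{supp}\,i_{-}\cap\mathrm{supp}\,i_{+}$. Once these identifications are in place, the homeomorphism property of $\mathbb{T}^{\pm}$ is a formal consequence of the chain rule for wave operators and the uniform boundedness estimate, so no further hard analysis is needed.
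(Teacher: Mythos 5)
Your overall architecture coincides with the paper's: define $\mathbb{T}^{\pm}$ by composing the inverse wave operators of the two channels with the identification along the principal null geodesics, check on smooth compactly supported data that this composition is the pointwise trace (Lemma \ref{Link with inverse wave operators}), and invert using the direct wave operators. The genuine gap sits exactly where the paper has to work hardest, namely the inversion. You invoke a per-channel identity $\boldsymbol{W}^{f/p}_{\mathscr{H}\!/\!\mathscr{I}}\boldsymbol{\Omega}^{f/p}_{\mathscr{H}\!/\!\mathscr{I}}=\mathds{1}_{\dot{\mathcal{E}}^{\z}}$; this is false. It would mean that every finite-energy solution is recovered from its event-horizon channel alone, i.e.\ determined by $\xi^{+}$ only; but a solution which is asymptotically outgoing toward the cosmological end satisfies $i_{-}\mathrm{e}^{\mathrm{i}t\dot{H}^{\z}}u\to0$ in energy, hence $\boldsymbol{\Omega}^{f}_{\mathscr{H}}u=0$ with $u\neq0$. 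What is true is the reversed composition on the asymptotic space, $\boldsymbol{\Omega}^{f/p}_{\mathscr{H}}\boldsymbol{W}^{f/p}_{\mathscr{H}}=\mathds{1}_{\Psi_{\mathscr{H}}(\dot{\mathcal{H}}^{1,\z}_{\mathscr{H}}\times\{0\})}$ (Proposition \ref{Geo wave op prop}), and, on $\dot{\mathcal{E}}^{\z}$, only the two-channel identity $\mathbb{W}^{\pm}\OO^{\pm}=\mathds{1}_{\dot{\mathcal{E}}^{\z}}$ (Theorem \ref{Inversion of full wave operators}). Your ``essential point'' $\mathrm{Ran}\,\boldsymbol{W}^{f}_{\mathscr{H}}\oplus\mathrm{Ran}\,\boldsymbol{W}^{f}_{\mathscr{I}}=\dot{\mathcal{E}}^{\z}$ is precisely this two-channel statement, and it does \emph{not} follow formally from Theorem \ref{Asymptotic completeness, geometric profiles}, which only provides existence and boundedness of the cut-off direct and inverse wave operators and says nothing about ranges. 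The paper proves it by showing $\ker\boldsymbol{\Omega}^{f/p}_{\mathscr{H}}\cap\ker\boldsymbol{\Omega}^{f/p}_{\mathscr{I}}=\{0\}$: one bounds $\|\chi(\dot{H}^{\z})u\|_{\dot{\mathcal{E}}^{\z}}$ by $\|i_{-}\mathrm{e}^{\mathrm{i}t\dot{H}^{\z}}\chi(\dot{H}^{\z})u\|_{\dot{\mathcal{E}}^{\z}}+\|i_{+}\mathrm{e}^{\mathrm{i}t\dot{H}^{\z}}\chi(\dot{H}^{\z})u\|_{\dot{\mathcal{E}}^{\z}}$, replaces the full energy norms by the asymptotic ones up to errors vanishing as $|t|\to\infty$ (Lemma \ref{Difference norms}, which rests on the weighted propagation estimates of \cite{GGH17} and the functional calculus for $\dot{H}^{\z}$, hence on the resonance-free strip of \cite{Be18} and the smallness of $s$), and then uses that injectivity of $\OO^{\pm}$ upgrades the already known $\OO^{\pm}\mathbb{W}^{\pm}=\mathds{1}$ to $\mathbb{W}^{\pm}\OO^{\pm}=\mathds{1}_{\dot{\mathcal{E}}^{\z}}$. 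None of this appears in your proposal; ``must be extracted from the analytic asymptotic completeness'' is exactly the missing argument. (The single-channel reconstruction $(\phi_{0},\phi_{1})=\boldsymbol{W}^{f}_{\mathscr{H}}U_{\mathscr{H}}^{-1}\xi^{+}$ is likewise wrong as written; both traces are needed, as your step 4 in fact has.)

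Two secondary points. Your step 1 takes for granted that a finite-energy solution has a well-defined continuous trace on the horizons; the paper stresses (Remark \ref{Geometric interpretation of full wave operators}) that this is not a priori true at energy regularity --- traces are defined on smooth compactly supported data via Leray's theorem, and their extension \emph{is} the content of Theorem \ref{Extension of the traces}, obtained from completeness --- so assuming it up front is circular. Also, the identification you flag as the main obstacle is the short step in the paper: Lemma \ref{Link with inverse wave operators} follows from the explicit transport formulas for $\mathrm{e}^{\mathrm{i}t\dot{\mathbb{H}}_{\mathscr{H}\!/\!\mathscr{I}}}$ along $\gamma_{\textup{in/out}}$ together with $j_{-\!/\!+}\equiv1$ near the respective horizon, with Proposition \ref{Geo wave op prop} ensuring that the range of $\boldsymbol{\Omega}^{f/p}_{\mathscr{H}\!/\!\mathscr{I}}$ is parametrized by a single scalar function so that a pair in $\dot{\mathcal{E}}^{\z}_{\mathscr{H}\!/\!\mathscr{I}}$ corresponds to one trace in $\dot{\HH}^{1}_{\mathscr{H}\!/\!\mathscr{I}}$; no flux computation or propagation-of-singularities argument is needed there.
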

We emphasize here that the above geometric interpretation of the scattering only holds in the Kaluza-Kein extension as it uses transport along geodesics which do not project onto the original spacetime (because of the absence of the charge $q$ and mass $m$ in the original metric).
%
%
%
%
\paragraph{Notations and conventions.}The set $\left\{z\in\mathbb{C}\mid\Im z>0\right\}$ will be denoted by $\mathbb{C}^{+}$. For any complex number $\lambda\in\mathbb{C}$, $D(\lambda,R)$ will be the disc centered at $\lambda\in\mathbb{C}$ of radius $R>0$. To emphasize some important dependences, the symbol $\equiv$ will be used: for example, $a\equiv a(b)$ means "$a$ depends on $b$". We will write $u\lesssim v$ to mean $u\leq Cv$ for some constant $C>0$ independant of $u$ and $v$.

The notation $\mathcal{C}^k_{\mathrm{c}}$ will be used to denote the space of compactly supported $\mathcal{C}^k$ functions. All the scalar products $\langle \cdot\,,\cdot\rangle$ will be antilinear in the first component and linear in the second component. The identity operator (acting on some vector space of functions) will be written $\mathds{1}$. For any function $f$, the support of $f$ will be denoted by $\mathrm{Supp\,}f$. For an operator $A$, we will denote by $\mathscr{D}\left(A\right)$ its domain.

When using the standard spherical coordinates $(\theta,\varphi)\in\left]0,\pi\right[\times\left]0,2\pi\right[$ on $\mathbb{S}^{2}$, we will always ignore the singularities $\{\theta=0\}$, $\{\theta=\pi\}$. We refer to \cite[Lemma 2.2.2]{ON} to properly fix it.
%
%
%
%
%
%
%
%
%
%
%
\section{The extended spacetime}
\label{The extended spacetime}
This Section introduces the basic notions and objects used throughout this paper. We recall in Subsection \ref{The charged Klein-Gordon equation in the exterior De Sitter-Reissner-Nordstrom spacetime} the De Sitter-Reissner-Nordstr\"{o}m metric and the charged Klein-Gordon equation we are interested in. Subsection \ref{The neutralization procedure} presents the neutralization procedure which extends the original spacetime using the charged Klein-Gordon operator. We next show in Subsection \ref{Extended Einstein-Maxwell equations} how the neutralization affects Einstein-Maxwell equations. We then deduce in Subsection \ref{Dominant energy condition} a dominant energy condition that fulfill the energy-momentum tensor associated to $(\M,\g)$ under some assumption on the cosmological constant $\Lambda$.
\subsection{The charged Klein-Gordon equation in the exterior De Sitter-Reissner-Nordstr\"{o}m spacetime}
\label{The charged Klein-Gordon equation in the exterior De Sitter-Reissner-Nordstrom spacetime}
Let
\begin{align*}
F\left(r\right)&:=1-\frac{2M}{r}+\frac{Q^{2}}{2r^{2}}-\frac{\Lambda r^{2}}{3}.
\end{align*}
with $M>0$ the mass of the black hole, $Q\in\mathbb{R}\setminus\{0\}$ its electric charge and $\Lambda>0$ the cosmological constant. $F$ is the \textit{horizon} function. We assume that
\begin{align}
\label{4 roots}
\Delta&:=9M^{2}-4Q^{2}>0,&\max\left\{0,\frac{6\big(M-\sqrt{\Delta}\,\big)}{\big(3M-\sqrt{\Delta}\,\big)^{3}}\right\}&<\Lambda<\frac{6\big(M+\sqrt{\Delta}\,\big)}{\big(3M+\sqrt{\Delta}\,\big)^{3}}
\end{align}
so that $F$ has four distinct zeros $-\infty<r_{n}<0<r_{c}<r_{-}<r_{+}<+\infty$ and is positive for all $r\in\left]r_{-},r_{+}\right[$ (see \cite[Proposition 3.2]{Hi18} with $Q^{2}$ replaced by $Q^{2}/2$ for us). We also assume that $9\Lambda M^2<1$ so that we can use the work of Bony-H\"{a}fner \cite{BoHa08}. In Boyer-Lindquist local coordinates, the \textit{exterior De Sitter-Reissner-Nordstr\"{o}m spacetime} is the Lorentzian manifold $(\mathcal{M},g)$ with
\begin{align*}
\mathcal{M}&=\mathbb{R}_t\times\left]r_-,r_+\right[_r\times\mathbb{S}^2_\omega,\qquad\qquad g=F\left(r\right)\mathrm{d}t^{2}-F\left(r\right)^{-1}\mathrm{d}r^{2}-r^{2}\mathrm{d}\omega^{2} 
\end{align*}
where $\mathrm{d}\omega^2$ is the standard metric on the unit sphere $\mathbb{S}^2$.

Let $A=(Q/r)\mathrm{d}t$. Then $(g,A)$ solves the Einstein-Maxwell field equations
\begin{align}
\label{Stress-energy tensor}
	\mathrm{Ric}_{\mu\nu}-\frac{1}{2}Rg_{\mu\nu}-\Lambda g_{\mu\nu}&=-T_{\mu\nu},&T_{\mu\nu}&=\mathfrak{F}_{\mu\sigma}\mathfrak{F}_{\nu}^{\ \sigma}-\frac{1}{4}g_{\mu\nu}\mathfrak{F}^{\sigma\rho}\mathfrak{F}_{\sigma\rho}
\end{align}
where $\mathrm{Ric}$ is the Ricci tensor, $R$ the scalar curvature and $\mathfrak{F}=\mathrm{d}A$ the electromagnetic tensor. The charged wave operator on $(\mathcal{M},g)$ is
\begin{align*}
	(\nabla_\mu-\mathrm{i}A_{\mu})(\nabla^\mu-\mathrm{i}A^{\mu})&=\frac{1}{F}\big(\partial_{t}-\mathrm{i}sV(r)\big)^{2}-\frac{1}{r^{2}}\partial_{r}r^{2}F\partial_{r}-\frac{1}{r^{2}}\Delta_{\mathbb{S}{^2}},
\end{align*}
where $s:=qQ$ is the charge product and $V(r)=r^{-1}$. In the sequel, we will write
\begin{align*}
	V_{\alpha}&:=r_{\alpha}^{-1}=\lim_{r\to r_{\alpha}}V(r)\qquad\qquad\forall\alpha\in\{c,-,+\}.
\end{align*}
The charged Klein-Gordon operator is then
\begin{align*}
	(\nabla_\mu-\mathrm{i}A_{\mu})(\nabla^\mu-\mathrm{i}A^{\mu})+m^{2}&=\frac{1}{F(r)}\big(\partial_{t}-\mathrm{i}sV(r)\big)^{2}-\frac{1}{r^{2}}\partial_{r}r^{2}F(r)\partial_{r}-\frac{1}{r^{2}}\Delta_{\mathbb{S}{^2}}+m^{2}.
\end{align*}
%
%
%
%
\subsection{The neutralization procedure}
\label{The neutralization procedure}
We introduce a fifth dimension labeled $z\in\mathbb{S}^{1}$ in order to reinterpret the charged Klein-Gordon operator in the De Sitter-Reissner-Nordstr\"{o}m spacetime as a wave operator in a $1+4$ spacetime. Define
\begin{align*}
	L&:=\frac{1}{F(r)}(\partial_{t}-sV(r)\partial_{z})^{2}-\frac{1}{r^{2}}\partial_{r}r^{2}F(r)\partial_{r}-\frac{1}{r^{2}}\Delta_{\mathbb{S}{^2}}-m^{2}\partial_{z}^{2}.
\end{align*}
Diagonalizing $-\mathrm{i}\partial_{z}$ on the unit circle, we can recover $P$ by restriction to the harmonic $-\mathrm{i}\partial_{z}=1$. We construct a new metric $\g$ such that $\Box_{\g}=L$: the \textit{extended metric} in the \textit{extended Boyer-Lindquist coordinates} $(t,z,r,\omega)$ with signature $(+,-,-,-,-)$ is defined as
\begin{align*}
	\g&:=\left(F(r)-\frac{s^{2}V(r)^{2}}{m^{2}}\right)\mathrm{d}t^{2}-\frac{sV(r)}{m^{2}}\left(\mathrm{d}t\mathrm{d}z+\mathrm{d}z\mathrm{d}t\right)-\frac{1}{m^{2}}\mathrm{d}z^{2}-\frac{1}{F(r)}\mathrm{d}r^{2}-r^{2}\mathrm{d}\omega^{2}.
\end{align*}
It is non-degenerate since the determinant is equal to $-r^{4}\sin^{2}\theta/m^{2}<0$ (notice here the crucial hypothesis $m\neq0$). The inverse extended metric is given by
\begin{align*}
	\g^{-1}&=\frac{1}{F(r)}\partial_{t}\otimes\partial_{t}-\frac{sV(r)}{F(r)}\left(\partial_{t}\otimes\partial_{z}+\partial_{z}\otimes\partial_{t}\right)+\left(\frac{s^{2}V(r)^{2}}{F(r)}-m^{2}\right)\partial_{z}\otimes\partial_{z}\\
	&-F(r)\partial_{r}\otimes\partial_{r}-\frac{1}{r^{2}}\partial_{\theta}\otimes\partial_{\theta}-\frac{1}{r^{2}\sin^{2}\theta}\partial_{\varphi}\otimes\partial_{\varphi}.
\end{align*}
Let us define the four blocks
\begin{align*}
	\M_{1}&:=\mathbb{R}_{t}\times\mathbb{S}^{1}_{z}\times\left]0,r_{c}\right[_{r}\times\mathbb{S}^{2}_{\omega},&\M_{2}&:=\mathbb{R}_{t}\times\mathbb{S}^{1}_{z}\times\left]r_{c},r_{-}\right[_{r}\times\mathbb{S}^{2}_{\omega},\\
	\M_{3}&:=\mathbb{R}_{t}\times\mathbb{S}^{1}_{z}\times\left]r_{-},r_{+}\right[_{r}\times\mathbb{S}^{2}_{\omega},&\M_{4}&:=\mathbb{R}_{t}\times\mathbb{S}^{1}_{z}\times\left]r_{+},+\infty\right[_{r}\times\mathbb{S}^{2}_{\omega}.
\end{align*}
The \textit{extended spacetime} is then the $(1+4)$-dimensional manifold $\big(\M_{\textup{ext}},\g\big)$ with
\begin{align*}
	\M_{\textup{ext}}&:=\bigcup_{j=1}^{4}\M_{j}.
\end{align*}
In the sequel, we will consider more carefully the \textit{outer space}
\begin{align*}
	\M&:=\mathbb{R}_{t}\times\mathbb{S}^{1}_{z}\times\left]r_{-},r_{+}\right[_{r}\times\mathbb{S}^{2}_{\omega}
\end{align*}
which we will simply call the extended spacetime when no confusion can occur. In $\M$, we will use the timelike vector field $\nabla t=F(r)^{-1}(\partial_{t}-sV(r)\partial_{z})$ to define an orientation on $\M$: any causal vector field $X\in T\M$ will be said \textit{future-pointing} if and only if
\begin{align*}
	\g(\nabla t,X)&>0.
\end{align*}
We can easily check that the outer space is  a globally hyperbolic spacetime. Besides, we can check that $X_{\alpha}:=\partial_{t}-sV_{\alpha}\partial_{z}$ are timelike Killing vector fields for $\alpha\in\{c,-,+\}$ (they will be useful in Subsection \ref{Surface gravities and Killing horizons}).
\begin{remark}[Dyadorings]
\label{Dyadorings}
	For $s$ small enough, the shifted horizon function
	\begin{align*}
		\mathbf{F}(r)&:=F(r)-\frac{s^2V^2}{m^2}=1-\frac{2M}{r}+\frac{Q^2}{2r^2}\left(1-\frac{2q^2}{m^2}\right)-\frac{\Lambda r^2}{3}
	\end{align*}
	has four roots with two\footnote{It is a simple consequence of the intermediate value theorem since $\mathbf{F}<F$, $F>0$ inside $\left]r_-,r_+\right[$ and $F$ cancels at $r=r_\pm$.} inside $\left]r_-,r_+\right[$; call them $r_1, r_2$ with $r_-<r_1<r_2<r_+$. We can check that $\partial_{t}$ becomes spacelike when $r\in\left]r_{-},r_{1}\right[\cup\left]r_{2},r_{+}\right[$. We then define the \textup{dyadorings}
	\begin{align*}
		\mathcal{D}_{-}&:=\mathbb{R}_{t}\times\mathbb{S}^{1}_{z}\times\left]r_{-},r_{1}\right[_{r}\times\mathbb{S}^{2}_{\omega},\qquad\qquad\mathcal{D}_{+}:=\mathbb{R}_{t}\times\mathbb{S}^{1}_{z}\times\left]r_{2},r_{+}\right[_{r}\times\mathbb{S}^{2}_{\omega}.
	\end{align*}
	We may observe that if $s$ is too large, namely if
	\begin{align*}
	|s|&\geq mrF(r)^{1/2}\qquad\qquad\forall r\in\left]r_{-},r_{+}\right[,
	\end{align*}
	then the dyadorings cover the entire outer space $\M$.
\end{remark}

The wave equation on $(\M,\g)$ reads
\begin{align*}
	\Box_{\g}u&=\frac{1}{F(r)}\left(\big(\partial_{t}-sV(r)\partial_{z}\big)^{2}-\frac{F(r)}{r^{2}}\partial_{r}r^{2}F(r)\partial_{r}-\frac{F(r)}{r^{2}}\Delta_{\mathbb{S}{^2}}-m^{2}F(r)\partial_{z}^{2}\right)u=0
\end{align*}
with $u\in L^{2}(\mathbb{R}_{t}\times\mathbb{S}^{1}_{z}\times\left]r_{-},r_{+}\right[_{r}\times\mathbb{S}^{2}_{\omega},r^{2}F(r)^{-1}\mathrm{d}t\mathrm{d}z\mathrm{d}r\mathrm{d}\omega)$. It will be convenient for Section \ref{Analytic scattering theory} to rewrite this equation as
\begin{align}
\label{Extended wave equation}
	\left(\partial_{t}^{2}-2sV(r)\partial_{z}\partial_{t}+\hat{P}\right)u&=0
\end{align}
where
\begin{align}
\label{P_z}
	\hat{P}&=-\frac{F(r)}{r^{2}}\partial_{r}r^{2}F(r)\partial_{r}-\frac{F(r)}{r^{2}}\Delta_{\mathbb{S}{^2}}-\big(m^{2}F(r)-s^{2}V(r)^{2}\big)\partial_{z}^{2}
\end{align}
acts on $L^{2}(\mathbb{S}^{1}_{z}\times\left]r_{-},r_{+}\right[_{r}\times\mathbb{S}^{2}_{\omega},r^{2}F(r)^{-1}\mathrm{d}z\mathrm{d}r\mathrm{d}\omega)$. Restricting $u$ to $\ker(\mathrm{i}\partial_{z}+\z)$, we get back the original charge Klein-Gordon operator with the modified mass $m\z$ and charge $s\z$.
%
%
%
%
\subsection{Extended Einstein-Maxwell equations}
\label{Extended Einstein-Maxwell equations}
The neutralization procedure has modified the Einstein-Maxwell equations \eqref{Stress-energy tensor} as we will see in this Subsection. 

Denote with a $\,\widetilde{\,}\,$ the quantities corresponding to the extended spacetime $(\M_{\textup{ext}},\g)$ and set\footnote{We can use \textit{any} $\mathcal{C}^{2}$ potential $W\equiv W(r)$ instead of $sV(r)$ in $\g$.} $W(r)=sV(r)$. The Christoffel symbols are given in matrix notations by:
\begin{align*}
\widetilde{\Gamma}^{0}_{\mu\nu}&=\begin{pmatrix}
0&0&\frac{m^{2}F'-WW'}{2m^{2}F}&0&0\\[1mm]
0&0&-\frac{W'}{2m^{2}F}&0&0\\[1mm]
\frac{m^{2}F'-WW'}{2m^{2}F}&-\frac{W'}{2m^{2}F}&0&0&0\\[1mm]
0&0&0&0&0\\[1mm]
0&0&0&0&0
\end{pmatrix},
\end{align*}
\begin{align*}
\widetilde{\Gamma}^{1}_{\mu\nu}=\begin{pmatrix}
0&0&\frac{m^{2}(FW'-F'W)+W^{2}W'}{2m^{2}F}&0&0\\[1mm]
0&0&\frac{WW'}{2m^{2}F}&0&0\\[1mm]
\frac{m^{2}(FW'-F'W)+W^{2}W'}{2m^{2}F}&\frac{WW'}{2m^{2}F}&0&0&0\\[1mm]
0&0&0&0&0\\[1mm]
0&0&0&0&0
\end{pmatrix},
\end{align*}
\begin{align*}
\widetilde{\Gamma}^{2}_{\mu\nu}&=\begin{pmatrix}
\frac{F(m^{2}F'-2WW')}{2m^{2}}&-\frac{FW'}{2m^{2}}&0&0&0\\[1mm]
-\frac{FW'}{2m^{2}}&0&0&0&0\\[1mm]
0&0&-\frac{F'}{2F}&0&0\\[1mm]
0&0&0&-rF&0\\[1mm]
0&0&0&0&-rF\sin^{2}\theta
\end{pmatrix},
\end{align*}
\begin{align*}
\widetilde{\Gamma}^{3}_{\mu\nu}&=\begin{pmatrix}
0&0&0&0&0\\[1mm]
0&0&0&0&0\\[1mm]
0&0&0&\frac{1}{r}&0\\[1mm]
0&0&\frac{1}{r}&0&0\\[1mm]
0&0&0&0&-\sin\theta\cos\theta
\end{pmatrix}, &\widetilde{\Gamma}^{4}_{\mu\nu}=\begin{pmatrix}
0&0&0&0&0\\[1mm]
0&0&0&0&0\\[1mm]
0&0&0&0&\frac{1}{r}\\[1mm]
0&0&0&0&\cot\theta\\[1mm]
0&0&\frac{1}{r}&\cot\theta&0
\end{pmatrix}.
\end{align*}
The non-zero components of the extended Ricci tensor $\widetilde{\mathrm{Ric}}$ are given by
\begin{align*}
	\widetilde{\mathrm{Ric}}_{00}&=\frac{F\left(2F'+rF''\right)}{2r}-\left(\frac{2FWW'}{m^2r}+\frac{FWW''}{m^2}+\frac{FW'^2}{2m^2}+\frac{W^2W'^2}{2m^4}\right),\\
	\widetilde{\mathrm{Ric}}_{01}&=\widetilde{\mathrm{Ric}}_{10}=-\left(\frac{FW'}{m^2r}+\frac{FW''}{2m^2}+\frac{WW'^2}{2m^4}\right),\\
	\widetilde{\mathrm{Ric}}_{11}&=-\frac{W'^{2}}{2m^{4}},\\
	\widetilde{\mathrm{Ric}}_{22}&=-\frac{2F'+rF''}{2rF}+\frac{W'^2}{2m^2F},\\
	\widetilde{\mathrm{Ric}}_{33}&=1-F-rF',\\
	\widetilde{\mathrm{Ric}}_{44}&=\left(1-F-rF'\right)\sin^{2}\theta
\end{align*}
and the scalar curvature is
\begin{align*}
	\widetilde{R}&:=\sum_{i=0}^{4}\sum_{j=0}^{4}\g^{\mu\nu}\widetilde{\mathrm{Ric}}_{\mu\nu}=R_{\mathrm{DSRN}}-\frac{(W')^2}{2m^2}=-4\Lambda-\frac{q^2Q^2}{2m^2r^4}
\end{align*}
where $R_{\mathrm{DSRN}}=-4\Lambda$ is the scalar curvature associated to the De Sitter-Reissner-Nordstr\"{o}m metric $g$.
The singularity at $r=0$ is therefore still present in the extended spacetime; notice however that the scalar curvature has become singular at this point after neutralization.

Let now $\widetilde{A}=\frac{Q}{r}\sqrt{1-\frac{q^2}{2m^2}}\mathrm{d}t$. Tedious but direct computations show that $(\g,\widetilde{A})$ solves the Einstein field equations
\begin{align}
\label{Extended Einstein}
	\widetilde{\mathrm{Ric}}-\frac{1}{2}\widetilde{R}\g-\Lambda\g&=-\widetilde{T}.
\end{align}
The extended stress-energy tensor $\widetilde{T}$ is given by $\widetilde{T}=\widetilde{T}_{\mathrm{Maxwell}}+\widetilde{T}_{\mathrm{fluid}}$ with in matrix notations
\begin{align*}
\widetilde{T}_{\mathrm{Maxwell}}&=\frac{Q^2}{2r^4}\left(1-\frac{q^2}{2m^2}\right)\begin{pmatrix}
-F(r)-\frac{s^{2}V(r)^{2}}{m^{2}}&-\frac{sV(r)}{m^{2}}&0&0&0\\
-\frac{sV(r)}{m^{2}}&-\frac{1}{m^{2}}&0&0&0\\
0&0&\frac{1}{F(r)}&0&0\\
0&0&0&-r^2&0\\
0&0&0&0&-r^2\sin^2\theta
\end{pmatrix},\\[2mm]
\widetilde{T}_{\mathrm{fluid}}&=\left(\Lambda+\frac{Q^{2}}{2r^{4}}\left(1+\frac{q^{2}}{m^{2}}\right)\right)\begin{pmatrix}
\frac{s^{2}V(r)^{2}}{m^{2}}&\frac{sV(r)}{m^{2}}&0&0&0\\
\frac{sV(r)}{m^{2}}&\frac{1}{m^{2}}&0&0&0\\
0&0&0&0&0\\
0&0&0&0&0\\
0&0&0&0&0
\end{pmatrix}.
\end{align*}
$\widetilde{T}_{\mathrm{Maxwell}}$ is nothing but the Maxwell electromagnetic tensor associated to $\g$ and $\widetilde{A}$; $\widetilde{T}_{\mathrm{fluid}}$ describes as for itself a perfect fluid:
\begin{align*}
\widetilde{T}_{\mathrm{fluid}}&=\rho(r)u\otimes u
\end{align*}
with $\rho(r)=\Lambda+\frac{Q^{2}}{2r^{4}}\left(1+\frac{q^{2}}{m^{2}}\right)$ the energy density in the fluid and $u=\frac{1}{m}\big(sV(r)\mathrm{d}t+\mathrm{d}z\big)$ the dual vector field of the fluid's velocity which is given by
\begin{align}
\label{Perfect fluid speed}
v^{\mu}&=\g^{\mu\nu}u_{\nu}=-m\partial_{z}= \frac{1}{m}\big(sV(r)\nabla t+\nabla z\big).
\end{align}
Observe that the energy density of the fluid as measured by an observer at rest is zero:
\begin{align*}
(\widetilde{T}_{\mathrm{fluid}})_{\mu\nu}\nabla^{\mu}t\nabla^{\nu}t&=0.
\end{align*}
Observe also that taking the divergence on both sides of \eqref{Extended Einstein} yields
\begin{align*}
	\mathrm{div}(\rho(r)u\otimes u)&=\frac{\rho(r)F(r)W(r)W'(r)}{2m^{2}}\partial_{r}=:-\nabla P
\end{align*}
with
\begin{align*}
	P(r)&=-\frac{q^{2}Q^{2}}{4m^{2}r^{2}}\left(\Lambda+\frac{Q^{2}(1+\frac{q^{2}}{m^{2}})}{6r^{4}}\right)\leq 0.
\end{align*}
Moreover, we can see that
\begin{align*}
	\mathrm{div}(\rho u)&=0.
\end{align*}
Thus $u$ obeys compressible Euler law for a static fluid with \textit{mass density} $\rho$, \textit{pressure} $P$ and no internal source term:
\begin{align*}
	\begin{cases}
		\partial_{t}(\rho u)+\mathrm{div}(\rho u\otimes u)~=~-\nabla P\\
		\partial_{t}\rho+\mathrm{div}(\rho u)~=~0
	\end{cases}.
\end{align*}
The second equation above is the conservation of the mass.
%
%
%
%
\subsection{Dominant energy condition}
\label{Dominant energy condition}
In all this Subsection, we will assume that $|q|<2m$. Let us rewrite
\begin{align*}
	-\widetilde{T}&=\mathfrak{Q}(r)\Big(F(r)\mathrm{d}t^{2}-F(r)^{-1}\mathrm{d}r^{2}+r^{2}\mathrm{d}\omega^{2}\Big)-\mathfrak{D}(r)\big(sV(r)\mathrm{d}t+\mathrm{d}z\big)^{2}
\end{align*}
where
\begin{align*}
	\mathfrak{Q}(r)&=\frac{Q^{2}}{2r^{4}}\left(1-\frac{q^{2}}{2m^{2}}\right),&\mathfrak{D}(r)&=\left(\frac{\Lambda}{m^{2}}+\frac{3q^{2}Q^{2}}{4m^{4}r^{4}}\right).
\end{align*}
Consider then the following condition:
\begin{align}
\label{Weak condition Lambda}
	m^{2}\mathfrak{D}(r)&\leq\mathfrak{Q}(r)
\end{align}
which is equivalent to $\Lambda\leq\frac{Q^{2}}{2r_{+}^{4}}\left(1-\frac{2q^{2}}{m^{2}}\right)$.
\begin{lemma}
\label{Weak energy condition}
	The condition \eqref{Weak condition Lambda} implies that for all timelike vector field $X$, we have
	\begin{align}
	\label{WEC}
		-\widetilde{T}_{\mu\nu}X^{\mu}X^{\nu}&\geq 0.
	\end{align}
	If $Q\neq 0$, then both the above conditions are equivalent.
\end{lemma}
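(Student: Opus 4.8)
The plan is to show the equivalence in the case $Q\neq 0$ by computing the quadratic form $X\mapsto -\widetilde T_{\mu\nu}X^\mu X^\nu$ explicitly using the decomposition of $-\widetilde T$ given just before the statement. Writing $X=X^\mu\partial_\mu$ and using
\begin{align*}
-\widetilde T_{\mu\nu}X^\mu X^\nu&=\mathfrak Q(r)\Big(F(r)(X^0)^2-F(r)^{-1}(X^2)^2+r^2\big((X^3)^2+\sin^2\theta\,(X^4)^2\big)\Big)\\
&\quad-\mathfrak D(r)\big(sV(r)X^0+X^1\big)^2,
\end{align*}
I would first observe that the term $F(X^0)^2-F^{-1}(X^2)^2$ is, up to the positive factor $\mathfrak Q(r)$, exactly the part of $\g(X,X)$ coming from the $(t,r)$-block (recall the $r^2\mathrm d\omega^2$ carries a minus sign in $\g$ but enters here with a plus). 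The idea is therefore to rewrite the first parenthesis as $\mathfrak Q(r)\big(\g(X,X)+m^{-2}(sV X^0+X^1)^2+2r^2(\text{angular part})\big)$ — i.e. to recognize $F\,\mathrm dt^2-F^{-1}\mathrm dr^2+r^2\mathrm d\omega^2$ as $\g$ plus $m^{-2}(sV\,\mathrm dt+\mathrm dz)^2$ plus twice the angular block. Comparing with the explicit $\g$ from Subsection~\ref{The neutralization procedure} makes this bookkeeping routine.

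Then, for $X$ timelike one has $\g(X,X)>0$ (our signature is $(+,-,-,-,-)$), while the angular contribution $r^2\big((X^3)^2+\sin^2\theta (X^4)^2\big)$ and the square $\big(sV(r)X^0+X^1\big)^2$ are both nonnegative. Grouping terms, the quadratic form becomes
\begin{align*}
-\widetilde T_{\mu\nu}X^\mu X^\nu&=\mathfrak Q(r)\,\g(X,X)+2\mathfrak Q(r)\,r^2\big((X^3)^2+\sin^2\theta\,(X^4)^2\big)+\big(\tfrac{\mathfrak Q(r)}{m^2}-\mathfrak D(r)\big)\big(sV(r)X^0+X^1\big)^2,
\end{align*}
and since $\mathfrak Q(r)>0$ (this is where $Q\neq 0$ and $|q|<2m$ enter) and $\mathfrak D(r)>0$, condition \eqref{Weak condition Lambda}, i.e. $m^2\mathfrak D(r)\leq\mathfrak Q(r)$, makes the last coefficient nonnegative, hence \eqref{WEC} holds for all timelike $X$ — indeed for all causal $X$.

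For the converse (assuming $Q\neq 0$), I would exhibit a timelike vector $X$ for which the sign of $-\widetilde T_{\mu\nu}X^\mu X^\nu$ forces $m^2\mathfrak D(r)\leq\mathfrak Q(r)$ at every $r\in\left]r_-,r_+\right[$. The natural choice is to kill the $\g(X,X)$ and angular terms as much as possible while making $\big(sV(r)X^0+X^1\big)^2$ dominant: take $X$ with $X^3=X^4=0$ and $X^1$ large compared to $X^0$ in a region where $\partial_r$ is spacelike — more carefully, fix $r_0$, pick $X=\partial_t+\lambda\partial_r$ for suitable $\lambda$ so that $X$ stays timelike (possible for small $\lambda$ since $\partial_t$ is timelike on the complement of the dyadorings, and one can also work inside a dyadoring with a different base vector), then let the relevant component grow; dividing by the square of that component and passing to the limit isolates the coefficient $\mathfrak Q(r_0)/m^2-\mathfrak D(r_0)$, whose required nonnegativity is \eqref{Weak condition Lambda} at $r_0$. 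Taking the supremum over $r\in\left]r_-,r_+\right[$ of $V(r)^4=r^{-4}$ (attained in the limit $r\to r_-$, or one checks $r_+$ gives the binding constraint as stated after \eqref{Weak condition Lambda}) converts the pointwise inequalities into the single condition on $\Lambda$.

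The main obstacle is the converse direction: one must be careful that the probe vector $X$ genuinely remains timelike throughout the limiting procedure (the metric $\g$ has off-diagonal $\mathrm dt\,\mathrm dz$ terms and $\partial_t$ itself is spacelike on the dyadorings, so a naive choice may fail), and one must track which value of $r$ yields the sharp constraint so as to recover precisely $\Lambda\leq\frac{Q^2}{2r_+^4}\big(1-\frac{2q^2}{m^2}\big)$ rather than a weaker pointwise family. The forward direction is essentially the algebraic identity above plus positivity of $\mathfrak Q$ and $\mathfrak D$, and should be only a few lines once the completion-of-$\g$ rewriting is in place.
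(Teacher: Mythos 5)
Your forward direction is the paper's own argument: the same completion-of-the-square identity
$-\widetilde{T}_{\mu\nu}X^{\mu}X^{\nu}=\mathfrak{Q}(r)\big(\g(X,X)+(\tfrac{1}{m^{2}}-\tfrac{\mathfrak{D}(r)}{\mathfrak{Q}(r)})(sV(r)X^{t}+X^{z})^{2}+2r^{2}((X^{\theta})^{2}+\sin^{2}\theta\,(X^{\varphi})^{2})\big)$,
followed by positivity of each term under \eqref{Weak condition Lambda}; nothing to change there.

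The converse, however, has a genuine gap. Your plan is to "let the relevant component grow, divide by its square and pass to the limit" so as to isolate the coefficient $\mathfrak{Q}/m^{2}-\mathfrak{D}$. That limiting procedure is not available here, because the form is only tested on timelike vectors and timelikeness caps exactly the quantity you want to make dominant: for $X=X^{t}\partial_{t}+X^{z}\partial_{z}$ one has $\g(X,X)=F(X^{t})^{2}-\tfrac{1}{m^{2}}(sV X^{t}+X^{z})^{2}$, so on the timelike cone $(sVX^{t}+X^{z})^{2}<m^{2}F(X^{t})^{2}$; letting $X^{z}$ (or $\lambda$ in $\partial_{t}+\lambda\partial_{r}$) grow with the other components fixed leaves the cone, while rescaling the whole vector changes nothing by homogeneity. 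Your concrete probe $X=\partial_{t}+\lambda\partial_{r}$ with $\lambda$ small does not do the job either: it gives $-\widetilde{T}(X,X)=\mathfrak{Q}\big(\g(X,X)+\alpha(r)s^{2}V^{2}\big)$ with $\alpha=\tfrac{1}{m^{2}}-\tfrac{\mathfrak{D}}{\mathfrak{Q}}$, and for small $\lambda$ the positive term $\g(X,X)\approx F-\tfrac{s^{2}V^{2}}{m^{2}}$ may dominate $|\alpha|s^{2}V^{2}$ even when $\alpha<0$ (and if $q=0$ it detects nothing at all). What the necessity really requires is to drive $\g(X,X)$ towards $0$ while keeping $(sVX^{t}+X^{z})^{2}$ comparable to $m^{2}F(X^{t})^{2}$, i.e.\ to probe with nearly null vectors in the $\partial_{t}$--$\partial_{z}$ plane. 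This is the paper's construction: with $X=\partial_{t}-sV(1+\varepsilon)\partial_{z}$ one gets $\g(X,X)=F-\tfrac{s^{2}V^{2}\varepsilon^{2}}{m^{2}}$ and $-\widetilde{T}(X,X)=\mathfrak{Q}F-\mathfrak{D}s^{2}V^{2}\varepsilon^{2}$, so a timelike violating vector exists at $r$ precisely when the window $\tfrac{\mathfrak{Q}F}{\mathfrak{D}s^{2}V^{2}}<\varepsilon^{2}<\tfrac{m^{2}F}{s^{2}V^{2}}$ is nonempty, i.e.\ precisely when $m^{2}\mathfrak{D}(r)>\mathfrak{Q}(r)$; equivalently, the infimum of $-\widetilde{T}(X,X)$ over timelike $X$ in that plane with $X^{t}=1$ equals $F(\mathfrak{Q}-m^{2}\mathfrak{D})$, which yields both implications at once. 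A minor side point: in your last step the pointwise conditions $\Lambda\leq\tfrac{Q^{2}}{2r^{4}}(1-\tfrac{2q^{2}}{m^{2}})$ are binding at the \emph{largest} radius, $r\to r_{+}$, not at $r_{-}$ (and this conversion to the $\Lambda$-inequality is anyway not part of the lemma).
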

\begin{proof}
	Let $X=X^{t}\partial_{t}+X^{z}\partial_{z}+X^{r}\partial_{r}+X^{\theta}\partial_{\theta}+X^{\varphi}\partial_{\varphi}$ be a timelike vector field:
	\begin{align*}
		\left(F(r)-\frac{s^{2}V(r)^{2}}{m^{2}}\right)(X^{t})^{2}-2\frac{sV(r)}{m^{2}}X^{t}X^{z}-\frac{1}{m^{2}}(X^{z})^{2}-F(r)^{-1}(X^{r})^{2}-r^{2}(X^{\theta})^{2}-r^{2}\sin^{2}\theta(X^{\varphi})^{2}&>0.
	\end{align*}
	\indent Assume first \eqref{Weak condition Lambda}. Then
	\begin{align*}
		-\widetilde{T}_{\mu\nu}X^{\mu}X^{\nu}&=\mathfrak{Q}(r)\left(\g(X,X)+\left(\frac{1}{m^{2}}-\frac{\mathfrak{D}(r)}{\mathfrak{Q}(r)}\right)\left(s^{2}V(r)^{2}(X^{t})^{2}+2sV(r)X^{t}X^{z}+(X^{z})^{2}\right)\right.\\
		&\qquad\quad\,\left.\textcolor{white}{\frac{A}{Q}}+2r^{2}(X^{\theta})^{2}+2r^{2}\sin^{2}\theta(X^{\varphi})^{2}\right)\\
		&\geq\mathfrak{Q}(r)\left(\frac{1}{m^{2}}-\frac{\mathfrak{D}(r)}{\mathfrak{Q}(r)}\right)\left(s^{2}V(r)^{2}(X^{t})^{2}+2sV(r)X^{t}X^{z}+(X^{z})^{2}\right).
	\end{align*}
	Furthermore,
	\begin{align*}
		\left(\frac{1}{m^{2}}-\frac{\mathfrak{D}(r)}{\mathfrak{Q}(r)}\right)\begin{pmatrix}
		s^{2}V(r)^{2}&sV(r)\\
		sV(r)&1
		\end{pmatrix}&\geq 0
	\end{align*}
	as quadratic form since the spectrum of the matrix on the left-hand side is $\big\{0,1+s^{2}V(r)^{2}\big\}$ (the eigenvalue 0 is associated to $\nabla t$). It follows that $-\widetilde{T}_{\mu\nu}X^{\mu}X^{\nu}\geq 0$.
	
	Assume now that \eqref{Weak condition Lambda} is not verified and $Q\neq 0$ (so that $\mathfrak{Q}(r)\neq 0$). Put
	\begin{align*}
		\alpha(r)&:=\frac{1}{m^{2}}-\frac{\mathfrak{D}(r)}{\mathfrak{Q}(r)}<0.
	\end{align*}
	Let $\delta,\varepsilon\in\mathcal{C}^{\infty}(\left]r_{-},r_{+}\right[,\left]0,+\infty\right[)$ with $\varepsilon$ supported far away from the dyadorings and set $X:=\delta(r)\big(\partial_{t}-sV(r)(1+\varepsilon(r))\partial_{z}\big)$. Taking $\varepsilon$ sufficiently small, we get
	\begin{align*}
		\g(X,X)&=\delta(r)\left(F(r)-\frac{s^{2}V(r)^{2}\varepsilon(r)^{2}}{m^{2}}\right)>0.
	\end{align*}
	Taking now $\delta$ large enough, we find
	\begin{align*}
		-\widetilde{T}_{\mu\nu}X^{\mu}X^{\nu}&=\mathfrak{Q}(r)\Big(\g(X,X)+\alpha(r)\varepsilon(r)^{2}\delta(r)^{2}s^{2}V(r)^{2}\Big)\\
		&=\mathfrak{Q}(r)\delta(r)\left(F(r)-\frac{s^{2}V(r)^{2}\varepsilon(r)^{2}}{m^{2}}+\alpha(r)\varepsilon(r)^{2}\delta(r)s^{2}V(r)^{2}\right)\\
		&<0
	\end{align*}
	for $r\in\mathrm{Supp\,}\varepsilon$. This completes the proof.
\end{proof}
\begin{proposition}[Dominant energy condition]
	\label{Dominant energy condition proposition}
	Assume $Q\neq 0$. Then the condition \eqref{Weak condition Lambda} is verified if and only if
	\begin{align}
	\label{DEC}
		\begin{cases}
			-\widetilde{T}_{\mu\nu}X^{\mu}X^{\nu}\geq 0\qquad\text{ for all timelike vector field $X$},\\
			\text{If $X$ is a future-pointing causal vector field, then so is for $-\widetilde{T}^{\mu}_{\ \,\nu}X^{\nu}\geq 0$}
		\end{cases}.
	\end{align}
\end{proposition}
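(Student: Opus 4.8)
The plan is to use Lemma \ref{Weak energy condition} for the first bullet of \eqref{DEC}, which already settles the equivalence of \eqref{Weak condition Lambda} with the weak energy condition, so only the second bullet (the dominance of the energy-momentum flux) needs work, and one must check both directions. For the "if" direction one should argue by contraposition: if \eqref{Weak condition Lambda} fails, then by Lemma \ref{Weak energy condition} even the weak energy condition fails, hence a fortiori the conjunction \eqref{DEC} fails. So the bulk of the proof is the "only if" direction: assuming \eqref{Weak condition Lambda}, show that for every future-pointing causal $X$ the vector $-\widetilde{T}^{\mu}_{\ \nu}X^{\nu}$ is again causal and future-pointing.

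First I would decompose $-\widetilde{T}$ using the formula already displayed in Subsection \ref{Dominant energy condition}, namely
\begin{align*}
	-\widetilde{T}&=\mathfrak{Q}(r)\big(F(r)\mathrm{d}t^{2}-F(r)^{-1}\mathrm{d}r^{2}+r^{2}\mathrm{d}\omega^{2}\big)-\mathfrak{D}(r)\big(sV(r)\mathrm{d}t+\mathrm{d}z\big)^{2}.
\end{align*}
Contracting one index with $\g^{-1}$, the endomorphism $-\widetilde{T}^{\mu}_{\ \nu}$ acts on a tangent vector $X$ by producing a linear combination of $X$ itself and of the two distinguished vector fields $\nabla t=F(r)^{-1}(\partial_{t}-sV(r)\partial_{z})$ and the fluid covector $u=\tfrac1m(sV(r)\mathrm{d}t+\mathrm{d}z)$ raised by $\g^{-1}$, which is $v=-m\partial_{z}$ up to the scalings in \eqref{Perfect fluid speed}. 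Concretely one gets, writing $Y:=-\widetilde{T}^{\mu}_{\ \nu}X^{\nu}$, an expression of the form $Y=\mathfrak{Q}(r)\,a\,\nabla t-(\text{something})\,\partial_{z}$ with coefficients that are contractions $\g(\,\cdot\,,X)$ of $X$ with $\nabla t$ and $\partial_z$. The key computational step is to compute $\g(Y,\nabla t)$ and $\g(Y,Y)$ explicitly in terms of $a:=\g(X,\nabla t)$, $b:=\g(X,\partial_z)$ and $\g(X,X)\geq 0$, and to show, using \eqref{Weak condition Lambda}, that $\g(Y,\nabla t)\geq 0$ (future-pointing) and $\g(Y,Y)\geq 0$ (causal). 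The crucial structural fact to exploit is the one already noted in the proof of Lemma \ref{Weak energy condition}: the $2\times 2$ form $\alpha(r)\big(\begin{smallmatrix}s^2V^2 & sV\\ sV & 1\end{smallmatrix}\big)$ in the $(\mathrm{d}t,\mathrm{d}z)$ block has eigenvalue $0$ on the direction of $\nabla t$ and eigenvalue $\alpha(r)(1+s^2V^2)$ on the orthogonal complement, so under \eqref{Weak condition Lambda} (i.e. $\alpha(r)\geq 0$ with $\alpha=\tfrac1{m^2}-\tfrac{\mathfrak D}{\mathfrak Q}$) this is positive semidefinite; this is exactly what makes the fluid correction push $Y$ deeper into the causal cone rather than out of it.

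The main obstacle I expect is bookkeeping rather than conceptual: getting the endomorphism $-\widetilde{T}^{\mu}_{\ \nu}$ right — in particular keeping track of the signs coming from the signature $(+,-,-,-,-)$ and from the fact that $\nabla t$ is the degenerate direction of the fluid block — and then verifying that the two inequalities $\g(Y,\nabla t)\geq 0$, $\g(Y,Y)\geq 0$ reduce cleanly to the single scalar inequality \eqref{Weak condition Lambda} (equivalently $\mathfrak{Q}(r)\geq 0$ together with $\alpha(r)\geq 0$, both of which hold on $\left]r_-,r_+\right[$ under $|q|<2m$ and \eqref{Weak condition Lambda}). One clean way to organize this is to diagonalize $-\widetilde{T}^{\mu}_{\ \nu}$: it has eigenvalue $\mathfrak{Q}(r)$ on the two-plane spanned by $\partial_r$ and the angular directions, eigenvalue $\mathfrak{Q}(r)-m^2\mathfrak{D}(r)$ on the fluid direction $v$, and eigenvalue $\mathfrak{Q}(r)$ again on $\nabla t$ (which is annihilated by the fluid term); a Lorentzian-geometry lemma then says that an energy-momentum tensor is dominant precisely when its "energy density" eigenvalue (here $\mathfrak{Q}(r)$, on the timelike eigenvector $\nabla t$) dominates the absolute values of all its "pressure" eigenvalues, which amounts to $\mathfrak{Q}(r)\geq |\mathfrak{Q}(r)|$ and $\mathfrak{Q}(r)\geq |\mathfrak{Q}(r)-m^2\mathfrak{D}(r)|$, i.e. to $0\leq m^2\mathfrak{D}(r)\leq 2\mathfrak{Q}(r)$; since $\mathfrak{D}\geq 0$ automatically and $m^2\mathfrak{D}\leq\mathfrak{Q}\leq 2\mathfrak{Q}$ under \eqref{Weak condition Lambda}, this closes the argument. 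I would present the eigenvector/eigenvalue computation, invoke (or quickly prove) the standard characterization of the dominant energy condition in terms of eigenvalues for a tensor of this Segre type, and then read off the equivalence with \eqref{Weak condition Lambda}, with the failure direction handled by the contrapositive via Lemma \ref{Weak energy condition} as above.
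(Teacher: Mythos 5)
Your overall architecture is fine and, for the necessity direction, matches the paper: since Lemma \ref{Weak energy condition} makes the first line of \eqref{DEC} equivalent to \eqref{Weak condition Lambda} when $Q\neq 0$, failure of \eqref{Weak condition Lambda} kills \eqref{DEC} already at the weak-energy level. Your ``direct'' plan for sufficiency (compute $Y=-\widetilde{T}^{\mu}_{\ \,\nu}X^{\nu}$ and check $\g(Y,\nabla t)\geq 0$ and $\g(Y,Y)\geq 0$) is exactly the paper's route: there $\g(\nabla t,Y)=\mathfrak{Q}(r)X^{t}>0$ is automatic, and $\g(Y,Y)$ is bounded below, using $\g(X,X)\geq 0$, by $\big(1-(m^{2}\mathfrak{D}/\mathfrak{Q})^{2}\big)$ times the same positive semidefinite $(X^{t},X^{z})$-form as in the Lemma. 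However, the route you actually commit to --- the diagonalization --- contains a concrete error. The fluid direction $\partial_{z}$ is indeed an eigenvector of $-\widetilde{T}^{\mu}_{\ \,\nu}$, but its eigenvalue is $m^{2}\mathfrak{D}(r)$, not $\mathfrak{Q}(r)-m^{2}\mathfrak{D}(r)$: the block $\mathfrak{Q}\big(F\mathrm{d}t^{2}-F^{-1}\mathrm{d}r^{2}+r^{2}\mathrm{d}\omega^{2}\big)$ is \emph{not} $\mathfrak{Q}\,\g$ (its angular sign is flipped and it has no $\mathrm{d}z$ or $\mathrm{d}t\,\mathrm{d}z$ components), so it annihilates $\partial_{z}$ instead of contributing $\mathfrak{Q}$; this is visible in the paper's displayed mixed tensor, whose only $\mathrm{d}z$-column entry is $m^{2}\mathfrak{D}\,\partial_{z}\otimes\mathrm{d}z$. (Also, the radial and angular eigenvalues are $+\mathfrak{Q}$ and $-\mathfrak{Q}$, a three-dimensional block, not a two-plane of eigenvalue $\mathfrak{Q}$.) With your eigenvalue the type-I criterion becomes $\mathfrak{Q}\geq|\mathfrak{Q}-m^{2}\mathfrak{D}|$, i.e. $0\leq m^{2}\mathfrak{D}\leq 2\mathfrak{Q}$, which is strictly weaker than \eqref{Weak condition Lambda} and is actually inconsistent with the proposition you are proving: in the regime $\mathfrak{Q}<m^{2}\mathfrak{D}\leq 2\mathfrak{Q}$ your criterion asserts \eqref{DEC}, while the statement (and Lemma \ref{Weak energy condition}) says it fails. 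So the sufficiency step as written rests on a false intermediate claim, even though its conclusion happens to be true.

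The repair is straightforward and, once done, your route is arguably cleaner than the paper's. The correct principal values are: $\mathfrak{Q}$ on the timelike eigendirection $\nabla t$ (the $t$--$z$ block is triangular with eigenvectors $\partial_{t}-sV\partial_{z}\propto\nabla t$ and $\partial_{z}$, so the tensor really is of the diagonalizable Segre type you need), and spacelike eigenvalues $m^{2}\mathfrak{D}$, $+\mathfrak{Q}$, $-\mathfrak{Q}$, $-\mathfrak{Q}$. The eigenvalue characterization then gives \eqref{DEC} if and only if $\mathfrak{Q}\geq m^{2}\mathfrak{D}$ (the conditions $\mathfrak{Q}\geq|\pm\mathfrak{Q}|$ being automatic once $\mathfrak{Q}\geq 0$, which \eqref{Weak condition Lambda} forces since $\mathfrak{D}>0$), i.e. exactly \eqref{Weak condition Lambda}, recovering both directions at once; but you must then prove or properly cite the eigenvalue characterization of the dominant energy condition rather than just invoke it. Alternatively, drop the diagonalization and carry out your first plan in full, which is precisely the computation in the paper's proof.
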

\begin{proof}
	Lemma \ref{Weak energy condition} shows that \eqref{Weak condition Lambda} is equivalent to the first condition in \eqref{DEC} provided that $Q\neq 0$. Next, a direct computation shows that
	\begin{align*}
		-\widetilde{T}^{\mu}_{\ \,\nu}=-\g^{\mu\sigma}\widetilde{T}_{\sigma\nu}&=\mathfrak{Q}(r)\partial_{t}\otimes\mathrm{d}t+\big(m^{2}\mathfrak{D}(r)-\mathfrak{Q}(r)\big)sV(r)\partial_{z}\otimes\mathrm{d}t+m^{2}\mathfrak{D}(r)\partial_{z}\otimes\mathrm{d}z\nonumber\\
		&+\mathfrak{Q}(r)\partial_{r}\otimes\mathrm{d}r-\mathfrak{Q}(r)\partial_{\theta}\otimes\mathrm{d}\theta-\mathfrak{Q}(r)\partial_{\varphi}\otimes\mathrm{d}\varphi.
	\end{align*}
	Let $X=X^{t}\partial_{t}+X^{z}\partial_{z}+X^{r}\partial_{r}+X^{\theta}\partial_{\theta}+X^{\varphi}\partial_{\varphi}$ be a future-pointing causal vector field:
	\begin{align*}
		\g(\nabla t,X)&=X^{t}>0,\qquad\g(X,X)\geq 0.
	\end{align*}
	We compute:
	\begin{align*}
		-\widetilde{T}^{\mu}_{\ \,\nu}X^{\nu}&=\mathfrak{Q}(r)X^{t}\partial_{t}+\big((m^{2}\mathfrak{D}(r)-\mathfrak{Q}(r))sV(r)X^{t}+m^{2}\mathfrak{D}(r)X^{z}\big)\partial_{z}\\
		&+\mathfrak{Q}(r)X^{r}\partial_{r}-\mathfrak{Q}(r)X^{\theta}\partial_{\theta}-\mathfrak{Q}(r)X^{\varphi}\partial_{\varphi}.
	\end{align*}
	Then the vector field $-\widetilde{T}^{\mu}_{\ \,\nu}X^{\nu}$ is future-pointing if and only if
	\begin{align*}
		\g(\nabla_{t},-\widetilde{T}^{\mu}_{\ \,\nu}X^{\nu})&=-\widetilde{T}^{t}_{\nu}X^{\nu}=\mathfrak{Q}(r)X^{t}>0
	\end{align*}
	which is always true when $Q\neq 0$. Furthermore:
	\begin{align*}
		\mathfrak{Q}(r)^{-2}\g(\widetilde{T}^{\mu}_{\ \,\nu}X^{\nu},\widetilde{T}^{\mu}_{\ \,\nu}X^{\nu})&=\left(F(r)-\frac{s^{2}V(r)^{2}}{m^{2}}\left(\frac{m^{2}\mathfrak{D}(r)}{\mathfrak{Q}(r)}\right)^{2}\right)(X^{t})^{2}-2\frac{sV(r)}{m^{2}}\left(\frac{m^{2}\mathfrak{D}(r)}{\mathfrak{Q}(r)}\right)^{2}X^{t}X^{z}\\
		&-\frac{1}{m^{2}}\left(\frac{m^{2}\mathfrak{D}(r)}{\mathfrak{Q}(r)}\right)^{2}(X^{z})^{2}-F(r)^{-1}(X^{r})^{2}\\
		&=\tilde{g}(X,X)\\
		&+\left(1-\left(\frac{m^{2}\mathfrak{D}(r)}{\mathfrak{Q}(r)}\right)^{2}\right)\left(\frac{s^{2}V(r)^{2}}{m 2}(X^{t})^{2}+2\frac{sV(r)}{m^{2}}X^{t}X^{z}+\frac{1}{m^{2}}(X^{z})^{2}\right).
	\end{align*}
	Using $\g(X,X)\geq 0$, we get:
	\begin{align*}
		\mathfrak{Q}(r)^{-2}\g(\widetilde{T}^{\mu}_{\ \,\nu}X^{\nu},\widetilde{T}^{\mu}_{\ \,\nu}X^{\nu})&\geq\left(1-\left(\frac{m^{2}\mathfrak{D}(r)}{\mathfrak{Q}(r)}\right)^{2}\right)\left(\frac{s^{2}V(r)^{2}}{m 2}(X^{t})^{2}+2\frac{sV(r)}{m^{2}}X^{t}X^{z}+\frac{1}{m^{2}}(X^{z})^{2}\right)
	\end{align*}
	and this quantity is nonnegative if and only if $m^{2}\mathfrak{D}(r)\leq\mathfrak{Q}(r)$ as in the proof of Lemma \ref{Weak energy condition} (the same vector field as therein shows the necessity of the condition). This completes the proof.
\end{proof}
\begin{remark}
	The condition \eqref{Weak condition Lambda} is called the \textup{dominant energy condition}. It will allow us to define constant surface gravities in Subsection \ref{Surface gravities and Killing horizons}. In the De Sitter-Reissner-Nordström case, Proposition \ref{Dominant energy condition proposition} is always true. Indeed, denoting by $T$ the corresponding stress-energy tensor as well as $\mathfrak{Q}(r)=\frac{Q^{2}}{2r^{4}}$, we have
	\begin{align*}
		T_{\mu\nu}X^{\mu}X^{\nu}&=\mathfrak{Q}(r)\left(F(r)(X^{t})^{2}-F(r)^{-1}(X^{r})^{2}+r^{2}(X^{\theta})^{2}+r^{2}\sin^{2}\theta(X^{\varphi})^{2}\right),\\
		\g(\partial_{t},T^{\mu}_{\ \,\nu}X^{\nu})&=\mathfrak{Q}(r)X^{t},\\
		\g(T^{\mu}_{\ \,\nu}X^{\nu},T^{\mu}_{\ \,\nu}X^{\nu})&=\mathfrak{Q}(r)X^{t}g(X,X)
	\end{align*}
	for any vector field $X$.
\end{remark}
\begin{remark}
	We may notice that condition \eqref{4 roots} implies a lower bound to $\Lambda$ if $|Q|\geq M$. In this situation and for a small charge $q$, the dominant energy condition \eqref{Weak condition Lambda} is not satisfied.
\end{remark}
%
%
%
%
%
%
\section{Global geometry of the extended spacetime}
\label{Global geometry of the extended spacetime}
This Section is devoted to the study of the geometry of the extended spacetime. The goal is two-fold: first, the study of the geometric objects is interesting in its own right as it describes in details a new solution to Einstein-Maxwell equations in 5 dimensions. On the other hand, the construction of some geometric objects (such as principal null geodesics and horizons) are prerequisites to formulate the scattering theory developed in Section \ref{Analytic scattering theory}. We will henceforth assume the dominant energy condition \eqref{Weak condition Lambda}.

Let us outline here the plan of this Section. Subsection \ref{Principal null geodesics} introduces first special null geodesics and local coordinates used to build horizons. In Subsection \ref{Surface gravities and Killing horizons}, we define surface gravities. In Subsection \ref{Crossing rings}, we add crossing rings to complete the construction of the horizons. Finally, we define the conformal infinity in Subsection \ref{Black rings} and show that the extended spacetime contains black rings.
%
%
%
%
\subsection{Principal null geodesics}
\label{Principal null geodesics}
We introduce in this Subsection a family of null geodesics which send data from the blocks $\M_j$ to the horizons located at the roots of $F$. They will be used for the geometric interpretation of the scattering in Section \ref{Geometric interpretation}. We follow the standard procedure (see for example the construction in \cite[Section 2.5]{ON} in the Kerr spacetime).

Let $\gamma:\mathbb{R}\ni\mu\mapsto(t(\mu),z(\mu),r(\mu),\omega(\mu))\in\M$ be a null geodesic. Denoting by $\dot{\ }$ the derivative with respect to $\mu$, we have
\begin{align}
\label{gamma_in/out beginning}
	\left(F(r)-\frac{s^{2}V(r)^{2}}{m^{2}}\right)\dot{t}^{2}-\frac{2sV(r)}{m^{2}}\dot{t}\dot{z}-\frac{1}{m^{2}}\dot{z}^{2}-\frac{1}{F(r)}\dot{r}^{2}&=0
\end{align}
if $\dot{\omega}=r^{2}\dot{\theta}^{2}+r^{2}\sin^{2}\theta\dot{\varphi}=0$. Since $\partial_{z}$ is Killing\footnote{This is a reformulation of the cylinder condition in the Kaluza-Klein theory.}, there exists a constant $\mathfrak{Z}\in\mathbb{R}$ such that
\begin{align}
\label{Link t. and z.}
	\g(\dot{\gamma},\partial_{z})&=sV(r)\dot{t}+\dot{z}=-m^{2}\mathfrak{Z}.
\end{align}
Plugging \eqref{Link t. and z.} into \eqref{gamma_in/out beginning} then yields
\begin{align}
\label{Regger wheeler origin}
	F(r)\dot{t}^{2}-\frac{1}{F(r)}\dot{r}^{2}&=m^{2}\mathfrak{Z}^{2}.
\end{align}
We build our geodesics so that $\mathfrak{Z}=0$. This is not only a convenient choice that makes computations easier and explicit: the geometric interpretation is that
\begin{align*}
	\g(\dot{\gamma},\partial_z)&=0
\end{align*}
that is, the principal null geodesics will be $\g$-orthogonal to the velocity vector field of the perfect fluid appearing in the stress-energy tensor after neutralization (see \eqref{Perfect fluid speed}). Solving \eqref{Link t. and z.} and \eqref{Regger wheeler origin} for $\mathfrak{Z}=0$, we get
\begin{align*}
	\frac{\mathrm{d}t}{\mathrm{d}r}&=\pm F(r)^{-1},\qquad\qquad\frac{\mathrm{d}z}{\mathrm{d}r}=\mp sV(r)F(r)^{-1}.
\end{align*}
In particular, we can parametrize our geodesic by $\mu=\pm r$.

 
The \textit{incoming principal null geodesic} $\gamma_{\mathrm{in}}(\mu)=(t(\mu),z(\mu),r(\mu),\omega(\mu))$ is the null geodesic parametrized by $\mu=-r$ and defined by
\begin{align*}
	\begin{cases}
		\dot{t}(\mu)~=~F(r)^{-1}\\
		\dot{z}(\mu)~=~-sV(r)F(r)^{-1}\\
		\dot{r}(\mu)~=~-1\\
		\dot{\omega}(\mu)~=~0
	\end{cases}.
\end{align*}
We then define the \textit{extended-star} coordinates\footnote{The map $(t,z,r,\omega)\mapsto(t^{\star},z^{\star},r,\omega)$ is one-to-one in each block $\M_{2}$, $\M_{3}$ and $\M_{4}$ since $T$ and $Z$ are (their derivative are respectively $F^{-1}$ and $-(sVF)^{-1}$ which have constant signs) and its Jacobian determinant is $1$; it therefore defines a coordinates chart.} $(t^{\star},z^{\star},r,\omega)$ with
\begin{align}
\label{t^*}
	t^{\star}&:=t+T(r),\qquad T(r):=\int_{\mathfrak{r}}^{r}\frac{\mathrm{d}\rho}{F(\rho)}=\sum_{\alpha\in\{n,c,-,+\}}\frac{1}{2\kappa_{\alpha}}\ln\left|\frac{r-r_{\alpha}}{\mathfrak{r}-r_{\alpha}}\right|,\\
\label{z^*}
	z^{\star}&:=z+Z(r),\qquad Z(r):=-\int_{\mathfrak{r}}^{r}\frac{sV(\rho)\mathrm{d}\rho}{F(\rho)}=-\sum_{\alpha\in\{n,c,-,+\}}\frac{s}{2r_{\alpha}\kappa_{\alpha}}\ln\left|\frac{\mathfrak{r}}{r}\frac{r-r_{\alpha}}{\mathfrak{r}-r_{\alpha}}\right|
\end{align}
for some $\mathfrak{r}\in\M_{\textup{ext}}$. Here we have used the factorization
\begin{align}
\label{Factorization F}
	F(r)&=\frac{\Lambda}{3r^{2}}(r-r_{n})(r-r_{c})(r-r_{-})(r_{+}-r)
\end{align}
as well as the \textit{surface gravities} $\kappa_{n}:=F'(r_{n})/2>0$, $\kappa_{c}:=F'(r_{c})/2<0$, $\kappa_{-}:=F'(r_{-})/2>0$ and $\kappa_{+}:=F'(r_{+})/2<0$ (we will give a more precise meaning of them in Subsection \ref{Surface gravities and Killing horizons}). The function $T$ is the so-called \textit{Regge-Wheeler} (or \textit{tortoise}) coordinate, and will be denoted by $x$ in Section \ref{Analytic scattering theory} and Section \ref{Geometric interpretation}. The expression of the extended metric in these coordinates is given by
\begin{align*}
	\g_{\star}&=\left(F(r)-\frac{s^{2}V(r)^{2}}{m^{2}}\right)(\mathrm{d}t^{\star})^{2}-\frac{sV(r)}{m^{2}}\left(\mathrm{d}t^{\star}\mathrm{d}z^{\star}+\mathrm{d}z^{\star}\mathrm{d}t^{\star}\right)-\frac{1}{m^{2}}(\mathrm{d}z^{\star})^{2}-\left(\mathrm{d}t^{\star}\mathrm{d}r+\mathrm{d}r\mathrm{d}t^{\star}\right)-r^{2}\mathrm{d}\omega^{2}
\end{align*}
with inverse
\begin{align*}
	(\g_{\star})^{-1}&=-m^{2}\partial_{z^{\star}}^{\otimes 2}-\left(\partial_{t^{\star}}\otimes\partial_{r}+\partial_{r}\otimes\partial_{t^{\star}}\right)+sV(r)\left(\partial_{z^{\star}}\otimes\partial_{r}+\partial_{r}\otimes\partial_{z^{\star}}\right)\nonumber\\
	&-F(r)\partial_{r}^{\otimes 2}-\frac{1}{r^{2}}\partial_{\theta}^{\otimes 2}-\frac{1}{r^{2}\sin^{2}\theta}\partial_{\varphi}^{\otimes 2}.
\end{align*}
Observe that, by construction, we have $\dot{t}^{\star}=\dot{z}^{\star}=0$: this shows that $t\to\pm\infty$ as $r\to(r_{\pm})^{\mp}$ (but $t^{\star}$ remains smooth because $\dot{t}^{\star}=0$). The same conclusion can be drawn for $z$.

We similarly define the \textit{outgoing principal null geodesic} $\gamma_{\mathrm{out}}(\mu)=(t(\mu),z(\mu),r(\mu),\omega(\mu))$ as the null geodesic parametrized by $\mu=r$ and defined by
\begin{align*}
	\begin{cases}
		\dot{t}(\mu)~=~F(r)^{-1}\\
		\dot{z}(\mu)~=~-sV(r)F(r)^{-1}\\
		\dot{r}(\mu)~=~1\\
		\dot{\omega}(\mu)~=~0
	\end{cases}.
\end{align*}
We then define the \textit{star-extended} coordinates $(^{\star}t,{}^{\star}z,r,\omega)$ with
\begin{align*}
	^{\star}t&:=t-T(r),\qquad ^{\star}z:=z-Z(r)
\end{align*}
with $T$ and $Z$ as in \eqref{t^*} and \eqref{z^*}. The expression of the extended metric in these coordinates is given by
\begin{align*}
	_{\star}\g&=\left(F(r)-\frac{s^{2}V(r)^{2}}{m^{2}}\right)(\mathrm{d}^{\star}t)^{2}-\frac{sV(r)}{m^{2}}\left(\mathrm{d}^{\star}t\mathrm{d}^{\star}z+\mathrm{d}^{\star}z\mathrm{d}^{\star}t\right)-\frac{1}{m^{2}}(\mathrm{d}^{\star}z)^{2}+\left(\mathrm{d}^{\star}t\mathrm{d}r+\mathrm{d}r\mathrm{d}^{\star}t\right)-r^{2}\mathrm{d}\omega^{2}
\end{align*}
with inverse
\begin{align*}
	(_{\star}\g)^{-1}&=-m^{2}\partial_{z^{\star}}^{\otimes 2}+\left(\partial_{t^{\star}}\otimes\partial_{r}+\partial_{r}\otimes\partial_{t^{\star}}\right)-sV(r)\left(\partial_{z^{\star}}\otimes\partial_{r}+\partial_{r}\otimes\partial_{z^{\star}}\right)\nonumber\\
	&-F(r)\partial_{r}^{\otimes 2}-\frac{1}{r^{2}}\partial_{\theta}^{\otimes 2}-\frac{1}{r^{2}\sin^{2}\theta}\partial_{\varphi}^{\otimes 2}.
\end{align*}
We have $^{\star}\dot{t}=\,\!^{\star}\dot{z}=0$ which entails $t\to\mp\infty$ and $z\to\mp\infty$ as $r\to(r_{\pm})^{\mp}$.%

%
%
%
%
%
%
\subsection{Surface gravities and Killing horizons}
\label{Surface gravities and Killing horizons}
\textit{Surface gravities} are accelerations felt in the incoming direction locally near a hypersurface by an unit test mass due to the gravitational force (measured infinitely far away from the hypersurface). We compute in this Subsection the surface gravities $\kappa_{\alpha}$ at $r=r_\alpha$ for all $\alpha\in\{c,-,+\}$.

Consider the following normalization of the velocity vector field $\nabla t$ associated to a static observer:
\begin{align}
\label{Static observer}
	F(r)\nabla t&=\partial_{t}-sV(r)\partial_{z}.
\end{align}
For De Sitter Kerr black holes, $\frac{\nabla t}{\sqrt{\nabla^{i}t\nabla_{i}t}}$ is the velocity vector field which follows the rotation of the black hole; it tends to $\partial_{t}-\frac{a}{r_{\pm}^{2}+a^{2}}\partial_{\varphi}$ as $r\to r_{\pm}$ which provides two Killing vector fields in the ergoregions near the horizons. The equivalent in our setting of the rotations at the horizons $\frac{a}{r_{\pm}^{2}+a^{2}}$ are the terms $sV_{\alpha}$, $\alpha\in\{c,-,+\}$. We thus consider the Killing vector fields
\begin{align}
\label{Killing generators}
	X_{c}&:=\partial_{t^{\star}}-sV_{c}\partial_{z^{\star}},\qquad\qquad X_{-}:=\partial_{t^{\star}}-sV_{-}\partial_{z^{\star}},\qquad\qquad X_{+}:=\partial_{t^{\star}}-sV_{+}\partial_{z^{\star}}
\end{align}
which are null at $r=r_{\alpha}$, $\alpha\in\{c,-,+\}$:
\begin{align*}
	\g_{\star}(X_\alpha,X_\alpha)&=F-\frac{s^{2}(V-V_\alpha)^{2}}{m^{2}}.
\end{align*}
\begin{remark}
	We used the extended-starr coordinates to define surface gravities. It is perfectly fine doing it with the star-extended coordinates, the only difference being the opposite sign we have to put in formula \eqref{Newton eq surface gravities} below.
\end{remark}

The physical interpretation of the surface gravities leads to the following Newtonian forces equilibrium\footnote{Using that orthogonal null vector fields are collinear, we only know in general that $\kappa_{\alpha}$ is at least a smooth function $\{r=r_{\alpha}\}\to\mathbb{R}$.}:
\begin{align}
\label{Newton eq surface gravities}
	\restriction{\left(\nabla_{X_{\alpha}}X_{\alpha}\right)}{\{r=r_{\alpha}\}}&=-\kappa_{\alpha}\restriction{X_{\alpha}}{\{r=r_{\alpha}\}}\qquad\kappa_{\alpha}\in\mathbb{R}.
\end{align}
The constants $\kappa_{\alpha}$ are well defined since the surface gravities are constant on $\{r=r_{\alpha}\}$ by the dominant energy condition \eqref{Weak condition Lambda} (see \textit{e.g.} \cite[equation (12.5.31)]{Wa}). It is easy to see that $\nabla^{\mu}\big((X_{\alpha})^\nu(X_{\alpha})_\nu\big)=-2\kappa_{\alpha}(X_{\alpha})^{\mu}$ on the corresponding horizon (\textit{cf.} \cite[equation (12.5.2)]{Wa}). We then compute:
\begin{align*}
	\restriction{\nabla^\mu\big((X_{\alpha})^\nu(X_{\alpha})_\nu\big)}{\{r=r_{\alpha}\}}&=\restriction{\left((\g_{\star})^{\mu r}\partial_{r}\!\left(F-\frac{s^{2}(V-V_{\alpha})^{2}}{m^{2}}\right)\right)}{\{r=r_{\alpha}\}}.
\end{align*}
Since $(\g_{\star})^{tr}=-1$ and $(\g_{\star})^{zr}=sV(r)$ are the only non-zero coefficients of the form $(\g_{\star})^{\mu r}$, we get in all cases:
\begin{align*}
	\kappa_{\alpha}&=\frac{F'(r_{\alpha})}{2}=\frac{(3r_{\alpha}-3M-2\Lambda r_{\alpha}^2)}{3r_{\alpha}^2}.
\end{align*}
Observe that $\kappa_{\alpha}$ is nothing but the De Sitter-Reissner-Nordstr\"{o}m surface gravity at the corresponding horizon (we show in the next Subsection that $\{r=r_{\alpha}\}$ are still Killing horizons in the extended spacetime). Using the signs of $F'$ at $r=r_{\alpha}$, we get:
\begin{align*}
	\kappa_{c}&<0,\qquad\qquad\kappa_{-}>0,\qquad\qquad\kappa_{+}<0.
\end{align*}

The surface gravities $\kappa_{\alpha}$ provide the rate of convergence of the Boyer-Lindquist coordinate $r$ to $r_\alpha$ in terms of the Regge-Wheeler coordinate $T(r)$ introduced in equation \eqref{t^*}: indeed, using the factorization \eqref{Factorization F}, we find
\begin{align}
\label{Decay r-r_- with kappa_pm}
	|r-r_\alpha|&=|\mathfrak{r}-r_{\alpha}|\left(\prod_{\substack{\beta\in\{n,c,-,+\}\\\beta\neq\alpha}}\left|\frac{\mathfrak{r}-r_{\beta}}{r-r_{\beta}}\right|^{\frac{\kappa_{\alpha}}{\kappa_{\beta}}}\right)\mathrm{e}^{2\kappa_{\alpha}T(r)}=\mathcal{O}_{r\to r_\pm}\big(\mathrm{e}^{2\kappa_{\alpha}T(r)}\big).
\end{align}

Let now $r_{0}>0$ and define the hypersurface $\Sigma_{r_{0}}:=\mathbb{R}_t\times\mathbb{S}^1_z\times\{r=r_0\}\times\mathbb{S}^2_\omega$. Using then the inverse metric expression in the extended-star coordinates, we compute
\begin{align}
\label{Normal to Sigma_pm}
	n_{\Sigma_{r_{0}}}:=\nabla r&=-\partial_{t^{\star}}+sV(r)\partial_{z^{\star}}-F(r)\left(\frac{\partial}{\partial r}\right)_{E{\star}}
\end{align}
where $\left(\frac{\partial}{\partial r}\right)_{E{\star}}$ is the vector field $\partial_{r}$ in the extended-star coordinates. Since
\begin{align*}
\g_{\star}\left(n_{\Sigma_{r_{0}}},-\left(\frac{\partial}{\partial r}\right)_{E{\star}}\right)&=1,
\end{align*}
$n_{\Sigma_{r_{0}}}$ is a incoming normal vector field to $\Sigma_{r_{0}}$. Besides,
\begin{align*}
	\g_{\star}(n_{\Sigma_{r_{0}}},n_{\Sigma_{r_{0}}})&=-F(r)
\end{align*}
so that the incoming normal $n_{\Sigma_{r_{\alpha}}}$ is also tangent to $\Sigma_{r_{\alpha}}$ and this hypersurface is null (we could have also seen this by observing that $\det(\restriction{\g_{\star}}{\Sigma_{r_{\alpha}}})=0$). As
\begin{align*}
	\det(\g_{\star})&=\frac{r^{4}\sin^{2}\theta}{m^{2}}=\det(_{\star}\g)
\end{align*}
does not vanish at $r=r_{\alpha}$, $\Sigma_{r_{\alpha}}$ is not degenerate. Since $\nabla r\to X_{\alpha}$ as $r\to r_{\alpha}$, $\alpha\in\{c,-,+\}$, this hypersurface $\{r=r_{\alpha}\}$ is a \textit{Killing horizon}, that is a non-degenerate null hypersurface generated by a Killing vector field. We then define:
\begin{align*}
&\mathscr{H}_{c}^{+}:=\mathbb{R}_{t^{\star}}\times\mathbb{S}^{1}_{z^{\star}}\times\{r_{c}\}_{r}\times\mathbb{S}^{2}_{\omega}\qquad\qquad(\textit{future Cauchy horizon}),\\
&\mathscr{H}_{c}^{-}:=\mathbb{R}_{^{\star}t}\times\mathbb{S}^{1}_{^{\star}z}\times\{r_{c}\}_{r}\times\mathbb{S}^{2}_{\omega}\qquad\qquad(\textit{past Cauchy horizon}),\\[2.5mm]
&\mathscr{H}^{+}:=\mathbb{R}_{t^{\star}}\times\mathbb{S}^{1}_{z^{\star}}\times\{r_{-}\}_{r}\times\mathbb{S}^{2}_{\omega}\qquad\qquad(\textit{future event horizon}),\\
&\mathscr{H}^{-}:=\mathbb{R}_{^{\star}t}\times\mathbb{S}^{1}_{^{\star}z}\times\{r_{-}\}_{r}\times\mathbb{S}^{2}_{\omega}\qquad\qquad(\textit{past event horizon}),\\[2.5mm]
&\mathscr{I}^{+}:=\mathbb{R}_{t^{\star}}\times\mathbb{S}^{1}_{z^{\star}}\times\{r_{+}\}_{r}\times\mathbb{S}^{2}_{\omega}\qquad\qquad(\textit{future cosmological horizon}),\\
&\mathscr{I}^{-}:=\mathbb{R}_{^{\star}t}\times\mathbb{S}^{1}_{^{\star}z}\times\{r_{+}\}_{r}\times\mathbb{S}^{2}_{\omega}\qquad\qquad(\textit{past cosmological horizon}).
\end{align*}
Observe that the construction of the horizons is not complete so far: we need to add two 3-surfaces where $\mathscr{H}^{+}$ and $\mathscr{H}^{-}$ on the one hand, $\mathscr{I}^{+}$ and $\mathscr{I}^{-}$ on the other hand, meet. This will be done in Subsection \ref{Crossing rings}.
%
%
%
%
\subsection{Crossing rings}
\label{Crossing rings}
In the previous Subsections, we have constructed Killing horizons. We now build sets of codimension 2 which will complete the construction of the hypersurfaces $\{r=r_\alpha\}$ for all $\alpha\in\{c,-,+\}$. As only the event and cosmological horizons will be concerned in the next Sections, we will only detail computations related to them.
 
We start with the event horizon. Following (8.19) and (8.20) in \cite{HaNi04}, we define the \textit{Kruskal-Boyer-Lindquist coordinates}
\begin{align*}
	u&:=\mathrm{e}^{-\kappa_{-}\!{}^{\star}t},\quad v:=\mathrm{e}^{\kappa_{-}t^{\star}},\quad z^{\sharp}:=z-\restriction{\left(\frac{\dot{z}^{\star}}{\dot{t}^{\star}}\right)}{r=r_-}t=z+sV_{-}t.
\end{align*}
The variable $z^{\sharp}$ must be understood as an element of $\mathbb{S}^{1}$ (\textit{i.e.} it is defined modulo $2\pi$), the rotation direction on the circle being imposed by the sign of the charge product $s$. It is introduced to follow the "rotation" of the event horizon (by analogy with the Kerr case); concretely, it cancels the "rotation" term $\frac{s^{2}V(r)^{2}}{m^{2}}$ in front of $\mathrm{d}t^{2}$ in the extended metric as $r\to r_{-}$. The extended metric $\g$ now reads in these coordinates
\begin{align*}
\g&=\frac{s^2\big(V(r)-V_{-}\big)^{2}}{4m^{2}\kappa_{-}^{2}u^{2}v^{2}}\Big(uv\big(\mathrm{d}u\mathrm{d}v+\mathrm{d}v\mathrm{d}u\big)-u^{2}\mathrm{d}v^{2}-v^{2}\mathrm{d}u^{2}\Big)\\
&-\frac{s\big(V(r)-V_{-}\big)}{2m^{2}\kappa_{-}uv}\Big(\big(u\mathrm{d}v-v\mathrm{d}u\big)\mathrm{d}z^{\sharp}+\mathrm{d}z^{\sharp}\big(u\mathrm{d}v-v\mathrm{d}u\big)\Big)-\frac{1}{m^{2}}(\mathrm{d}z^{\sharp})^{2}-r^{2}\mathrm{d}\omega^{2}.
\end{align*}
We have to ensure that the metric is well defined and non degenerate at $r=r_{-}$ (that is when $u=v=0$). Set
\begin{align*}
	G_{-}(u,v,r)&:=\frac{r-r_{-}}{uv}=(r-r_-)\mathrm{e}^{-2\kappa_{-}T(r)}.
\end{align*}
The function $G_{-}$ is analytic and non vanishing near $r=r_{-}$ because of \eqref{Decay r-r_- with kappa_pm}. It follows that the extended metric
\begin{align*}
	\g&=-\frac{s^2G_{-}(u,v,r)^{2}}{4m^{2}r^{2}r_{-}^{2}\kappa_{-}^{2}}\big(u^{2}\mathrm{d}v^{2}+v^{2}\mathrm{d}u^{2}\big)-\frac{s^2G_{-}(u,v,r)(r-r_{-})}{4m^{2}r^{2}r_{-}^{2}\kappa_{-}^{2}}\big(\mathrm{d}u\mathrm{d}v+\mathrm{d}v\mathrm{d}u\big)\\
	&+\frac{sG_{-}(u,v,r)}{2m^{2}rr_{-}\kappa_{-}}\Big(\big(u\mathrm{d}v-v\mathrm{d}u\big)\mathrm{d}z^{\sharp}+\mathrm{d}z^{\sharp}\big(u\mathrm{d}v-v\mathrm{d}u\big)\Big)-\frac{1}{m^{2}}(\mathrm{d}z^{\sharp})^{2}-r^{2}\mathrm{d}\omega^{2}
\end{align*}
extends smoothly on a neighbourhood of $\big\{u=v=0\big\}$. Call the set
\begin{align*}
	\mathscr{R}^{\mathscr{H}}_{c}:=\{u=0\}\times\{v=0\}\times\mathbb{S}^{1}_{z^{\sharp}}\times\mathbb{S}^{2}_{\omega}
\end{align*}
the \textit{crossing ring at the event horizon} (this is the equivalent of the crossing sphere in the usual case of Kerr-Newman black holes).

We now turn to the construction of the complete cosmological horizon. Computations are so similar that we will omit most of them. Define the new coordinates
\begin{align*}
	\tilde{u}&:=\mathrm{e}^{-\kappa_{+}\!{}^{\star}t},\quad \tilde{v}:=\mathrm{e}^{\kappa_{+}t^{\star}},\quad \,\!^{\sharp}\!z:=z-\restriction{\left(\frac{^{\star}\dot{z}}{^{\star}\dot{t}}\right)}{r=r_+}t=z+sV_{+}t.
\end{align*}
Recall that $\kappa_{+}<0$. The extended metric $\g$ reads in these coordinates
\begin{align*}
	\g&=-\frac{s^2G_{+}(\tilde{u},\tilde{v},r)^{2}}{4m^{2}r^{2}r_{+}^{2}\kappa_{+}^{2}}\big(\tilde{u}^{2}\mathrm{d}\tilde{v}^{2}+\tilde{v}^{2}\mathrm{d}\tilde{u}^{2}\big)+\frac{s^2G_{+}(\tilde{u},\tilde{v},r)(r-r_{+})}{4m^{2}r^{2}r_{+}^{2}\kappa_{+}^{2}}\big(\mathrm{d}\tilde{u}\mathrm{d}\tilde{v}+\mathrm{d}\tilde{v}\mathrm{d}\tilde{u}\big)\\
	&-\frac{sG_{+}(\tilde{u},\tilde{v},r)^{2}}{2m^{2}rr_{+}\kappa_{+}}\Big(\big(\tilde{u}\mathrm{d}\tilde{v}-\tilde{v}\mathrm{d}\tilde{u}\big)\mathrm{d}^{\sharp}\!z+\mathrm{d}^{\sharp}\!z\big(\tilde{u}\mathrm{d}\tilde{v}-\tilde{v}\mathrm{d}\tilde{u}\big)\Big)-\frac{1}{m^{2}}(\mathrm{d}^{\sharp}\!z)^{2}-r^{2}\mathrm{d}\omega^{2}
\end{align*}
with
\begin{align*}
	G_{+}(\tilde{u},\tilde{v},r)&:=\frac{r_{+}-r}{\tilde{u}\tilde{v}}=(r_+-r)\mathrm{e}^{-2\kappa_{+}T(r)}.
\end{align*}
which is analytic and non vanishing near $r=r_{+}$ because of \eqref{Decay r-r_- with kappa_pm}. Then $\g$ extends smoothly on a neighbourhood of $\big\{\tilde{u}=\tilde{v}=0\big\}$. Call the set
\begin{align*}
	\mathscr{R}^{\mathscr{I}}_{c}:=\{\tilde{u}=0\}\times\{\tilde{v}=0\}\times\mathbb{S}^{1}_{\,\!^{\sharp}\!z}\times\mathbb{S}^{2}_{\omega}
\end{align*}
the \textit{crossing ring at the cosmological horizon}.

Now the construction is complete. The event horizon is defined as
\begin{align*}
	\mathscr{H}&:=\mathscr{H}^{-}\cup\mathscr{R}^{\mathscr{H}}_{c}\cup\mathscr{H}^{+}\\
	&=\big([0,+\infty[_{u}\times\{0\}_{v}\times\mathbb{S}^{1}_{z^{\sharp}}\times\mathbb{S}^{2}_{\omega}\big)\cup\big(\{0\}_{u}\times[0,+\infty[_{v}\times\mathbb{S}^{1}_{z^{\sharp}}\times\mathbb{S}^{2}_{\omega}\big)
\end{align*}
and the cosmological horizon is
\begin{align*}
	\mathscr{I}&:=\mathscr{I}^{-}\cup\mathscr{R}^{\mathscr{I}}_{c}\cup\mathscr{I}^{+}\\
	&=\big(\{0\}_{\tilde{u}}\times[0,+\infty[_{\tilde{v}}\times\mathbb{S}^{1}_{\,\!^{\sharp}\!z}\times\mathbb{S}^{2}_{\omega}\big)\cup\big([0,+\infty[_{\tilde{u}}\times\{0\}_{\tilde{v}}\times\mathbb{S}^{1}_{\,\!^{\sharp}\!z}\times\mathbb{S}^{2}_{\omega}\big).
\end{align*}
The outer space can be extended itself as the \textit{global outer space}
\begin{align*}
	\MM&:=\M\cup\mathscr{H}\cup\mathscr{I}.
\end{align*}
As discussed at the beginning of this Section, the compactification shown in Figure \ref{Block I uncomplete} is not really complete as some points still lie at infinity whatever the coordinate system is. They are the future timelike infinity $i^{+}$ and the past timelike infinity $i^{-}$.

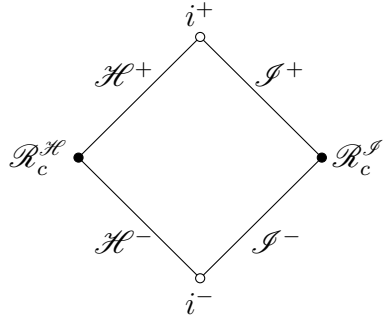
\begin{figure}[!h]
	\centering
	\captionsetup{justification=centering,margin=2cm}
		\begin{tikzpicture}[scale=0.8]
		\draw[](-2,0)--(0,2)--(2,0)--(0,-2)--cycle;
		\filldraw[fill=black](-2,0)circle[x radius=0.075,y radius=0.075]node[left]{$\mathscr{R}^{\mathscr{H}}_{c}$};
		\filldraw[fill=black](2,0)circle[x radius=0.075,y radius=0.075]node[right]{$\mathscr{R}^{\mathscr{I}}_{c}$};
		\filldraw[fill=white](0,2)circle[x radius=0.075,y radius=0.075]node[above]{$i^{+}$};
		\filldraw[fill=white](0,-2)circle[x radius=0.075,y radius=0.075]node[below]{$i^{-}$};
		\draw[](-1.25,1)node[left, above]{$\mathscr{H}^{+}$};
		\draw[](1.30,1)node[right, above]{$\mathscr{I}^{+}$};
		\draw[](-1.25,-1)node[left, below]{$\mathscr{H}^{-}$};
		\draw[](1.25,-1)node[right, below]{$\mathscr{I}^{-}$};
		\end{tikzpicture}
		\caption{\label{Block I uncomplete}The global outer space $\MM$ (each point in the diagram is a copy of $\mathbb{S}^{1}\times\mathbb{S}^{2}$). Only the points $i^{+}$ and $i^{-}$ are at infinity.}
\end{figure}
%
%
%
%
%
\subsection{Black rings}
\label{Black rings}
This Subsection is devoted to the construction of a global structure of $\M_{\textup{ext}}$. The procedure is similar to the standard one for De Sitter-Reissner-Nordstr\"om spacetime and will reveal the black rings structure of $\M_{\textup{ext}}$.

\paragraph{Conformal infinity.} We start by including the infinity to the region $\{r\geq r_+\}$. Recall here that $V(r)=1/r$. Let us set the \textit{conformal factor} $\Omega:=1/r$. In the coordinates $(t^{\star},z^{\star},R,\omega)$ with $R:=1/r\in\left[0,1/r_{+}\right]$, we define the conformal extended-star metric
\begin{align*}
	\hat{g}_{\star}&:=\Omega^{2}\g_{\star}\\
	&=R^{2}\left(F(1/R)-\frac{s^{2}R^{2}}{m^{2}}\right)(\mathrm{d}t^{\star})^{2}-\frac{sR^{3}}{m^{2}}\left(\mathrm{d}t^{\star}\mathrm{d}z^{\star}+\mathrm{d}z^{\star}\mathrm{d}t^{\star}\right)-\frac{R^{2}}{m^{2}}(\mathrm{d}z^{\star})^{2}+\left(\mathrm{d}t^{\star}\mathrm{d}R+\mathrm{d}R\mathrm{d}t^{\star}\right)-\mathrm{d}\omega^{2}
\end{align*}
and similarly, in the coordinates $(^{\star}t,{}^{\star}z,R,\omega)$, the conformal star-extended metric is
\begin{align*}
	_{\star}\hat{g}&:=\Omega^{2}\,\!_{\star}\g\\
	&=R^{2}\left(F(1/R)-\frac{s^{2}R^{2}}{m^{2}}\right)(\mathrm{d}^{\star}t)^{2}-\frac{sR^{3}}{m^{2}}\left(\mathrm{d}^{\star}t\mathrm{d}^{\star}z+\mathrm{d}^{\star}z\mathrm{d}^{\star}t\right)-\frac{R^{2}}{m^{2}}(\mathrm{d}^{\star}z)^{2}-\left(\mathrm{d}^{\star}t\mathrm{d}R+\mathrm{d}R\mathrm{d}^{\star}t\right)-\mathrm{d}\omega^{2}.
\end{align*}
We can check that $\det\hat{g}_{\star}=\det\,\!_{\star}\hat{g}=\frac{R^{2}}{m^{2}}$. Hence, the following hypersurfaces are null:
\begin{align*}
	\mathscr{I}^{+}_{\infty}&:=\mathbb{R}_{^{\star}t}\times\mathbb{S}^{1}_{^{\star}z}\times\{0\}_{R}\times\mathbb{S}^{2}_{\omega}&&(\textit{future null infinity}),\\
	\mathscr{I}^{-}_{\infty}&:=\mathbb{R}_{t^{\star}}\times\mathbb{S}^{1}_{z^{\star}}\times\{0\}_{R}\times\mathbb{S}^{2}_{\omega}&&(\textit{past null infinity}),\\
	\mathscr{I}_{\infty}&:=\mathscr{I}^{+}_{\infty}\cup\mathscr{I}^{-}_{\infty}&&(\textit{null infinity}).
\end{align*}
They are the sets of "end points" at infinity of the principal null geodesics; observe that they do not intersect (the spacelike infinity still lies at infinity as for the De Sitter-Kerr-Newman family). The conformal metrics then restrict to
\begin{align*}
	\restriction{\hat{g}_{\star}}{\mathscr{I}^{-}_{\infty}}&=-\frac{\Lambda}{3}(\mathrm{d}t^{\star})^{2}-\mathrm{d}\omega^{2}=\restriction{_{\star}\hat{g}}{\mathscr{I}^{+}_{\infty}}.
\end{align*}
\paragraph{Time orientation.} We now define a global time function in $\M_{\textup{ext}}$; recall that a \textit{time function} $\tau$ is a $\mathcal{C}^{1}$ function such that $\nabla\tau$ is timelike. The extended spacetime $\M_{\textup{ext}}$ is not connected as we remove the hypersurfaces $\Sigma_{r_{\alpha}}$ for $\alpha\in\{c,-,+\}$. As a result, there is no canonical way of defining a time-orientation on it. Since
\begin{align*}
	\nabla t&=F(r)^{-1}\big(\partial_{t}-sV(r)\partial_{z}\big),&&\g(\nabla t,\nabla t)=F(r)^{-1}>0\quad\text{in $\M_{1}$ and $\M_{3}$},\\
	\nabla r&=-F(r)\partial_{r},&&\g(\nabla r,\nabla r)=-F(r)>0\quad\text{in $\M_{2}$ and $\M_{4}$},
\end{align*}
we see that $\pm t$ is a time orientation in the blocks $\M_{1}$ and $\M_{3}$ whereas $\pm r$ is a time orientation in the blocks $\M_{2}$ and $\M_{4}$. From now on, $\M_{j}$ denotes the corresponding block endowed with the time orientation $t$ if $j\in\{1,3\}$ or $r$ if $j\in\{2,4\}$, and $\M_{j}'$ denotes the same block but with the time orientation $-t$ if $j\in\{1,3\}$ or $-r$ if $j\in\{2,4\}$. We still write $\M_{\textup{ext}}:=\bigcup_{j=1}^{4}\M_{j}$ and set $\M_{\textup{ext}}':=\bigcup_{j=1}^{4}\M_{j}'$.

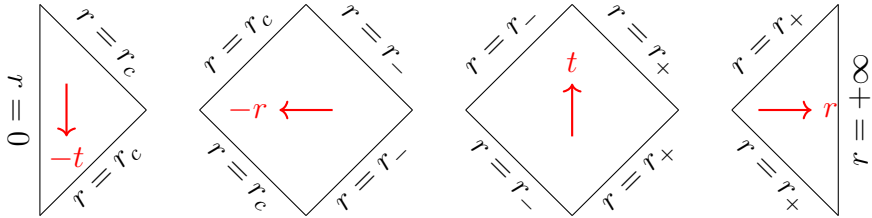
\begin{figure}[!h]
	\centering
	\captionsetup{justification=centering,margin=1.6cm}
	\begin{tikzpicture}[scale=0.7]
	\draw[](-2,-2)--(0,0)node[midway,below,sloped]{$r=r_c$};
	\draw[](0,0)--(-2,2)node[midway,above,sloped]{$r=r_c$};
	\draw[](-2,2)--(-2,-2)node[midway,below,sloped]{$r=0$};
	\draw[red, thick, ->](-1.5,0.5)--(-1.5,-0.5)node[below]{$-t$};
	\draw[](1,0)--(3,-2)node[midway,below,sloped]{$r=r_c$};
	\draw[](3,-2)--(5,0)node[midway,below,sloped]{$r=r_-$};
	\draw[](5,0)--(3,2)node[midway,above,sloped]{$r=r_-$};
	\draw[](3,2)--(1,0)node[midway,above,sloped]{$r=r_c$};
	\draw[red, thick, ->](3.5,0)--(2.5,0)node[left]{$-r$};
	\draw[](6,0)--(8,-2)node[midway,below,sloped]{$r=r_-$};
	\draw[](8,-2)--(10,0)node[midway,below,sloped]{$r=r_+$};
	\draw[](10,0)--(8,2)node[midway,above,sloped]{$r=r_+$};
	\draw[](8,2)--(6,0)node[midway,above,sloped]{$r=r_-$};
	\draw[red, thick, ->](8,-0.5)--(8,0.5)node[above]{$t$};
	\draw[](11,0)--(13,-2)node[midway,below,sloped]{$r=r_+$};
	\draw[](13,-2)--(13,2)node[midway,below,sloped]{$r=+\infty$};
	\draw[](13,2)--(11,0)node[midway,above,sloped]{$r=r_+$};
	\draw[red, thick, ->](11.5,0)--(12.5,0)node[right]{$r$};
	\end{tikzpicture}
	\caption{\label{4 Blocks}The four blocks $\M_{j}$ and their respective time orientation.}
\end{figure}
\paragraph{Carter-Penrose diagram of the extended spacetime.} We now construct a global structure $\MM_{\textup{ext}}$ which respects the time orientation of each block $\M_{j}$ and $\M_{j}'$.

Let $\MM_{\textup{DSRN}}$ be the maximal analytic extension of the De Sitter-Reissner-Nordstr\"{o}m spacetime (see \textit{e.g.} Subsection 1.2 and particularly the paragraph 1.2.5 in \cite{Mo17}). This extension essentially consists in defining appropriate coordinates near the positive\footnote{$r=0$ being a genuine singularity at which the metric can not be $\mathcal{C}^{2}$, we can not build an analytic extension near the negative root.} roots of $F$ in which the metric is analytic, so that all the work boils down to build it considering the quotient of the original spacetime by the action of the rotations group on $\mathbb{S}^{2}$. $\MM_{\textup{DSRN}}$ also satisfies a local inextendibility property: there is no open non-empty subset whose closure is non-compact and can be embedded in an analytic manifold with a relatively compact image (see \cite[Subsection 1.2]{Mo17}). We then define
\begin{align*}
	\MM_{\textup{ext}}&:=\MM_{\textup{DSRN}}\times\mathbb{S}^{1}
\end{align*}
which is nothing but the orbit of $\MM_{\textup{DSRN}}$ under the action  of the rotation on $\mathbb{S}^{1}$. 
It contains infinitely many isometric copies of $\M_{\textup{ext}}\sqcup\mathscr{H}_{c}\sqcup\mathscr{H}\sqcup\mathscr{I}\sqcup\mathscr{I}_{\infty}$ and $\M_{\textup{ext}}'\sqcup\mathscr{H}_{c}\sqcup\mathscr{H}\sqcup\mathscr{I}\sqcup\mathscr{I}_{\infty}$.

\begin{figure}[!h]
	\centering
	\captionsetup{justification=centering,margin=1.6cm}
	\begin{tikzpicture}[scale=0.6]
	%
	%
	\filldraw[gray!20](-4,-2)--(0,2)--(4,2)--(-4,-6)--cycle;
	\draw[dashed](4,6)--(2,8);
	\draw[dashed](8,6)--(10,8);
	\draw[thick](4,6)--(6,8)node[midway,above,sloped]{\small$r=r_-$};
	\draw[thick](6,8)--(8,6)node[midway,above,sloped]{\small$r=r_-$};
	\draw[](6,4)--(8,6)node[midway,above,sloped]{\small$r=r_c$};
	\draw[thick, blue](8,6)--(8,2)node[midway,above,sloped]{\small$r=0$};
	\draw[](6,4)--(4,6)node[midway,above,sloped]{\small$r=r_c$};
	\draw[thick, blue](4,6)--(4,2)node[midway,below,sloped]{\small$r=0$};
	\draw[](4,2)--(6,4)node[midway,above,sloped]{\small$r=r_c$};
	\draw[](6,4)--(8,2)node[midway,above,sloped]{\small$r=r_c$};
	\draw[](2,0)--(4,-2)node[midway,above,sloped]{\small$r=r_+$};
	\draw[thick](4,-2)--(6,0)node[midway,below,sloped]{\small$r=r_-$};
	\draw[thick](6,0)--(4,2)node[midway,above,sloped]{\small$r=r_-$};
	\draw[](4,2)--(2,0)node[midway,below,sloped]{\small$r=r_+$};
	\draw[thick](6,0)--(8,-2)node[midway,below,sloped]{\small$r=r_-$};
	\draw[](8,-2)--(10,0)node[midway,above,sloped]{\small$r=r_+$};
	\draw[](10,0)--(8,2)node[midway,below,sloped]{\small$r=r_+$};
	\draw[thick](8,2)--(6,0)node[midway,above,sloped]{\small$r=r_-$};
	\draw[](4,-2)--(6,-4)node[midway,below,sloped]{\small$r=r_c$};
	\draw[](6,-4)--(8,-2)node[midway,below,sloped]{\small$r=r_c$};
	\draw[](6,-4)--(4,-6)node[midway,below,sloped]{\small$r=r_c$};
	\draw[thick, blue](4,-6)--(4,-2)node[midway,above,sloped]{\small$r=0$};
	\draw[](6,-4)--(8,-6)node[midway,below,sloped]{\small$r=r_c$};
	\draw[thick, blue](8,-6)--(8,-2)node[midway,below,sloped]{\small$r=0$};
	\draw[thick](4,-6)--(6,-8)node[midway,below,sloped]{\small$r=r_-$};
	\draw[thick](6,-8)--(8,-6)node[midway,below,sloped]{\small$r=r_-$};
	\draw[dashed](4,-6)--(2,-8);
	\draw[dashed](8,-6)--(10,-8);
	\draw[](2,0)--(0,2)node[midway,below,sloped]{\small$r=r_+$};
	\draw[thick, red](0,2)--(4,2)node[midway,above,sloped]{\small$r=+\infty$};
	\draw[thick](0,2)--(-2,0)node[midway,above,sloped]{\small$r=r_-$};
	\draw[thick](-2,0)--(0,-2)node[midway,below,sloped]{\small$r=r_-$};
	\draw[thick](0,2)--(-2,4)node[midway,above,sloped]{\small$r=r_c$};
	\draw[thick](-4,2)--(-2,4)node[midway,above,sloped]{\small$r=r_c$};
	\draw[thick](-4,2)--(-2,0)node[midway,above,sloped]{\small$r=r_-$};
	\draw[thick](-2,4)--(0,6)node[midway,above,sloped]{\small$r=r_c$};
	\draw[thick](-2,4)--(-4,6)node[midway,above,sloped]{\small$r=r_c$};
	\draw[thick, blue](0,6)--(0,2)node[midway,above,sloped]{\small$r=0$};
	\draw[thick, blue](-4,6)--(-4,2)node[midway,below,sloped]{\small$r=0$};
	\draw[thick](0,6)--(-2,8)node[midway,above,sloped]{\small$r=r_-$};
	\draw[thick](-4,6)--(-2,8)node[midway,above,sloped]{\small$r=r_-$};
	\draw[dashed](0,6)--(2,8);
	\draw[dashed](-4,6)--(-6,8);
	\draw[thick](-4,2)--(-6,0)node[midway,below,sloped]{\small$r=r_+$};
	\draw[thick](-4,-2)--(-6,0)node[midway,above,sloped]{\small$r=r_+$};
	\draw[thick, red](-4,2)--(-8,2)node[midway,above,sloped]{\small$r=+\infty$};
	\draw[thick, red](-4,-2)--(-8,-2)node[midway,below,sloped]{\small$r=+\infty$};
	\draw[thick](-8,2)--(-10,0)node[midway,above,sloped]{\small$r=r_-$};
	\draw[thick](-8,-2)--(-10,0)node[midway,below,sloped]{\small$r=r_-$};
	\draw[thick](-8,-2)--(-6,0)node[midway,above,sloped]{\small$r=r_+$};
	\draw[thick](-8,2)--(-6,0)node[midway,below,sloped]{\small$r=r_+$};
	\draw[dashed](-8,-2)--(-10,-4);
	\draw[dashed](-8,2)--(-10,4);
	\draw[](2,0)--(0,-2)node[midway,above,sloped]{\small$r=r_+$};
	\draw[thick, red](0,-2)--(4,-2)node[midway,below,sloped]{\small$r=+\infty$};
	\draw[](10,0)--(12,2)node[midway,below,sloped]{\small$r=r_+$};
	\draw[thick, red](12,2)--(8,2)node[midway,above,sloped]{\small$r=+\infty$};
	\draw[](10,0)--(12,-2)node[midway,above,sloped]{\small$r=r_+$};
	\draw[thick, red](12,-2)--(8,-2)node[midway,below,sloped]{\small$r=+\infty$};
	\draw[thick](12,-2)--(14,0)node[midway,below,sloped]{\small$r=r_-$};
	\draw[thick](14,0)--(12,2)node[midway,above,sloped]{\small$r=r_-$};
	\draw[dashed](12,2)--(14,4);
	\draw[dashed](12,-2)--(14,-4);
	\draw[thick](-2,0)--(-4,-2)node[midway,below,sloped]{\small$r=r_-$};
	\draw[thick](-4,-2)--(-2,-4)node[midway,below,sloped]{\small$r=r_c$};
	\draw[thick](-2,-4)--(0,-2)node[midway,below,sloped]{\small$r=r_c$};
	\draw[thick](-2,-4)--(0,-6)node[midway,below,sloped]{\small$r=r_c$};
	\draw[thick, blue](0,-6)--(0,-2)node[midway,below,sloped]{\small$r=0$};
	\draw[thick, blue](-4,-6)--(-4,-2)node[midway,above,sloped]{\small$r=0$};
	\draw[thick](-2,-4)--(-4,-6)node[midway,below,sloped]{\small$r=r_c$};
	\draw[thick](-4,-6)--(-2,-8)node[midway,below,sloped]{\small$r=r_-$};
	\draw[thick](0,-6)--(-2,-8)node[midway,below,sloped]{\small$r=r_-$};
	\draw[thick, red](0,6)--(4,6)node[midway,below,sloped]{\small$r=+\infty$};
	\draw[thick, red](0,-6)--(4,-6)node[midway,above,sloped]{\small$r=+\infty$};
	\draw[dashed](-4,-6)--(-6,-8);
	\draw[dashed](0,-6)--(2,-8);
	%
	%
	%
	%
	\draw[](-3.4,-4.25)node[]{$\M_{1}$};
	\draw[](-2,-2.0)node[]{$\M_{2}$};
	\draw[](-0.25,0)node[]{$\M_{3}$};
	\draw[](2,1.25)node[]{$\M_{4}$};
	\draw[](7.4,4.5)node[]{$\M_{1}'$};
	\draw[](6,2.5)node[]{$\M_{2}'$};
	\draw[](4.45,0)node[]{$\M_{3}'$};
	\draw[](2,-1.25)node[]{$\M_{4}'$};
	\end{tikzpicture}
	\caption{\label{Global geometry figure}The global geometry of the extended spacetime. Time orientation goes from the bottom to the top of the figure.}
\end{figure}
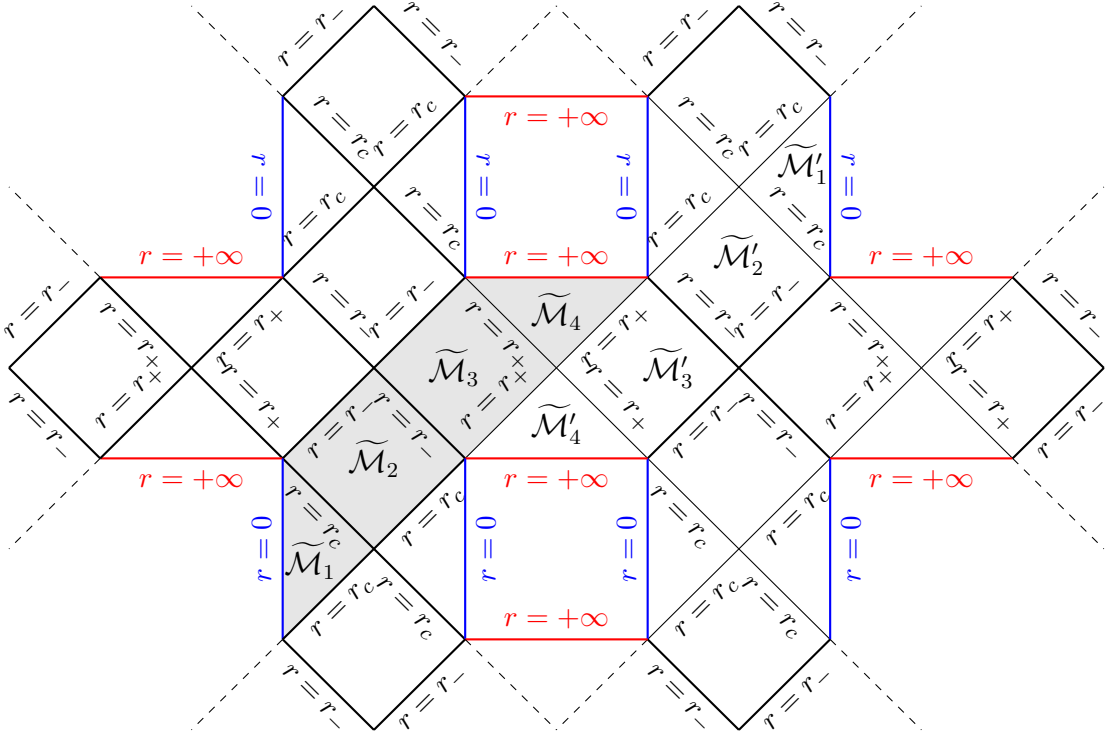
\paragraph{Black rings, white rings and worm rings} The neutralization procedure presented in Subsection \ref{The neutralization procedure} has turned the original De Sitter-Reissner-Nordstr\"om black hole into a stranger object, a \textit{black ring}. It is the equivalent of a black hole except that the topology of the horizon is $\mathbb{S}^{1}\times\mathbb{S}^{2}$. As the we will see, the maximal extension $\MM_{\textup{ext}}$ contains infinitely many of them so we have to distinguish the different copies of $\M_{\textup{ext}}$ from each others.

A \textit{piece of universe $\mathscr{P}$} is any copy of $\M_{\textup{ext}}\sqcup\mathscr{H}_{c}\sqcup\mathscr{H}\sqcup\mathscr{I}\sqcup\mathscr{I}_{\infty}$. We exclude in this definition the reverse time-oriented blocks $\M_{j}'$ as the physical block (that containing the Earth) is any copy of the outer space $\M_{3}$. Adapting the definition of Wald \cite{Wa} (see Subsection 12.1 therein which deals with the Kerr case), we will say that:
\begin{itemize}
	\item a non-empty closed subset $\mathcal{B}\subset\MM_{\textup{ext}}$ is a \textit{black ring} if $\partial\mathcal{B}=\mathbb{R}\times\mathbb{S}^{1}\times\mathbb{S}^{2}$ and if for all inextendible causal future-pointing geodesic $\gamma:\mathbb{R}\to\MM_{\textup{ext}}$ starting in $\M_{3}$ and entering $\mathcal{B}$ at some $\mu_{0}$, then
	\begin{align*}
		\bigcup_{\mu\geq\mu_{0}}\{\gamma(\mu)\}\cap\M_{4}\cap\mathscr{P}&=\emptyset;
	\end{align*}
	\item a non-empty subset $\mathcal{W}\subset\MM_{\textup{ext}}$ is a \textit{white ring} if it is a black ring for the reverse time orientation;
	\item a non-empty subset $\mathcal{S}\subset\MM_{\textup{ext}}$ is a \textit{worm ring} $\mathcal{S}$ if there exist a black ring $\mathcal{B}$ and a white ring $\mathcal{W}$ such that $\mathcal{S}=\mathcal{B}\cap\mathcal{W}$.
\end{itemize}
A black ring thus prevents any inextendible causal future-pointing geodesic entering inside it to escape at infinity in the same piece of universe: it can escape only in another copy of the universe. Observe that $\partial\mathcal{B}$ and $\partial\mathcal{W}$ are necessarily null.

We can similarly define the more standard notions of black/white/worm holes by requiring the topology $\mathbb{R}\times\mathbb{S}^{3}$ for the boundary. It turns that the extended spacetime contains only black/white/worm rings.
\begin{lemma}
	\begin{enumerate}
		\item All the copies of $\M_{2}'\sqcup\mathscr{H}_{c}\sqcup\mathscr{H}$ are black rings.
		\item All the copies of $\M_{2}\sqcup\mathscr{H}_{c}\sqcup\mathscr{H}$ are white rings.
		\item All the copies of $\M_{1}\sqcup\mathscr{H}_{c}\sqcup\mathscr{H}$ and $\M_{1}'\sqcup\mathscr{H}_{c}\sqcup\mathscr{H}$ are worm rings.
		\item There is no other black/white/worm ring in $\MM_{\textup{ext}}$. Furthermore, $\MM_{\textup{ext}}$ contains no black/white/worm hole.
	\end{enumerate}
\end{lemma}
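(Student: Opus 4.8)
The plan is to reduce everything to the well-understood causal structure of the maximal analytic extension $\MM_{\textup{DSRN}}$ by exploiting that $(\MM_{\textup{ext}},\g)$ is a metric product up to a null term. Writing $g$ for the De Sitter--Reissner--Nordstr\"{o}m metric and $\pi:\MM_{\textup{ext}}=\MM_{\textup{DSRN}}\times\mathbb S^{1}\to\MM_{\textup{DSRN}}$ for the projection onto the first factor, completing the square in $\mathrm dz$ in the definition of $\g$ gives
\begin{align*}
	\g&=\pi^{*}g-\frac{1}{m^{2}}\big(\mathrm dz+sV(r)\,\mathrm dt\big)^{2},
\end{align*}
the one-form $\mathrm dz+sV\mathrm dt$ being $-m^{2}$ times the $\g$-dual of $\partial_{z}$, hence globally smooth and nowhere vanishing (it is $m$ times the fluid velocity form of Subsection~\ref{Extended Einstein-Maxwell equations}). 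I would first check this identity globally --- patch by patch on the blocks $\M_{j}$, and then across the crossing rings $\mathscr R^{\mathscr H}_{c},\mathscr R^{\mathscr I}_{c}$ and near $\mathscr I_{\infty}$ using the Kruskal--Boyer--Lindquist and conformal coordinates of Subsections~\ref{Crossing rings} and \ref{Black rings} --- and then deduce the \emph{causal dictionary}: since $\g(X,X)=g(\pi_{*}X,\pi_{*}X)-m^{-2}\big((\mathrm dz+sV\mathrm dt)(X)\big)^{2}\le g(\pi_{*}X,\pi_{*}X)$ and a nonzero vertical vector is $\g$-spacelike, $\pi$ carries future-pointing $\g$-causal curves to future-pointing $g$-causal curves (future-pointedness is preserved because $\pi_{*}\partial_{t}=\partial_{t}$, $\pi_{*}\partial_{r}=\partial_{r}$ induce the time orientation of each block); conversely every future-pointing $g$-causal curve lifts --- non-uniquely, up to the $\mathbb S^{1}$-action on $z$ --- to a future-pointing $\g$-causal one by integrating $\dot z=-sV(r)\dot t$, and the lift of a causal geodesic is again a geodesic by the $\mathfrak Z=0$ construction of Subsection~\ref{Principal null geodesics} together with the Killing character of $\partial_{z}$. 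Finally $\pi$ maps $\M_{j}$, $\mathscr H$, $\mathscr H_{c}$, $\mathscr I$, $\mathscr I_{\infty}$ onto the corresponding blocks, event/Cauchy/cosmological horizons and null infinity of $\MM_{\textup{DSRN}}$, and identifies the pieces of universe.

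With this dictionary, items (1)--(3) become translations of the classical block structure of $\MM_{\textup{DSRN}}$ (see \cite{Mo17} and Figure~\ref{Global geometry figure}). For (1): if a future-pointing causal geodesic $\gamma$ starts in $\M_{3}$ and enters a copy of $\M_{2}'\sqcup\mathscr H_{c}\sqcup\mathscr H$, then $\pi\circ\gamma$ is a future-pointing causal curve of $\MM_{\textup{DSRN}}$ crossing the event horizon into the interior region $\{r_{c}<r<r_{-}\}$; there $r$ is monotone and the sequence of horizon crossings is rigid, so $\pi\circ\gamma$ can leave only through the Cauchy horizon into $\{0<r<r_{c}\}$ and can re-reach $\{r>r_{+}\}$ only in a \emph{different} piece of universe --- exactly the black-ring trapping condition. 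The relevant boundary is a single copy of the event horizon, which by the construction of Subsection~\ref{Crossing rings} consists of two null half-cylinders $[0,\infty)\times\mathbb S^{1}\times\mathbb S^{2}$ glued along $\mathscr R^{\mathscr H}_{c}$, hence is diffeomorphic to $\mathbb R\times\mathbb S^{1}\times\mathbb S^{2}$. Item (2) follows from (1) upon reversing the time orientation, which exchanges $\M_{j}\leftrightarrow\M_{j}'$ and black rings with white rings. For (3), just as the region $r<r_{\textup{inner}}$ of Reissner--Nordstr\"{o}m lies simultaneously inside a black hole and inside a white hole, each copy of $\M_{1}\sqcup\mathscr H_{c}\sqcup\mathscr H$ and of $\M_{1}'\sqcup\mathscr H_{c}\sqcup\mathscr H$ is the intersection of a black ring with a white ring sharing a Cauchy horizon, and is therefore a worm ring; the precise copies are read off Figure~\ref{Global geometry figure}.

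For item (4), let $\mathcal B$ be any black ring. Standard causality theory shows $\partial\mathcal B$, being the boundary of a future-trapped region, is an achronal --- hence null --- hypersurface, and it is connected because $\partial\mathcal B\cong\mathbb R\times\mathbb S^{1}\times\mathbb S^{2}$; hence $\pi(\partial\mathcal B)$ is a connected achronal hypersurface of $\MM_{\textup{DSRN}}$ with the analogous trapping property, which by the Penrose diagram of $\MM_{\textup{DSRN}}$ is a single copy of the event or of the Cauchy horizon. Then $\partial\mathcal B\subseteq\pi^{-1}\big(\pi(\partial\mathcal B)\big)$, and the latter is a connected $4$-manifold; since $\partial\mathcal B$ is a $4$-manifold without boundary that is closed in $\MM_{\textup{ext}}$, invariance of domain forces $\partial\mathcal B=\pi^{-1}\big(\pi(\partial\mathcal B)\big)$, pinning $\mathcal B$ down to one of the sets in (1)--(3); likewise for white and worm rings. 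The non-existence of black/white/worm \emph{holes} comes from the same analysis: such an object would satisfy $\partial\cong\mathbb R\times\mathbb S^{3}$, but $\pi^{-1}$ of a copy of an event or Cauchy horizon of $\MM_{\textup{DSRN}}$ is diffeomorphic to $\mathbb R\times\mathbb S^{2}\times\mathbb S^{1}$, which is not homeomorphic to $\mathbb R\times\mathbb S^{3}$ (their fundamental groups differ).

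The two points I expect to cost the most work are the global verification of the causal dictionary --- in particular the smoothness of $\mathrm dz+sV\mathrm dt$ and of the product decomposition across the crossing rings and near $\mathscr I_{\infty}$, where $\g$ is only given in disparate coordinate patches --- and the classification, used in (4), of connected achronal future-trapped hypersurfaces of $\MM_{\textup{DSRN}}$, i.e.\ the rigidity of horizon crossings that one reads off the Penrose diagram.
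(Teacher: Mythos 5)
Your reduction to the causal structure of $\MM_{\textup{DSRN}}$ through the splitting $\g=\pi^{*}g-m^{-2}\big(\mathrm{d}z+sV(r)\mathrm{d}t\big)^{2}$ is a genuinely different route from the paper's for items 1--3, and it works: the identity is easily checked in Boyer--Lindquist coordinates and extends by continuity since $\mathrm{d}z+sV\mathrm{d}t=-m^{2}\,\g(\partial_{z},\cdot)$ is globally smooth, the inequality $\g(X,X)\leq g(\pi_{*}X,\pi_{*}X)$ together with the spacelike character of $\partial_{z}$ shows that $\pi$ sends future-pointing causal curves to future-pointing causal curves, and trapping in $\MM_{\textup{DSRN}}$ then forces trapping upstairs. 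Note that only this projection direction is ever needed for items 1--3; your assertion that the lift $\dot z=-sV\dot t$ of a causal \emph{geodesic} is again a geodesic is unnecessary and is not justified by the $\mathfrak{Z}=0$ construction, which concerns the principal null geodesics only. The paper's own argument is more economical and never leaves the five-dimensional spacetime: since $-r$ is the time orientation on $\M_{2}'$, any future-pointing causal geodesic there satisfies $\dot r=\g(\dot\gamma,\nabla r)\leq 0$, so it can only sink towards the Cauchy horizon and, by the global Carter--Penrose structure, reach $r=+\infty$ only in another piece of universe; items 2 and 3 follow by reversing the time orientation and intersecting. Your route buys a transfer principle from the known $3+1$ picture at the cost of the global verification of the splitting; the paper's route is essentially a one-line sign computation.

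Item 4, however, contains a genuine gap. The paper proves it with no classification of boundaries at all: once the rings of items 1--3 are removed, only copies of $\M_{3},\M_{3}',\M_{4},\M_{4}'$ remain, and through every point of these blocks the outgoing principal null geodesic $\gamma_{\textup{out}}$ escapes to $r=+\infty$ in the same piece of universe, so no black/white/worm ring or hole can meet them. Your argument instead hinges on two unestablished steps. First, the achronality of $\partial\mathcal{B}$, and then of $\pi(\partial\mathcal{B})$, is not a consequence of the definition of a black ring, which only constrains causal geodesics issued from $\M_{3}$: a closed subset of a listed ring obtained by smoothly deforming the event-horizon boundary inward over a compact region still satisfies the trapping condition and has boundary diffeomorphic to $\mathbb{R}\times\mathbb{S}^{1}\times\mathbb{S}^{2}$, yet that boundary is partly timelike. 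Moreover, even granting achronality upstairs, it does not descend through $\pi$ by the lifting argument you indicate: a timelike curve joining two points of $\pi(\partial\mathcal{B})$ lifts to a timelike curve joining the corresponding fibres, not necessarily points of $\partial\mathcal{B}$. Second, the classification of connected achronal future-trapped hypersurfaces of $\MM_{\textup{DSRN}}$ as single horizon copies --- which you yourself flag as the expensive step --- is exactly the crux and is nowhere proved. Without these two inputs the invariance-of-domain and fundamental-group conclusions have nothing to rest on; the escape-geodesic argument used in the paper is the direct way to close item 4, and it also disposes of the hole case without any appeal to boundary topology beyond the definition.
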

\begin{proof}
	\begin{enumerate}
		\item Let $\gamma=(t,z,r,\omega)$ be an inextendible causal future-pointing geodesic which starts in $\left]r_-,r_+\right[$ and such that $r(\mu_0)\in\left]r_c,r_-\right[$ for some $\mu_0\in I$ (such a curve exists, take \textit{e.g.} the incoming principal null geodesics $\gamma_{\textup{in}}$). Since $r(\mu)<r_-$ for some $\mu<\mu_0$, there exists a proper time $\underline{\mu}\in I$, $\underline{\mu}\leq \mu_0$, such that $\dot{r}(\underline{\mu})<0$. Using that $-r$ is a time orientation in $\M_{2}'$, we have
		\begin{align*}
			\dot{r}&=\nabla_{\dot{\gamma}}r=\g_{\mu\nu}\dot{\gamma}^{\mu}\nabla^{\nu}r\leq 0.
		\end{align*}
		As a result, $\dot{r}\leq0$ along the flow of $\gamma$ with equality as long as $\gamma$ lies on $\{r=r_-\}$. Thus $\gamma$ will stay in the block $\M_{2}$ or will cross $\{r=r_{c}\}$, entailing that it can escape to $r=+\infty$ only in another piece of universe.
		\item [2.] and $3.$ follow from $1.$
		\item [4.] Removing all the black/white/worm rings from $\MM_{\textup{ext}}$, it only remains copies of $\M_{3}$, $\M_{3}'$, $\M_{4}$ and $\M_{4}'$ (with the cosmological horizons at $r=r_{+}$). Starting in any of these blocks, the outgoing principal null geodesics $\gamma_{\textup{out}}$ can escape to $r=+\infty$, meaning that no black hole (and thus no white and worm hole) lies there.
	\end{enumerate}
\end{proof}
\begin{remark}
	All the black and white rings contained in the global extended spacetime $\MM_{\textup{ext}}$ are delimited by horizons. In particular, the timelike singularity $\{r=0\}$ lying in all the worm ring copies is always hidden by horizons. The weak Cosmic Censorship is therefore respected.
\end{remark}
%
%
%
%
%
%
%
%
%
%
%
%
%
\section{Analytic scattering theory}
\label{Analytic scattering theory}
This section is devoted to the scattering theory associated to the extended wave equation \eqref{Extended wave equation}. We will show existence and completeness of wave operators associated to several comparison dynamics for fixed momenta $\mathrm{i}\partial_{z}=\z\in\mathbb{Z}\neq\{0\}$ of the scalar field. The case $\z=1$ corresponds to the charged Klein-Gordon equation. Most of the results below follows from the work of Georgescu-G\'{e}rard-H\"{a}fner \cite{GGH17}.

In Subsection \ref{Hamiltonian formulation of the extended wave equation}, we introduce the Hamiltonian formalism associated to equation \eqref{Extended wave equation}. The different comparison dynamics used for the scattering are introduced in Subsection \ref{Comparison dynamics}. The structure of the energy spaces for the comparison dynamics related to transport is analyzed in Subsection \ref{Structure of the energy spaces for the comparison dynamics} in order to prepare the proof of the scattering results. The analytic scattering results are then presented in Subsection \ref{Analyic scattering results} and proved in Subsection \ref{Proof of the analytic results}.
%
%
%
%
\subsection{Hamiltonian formulation of the extended wave equation}
\label{Hamiltonian formulation of the extended wave equation}
We introduce some notations following \cite[Section 2.1]{GGH17}.
%
%
%
%
\subsubsection{The full dynamics}
\label{The full dynamics}
Let us introduce the full dynamics associated to \eqref{Extended wave equation}. First of all, observe that if $u$ solves \eqref{Extended wave equation} then $v:=\mathrm{e}^{-\mathrm{i}sV_{+}t\partial_{z}}u$ satisfies
\begin{align}
\label{Extended wave}
\left(\partial_{t}^{2}-2s\tilde{V}(r)\partial_{z}\partial_{t}+\tilde{P}\right)u&=0,&\tilde{V}(r):=V(r)-V_{+}.
\end{align}
We can therefore work with the potential\footnote{Working with this potential is actually equivalent to use the gauge $A_{\mu}=\left(\frac{Q}{r}-\frac{Q}{r_{+}}\right)\mathrm{d}t$.} $\tilde{V}$ in this Section and use the results in \cite{GGH17}. In order not to overload notations, we will still denote $\tilde{V}$ by $V$; to keep track of the potential $V$, we will often write $V_+$ in the sequel even though it is the constant 0. Note here that we can again compare $sV_{+}$ to the Kerr rotation $\Omega_{+}=\frac{a}{r_+^{2}+a^{2}}$: the unitary transform performed above is the equivalent operation in our setting to the variable change $\varphi\mapsto\varphi-\Omega_{+}t$ at the very beginning of Section 13 in \cite{GGH17}.

We introduce the Regge-Wheeler coordinate
\begin{align*}
\frac{\mathrm{d}x}{\mathrm{d}r}&:=T'(r)=\frac{1}{F(r)}
\end{align*}
so that $x\equiv x(r)$ is equal to the function $T$ up to an additive constant; in particular, $x\in\mathbb{R}$ and $x\to\pm\infty$ if and only if $r\to r_{\pm}$. In the sequel, we will denote by $'$ the derivative with respect to the variable $r$.

Next, let us define $\mathcal{H}:=L^{2}\left(\mathbb{S}^{1}_{z}\times\left]r_{-},r_{+}\right[_{r}\times\mathbb{S}^{2}_{\omega},T'(r)\mathrm{d}z\mathrm{d}r\mathrm{d}\omega\right)=L^{2}\left(\mathbb{S}^{1}_{z}\times\mathbb{R}_{x}\times\mathbb{S}^{2}_{\omega},\mathrm{d}z\mathrm{d}x\mathrm{d}\omega\right)$ and
\begin{align*}
h&:=r\hat{P}r^{-1}\nonumber\\
&=-r^{-1}\partial_{x}r^{2}\partial_{x}r^{-1}-\frac{F(r)}{r^{2}}\Delta_{\mathbb{S}{^2}}-\big(m^{2}F(r)-s^{2}V(r)^{2}\big)\partial_{z}^{2}\\
&=-\partial_{x}^{2}+F(r)\left(-\frac{1}{r^{2}}\Delta_{\mathbb{S}^{2}}+\frac{F'(r)}{r}-m^{2}\partial_{z}^{2}\right)+s^{2}V(r)^{2}\partial_{z}^{2}.
\end{align*}
with $\hat{P}$ given by \eqref{P_z}. Here the variable $r$ has to be understood as a function $r(x)$ of the Regge-Wheeler coordinate. The operator $\partial_{z}$ plays a similar role as $\partial_{\varphi}$ for the De Sitter-Kerr case (\textit{cf.} \cite[equation (13.3)]{GGH17}). Observe here that $h$ is not positive in the dyadorings $\mathcal{D}_{\pm}$ (because $sV-m^{2}F<0$ near $r=r_\pm$). This problem is very similar to the failure of $\partial_{t}$ to be timelike in the extended spacetime (or in Kerr spacetime). Considering the timelike (but not Killing) vector field $\nabla t=F(r)^{-1}(\partial_{t}-sV(r)\partial_{z})$ instead, we add the extra "rotating" term $-sV(r)\partial_{z}$ which cancels the negative parts of $\g(\partial_{t},\partial_{t})$ near $r=r_{\pm}$.

Introduce now cut-offs $i_\pm,j_\pm\in\mathcal{C}^{\infty}(\left]r_-,r_+\right[,\mathbb{R})$ satisfying
\begin{align*}
&\mathrm{Supp\,i_-}=\left]-\infty,1\right],\qquad\mathrm{Supp\,i_+}=\left[-1,+\infty\right[,\\
&i_-^2+i_+^2=1,\qquad i_\pm j_\pm=j_\pm,\qquad i_\pm j_\mp=0.
\end{align*}
They will be useful to separate incoming and outgoing parts of solutions for the scattering. We next introduce the following operators:
\begin{align*}
k&:=-\mathrm{i}sV(r)\partial_{z},\\
k_{\pm}&:=-\mathrm{i}sV_{\pm}\partial_{z},\\
\kk_{\pm}&:=k\mp j_{\mp}^{2}k_{-},\\
h_{0}:=h+k^{2}&=-r^{-1}\partial_{x}r^{2}\partial_{x}r^{-1}-\frac{F(r)}{r^{2}}\Delta_{\mathbb{S}{^2}}-m^{2}F(r)\partial_{z}^{2}\\
&=-\partial_{x}^{2}+F(r)\left(-\frac{1}{r^{2}}\Delta_{\mathbb{S}^{2}}+\frac{F'(r)}{r}-m^{2}\partial_{z}^{2}\right),\\
\hh_{\pm}&:=h_{0}-(\kk-k_{\pm})^{2}.
\end{align*}
Observe that $h_{0}\geq0$ which witnesses of the hyperbolic nature of the equation \eqref{Extended wave equation} (\textit{cf.} \cite[Remark 2.2]{GGH17}). Observe also that if $s$ is sufficiently small (that we will always assume in the sequel), then $\hh_{\pm}\geq0$ to since $\kk-k_{\pm}$ as the same exponential decay at infinity as $F$, \textit{cf.} \eqref{Decay r-r_- with kappa_pm}.

Using the spherical symmetries of the problem, that is, using the diagonalizations $-\Delta_{\mathbb{S}^{2}}=\ell(\ell+1)$ and $-\mathrm{i}\partial_{z}=\z$ with $(\ell,\z)\in\mathbb{N}\times\mathbb{Z}$, we define
\begin{align*}
&h_{0}^{\ell,\z}:=-r^{-1}\partial_{x}r^{2}\partial_{x}r^{-1}-\frac{F(r)}{r^{2}}\ell(\ell+1)+m^{2}F(r)\z^{2}=-\partial_{x}^{2}+F(r)\left(\frac{\ell(\ell+1)}{r^{2}}+\frac{F'(r)}{r}+m^{2}\z^{2}\right),\\[-12mm]
\end{align*}
\begin{align*}
\mathscr{D}(h_{0}^{\ell,\z})&:=\Big\{u\in\HH\,\big\vert\,h_{0}^{\ell,\z}u\in\HH\Big\}.
\end{align*}
In their work \cite{BaMo93}, Bachelot and Motet-Bachelot show in Proposition II.1 that $-\partial_{x}^{2}+\mathcal{V}(x)$ has no 0 eigenvalue if the potential $\mathcal{V}(x)$ has some polynomial decay at infinity; this is the case for $h_{0}^{\ell,\z}$ which is then an elliptic second order differential operator, thus self-adjoint on $\HH$. We realize $h_{0}$ as the direct sum of the harmonic operators $h_{0}^{\ell,\z}$ \textit{i.e.}
\begin{align*}
\mathscr{D}(h_{0})&:=\bigg\{u=\sum_{\ell,\z}u_{\ell,\z}\ \bigg\vert\ u_{\ell,\z}\in\mathscr{D}(h_{0}^{\ell,\z}),\;\sum_{\ell,\z}\big\|h_{0}^{\ell,\z}u_{\ell,\z}\big\|_{\HH}^{2}<+\infty\bigg\}
\end{align*}
which is in turn elliptic and self-adjoint. Finally, as $k\in\mathcal{B}(\HH)$, we can also realize $h$ as a self-adjoint operator on the domain $\mathscr{D}(h)=\mathscr{D}(h_{0})$. In the sequel, we will use the elliptic self-adjoint realizations $(\hh_{\pm},\mathscr{D}(\hh_{\pm}))$ defined as above for $h_{0}$.
%
%
%
%
\subsubsection{Energy spaces for the full dynamics}
\label{Energy spaces for the full dynamics}
We now turn to the definition of the energy spaces associated to the Hamiltonian $\dot{H}$ following \cite{GGH17} (see the paragraphs 3.4.1 and 3.4.2 therein).

We define the \textit{inhomogeneous energy spaces}
\begin{align*}
\mathcal{E}&:=\langle h_{0}\rangle^{-1/2}\HH\times\HH
\end{align*}
equipped with the norm\footnote{We use here that $h_{0}\geq 0$.}
\begin{align*}
\|(u_{0},u_{1})\|_{\mathcal{E}}^{2}&:=\langle(1+h_{0})u_{0},u_{0}\rangle_{\HH}+\|u_{1}-ku_{0}\|_{\HH}^{2}.
\end{align*}
Using that $h_{0}\geq 0$ has no kernel in $\HH$, we can also define the \textit{homogeneous energy space} $\dot{\mathcal{E}}$ as the completion of $\mathcal{E}$ for the norm\footnote{We use here that $h_{0}>0$.}
\begin{align*}
\|(u_{0},u_{1})\|_{\dot{\mathcal{E}}}^{2}&:=\langle h_{0}u_{0},u_{0}\rangle_{\HH}+\|u_{1}-ku_{0}\|_{\HH}^{2}.
\end{align*}
Observe that, as explained in paragraph 3.4.3 of \cite{GGH17}, this energy is not conserved in general. The natural conserved energies $\langle\cdot\!\mid\!\cdot\rangle_{\ell}$ defined for all $\ell\in\mathbb{R}$ by
\begin{align*}
\langle(u_{0},u_{1})\mid(u_{0},u_{1})\rangle_{\ell}&:=\langle(h_{0}-(k-\ell)^{2})u_{0},u_{0}\rangle_{\HH}+\|u_{1}-\ell u_{0}\|_{\HH}^{2}
\end{align*}
are not in general positive (because of the existence of the dyadorings). In contrast, the (positive) energy $\|\cdot\|_{\dot{\mathcal{E}}}$ is not conserved along the flow of $\partial_{t}$ and may grow in time: this is \textit{superradiance}. From the geometric point of view, superradiance occurs because of the existence of the dyadorings and we are using the timelike vector field $\nabla t$ of \eqref{Static observer} instead of $\partial_{t}$ to get a positive quantity near the dyadorings; the cost to pay for this is the non conservation of the energy since $\nabla t$ is not Killing.

Define next the \textit{asymptotic energy spaces}\footnote{Now we use that $\hh_{\pm}>0$.}
\begin{align*}
\dot{\widetilde{\mathcal{E}}}_{\pm}&:=\Phi(\kk_{\pm})\left(\hh_{\pm}^{-1/2}\HH\times\HH\right)
\end{align*}
where
\begin{align*}
\Phi(\kk_{\pm})&:=\begin{pmatrix}
\mathds{1}&0\\\kk_{\pm}&\mathds{1}
\end{pmatrix}.
\end{align*}
The spaces $\dot{\widetilde{\mathcal{E}}}_{\pm}$ are equipped with the norms
\begin{align*}
\|(u_{0},u_{1})\|_{\dot{\widetilde{\mathcal{E}}}_{\pm}}^{2}&:=\langle \hh_{\pm}u_{0},u_{0}\rangle_{\HH}+\|u_{1}-\kk_{\pm}u_{0}\|_{\HH}^{2}.
\end{align*}
As discussed above Lemma 3.13 in \cite{GGH17}, the operators $\Phi(\kk_{\pm}):\hh_{\pm}^{-1/2}\HH\times\HH\to\dot{\widetilde{\mathcal{E}}}_{\pm}$ are isomorphisms with inverses $\Phi(-\kk_{\pm})$.

The Hamiltonian form of \eqref{Extended wave} is given by $-\mathrm{i}\partial_{t}u=\dot{H}u$ with
\begin{align*}
\dot{H}&=\begin{pmatrix}
0&\mathds{1}\\
h&2k
\end{pmatrix},&\mathscr{D}(\dot{H})&=\big\{u\in\dot{\mathcal{E}}\mid\dot{H}u\in\dot{\mathcal{E}}\big\}
\end{align*}
is the \textit{energy Klein-Gordon operator}. We will also need to use the \textit{asymptotic Hamiltonians}
\begin{align*}
\dot{\widetilde{H}}_{\pm}&=\begin{pmatrix}
0&\mathds{1}\\
\hh_{\pm}&2\kk_{\pm}
\end{pmatrix},&\mathscr{D}(\dot{\widetilde{H}}_{\pm})&:=\Phi(k_{\pm})\left(\hh_{\pm}^{-1/2}\HH\cap \hh_{\pm}^{-1}\HH\times\langle \hh_{\pm}\rangle^{-1/2}\HH\right).
\end{align*}
Given now $\z\in\mathbb{Z}$, we define on $\ker(\mathrm{i}\partial_{z}+\z)$ the restricted operators $h^{\z}$, $h_{0}^{\z}$, $k^{\z}$, $\dot{H}^{\z}$, etc. as well as the spaces $\HH^{\z}$, $\mathcal{E}^{\z}$, $\dot{\mathcal{E}}^{\z}$ in the obvious way. The operator $h_{0}^{\z}$ is nothing but a Klein-Gordon operator (we are in the same situation as in \cite{Be18} if $\z\neq0$). Furthermore, if $\z\neq0$, then $\langle h_{0}\rangle^{-1/2}\HH^{\z}=h_{0}^{-1/2}\HH^{\z}=(h_{0}^{\z})^{-1/2}\HH^{\z}$.

\cite[Lemma 2.2]{Ha02} shows\footnote{$\mathcal{H}^{2}$ must be replaced by its completion with respect to the homogeneous norm as pointed out by the author of this paper in \cite{Ha03}, above Lemma 2.1.1.} that $(\dot{\widetilde{H}}\,\!^{\,\z}_{\pm},\mathscr{D}(\dot{\widetilde{H}}\,\!^{\,\z}_{\pm}))$ are self-adjoint. The difference with $(\dot{H}^{\z},\mathscr{D}(\dot{H}^{\z}))$ is that for the asymptotic operators, we have $\big\|\kk_{\pm}^{\z}u\big\|_{\HH^{\z}}\lesssim\big\langle\hh^{\z}_{\pm}u,u\big\rangle_{\HH^{\z}}$ for all $u\in\mathscr{D}\big((\hh_{\pm}^{\z})^{1/2}\big)$ (as required by (2.22) in \cite{Ha02}); such an estimate is false with $k^{\z}$ and $h^{\z}$.

\cite[Lemma 3.19]{GGH17} shows that $\dot{H}$ is the generator of a continuous group $(\mathrm{e}^{\mathrm{i}t\dot{H}})_{t\in\mathbb{R}}$ on $\dot{\mathcal{E}}$. If $u=(u_{0},u_{1})$ solves $-\mathrm{i}\partial_{t}u=\dot{H}u$, then $u_{0}$ is a solution of \eqref{Extended wave equation} and conversely, if $u$ solves \eqref{Extended wave equation} then $v=(u,-\mathrm{i}\partial_{t}u)$ satisfies $-\mathrm{i}\partial_{t}v=\dot{H}v$.
%
%
%
%
\subsection{Comparison dynamics}
\label{Comparison dynamics}
We present in this Subsection the comparison dynamics we will use for the scattering.
%
%
%
%
\subsubsection{Introduction of the dynamics}
\label{Introduction of the dynamics}
\paragraph{Separable comparison dynamics.} The first dynamics we will compare $(\mathrm{e}^{\mathrm{i}t\dot{H}^{\z}})_{t\in\mathbb{R}}$ to are the following \textit{separable comparison dynamics}. Define the operators
\begin{align*}
h_{\pm\infty}&:=-\partial_{x}^{2}-\frac{F(r)}{r^{2}}\Delta_{\mathbb{S}{^2}}-\big(m^{2}F(r)-s^{2}V_{\pm}^{2}\big)\partial_{z}^{2},\qquad\qquad k_{\pm\infty}:=-\mathrm{i}sV_{\pm}\partial_{z}.
\end{align*}
The associated second order equation reads
\begin{align}
\label{Second order pminfty}
\big(\partial_{t}^{2}-2\mathrm{i}k_{\pm\infty}\partial_{t}+h_{\pm\infty}\big)u&=0.
\end{align}
The corresponding Hamiltonian is given by
\begin{align*}
\dot{H}_{\pm\infty}&=\begin{pmatrix}
0&\mathds{1}\\
h_{\pm\infty}&2k_{\pm\infty}
\end{pmatrix},\qquad\qquad\mathscr{D}(\dot{H}_{\pm\infty})=\big\{u\in\dot{\mathcal{E}}_{\pm\infty}\mid\dot{H}_{\pm\infty}u\in\dot{\mathcal{E}}_{\pm\infty}\big\}
\end{align*}
where the spaces $\big(\dot{\mathcal{E}}_{\pm\infty},\|\cdot\|_{\dot{\mathcal{E}}_{\pm\infty}}\big)$ are defined in the canonical way \textit{i.e.} $\dot{\mathcal{E}}_{\pm\infty}$ is the completion of smooth compactly supported functions with respect to the norm
\begin{align*}
\|(u_{0},u_{1})\|_{\dot{\mathcal{E}}_{\pm\infty}}^{2}&:=\big\langle\big(h_{\pm\infty}+k_{\pm\infty}^{2}\big)u_{0},u_{0}\big\rangle_{\HH}+\|u_{1}-k_{\pm\infty}u_{0}\|_{\HH}^{2}.
\end{align*}
Denote by $h^{\z}_{\pm\infty}, k^{\z}_{\pm\infty}, \dot{H}^{\z}_{\pm\infty}$ and $\dot{\mathcal{E}}^{\z}_{\pm\infty}$ the restrictions on $\ker(\mathrm{i}\partial_{z}+\z)$ for $\z\in\mathbb{Z}$ of the above operators and spaces. If $u\in\dot{\mathcal{E}}^{\z}$ solves \eqref{Second order pminfty}, then
\begin{align*}
\frac{1}{2}\frac{\mathrm{d}}{\mathrm{d}t}\|(u,-\mathrm{i}\partial_{t}u)\|_{\dot{\mathcal{E}}_{\pm\infty}^{\z}}^{2}&=\Re\Big(\big\langle\big(h^{\z}_{\pm\infty}+(k_{\pm\infty}^{\z})^{2}\big)u,\partial_{t}u\big\rangle_{\HH}+\big\langle\partial_{t}^{2}u-\mathrm{i}k_{\pm\infty}^{\z}\partial_{t}u,\partial_{t}u-\mathrm{i}k_{\pm\infty}^{\z}u\big\rangle_{\HH}\Big)\\
&=\Re\Big(\big\langle\big(h_{\pm\infty}+(k_{\pm\infty}^{\z})^{2}\big)u,\partial_{t}u\big\rangle_{\HH}+\big\langle\mathrm{i}k_{\pm\infty}^{\z}\partial_{t}u-h_{\pm\infty}u,\partial_{t}u-\mathrm{i}k_{\pm\infty}^{\z}u\big\rangle_{\HH}\Big)\\
&=\Re\Big(\big\langle\mathrm{i}k_{\pm\infty}^{\z}\partial_{t}u,\partial_{t}u\big\rangle_{\HH}+\big\langle h_{\pm\infty}u,\mathrm{i}k_{\pm\infty}^{\z}u\big\rangle_{\HH}\Big)\\
&=0
\end{align*}
because
\begin{align*}
\mathrm{sign}(s\z)\times\big\langle h_{\pm\infty}u,\mathrm{i}k_{\pm\infty}^{\z}u\big\rangle_{\HH}&=\mathrm{i}\big\||k_{\pm\infty}^{\z}|^{1/2}\partial_{x}u\big\|_{\HH}^{2}+\mathrm{i}\big\|r^{-1}F(r)^{1/2}|k_{\pm\infty}^{\z}|^{1/2}\nabla_{\mathbb{S}^{2}}u\big\|_{\HH}^{2}\\
&+\mathrm{i}\big\|mF(r)^{1/2}|k_{\pm\infty}^{\z}|^{1/2}u\big\|_{\HH}^{2}-\mathrm{i}\big\||k_{\pm\infty}^{\z}|^{3/2}\partial_{z}u\big\|_{\HH}^{2}.
\end{align*}
Notice here that the above conservation law strongly relies on the fact that $[\partial_{x},k_{\pm\infty}]=0$ (in comparison, $[\partial_{x},k]\neq 0$). It results that the associated dynamics $(\mathrm{e}^{\mathrm{i}t\dot{H}^{\z}_{\pm\infty}})_{t\in\mathbb{R}}$ are unitary on $\HH^{\z}\times\HH^{\z}$ and the infinitesimal generators $(\dot{H}^{\z}_{\pm\infty},\mathscr{D}(\dot{H}^{\z}_{\pm\infty}))$ are self-adjoint by Stone's theorem.

Theorem \ref{Asymptotic completeness, separable comparison dynamics} states that the operators $\mathrm{e}^{-\mathrm{i}t\dot{H}^{\z}}i_{\pm}\mathrm{e}^{\mathrm{i}t\dot{H}^{\z}_{\pm\infty}}$ and $\mathrm{e}^{-\mathrm{i}t\dot{H}^{\z}_{\pm\infty}}i_{\pm}\mathrm{e}^{\mathrm{i}t\dot{H}^{\z}}$ have strong limits in $\mathcal{B}(\dot{\mathcal{E}}^{\z}_{\pm\infty},\dot{\mathcal{E}}^{\z})$ and $\mathcal{B}(\dot{\mathcal{E}}^{\z},\dot{\mathcal{E}}^{\z}_{\pm\infty})$ as $|t|\to+\infty$.
\paragraph{Asymptotic profiles.} We next introduce the \textit{asymptotic profiles}: they consist in the simplest possible asymptotic comparison dynamics obtained by formally taking the limit $x\to\pm\infty$ in $h$ and $k$. Set
\begin{align*}
h_{-\!/\!+}&:=-\partial_{x}^{2}+s^{2}V_{-\!/\!+}^{2}\partial_{z}^{2},\qquad\qquad k_{-\!/\!+}:=-\mathrm{i}sV_{-\!/\!+}\partial_{z}.
\end{align*}
The associated second order equation reads
\begin{align}
\label{Assymptotic equation}
\big(\partial_{t}^{2}-2\mathrm{i}k_{-\!/\!+}\partial_{t}+h_{-\!/\!+}\big)u&=0.
\end{align}
Notice the following factorization :
\begin{align*}
\partial_{t}^{2}-2sV_{-\!/\!+}\partial_{z}\partial_{t}+h_{-\!/\!+}&=(\partial_{t}-\partial_{x}-sV_{-\!/\!+}\partial_{z})(\partial_{t}+\partial_{x}-sV_{-\!/\!+}\partial_{z}).
\end{align*}
We call \textit{incoming} respectively \textit{outgoing} solutions of \eqref{Assymptotic equation} are the solutions of $(\partial_{t}-\partial_{x}-sV_{-\!/\!+}\partial_{z})u=0$ respectively $(\partial_{t}+\partial_{x}-sV_{-\!/\!+}\partial_{z})u=0$. Define the corresponding Hamiltonians
\begin{align*}
\dot{H}_{-\!/\!+}&=\begin{pmatrix}
0&\mathds{1}\\
h_{-\!/\!+}&2k_{-\!/\!+}
\end{pmatrix},\qquad\qquad\mathscr{D}(\dot{H}_{-\!/\!+})=\big\{u\in\dot{\mathcal{E}}_{-\!/\!+}\mid\dot{H}_{-\!/\!+}u\in\dot{\mathcal{E}}_{-\!/\!+}\big\}
\end{align*}
with the canonical energy spaces $\big(\dot{\mathcal{E}}_{-\!/\!+},\|\cdot\|_{\dot{\mathcal{E}}_{-\!/\!+}}\big)$ and the homogeneous norms
\begin{align*}
\|(u_{0},u_{1})\|_{\dot{\mathcal{E}}_{-\!/\!+}}^{2}&:=\big\langle\big(h_{-\!/\!+}+k_{-\!/\!+}^{2}\big)u_{0},u_{0}\big\rangle_{\HH}+\|u_{1}-k_{-\!/\!+}u_{0}\|_{\HH}^{2}.
\end{align*}
Put $h^{\z}_{-\!/\!+}, k^{\z}_{-\!/\!+}, \dot{H}^{\z}_{-\!/\!+}$ and $\dot{\mathcal{E}}^{\z}_{-\!/\!+}$ for the restrictions of the above operators and spaces on $\ker(\mathrm{i}\partial_{z}+\z)$. Then $(\mathrm{e}^{\mathrm{i}t\dot{H}^{\z}_{-\!/\!+}})_{t\in\mathbb{R}}$ is unitary and $(\dot{H}^{\z}_{-\!/\!+},\mathscr{D}(\dot{H}^{\z}_{-\!/\!+}))$ self-adjoint (the argument is the same as for the separable comparison dynamics above).

Theorem \ref{Asymptotic completeness, asymptotic profiles} below states that there exists a dense subspace $\mathcal{D}^{\textup{fin},\z}_{-\!/\!+}\subset\dot{\mathcal{E}}^{\z}_{-\!/\!+}$ such that $\mathrm{e}^{-\mathrm{i}t\dot{H}^{\z}}i_{-\!/\!+}^{2}\mathrm{e}^{\mathrm{i}t\dot{H}^{\z}_{-\!/\!+}}$ and $\mathrm{e}^{-\mathrm{i}t\dot{H}^{\z}_{-\!/\!+}}i_{-\!/\!+}^{2}\mathrm{e}^{\mathrm{i}t\dot{H}^{\z}}$ have strong limits in $\mathcal{B}(\mathcal{D}^{\textup{fin},\z}_{-\!/\!+},\dot{\mathcal{E}}^{\z})$ and $\mathcal{B}(\dot{\mathcal{E}}^{\z},\dot{\mathcal{E}}^{\z}_{-\!/\!+})$ as $|t|\to+\infty$.
\paragraph{Geometric profiles.}Finally, let us introduce two last profiles we will refer to later on as the \textit{geometric profiles}. We want them to describe a transport along principal null geodesics $\gamma_{\textup{in/out}}$ introduced in Subsection \ref{Principal null geodesics}.

The generators of time-parametrized\footnote{We can parametrize the principal null geodesics by the time variable $t$ as $\frac{\mathrm{d}t}{\mathrm{d}r}=\pm F(r)^{-1}$, \textit{cf.} Subsection \nolinebreak\ref{Principal null geodesics}).} principal null geodesics are given by
\begin{align*}
\boldsymbol{v}_{\textup{in}}&=\partial_{t}+L_{\mathscr{H}},\qquad\qquad L_{\mathscr{H}}:=-\partial_{x}-sV(r)\partial_{z},\\
\boldsymbol{v}_{\textup{out}}&=\partial_{t}+L_{\mathscr{I}},\qquad\qquad L_{\mathscr{I}}:=\partial_{x}-sV(r)\partial_{z}.
\end{align*}
The natural equation one may want to consider to describe transport along $\gamma_{\textup{in/out}}$ would be
\begin{align*}
\frac{1}{2}\big(\boldsymbol{v}_{\textup{in}}\boldsymbol{v}_{\textup{out}}+\boldsymbol{v}_{\textup{out}}\boldsymbol{v}_{\textup{in}}\big)&=0
\end{align*}
However, $[L_{\mathscr{H}},L_{\mathscr{I}}]\neq 0$ (because $[\partial_{x},V]\neq 0$) and no conserved positive energy can be associated to this equation.

The idea is to use instead new dynamics with natural conserved energies whose incoming and outgoing parts are $\boldsymbol{v}_{\textup{in}}$ and $\boldsymbol{v}_{\textup{out}}$. Let us define
\begin{align*}
L_{+}&:=-L_{\mathscr{H}}-2sV_{-}\partial_{z}=\partial_{x}+s(V(r)-2V_{-})\partial_{z},\\
L_{-}&:=-L_{\mathscr{I}}-2sV_{+}\partial_{z}=-\partial_{x}+s(V(r)-2V_{+})\partial_{z}
\end{align*}
and set
\begin{align*}
h_{\mathscr{H}\!/\!\mathscr{I}}&:=-\big(L_{\mathscr{H}\!/\!\mathscr{I}}+sV_{-\!/\!+}\partial_{z}\big)^{2}=-\big(\partial_{x}+\!\!/\!\!-s(V(r)-V_{-\!/\!+})\partial_{z}\big)^{2},\qquad\qquad k_{\mathscr{H}\!/\!\mathscr{I}}:=-\mathrm{i}sV_{-\!/\!+}\partial_{z}.
\end{align*}
The associated second order equation reads
\begin{align}
\label{Assymptotic equation geo}
\big((\partial_{t}-\mathrm{i}k_{\mathscr{H}\!/\!\mathscr{I}})^{2}+h_{\mathscr{H}\!/\!\mathscr{I}}\big)u&=\partial_{t}^{2}u-2\mathrm{i}k_{\mathscr{H}\!/\!\mathscr{I}}\partial_{t}u+L_{\mathscr{H}\!/\!\mathscr{I}}L_{+\!/\!-}u=0
\end{align}
so that we have the factorizations
\begin{align*}
(\partial_{t}-\mathrm{i}k_{\mathscr{H}})^{2}+h_{\mathscr{H}}&=\big(\partial_{t}+L_{\mathscr{H}}\big)\big(\partial_{t}+L_{+}\big),\\
(\partial_{t}-\mathrm{i}k_{\mathscr{I}})^{2}+h_{\mathscr{I}}&=\big(\partial_{t}+L_{\mathscr{I}}\big)\big(\partial_{t}+L_{-}\big)
\end{align*}
with this time $[L_{\mathscr{H}},L_{+}]=[L_{\mathscr{I}},L_{-}]=0$. The \textit{incoming part} $\big(\partial_{t}+L_{\mathscr{H}}\big)$ describes a transport towards $\mathscr{H}$ along $\gamma_{\textup{in}}$ whereas the \textit{outgoing part} $\big(\partial_{t}+L_{\mathscr{I}}\big)$ describes a transport towards $\mathscr{I}$ along $\gamma_{\textup{out}}$. The artificial parts $\big(\partial_{t}+L_{+\!/\!-}\big)$ describe transports towards $\mathscr{I}/\mathscr{H}$ along modified principal null geodesics; they will disappear when we will send data on the horizons later on.

The corresponding Hamiltonians
\begin{align*}
\dot{H}_{\mathscr{H}\!/\!\mathscr{I}}&=\begin{pmatrix}
0&\mathds{1}\\
h_{\mathscr{H}\!/\!\mathscr{I}}-k_{\mathscr{H}\!/\!\mathscr{I}}^{2}&2k_{\mathscr{H}\!/\!\mathscr{I}}
\end{pmatrix},\qquad\qquad\mathscr{D}(\dot{H}_{\mathscr{H}\!/\!\mathscr{I}})=\big\{u\in\dot{\mathcal{E}}_{\mathscr{H}\!/\!\mathscr{I}}\mid\dot{H}_{\mathscr{H}\!/\!\mathscr{I}}u\in\dot{\mathcal{E}}_{\mathscr{H}\!/\!\mathscr{I}}\big\}
\end{align*}
act on their energy spaces $\big(\dot{\mathcal{E}}_{\mathscr{H}\!/\!\mathscr{I}},\|\cdot\|_{\dot{\mathcal{E}}_{\mathscr{H}\!/\!\mathscr{I}}}\big)$ with
\begin{align*}
\|(u_{0},u_{1})\|_{\dot{\mathcal{E}}_{\mathscr{H}\!/\!\mathscr{I}}}^{2}&:=\big\langle h_{\mathscr{H}\!/\!\mathscr{I}}u_{0},u_{0}\big\rangle_{\HH}+\|u_{1}-k_{\mathscr{H}\!/\!\mathscr{I}}u_{0}\|_{\HH}^{2}.
\end{align*}
Notice here that $h_{\mathscr{H}\!/\!\mathscr{I}}$ plays the role of $h_{0}$ for the full dynamics, so that we have to subtract $k_{\mathscr{H}\!/\!\mathscr{I}}^{2}$ in the Hamiltonians $\dot{H}_{\mathscr{H}\!/\!\mathscr{I}}$; besides, we can check that $h_{\mathscr{H}\!/\!\mathscr{I}}>0$ since $\ker_{\HH^{\z}}(h_{\mathscr{H}\!/\!\mathscr{I}})=\{0\}$ for all $\z\in\mathbb{Z}$. By construction, the energies $\|\cdot\|_{\dot{\mathcal{E}}_{\mathscr{H}\!/\!\mathscr{I}}}$ are conserved and the evolutions $(\mathrm{e}^{\mathrm{i}t\dot{H}_{\mathscr{H}\!/\!\mathscr{I}}})_{t\in\mathbb{R}}$ are unitary on $\HH\times\HH$.

Using as above the notation $^{\z}$ for the restriction on $\ker(\mathrm{i}\partial_{z}+\z)$, Theorem \ref{Asymptotic completeness, geometric profiles} below states that that there exists a dense subspace $\mathcal{D}^{\textup{fin},\z}_{\mathscr{H}\!/\!\mathscr{I}}\subset\dot{\mathcal{E}}^{\z}_{\mathscr{H}\!/\!\mathscr{I}}$ such that $\mathrm{e}^{-\mathrm{i}t\dot{H}^{\z}}i_{-\!/\!+}\mathrm{e}^{\mathrm{i}t\dot{H}^{\z}_{\mathscr{H}\!/\!\mathscr{I}}}$ and $\mathrm{e}^{-\mathrm{i}t\dot{H}^{\z}_{\mathscr{H}\!/\!\mathscr{I}}}-_{-\!/\!+}\mathrm{e}^{\mathrm{i}t\dot{H}^{\z}}$ have strong limits in $\mathcal{B}(\mathcal{D}^{\textup{fin},\z}_{\mathscr{H}\!/\!\mathscr{I}},\dot{\mathcal{E}}^{\z})$ and $\mathcal{B}(\dot{\mathcal{E}}^{\z},\dot{\mathcal{E}}^{\z}_{\mathscr{H}\!/\!\mathscr{I}})$. These limits will be given a geometric sense in Subsection \ref{Traces on the energy spaces}.
\begin{remark}
	\begin{enumerate}
		\item We implicitly used \cite[Lemma 3.19]{GGH17} to define all the above dynamics. We refer to Section 3 therein for a detailed discussion about the energy spaces and the Hamiltonian formalism associated to abstract Klein-Gordon equations.
		\item The self-jointedness of the different Hamiltonians (and thus the unitarity of the associated dynamics) have different origins:\\[-6mm]
		\begin{itemize}
			\item [(i)] For the asymptotic Hamiltonians $\dot{\widetilde{H}}\,\!^{\,\z}_{\pm}$, self-adjointness is due to the decay rate $\kk^{\z}_{\pm}=\mathcal{O}_{x\to\pm\infty}\big(\mathrm{e}^{2\kappa_{\pm}x}\big)$ which ensures that $\kk_{\pm}^{\z}\lesssim\hh_{\pm}^{\z}$.
			\item [(ii)] For the comparison Hamiltonians $\dot{H}^{\z}_{\pm\infty}$, $\dot{H}^{\z}_{-\!/\!+}$ and $\dot{H}^{\z}_{\mathscr{H}\!/\!\mathscr{I}}$, self-adjointness is entailed by the conservation of the associated homogeneous energies, which relies on the commutation of $\partial_{x}$ with $k^{\z}_{\pm\infty}=k^{\z}_{-\!/\!+}=k^{\z}_{\mathscr{H}\!/\!\mathscr{I}}$.
		\end{itemize}
		\item Let $\Sigma_{t'}:=\{t=t'\}\times\mathbb{S}^{1}_{z}\times\mathbb{R}_{x}\times\mathbb{S}^{2}_{\omega}$. In all the above statements, we implicitly identify the slices $\Sigma_{0}$ and $\Sigma_{t}$ for all $t\in\mathbb{R}$ using the curves $\{t=\mathrm{constant}\}$. All the energy spaces $\dot{\mathcal{E}},\dot{\mathcal{E}}_{\pm\infty}$ and $\dot{\mathcal{E}}_{-\!/\!+}$ use coordinates $(z,x,\omega)$ in $\Sigma_{0}$.
	\end{enumerate}
\end{remark}
%
%
%
%
%
\subsubsection{Transport along principal null geodesics}
\label{Transport along principal null geodesics}
The purpose of this paragraph is to explicit the action of the Hamiltonians $\dot{H}_{\mathscr{H}\!/\!\mathscr{I}}$. The same considerations hold for $\dot{H}_{-\!/\!+}$, but we omit details here as this case has been treated in \cite{GGH17}, paragraph 13.3.

Let $\dot{\mathcal{H}}_{\mathscr{H}\!/\!\mathscr{I}}^{1}$ be the completions of $\mathcal{C}^{\infty}_{\mathrm{c}}(\Sigma_{0})$ with respect to the norms
\begin{align*}
	\|u\|_{\dot{\HH}^{1}_{\mathscr{H}\!/\!\mathscr{I}}}&:=\big\|(L_{\mathscr{H}\!/\!\mathscr{I}}+\mathrm{i}k_{\mathscr{H}\!/\!\mathscr{I}})u\big\|_{\HH}=\big\|\big(\partial_{x}+\!\!/\!\!-s(V(r)-V_{-\!/\!+})\partial_{z}\big)u\big\|_{\HH}.
\end{align*}
When $s=0$, we have $h_{\mathscr{H}\!/\!\mathscr{I}}=-\partial_{x}^{2}$ and $\dot{\HH}^{1}_{\mathscr{H}\!/\!\mathscr{I}}$ are nothing but the standard homogeneous Sobolev space $\dot{H}^{1}(\mathbb{R},\mathrm{d}x)$ naturally associated to the one-dimensional wave equation. Consider the following canonical transformations:
\begin{align*}
	\Psi_{\mathscr{H}}&:=\frac{1}{\sqrt{2}}\begin{pmatrix}
	\mathds{1}&\mathds{1}\\
	\mathrm{i}L_{\mathscr{H}}&\mathrm{i}L_{+}
	\end{pmatrix}:\dot{\HH}^{1}_{\mathscr{H}}\times\dot{\HH}^{1}_{\mathscr{H}}\longrightarrow\dot{\mathcal{E}}_{\mathscr{H}},\\
	\Psi_{\mathscr{I}}&:=\frac{1}{\sqrt{2}}\begin{pmatrix}
	\mathds{1}&\mathds{1}\\
	\mathrm{i}L_{-}&\mathrm{i}L_{\mathscr{I}}
	\end{pmatrix}:\dot{\HH}^{1}_{\mathscr{I}}\times\dot{\HH}^{1}_{\mathscr{I}}\longrightarrow\dot{\mathcal{E}}_{\mathscr{I}}.
\end{align*}
While the second column of $\Psi_{\mathscr{H}}$ is artificial, the first one is related to the principal null geodesics $\gamma_{\textup{in}}$: for an incoming/outgoing solution $u^{\textup{in}}$, that is $(\partial_{t}+L_{\mathscr{H}})u^{\textup{in}}=0$, we have $\Psi_{\mathscr{H}}(u^{\textup{in}},0)=(u^{\textup{in}},-\mathrm{i}\partial_{t}u^{\textup{in}})$ so $\Psi_{\mathscr{H}}$ prepare initial data for evolution along $\gamma_{\textup{in}}$. Similarly, the second column of $\Psi_{\mathscr{I}}$ is related to transport along $\gamma_{\textup{out}}$.%
%
%
\begin{lemma}
	\label{Transformation Psi}
	The transformations $\Psi_{\mathscr{H}\!/\!\mathscr{I}}$ are invertible isometries with inverses
	\begin{align*}
	\Psi_{\mathscr{H}}^{-1}&=\sqrt{2}\begin{pmatrix}L_{+}&\mathrm{i}\\-L_{\mathscr{H}}&-\mathrm{i}
	\end{pmatrix}(L_{+}-L_{\mathscr{H}})^{-1},\qquad\Psi_{\mathscr{I}}^{-1}=\sqrt{2}\begin{pmatrix}L_{\mathscr{I}}&\mathrm{i}\\-L_{-}&-\mathrm{i}
	\end{pmatrix}(L_{\mathscr{I}}-L_{-})^{-1}.
	\end{align*}
\end{lemma}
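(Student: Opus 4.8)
The strategy is to reduce everything to a single skew-adjoint operator on $\HH$. For $\bullet\in\{\mathscr{H},\mathscr{I}\}$ set $M_{\bullet}:=L_{\bullet}+\mathrm{i}k_{\bullet}$; by the very definition of $\dot{\HH}^{1}_{\bullet}$ one has $\|u\|_{\dot{\HH}^{1}_{\bullet}}=\|M_{\bullet}u\|_{\HH}$, and using $\mathrm{i}k_{\mathscr{H}\!/\!\mathscr{I}}=sV_{-\!/\!+}\partial_{z}$ one reads off from the definitions of $L_{\pm},h_{\mathscr{H}\!/\!\mathscr{I}},k_{\mathscr{H}\!/\!\mathscr{I}}$ the identities $h_{\mathscr{H}\!/\!\mathscr{I}}=-M_{\mathscr{H}\!/\!\mathscr{I}}^{2}$, $L_{+}-L_{\mathscr{H}}=-2M_{\mathscr{H}}$, $L_{\mathscr{I}}-L_{-}=2M_{\mathscr{I}}$, $L_{+}+\mathrm{i}k_{\mathscr{H}}=-(L_{\mathscr{H}}+\mathrm{i}k_{\mathscr{H}})=-M_{\mathscr{H}}$, $L_{-}+\mathrm{i}k_{\mathscr{I}}=-M_{\mathscr{I}}$, together with $\mathrm{i}L_{\mathscr{H}}-k_{\mathscr{H}}=\mathrm{i}M_{\mathscr{H}}$, $\mathrm{i}L_{+}-k_{\mathscr{H}}=-\mathrm{i}M_{\mathscr{H}}$, and the analogous pair $\mathrm{i}L_{\mathscr{I}}-k_{\mathscr{I}}=\mathrm{i}M_{\mathscr{I}}$, $\mathrm{i}L_{-}-k_{\mathscr{I}}=-\mathrm{i}M_{\mathscr{I}}$. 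Since $\partial_{x}$ on $L^{2}(\mathbb{R}_{x},\mathrm{d}x)$ and $\partial_{z}$ on $L^{2}(\mathbb{S}^{1}_{z})$ are skew-adjoint and $V(r)-V_{-\!/\!+}$ is a real function commuting with $\partial_{z}$, each $M_{\bullet}$ is skew-adjoint on $\HH$; on the harmonic $-\mathrm{i}\partial_{z}=\z$, conjugation by the unitary multiplication operator $\mathrm{e}^{\mathrm{i}s\z\int^{x}(V-V_{-\!/\!+})}$ turns $M_{\bullet}$ into $\pm\partial_{x}$, so $M_{\bullet}\colon\dot{\HH}^{1}_{\bullet}\to\HH$ is an isometric isomorphism (for $s=0$ this is just $\partial_{x}\colon\dot H^{1}(\mathbb{R})\to L^{2}(\mathbb{R})$, as observed above).

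For the isometry, write $(u_{0},u_{1})=\Psi_{\mathscr{H}}(f,g)$, so $u_{0}=\tfrac{1}{\sqrt{2}}(f+g)$ and $u_{1}=\tfrac{\mathrm{i}}{\sqrt{2}}(L_{\mathscr{H}}f+L_{+}g)$. Skew-adjointness of $M_{\mathscr{H}}$ gives $\langle h_{\mathscr{H}}u_{0},u_{0}\rangle_{\HH}=-\langle M_{\mathscr{H}}^{2}u_{0},u_{0}\rangle_{\HH}=\|M_{\mathscr{H}}u_{0}\|_{\HH}^{2}=\tfrac{1}{2}\|M_{\mathscr{H}}(f+g)\|_{\HH}^{2}$, while the identities above yield $u_{1}-k_{\mathscr{H}}u_{0}=\tfrac{\mathrm{i}}{\sqrt{2}}M_{\mathscr{H}}(f-g)$, hence $\|u_{1}-k_{\mathscr{H}}u_{0}\|_{\HH}^{2}=\tfrac{1}{2}\|M_{\mathscr{H}}(f-g)\|_{\HH}^{2}$. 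Adding the two and applying the parallelogram law in $\HH$ gives $\|\Psi_{\mathscr{H}}(f,g)\|_{\dot{\mathcal{E}}_{\mathscr{H}}}^{2}=\|M_{\mathscr{H}}f\|_{\HH}^{2}+\|M_{\mathscr{H}}g\|_{\HH}^{2}=\|f\|_{\dot{\HH}^{1}_{\mathscr{H}}}^{2}+\|g\|_{\dot{\HH}^{1}_{\mathscr{H}}}^{2}$. The same computation with $M_{\mathscr{H}}$ replaced by $M_{\mathscr{I}}$ (and $L_{\mathscr{H}},L_{+},k_{\mathscr{H}}$ by $L_{-},L_{\mathscr{I}},k_{\mathscr{I}}$) gives the isometry of $\Psi_{\mathscr{I}}$.

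For invertibility I will verify directly that the displayed operator $\Psi_{\mathscr{H}}^{-1}$ satisfies $\Psi_{\mathscr{H}}^{-1}\Psi_{\mathscr{H}}=\Psi_{\mathscr{H}}\Psi_{\mathscr{H}}^{-1}=\mathds{1}$ by multiplying out the two $2\times2$ operator matrices. The commutation $[L_{\mathscr{H}},L_{+}]=0$ recorded just after \eqref{Assymptotic equation geo} is what makes this work: it lets $L_{\mathscr{H}}$ and $L_{+}$ pass through $(L_{+}-L_{\mathscr{H}})^{-1}=-\tfrac{1}{2}M_{\mathscr{H}}^{-1}$, so that every entry of the two products collapses to $L_{+}-L_{\mathscr{H}}$ or $L_{\mathscr{H}}-L_{+}$ acting before the inverse, which produces $\pm\mathds{1}$ with the correct signs. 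That $\Psi_{\mathscr{H}}^{-1}$ maps $\dot{\mathcal{E}}_{\mathscr{H}}$ into $\dot{\HH}^{1}_{\mathscr{H}}\times\dot{\HH}^{1}_{\mathscr{H}}$ follows from rewriting $L_{+}u_{0}+\mathrm{i}u_{1}=-M_{\mathscr{H}}u_{0}+\mathrm{i}(u_{1}-k_{\mathscr{H}}u_{0})$ and $-L_{\mathscr{H}}u_{0}-\mathrm{i}u_{1}=-M_{\mathscr{H}}u_{0}-\mathrm{i}(u_{1}-k_{\mathscr{H}}u_{0})$ (using $L_{+}+\mathrm{i}k_{\mathscr{H}}=-(L_{\mathscr{H}}+\mathrm{i}k_{\mathscr{H}})=-M_{\mathscr{H}}$): both lie in $\HH$ because $M_{\mathscr{H}}u_{0}\in\HH$ (as $u_{0}\in\dot{\HH}^{1}_{\mathscr{H}}$) and $u_{1}-k_{\mathscr{H}}u_{0}\in\HH$ by definition of $\dot{\mathcal{E}}_{\mathscr{H}}$, and then $M_{\mathscr{H}}^{-1}\colon\HH\to\dot{\HH}^{1}_{\mathscr{H}}$ lands in the right space. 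The case of $\Psi_{\mathscr{I}}$ is the mirror image, using $[L_{\mathscr{I}},L_{-}]=0$ and $(L_{\mathscr{I}}-L_{-})^{-1}=\tfrac{1}{2}M_{\mathscr{I}}^{-1}$.

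I expect the only step requiring genuine care — all the rest being linear-algebra bookkeeping — to be the claim that $M_{\bullet}$ is an isometric \emph{isomorphism} $\dot{\HH}^{1}_{\bullet}\to\HH$, i.e. the surjectivity, equivalently the solvability in the homogeneous space of the first-order transport equation $M_{\bullet}u=f$; as indicated above this is handled by the unitary change of unknown on each harmonic reducing it to $\partial_{x}$ on $\dot H^{1}(\mathbb{R})$, exactly as in the $s=0$ situation already noted in the text.
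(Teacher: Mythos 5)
Your proposal is correct and takes essentially the same approach as the paper: the isometry via the parallelogram law, and the inversion via the commutation $[L_{\mathscr{H}},L_{+}]=0$ together with the invertibility of $L_{\mathscr{H}\!/\!\mathscr{I}}+\mathrm{i}k_{\mathscr{H}\!/\!\mathscr{I}}$. The only cosmetic difference is that you justify that invertibility by a unitary gauge conjugation reducing $M_{\mathscr{H}\!/\!\mathscr{I}}$ to $\pm\partial_{x}$ on $\dot{H}^{1}(\mathbb{R})$, whereas the paper exhibits the explicit integral formula for $(L_{\mathscr{H}}+\mathrm{i}k_{\mathscr{H}})^{-1}$ on each harmonic and notes that its kernel in $\HH$ is trivial.
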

\begin{proof}
	We only treat the $\mathscr{H}$ case. Let $(u_{0},u_{1})\in\dot{\HH}^{1}_{\mathscr{H}}\times\dot{\HH}^{1}_{\mathscr{H}}$. Using the parallelogram law $\|a+b\|^{2}+\|a-b\|^{2}=2\|a\|^{2}+2\|b\|^{2}$, we compute:
	\begin{align*}
	\big\|\Psi_{\mathscr{H}}(u_{0},u_{1})\big\|_{\dot{\mathcal{E}}_{\mathscr{H}}}^{2}&=\frac{1}{2}\big\|(L_{\mathscr{H}}+\mathrm{i}k_{\mathscr{H}})(u_{0}+u_{1})\big\|_{\HH}^{2}+\frac{1}{2}\big\|\mathrm{i}(L_{\mathscr{H}}+\mathrm{i}k_{\mathscr{H}})u_{0}+\mathrm{i}(L_{+}+\mathrm{i}k_{\mathscr{H}})u_{1}\big\|_{\HH}^{2}\\[1.25mm]
	&=\frac{1}{2}\big\|(L_{\mathscr{H}}+\mathrm{i}k_{\mathscr{H}})(u_{0}+u_{1})\big\|_{\HH}^{2}+\frac{1}{2}\big\|(L_{\mathscr{H}}+\mathrm{i}k_{\mathscr{H}})(u_{0}-u_{1})\big\|_{\HH}^{2}\\[1.25mm]
	&=\|(u_{0},u_{1})\|_{\dot{\HH}^{1}_{\mathscr{H}}\times\dot{\HH}^{1}_{\mathscr{H}}}^{2}.
	\end{align*}
	Next, the expression for $\Psi_{\mathscr{H}}^{-1}$ makes sense since $L_{+}-L_{\mathscr{H}}=-2(L_{\mathscr{H}}+\mathrm{i}k_{\mathscr{H}})$ have a trivial kernel in $\HH$ and we have
	\begin{align*}
		\big((L_{\mathscr{H}}+\mathrm{i}k_{\mathscr{H}})^{-1}\phi\big)(z,x,\omega)&=-\int_{0}^{x}\mathrm{e}^{-\mathrm{i}s\z\int_{y}^{x}(V(y')-V_{-})\mathrm{d}y'}\phi(z,y,\omega)\mathrm{d}y\qquad\qquad\forall\phi\in\mathcal{C}^{\infty}_{\mathrm{c}}(\Sigma_{0})\cap\ker(\mathrm{i}\partial_{z}+\z).
	\end{align*}
	To check that $\Psi_{\mathscr{H}}^{-1}$ indeed inverts $\Psi_{\mathscr{H}}$, we use that $[L_{\mathscr{H}},L_{+}]=0$.
\end{proof}
\begin{lemma}
	\label{Link Lemma}
	The transformations $\Psi_{\mathscr{H}\!/\!\mathscr{I}}$ diagonalize the Hamiltonians $\dot{H}_{\mathscr{H}\!/\!\mathscr{I}}$: 
	\begin{align*}
	\Psi_{\mathscr{H}}^{-1}\dot{H}_{\mathscr{H}}\Psi_{\mathscr{H}}&=\dot{\mathbb{H}}_{\mathscr{H}}:=\mathrm{i}\begin{pmatrix}
	L_{\mathscr{H}}&0\\0&L_{+}\end{pmatrix}:\Psi_{\mathscr{H}}^{-1}\mathscr{D}(\dot{H}_{\mathscr{H}})\longrightarrow\dot{\HH}^{1}_{\mathscr{H}}\times\dot{\HH}^{1}_{\mathscr{H}},\\
	\Psi_{\mathscr{I}}^{-1}\dot{H}_{\mathscr{I}}\Psi_{\mathscr{I}}&=\dot{\mathbb{H}}_{\mathscr{I}}:=\mathrm{i}\begin{pmatrix}
	L_{\mathscr{I}}&0\\0&L_{-}\end{pmatrix}:\Psi_{\mathscr{I}}^{-1}\mathscr{D}(\dot{H}_{\mathscr{I}})\longrightarrow\dot{\HH}^{1}_{\mathscr{I}}\times\dot{\HH}^{1}_{\mathscr{I}}.
	\end{align*}
	The dynamics $(\mathrm{e}^{\mathrm{i}t\dot{\mathbb{H}}_{\mathscr{H}\!/\!\mathscr{I}}})_{t\in\mathbb{R}}$ have therefore unitary extensions on $\dot{\HH}^{1}_{\mathscr{H}\!/\!\mathscr{I}}\times\dot{\HH}^{1}_{\mathscr{H}\!/\!\mathscr{I}}$.
\end{lemma}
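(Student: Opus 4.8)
The plan is to establish the conjugation relation by first verifying the intertwining identity $\dot H_{\mathscr H\!/\!\mathscr I}\,\Psi_{\mathscr H\!/\!\mathscr I}=\Psi_{\mathscr H\!/\!\mathscr I}\,\dot{\mathbb H}_{\mathscr H\!/\!\mathscr I}$ on the core $\mathcal C^\infty_{\mathrm c}(\Sigma_0)\times\mathcal C^\infty_{\mathrm c}(\Sigma_0)$, and then transferring it to $\Psi_{\mathscr H\!/\!\mathscr I}^{-1}\dot H_{\mathscr H\!/\!\mathscr I}\Psi_{\mathscr H\!/\!\mathscr I}=\dot{\mathbb H}_{\mathscr H\!/\!\mathscr I}$ by left-composing with the explicit inverse of Lemma \ref{Transformation Psi}. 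I would treat only the $\mathscr H$ case in detail, the $\mathscr I$ case being identical after interchanging the two columns (so that the diagonal block associated to the $\gamma_{\mathrm{out}}$-column $L_{\mathscr I}$ is read off in its correct slot). The algebraic data needed are exactly those built into the geometric profiles: the factorisation of \eqref{Assymptotic equation geo}, namely $h_{\mathscr H}-k_{\mathscr H}^2=L_{\mathscr H}L_+$, the identity $2k_{\mathscr H}=\mathrm i(L_{\mathscr H}+L_+)$, and the commutation $[L_{\mathscr H},L_+]=0$. Substituting these into the product $\dot H_{\mathscr H}\Psi_{\mathscr H}$, the two bottom entries become $L_{\mathscr H}L_+-(L_{\mathscr H}+L_+)L_{\mathscr H}=[L_{\mathscr H},L_+]-L_{\mathscr H}^2=-L_{\mathscr H}^2$ and $L_{\mathscr H}L_+-(L_{\mathscr H}+L_+)L_+=-L_+^2$, while the top entries are $\mathrm iL_{\mathscr H}$ and $\mathrm iL_+$; this is precisely $\Psi_{\mathscr H}\,\dot{\mathbb H}_{\mathscr H}$. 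Left-composing with $\Psi_{\mathscr H}^{-1}$ yields the claimed diagonalisation, and the domain statement $\mathscr D(\dot{\mathbb H}_{\mathscr H})=\Psi_{\mathscr H}^{-1}\mathscr D(\dot H_{\mathscr H})$ holds by definition.

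For the last assertion, the quickest route is to note that $(\mathrm e^{\mathrm it\dot H_{\mathscr H\!/\!\mathscr I}})_{t\in\mathbb R}$ is a unitary group on $\dot{\mathcal E}_{\mathscr H\!/\!\mathscr I}$ (conservation of $\|\cdot\|_{\dot{\mathcal E}_{\mathscr H\!/\!\mathscr I}}$, recorded just above the lemma) and that $\Psi_{\mathscr H\!/\!\mathscr I}\colon\dot{\HH}^1_{\mathscr H\!/\!\mathscr I}\times\dot{\HH}^1_{\mathscr H\!/\!\mathscr I}\to\dot{\mathcal E}_{\mathscr H\!/\!\mathscr I}$ is an isometric isomorphism by Lemma \ref{Transformation Psi}, so that $\mathrm e^{\mathrm it\dot{\mathbb H}_{\mathscr H\!/\!\mathscr I}}=\Psi_{\mathscr H\!/\!\mathscr I}^{-1}\mathrm e^{\mathrm it\dot H_{\mathscr H\!/\!\mathscr I}}\Psi_{\mathscr H\!/\!\mathscr I}$ extends to a unitary group on $\dot{\HH}^1_{\mathscr H\!/\!\mathscr I}\times\dot{\HH}^1_{\mathscr H\!/\!\mathscr I}$. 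A more self-contained alternative is to compute $\mathrm e^{\mathrm it\dot{\mathbb H}_{\mathscr H}}$ directly as the block-diagonal operator $\mathrm{diag}(\mathrm e^{-tL_{\mathscr H}},\mathrm e^{-tL_+})$: the operators $L_{\mathscr H}=-\partial_x-sV(r)\partial_z$ and $L_+=\partial_x+s(V(r)-2V_{-})\partial_z$ are (essentially) skew-adjoint on $\HH$, having real $z$-independent coefficients and no zeroth-order part, hence $\mathrm e^{\mp tL_{\mathscr H}}$ and $\mathrm e^{\mp tL_+}$ are unitary on $\HH$; and since the operator $\partial_x+s(V(r)-V_{-})\partial_z$ defining the $\dot{\HH}^1_{\mathscr H}$-norm commutes with both $L_{\mathscr H}$ and $L_+$ (the only seemingly nonzero commutator is of the form $[\,\cdot\,,sV_{-}\partial_z]$, which vanishes because $V_{-}$ is constant), the transport groups preserve the $\dot{\HH}^1_{\mathscr H}$-norm and hence extend to unitaries there.

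I do not expect a genuine obstacle: the entire content reduces to the three identities above, which are formal consequences of the definitions in Subsection \ref{Introduction of the dynamics}. The only points requiring care are routine bookkeeping of unbounded operators — the matrix manipulations should be carried out on $\mathcal C^\infty_{\mathrm c}(\Sigma_0)$, where $(L_{\mathscr H}+\mathrm ik_{\mathscr H})^{-1}$ is the explicit Volterra-type integral operator of Lemma \ref{Transformation Psi}, and then extended by density using continuity of $\Psi_{\mathscr H\!/\!\mathscr I}$ and of $\dot{\mathbb H}_{\mathscr H\!/\!\mathscr I}$ in the relevant topologies — and one must transcribe the $\mathscr H$ computation to $\mathscr I$ while keeping the column ordering of $\Psi_{\mathscr I}$ fixed.
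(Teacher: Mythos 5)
Your proposal is correct and takes essentially the same route as the paper: the paper's proof is precisely the matrix computation $\dot H_{\mathscr{H}}\Psi_{\mathscr{H}}=\Psi_{\mathscr{H}}\dot{\mathbb H}_{\mathscr{H}}$ based on the three identities $h_{\mathscr{H}}-k_{\mathscr{H}}^{2}=L_{\mathscr{H}}L_{+}$, $L_{+}=-L_{\mathscr{H}}-2\mathrm{i}k_{\mathscr{H}}$ and $[L_{\mathscr{H}},L_{+}]=0$, with the unitary extension obtained, as in your first route, from Lemma \ref{Transformation Psi} together with the conservation of $\|\cdot\|_{\dot{\mathcal{E}}_{\mathscr{H}\!/\!\mathscr{I}}}$. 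Your caveat about the column ordering in the $\mathscr{I}$ case is well taken (with the paper's $\Psi_{\mathscr{I}}$ as written the diagonal entries come out in the order $L_{-},L_{\mathscr{I}}$), but this is bookkeeping and does not affect the argument.
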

\begin{proof}
	We only treat the $\mathscr{H}$ case. Recall that $L_{+}=-L_{\mathscr{H}}-2\mathrm{i}k_{\mathscr{H}}$ and $h_{\mathscr{H}}-k_{\mathscr{H}}^{2}=L_{\mathscr{H}}L_{\mathscr{+}}$. We compute:
	\begin{align*}
	\dot{H}_{\mathscr{H}}\Psi_{\mathscr{H}}&=\begin{pmatrix}
	0&\mathds{1}\\
	L_{\mathscr{H}}L_{+}&2k_{\mathscr{H}}
	\end{pmatrix}\begin{pmatrix}
	\mathds{1}&\mathds{1}\\
	\mathrm{i}L_{\mathscr{H}}&\mathrm{i}L_{+}
	\end{pmatrix}=\frac{1}{\sqrt{2}}\begin{pmatrix}
	\mathrm{i}L_{\mathscr{H}}&\mathrm{i}L_{+}\\
	-L_{\mathscr{H}}^{2}&-L_{+}^{2}
	\end{pmatrix},\\[1.5mm]
	\Psi_{\mathscr{H}}\dot{\mathbb{H}}_{\mathscr{H}}&=\begin{pmatrix}
	\mathds{1}&\mathds{1}\\
	\mathrm{i}L_{\mathscr{H}}&\mathrm{i}L_{+}
	\end{pmatrix}\begin{pmatrix}
	\mathrm{i}L_{\mathscr{H}}&0\\
	0&\mathrm{i}L_{+}
	\end{pmatrix}=\frac{1}{\sqrt{2}}\begin{pmatrix}
	\mathrm{i}L_{\mathscr{H}}&\mathrm{i}L_{+}\\
	-L_{\mathscr{H}}^{2}&-L_{+}^{2}
	\end{pmatrix}.
	\end{align*}
	This gives the desired formula.
\end{proof}
%
We can now interpret the dynamics $(\mathrm{e}^{\mathrm{i}t\dot{H}_{\mathscr{H}\!/\!\mathscr{I}}})_{t\in\mathbb{R}}$ as mixed transports towards the horizons. Introduce the time-parametrized curves $\gamma_{+}$ and $\gamma_{-}$ defined in the $(z,r,\omega)$ coordinates by
\begin{align}
\label{Principal curves}
\begin{cases}
\dot{\gamma}_{+}(t):=\big(1,s(V(r(t))-2V_{-}),F(r(t)),0\big),\\
\dot{\gamma}_{-}(t):=\big(1,s(V(r(t))-2V_{+}),-F(r(t)),0\big)\\
\end{cases}.
\end{align}
The curve $\gamma_{+}$ carries data to $\mathscr{I}^{+}$ whereas $\gamma_{-}$ carries data to $\mathscr{H}^{+}$. By Lemma \ref{Link Lemma}, we have for all $(\phi_{0},\phi_{1})\in\mathcal{C}^{\infty}_{\mathrm{c}}(\Sigma_{0})\times\mathcal{C}^{\infty}_{\mathrm{c}}(\Sigma_{0})$ and all $t\in\mathbb{R}$:
\begin{align}
\mathrm{e}^{\mathrm{i}t\dot{\mathbb{H}}_{\mathscr{H}\!/\!\mathscr{I}}}\begin{pmatrix}
\phi_{0}\\\phi_{1}
\end{pmatrix}&=\begin{pmatrix}
\mathrm{e}^{-tL_{\mathscr{H}\!/\!\mathscr{I}}}&0\\0&\mathrm{e}^{-tL_{+\!/\!-}}
\end{pmatrix}\begin{pmatrix}
\phi_{0}\\\phi_{1}
\end{pmatrix}=\begin{pmatrix}
\phi_{0}\circ\gamma_{\textup{in/out}}(t)\\\phi_{1}\circ\gamma_{+\!/\!-}(t)
\end{pmatrix},\nonumber\\[2mm]
\label{Evolution on SCS}
\mathrm{e}^{\mathrm{i}t\dot{H}_{\mathscr{H}\!/\!\mathscr{I}}}\begin{pmatrix}
\phi_{0}\\\phi_{1}
\end{pmatrix}&=\frac{1}{2}\begin{pmatrix}
-L_{+\!/\!-}\widetilde{\phi}_{0}-\mathrm{i}\widetilde{\phi}_{1}\\\mathrm{i}L_{\mathscr{H}\!/\!\mathscr{I}}\big(-L_{+\!/\!-}\widetilde{\phi}_{0}-\mathrm{i}\widetilde{\phi}_{1}\big)
\end{pmatrix}\circ\gamma_{\textup{in/out}}(t)+\frac{1}{2}\begin{pmatrix}
L_{\mathscr{H}\!/\!\mathscr{I}}\widetilde{\phi}_{0}+\mathrm{i}\widetilde{\phi}_{1}\\\mathrm{i}L_{+\!/\!-}\big(L_{\mathscr{H}\!/\!\mathscr{I}}\widetilde{\phi}_{0}+\mathrm{i}\widetilde{\phi}_{1}\big)
\end{pmatrix}\circ\gamma_{+\!/\!-}(t)\nonumber\\
&=\frac{1}{2}\begin{pmatrix}
\phi_{0}-\mathrm{i}\big(\widetilde{\phi}_{1}-k_{\mathscr{H}/\!\mathscr{I}}\widetilde{\phi}_{0}\big)\\\mathrm{i}L_{\mathscr{H}\!/\!\mathscr{I}}\Big[\phi_{0}-\mathrm{i}\big(\widetilde{\phi}_{1}-k_{\mathscr{H}/\!\mathscr{I}}\widetilde{\phi}_{0}\big)\Big]
\end{pmatrix}\circ\gamma_{\textup{in/out}}(t)\nonumber\\
&\quad+\frac{1}{2}\begin{pmatrix}
\phi_{0}+\mathrm{i}\big(\widetilde{\phi}_{1}-k_{\mathscr{H}/\!\mathscr{I}}\widetilde{\phi}_{0}\big)\\\mathrm{i}L_{+\!/\!-}\Big[\phi_{0}+\mathrm{i}\big(\widetilde{\phi}_{1}-k_{\mathscr{H}/\!\mathscr{I}}\widetilde{\phi}_{0}\big)\Big]
\end{pmatrix}\circ\gamma_{+\!/\!-}(t)
\end{align}
where $\widetilde{\phi}_{j}:=(L_{\mathscr{H}\!/\!\mathscr{I}}+\mathrm{i}k_{\mathscr{H}\!/\!\mathscr{I}})^{-1}\phi_{j}$. At $t=0$, we get $(\phi_{0},\phi_{1})$ back on the right-hand side above.
%
%
%
%
\subsection{Structure of the energy spaces for the comparison dynamics}
\label{Structure of the energy spaces for the comparison dynamics}
We analyze in this Subsection the structure of the energy spaces associated to the asymptotic and geometric comparison dynamics introduced in Subsection \ref{Comparison dynamics}. We will obtain explicit representation formulas on dense subspaces in smooth compactly supported functions. This will help us to show existence and completeness of the wave operators in Theorem \ref{Asymptotic completeness, asymptotic profiles} and Theorem \ref{Asymptotic completeness, geometric profiles}. In all this Section, we will restrict ourselves to $\ker(\mathrm{i}\partial_{z}+\z)$ with $\z\in\mathbb{Z}$.
%
%
%
%
\subsubsection{Structure of the energy spaces for the asymptotic profiles}
\label{Structure of the energy spaces for the asymptotic profiles}
The solutions of the initial value problem associated to \eqref{Assymptotic equation}
\begin{align*}
\begin{cases}
\big(\partial_{t}^{2}-2\mathrm{i}k_{-\!/\!+}^{\z}\partial_{t}+h_{-\!/\!+}^{\z}\big)u=0\\
u_{\vert\Sigma_{0}}=u_{0}\in\mathcal{C}^{\infty}_{\mathrm{c}}(\Sigma_{0})\\
(-\mathrm{i}\partial_{t}u)_{\vert\Sigma_{0}}=u_{1}\in\mathcal{C}^{\infty}_{\mathrm{c}}(\Sigma_{0})
\end{cases}
\end{align*}
are given by the Kirchhoff type formula:
\begin{align}
\label{Kirchhoff type formula 1}
u(t,z,x,\omega)&=\frac{\mathrm{e}^{\mathrm{i}s\z tV_{-\!/\!+}}}{2}\left(u_{0}(z,x+t,\omega)+\mathrm{i}\int_{0}^{x+t}\big(u_{1}-sV_{-\!/\!+}\z u_{0}\big)(z,y,\omega)\right)\nonumber\\
&+\frac{\mathrm{e}^{\mathrm{i}s\z tV_{-\!/\!+}}}{2}\left(u_{0}(z,x-t,\omega)+\mathrm{i}\int_{x-t}^{0}\big(u_{1}-sV_{-\!/\!+}\z u_{0}\big)(z,y,\omega)\right).
\end{align}
They are a linear combination of incoming and outgoing solutions of \eqref{Assymptotic equation}.

The simplicity of the asymptotic profiles has a cost: all the angular information has been lost in the construction of $\dot{H}^{\z}_{-\!/\!+}$. As a result, as explained in \cite{GGH17}, there is no chance that the limits
\begin{align*}
\boldsymbol{W}_{l/r}u&=\lim_{|t|\to+\infty}\mathrm{e}^{-\mathrm{i}t\dot{H}^{\z}}i_{-\!/\!+}\mathrm{e}^{\mathrm{i}t\dot{H}^{\z}_{-\!/\!+}}u
\end{align*}
exist for all $u\in\dot{\mathcal{E}}^{\z}_{-\!/\!+}$ since $\dot{H}^{\z}$ is built from $h^{\z}$ which acts on the angular part of $u$. The strategy is to define the limits first on a suitable dense subspace then to extend the corresponding operator by continuity on the whole space $\dot{\mathcal{E}}^{\z}_{-\!/\!+}$. We define the spaces
\begin{align*}
\mathcal{E}^{\textup{fin},\z}_{l/r}&:=\left\{u\in\mathcal{E}^{\z}_{l/r}\ \bigg\vert\ \exists \ell_{0}>0\;;\;u\in\big(L^{2}(\mathbb{S}^{1}\times\mathbb{R},\mathrm{d}z\mathrm{d}x)\otimes\bigoplus_{\ell\leq\ell_{0}}Y_{\ell}\big)\times\big(L^{2}(\mathbb{S}^{1}\times\mathbb{R},\mathrm{d}z\mathrm{d}x)\otimes\bigoplus_{\ell\leq\ell_{0}}Y_{\ell}\big)\right\}
\end{align*}
where $Y_{\ell}$ is the eigenspace associated to the eigenvalue $\ell(\ell+1)$ of the self-adjoint realization $(-\Delta_{\mathbb{S}^{2}},H^{2}(\mathbb{S}^{2},\mathrm{d}\omega))$. In order to exploit \eqref{Kirchhoff type formula 1} and decompose elements of $\mathcal{E}^{\textup{fin},\z}_{l/r}$ into incoming and outgoing solutions of \eqref{Assymptotic equation}, we will use the following spaces:
\begin{align*}
\dot{\mathcal{E}}_{-\!/\!+}^{L,\z}&:=\bigg\{(u_{0},u_{1})\in\dot{\mathcal{E}}_{-\!/\!+}\ \Big\vert\ u_{1}-sV_{-\!/\!+}\z u_{0}\in L^{1}\big(\mathbb{R},\mathrm{d}x\,;(\mathbb{S}^{1}\times\mathbb{S}^{2},\mathrm{d}z\mathrm{d}\omega)\big),\\
&\qquad\qquad\qquad\qquad\quad\int_{\mathbb{R}}\big(u_{1}-sV_{-\!/\!+}\z u_{0}\big)(z,x,\omega)\mathrm{d}x=0\text{ a.e. in $z,\omega$}\bigg\}.
\end{align*}
\cite[Lemma 13.3]{GGH17} shows that\footnote{Observe the different gauge used therein (the initial time-derivative is $\partial_{t}u$). To obtain the incoming/outgoing spaces of \cite{GGH17}, we need to replace $u_{1}$ by $\mathrm{i}u_{1}$ below; this of course does not modify the results.}
\begin{align}
\label{In/out decomposition 1}
\dot{\mathcal{E}}_{-\!/\!+}^{L,\z}&=\dot{\mathcal{E}}_{-\!/\!+}^{\textup{in},\z}\oplus\dot{\mathcal{E}}_{-\!/\!+}^{\textup{out},\z}
\end{align}
with the spaces of incoming and outgoing initial data
\begin{align*}
\dot{\mathcal{E}}_{-\!/\!+}^{\textup{in},\z}&=\Big\{(u_{0},u_{1})\in\dot{\mathcal{E}}_{-\!/\!+}^{L,\z}\ \big\vert\,u_{1}=-\mathrm{i}\partial_{x}u_{0}-sV_{-\!/\!+}\z u_{0}\Big\},\\
\dot{\mathcal{E}}_{-\!/\!+}^{\textup{out},\z}&=\Big\{(u_{0},u_{1})\in\dot{\mathcal{E}}_{-\!/\!+}^{L,\z}\ \big\vert\,u_{1}=\mathrm{i}\partial_{x}u_{0}-sV_{-\!/\!+}\z u_{0}\Big\}.
\end{align*}
Solutions in $\dot{\mathcal{E}}_{-\!/\!+}^{L,\z}$ verify a Huygens principle (\textit{cf.} \cite[Remark 13.4]{GGH17}). The cancellation of the integral in particular removes the resonance at 0 of the wave equation; the non-vanishing term as $t\to\pm\infty$ is the projection of compactly supported data on the resonant state which is nothing but the constant solution $1\not\in\HH$ (see the De Sitter-Schwarzschild case for $s=0$ in \cite{BoHa08}, Proposition 1.2 and Theorem 1.3; here the resonant state is $r\otimes\omega_{0}$ with $\omega_{0}\in Y_{0}$ the fundamental spherical harmonic because we use the conjugated spatial operator $r\widehat{P}r^{-1}$).

Finally, set
\begin{align*}
\mathcal{D}_{l/r}^{\textup{fin},\z}&:=\big(\mathcal{C}^{\infty}_{\mathrm{c}}(\mathbb{S}^{1}\times\mathbb{R}\times\mathbb{S}^{2})\times\mathcal{C}^{\infty}_{\mathrm{c}}(\mathbb{S}^{1}\times\mathbb{R}\times\mathbb{S}^{2})\big)\cap\mathcal{E}_{l/r}^{\textup{fin},\z}\cap\dot{\mathcal{E}}_{-\!/\!+}^{L}.
\end{align*}
Then \cite[Lemma 13.5]{GGH17} shows that $\mathcal{D}_{l/r}^{\textup{fin},\z}$ is dense in $\big(\mathcal{E}_{l/r}^{\textup{fin},\z},\|\cdot\|_{\dot{\mathcal{E}}_{-\!/\!+}^{\z}}\big)$ (and thus in \linebreak$\big(\dot{\mathcal{E}}_{-\!/\!+}^{\z},\|\cdot\|_{\dot{\mathcal{E}}_{-\!/\!+}^{\z}}\big)$). We will show similar results for the geometric profiles in the next paragraph.
%
%
%
%
\subsubsection{Structure of the energy spaces for the geometric profiles}
\label{Structure of the energy spaces for the geometric profiles}
The solutions of the initial value problem associated to \eqref{Assymptotic equation geo}
\begin{align*}
\begin{cases}
\big(\partial_{t}^{2}-2\mathrm{i}k_{\mathscr{H}\!/\!\mathscr{I}}^{\z}\partial_{t}+L_{\mathscr{H}\!/\!\mathscr{I}}^{\z}L_{+\!/\!-}^{\z}\big)u=0\\
u_{\vert\Sigma_{0}}=u_{0}\in\mathcal{C}^{\infty}_{\mathrm{c}}(\Sigma_{0})\\
(-\mathrm{i}\partial_{t}u)_{\vert\Sigma_{0}}=u_{1}\in\mathcal{C}^{\infty}_{\mathrm{c}}(\Sigma_{0})
\end{cases}
\end{align*}
are given by the Kirchhoff type formula (we drop the dependence in $(z,\omega)\in\mathbb{S}^{1}\times\mathbb{S}^{2}$):
\begin{align}
\label{Kirchhoff type formula 2}
u_{\mathscr{H}}(t,x)&=\frac{1}{2}\mathrm{e}^{\mathrm{i}s\z\int_{x}^{x+t}V(x')\mathrm{d}x'}\left(u_{0}(x+t)+\mathrm{i}\int_{0}^{x+t}\mathrm{e}^{-\mathrm{i}s\z\int_{y}^{x+t}(V(y')-V_{-})\mathrm{d}y'}\big(u_{1}-s\z V_{-}u_{0}\big)(y)\mathrm{d}y\right)\nonumber\\
&\quad+\frac{1}{2}\mathrm{e}^{\mathrm{i}s\z\int_{x}^{x-t}(V(x')-2V_{-})\mathrm{d}x'}\left(u_{0}(x-t)-\mathrm{i}\int_{0}^{x-t}\mathrm{e}^{-\mathrm{i}s\z\int_{y}^{x-t}(V(y')-V_{-})\mathrm{d}y'}\big(u_{1}-s\z V_{-}u_{0}\big)(y)\mathrm{d}y\right),
\end{align}
\textcolor{white}{0}\\[-12mm]
\begin{align}
\label{Kirchhoff type formula 2 bis}
u_{\mathscr{I}}(t,x)&=\frac{1}{2}\mathrm{e}^{-\mathrm{i}s\z\int_{x}^{x+t}(V(x')-2V_{+})\mathrm{d}x'}\left(u_{0}(x+t)+\mathrm{i}\int_{0}^{x+t}\mathrm{e}^{\mathrm{i}s\z\int_{y}^{x+t}(V(y')-V_{+})\mathrm{d}y'}\big(u_{1}-s\z V_{+}u_{0}\big)(y)\mathrm{d}y\right)\nonumber\\
&\quad+\frac{1}{2}\mathrm{e}^{-\mathrm{i}s\z\int_{x}^{x-t}V(x')\mathrm{d}x'}\left(u_{0}(x-t)-\mathrm{i}\int_{0}^{x-t}\mathrm{e}^{\mathrm{i}s\z\int_{y}^{x-t}(V(y')-V_{+})\mathrm{d}y'}\big(u_{1}-s\z V_{+}u_{0}\big)(y)\mathrm{d}y\right).
\end{align}
The solution $u_{\mathscr{H}}$ is a linear combination of an incoming solution transported along the principal null geodesic $\gamma_{\textup{in}}$ and and outgoing solution transported along the artificial curve $\gamma_{+}$; the solution $u_{\mathscr{I}}$ is a linear combination of an incoming solution transported along the artificial curve $\gamma_{-}$ and and outgoing solution transported along the principal null geodesic $\gamma_{\textup{out}}$. Formulas \eqref{Kirchhoff type formula 2} and \eqref{Kirchhoff type formula 2 bis} display the first components of $\mathrm{e}^{\mathrm{i}t\dot{H}^{\z}_{\mathscr{H}\!/\!\mathscr{I}}}u$: we can easily check them using \eqref{Evolution on SCS} with (we omit the dependence in $(z,\omega)\in\mathbb{S}^{1}\times\mathbb{S}^{2}$)
\begin{align*}
\big((L_{\mathscr{H}\!/\!\mathscr{I}}^{\z}+\mathrm{i}k_{\mathscr{H}\!/\!\mathscr{I}}^{\z})^{-1}(u_{1}-k^{\z}_{\mathscr{H}\!/\!\mathscr{I}}u_{0})\big)(x)&=-\!/\!\!+\int_{0}^{x}\mathrm{e}^{-\!/\!+\mathrm{i}s\z\int_{y}^{x}(V(y')-V_{-\!/\!+})\mathrm{d}y'}\big(u_{1}-s\z V_{-\!/\!+}u_{0}\big)(y)\mathrm{d}y.
\end{align*}
It will be useful below using the following simplified forms:
\begin{align}
\label{Kirchhoff type formula 2 simplified}
u_{\mathscr{H}}(t,x)&=\frac{\mathrm{e}^{\mathrm{i}s\z tV_{-}}}{2}\left(\sum_{\pm}\mathrm{e}^{\mathrm{i}s\z\int_{x}^{x\pm t}(V(x')-V_{-})\mathrm{d}x'}u_{0}(x\pm t)+\mathrm{i}\int_{x-t}^{x+t}\mathrm{e}^{\mathrm{i}s\z\int_{x}^{y}(V(x')-V_{-})\mathrm{d}x'}\big(u_{1}-s\z V_{-}u_{0}\big)(y)\mathrm{d}y\right),
\end{align}
\textcolor{white}{0}\\[-12mm]
\begin{align}
\label{Kirchhoff type formula 2 bis simplified}
u_{\mathscr{I}}(t,x)&=\frac{\mathrm{e}^{\mathrm{i}s\z tV_{+}}}{2}\left(\sum_{\pm}\mathrm{e}^{-\mathrm{i}s\z\int_{x}^{x\pm t}(V(x')-V_{+})\mathrm{d}x'}u_{0}(x\pm t)+\mathrm{i}\int_{x-t}^{x+t}\mathrm{e}^{-\mathrm{i}s\z\int_{x}^{y}(V(x')-V_{+})\mathrm{d}x'}\big(u_{1}-s\z V_{+}u_{0}\big)(y)\mathrm{d}y\right).
\end{align}

As for the asymptotic profiles of the paragraph \ref{Structure of the energy spaces for the asymptotic profiles}, we need to control the angular directions on $\mathbb{S}^{2}_{\omega}$ if we wish to compare the geometric dynamics with the full one. We thus define
\begin{align*}
\mathcal{E}^{\textup{fin},\z}_{\mathscr{H}\!/\!\mathscr{I}}&:=\left\{u\in\mathcal{E}^{\z}_{\mathscr{H}\!/\!\mathscr{I}}\ \bigg\vert\ \exists \ell_{0}>0\;;\;u\in\big(L^{2}(\mathbb{S}^{1}\times\mathbb{R},\mathrm{d}z\mathrm{d}x)\otimes\bigoplus_{\ell\leq\ell_{0}}Y_{\ell}\big)\times\big(L^{2}(\mathbb{S}^{1}\times\mathbb{R},\mathrm{d}z\mathrm{d}x)\otimes\bigoplus_{\ell\leq\ell_{0}}Y_{\ell}\big)\right\}.
\end{align*}
Formulas \eqref{Kirchhoff type formula 2} and \eqref{Kirchhoff type formula 2 bis} makes \textit{a priori} no sense in $\mathcal{E}^{\textup{fin},\z}_{\mathscr{H}\!/\!\mathscr{I}}$ and the integral terms are not controlled in $\HH$. We thus introduce the following spaces:
\begin{align*}
\dot{\mathcal{E}}_{\mathscr{H}\!/\!\mathscr{I}}^{L,\z}&:=\bigg\{(u_{0},u_{1})\in\dot{\mathcal{E}}_{\mathscr{H}\!/\!\mathscr{I}}\ \Big\vert\ u_{1}-s\z V_{-\!/\!+}u_{0}\in L^{1}\big(\mathbb{R},\mathrm{d}y\,;(\mathbb{S}^{1}\times\mathbb{S}^{2},\mathrm{d}z\mathrm{d}\omega)\big),\\
&\qquad\qquad\qquad\qquad\qquad\!\int_{\mathbb{R}}\mathrm{e}^{+\!/\!-\mathrm{i}s\z\int_{0}^{y}(V(x')-V_{-\!/\!+})\mathrm{d}x'}\big(u_{1}-s\z V_{-\!/\!+}u_{0}\big)(z,y,\omega)\mathrm{d}y=0\text{ a.e. in $z,\omega$}\bigg\}.
\end{align*}
We now establish a result similar to \cite[Lemma 13.3]{GGH17}: it gives a deeper meaning to incoming and outgoing data.
\begin{lemma}
	\label{Decomposition in/out H/I}
	Let
	\begin{align*}
	\dot{\mathcal{E}}_{\mathscr{H}}^{\textup{in},\z}&=\Big\{(u_{0},u_{1})\in\dot{\mathcal{E}}_{\mathscr{H}}^{L,\z}\ \Big\vert\,u_{1}=\mathrm{i}L_{\mathscr{H}}u_{0}\Big\},\qquad\qquad\dot{\mathcal{E}}_{\mathscr{H}}^{\textup{out},\z}=\Big\{(u_{0},u_{1})\in\dot{\mathcal{E}}_{\mathscr{H}}^{L,\z}\ \Big\vert\,u_{1}=\mathrm{i}L_{+}u_{0}\Big\},\\[1.5mm]
	\dot{\mathcal{E}}_{\mathscr{I}}^{\textup{in},\z}&=\Big\{(u_{0},u_{1})\in\dot{\mathcal{E}}_{\mathscr{I}}^{L,\z}\ \Big\vert\,u_{1}=\mathrm{i}L_{-}u_{0}\Big\},\qquad\qquad\dot{\mathcal{E}}_{\mathscr{I}}^{\textup{out},\z}=\Big\{(u_{0},u_{1})\in\dot{\mathcal{E}}_{\mathscr{I}}^{L,\z}\ \Big\vert\,u_{1}=\mathrm{i}L_{\mathscr{I}}u_{0}\Big\}.
	\end{align*}
	The following decompositions into incoming and outgoing solutions of \eqref{Assymptotic equation geo} hold:
	\begin{align*}
	\dot{\mathcal{E}}^{L,\z}_{\mathscr{H}\!/\!\mathscr{I}}&=\dot{\mathcal{E}}_{\mathscr{H}\!/\!\mathscr{I}}^{\textup{in},\z}\oplus\dot{\mathcal{E}}_{\mathscr{H}\!/\!\mathscr{I}}^{\textup{out},\z}.
	\end{align*}
	Furthermore, if $u=u^{\textup{in}}+u^{\textup{out}}\in\dot{\mathcal{E}}_{\mathscr{H}\!/\!\mathscr{I}}^{L,\z}$ is supported in $\mathbb{S}^{1}_{z}\times[R_{1},R_{2}]_{x}\times\mathbb{S}^{2}_{\omega}$ for some $R_{1},R_{2}\in\mathbb{R}$, then
	\begin{align}
	\label{Huygens}
	\mathrm{Supp\,}u^{\textup{in}}&\subset\mathbb{S}^{1}_{z}\times\left]-\infty,R_{2}\right]_{x}\times\mathbb{S}^{2}_{\omega},\qquad\qquad\mathrm{Supp\,}u^{\textup{out}}\subset\mathbb{S}^{1}_{z}\times\left[R_{1},+\infty\right[_{x}\times\mathbb{S}^{2}_{\omega}.
	\end{align}
\end{lemma}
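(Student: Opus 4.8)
\textbf{Proof plan for Lemma \ref{Decomposition in/out H/I}.}

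The plan is to mimic the argument of \cite[Lemma 13.3]{GGH17}, exploiting the diagonalization provided by Lemma \ref{Link Lemma} and the explicit inverse of $(L_{\mathscr{H}\!/\!\mathscr{I}}+\mathrm{i}k_{\mathscr{H}\!/\!\mathscr{I}})$ computed in the proof of Lemma \ref{Transformation Psi}. I only treat the $\mathscr{H}$ case, the $\mathscr{I}$ case being symmetric (replace $V_{-}$ by $V_{+}$, swap the roles of $L_{\mathscr{H}},L_{+}$ and change a sign in the exponentials). The first step is to write $(u_{0},u_{1})\in\dot{\mathcal{E}}_{\mathscr{H}}^{L,\z}$ in the diagonalizing variables: set $(w_{\textup{in}},w_{\textup{out}}):=\Psi_{\mathscr{H}}^{-1}(u_{0},u_{1})$, so that by Lemma \ref{Transformation Psi}
\begin{align*}
	w_{\textup{in}}&=\sqrt{2}\,(L_{+}-L_{\mathscr{H}})^{-1}\big(L_{+}u_{0}+\mathrm{i}u_{1}\big)=\tfrac{1}{\sqrt 2}(L_{\mathscr{H}}+\mathrm{i}k_{\mathscr{H}})^{-1}\big(\mathrm{i}u_{1}-\mathrm{i}L_{+}u_{0}\big),\\
	w_{\textup{out}}&=-\sqrt{2}\,(L_{+}-L_{\mathscr{H}})^{-1}\big(L_{\mathscr{H}}u_{0}+\mathrm{i}u_{1}\big)=\tfrac{1}{\sqrt 2}(L_{\mathscr{H}}+\mathrm{i}k_{\mathscr{H}})^{-1}\big(\mathrm{i}L_{\mathscr{H}}u_{0}-\mathrm{i}u_{1}\big),
\end{align*}
using $L_{+}-L_{\mathscr{H}}=-2(L_{\mathscr{H}}+\mathrm{i}k_{\mathscr{H}})$. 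Then define $u^{\textup{in}}:=\Psi_{\mathscr{H}}(w_{\textup{in}},0)$ and $u^{\textup{out}}:=\Psi_{\mathscr{H}}(0,w_{\textup{out}})$; by construction $u^{\textup{in}}+u^{\textup{out}}=(u_{0},u_{1})$, the first component of $u^{\textup{in}}$ (resp. $u^{\textup{out}}$) is $w_{\textup{in}}/\sqrt2$ (resp. $w_{\textup{out}}/\sqrt2$), and a one-line check shows $u^{\textup{in}}$ satisfies $u_{1}=\mathrm{i}L_{\mathscr{H}}u_{0}$ and $u^{\textup{out}}$ satisfies $u_{1}=\mathrm{i}L_{+}u_{0}$. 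Uniqueness of the splitting and the fact that the two summands again lie in $\dot{\mathcal{E}}_{\mathscr{H}}^{L,\z}$ (not merely in $\dot{\mathcal{E}}_{\mathscr{H}}$) follow from the isometry property of $\Psi_{\mathscr{H}}$ together with the observation that the defining $L^{1}$-condition and vanishing-integral condition are preserved: for the ``in'' part $u_{1}-s\z V_{-}u_{0}=\mathrm{i}(L_{\mathscr{H}}+\mathrm{i}k_{\mathscr{H}})u_{0}=\mathrm{i}\big(\partial_{x}-\mathrm{i}s\z(V(r)-V_{-})\big)u_{0}$, so the weighted integral $\int_{\mathbb{R}}\mathrm{e}^{\mathrm{i}s\z\int_0^y(V-V_-)}\big(u_{1}-s\z V_{-}u_{0}\big)\,\mathrm{d}y$ becomes the total derivative $\mathrm{i}\int_{\mathbb{R}}\partial_{y}\big(\mathrm{e}^{\mathrm{i}s\z\int_0^y(V-V_-)}u_{0}\big)\,\mathrm{d}y$, which vanishes automatically because $u_{0}\in\dot{\HH}^{1}_{\mathscr{H}}$ forces $u_0\to 0$ at $\pm\infty$; the analogous computation with $L_{+}$ handles the ``out'' part.

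The second step is the support statement \eqref{Huygens}. Here I would argue directly from the formula for $(L_{\mathscr{H}}+\mathrm{i}k_{\mathscr{H}})^{-1}$ given in the proof of Lemma \ref{Transformation Psi}, namely $\big((L_{\mathscr{H}}+\mathrm{i}k_{\mathscr{H}})^{-1}\phi\big)(x)=-\int_{0}^{x}\mathrm{e}^{-\mathrm{i}s\z\int_{y}^{x}(V-V_{-})}\phi(y)\,\mathrm{d}y$. If $u=u^{\textup{in}}+u^{\textup{out}}$ is supported in $x\in[R_{1},R_{2}]$, then $\phi:=\mathrm{i}u_{1}-\mathrm{i}L_{+}u_{0}$ (the input producing $w_{\textup{in}}$) is also supported in $[R_{1},R_{2}]$, so $w_{\textup{in}}(x)$ is constant for $x\geq R_{2}$ up to the oscillating phase; the constant must be zero because the vanishing-integral condition defining $\dot{\mathcal{E}}_{\mathscr{H}}^{L,\z}$ says precisely that $\int_{\mathbb{R}}\mathrm{e}^{\mathrm{i}s\z\int_{0}^{y}(V-V_{-})}\phi(y)\,\mathrm{d}y=0$ (after rewriting using the ``in'' relation as above). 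Hence $\mathrm{Supp\,}u^{\textup{in}}\subset\{x\leq R_{2}\}$. Symmetrically, writing $(L_{\mathscr{H}}+\mathrm{i}k_{\mathscr{H}})^{-1}\phi$ as $+\int_{x}^{\infty}\mathrm{e}^{-\mathrm{i}s\z\int_{y}^{x}(V-V_{-})}\phi(y)\,\mathrm{d}y$ (legitimate by the same cancellation) shows $w_{\textup{out}}(x)$ vanishes for $x\leq R_{1}$, giving $\mathrm{Supp\,}u^{\textup{out}}\subset\{x\geq R_{1}\}$; throughout, the $(z,\omega)$ variables are inert since all operators act only in $x$ after restricting to $\ker(\mathrm{i}\partial_{z}+\z)$.

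The main obstacle I anticipate is not the algebra of the splitting — that is forced by Lemma \ref{Link Lemma} — but rather checking carefully that the three spaces $\dot{\mathcal{E}}_{\mathscr{H}}^{L,\z}$, $\dot{\mathcal{E}}_{\mathscr{H}}^{\textup{in},\z}$, $\dot{\mathcal{E}}_{\mathscr{H}}^{\textup{out},\z}$ are genuinely stable under the maps above and that the integrability/cancellation conditions match up. Concretely, one must verify that $\Psi_{\mathscr H}(w_{\textup{in}},0)$ lands in $\dot{\mathcal{E}}_{\mathscr{H}}^{L,\z}$, i.e. that its ``$u_{1}-s\z V_{-}u_{0}$'' component is in $L^{1}$ with vanishing weighted integral; the $L^{1}$ membership follows because for $u^{\textup{in}}$ this component equals $\mathrm{i}\partial_{x}u_{0}+$ (bounded)$\cdot u_{0}$ and $u^{\textup{in}}$ inherits compact-type decay from $(u_0,u_1)\in\dot{\mathcal{E}}_{\mathscr H}^{L,\z}$, while the vanishing integral is the total-derivative computation noted above. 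The decay rate of $V(r)-V_{-}=\mathcal{O}(\mathrm{e}^{2\kappa_{-}x})$ as $x\to-\infty$, from \eqref{Decay r-r_- with kappa_pm}, guarantees the oscillating phases $\mathrm{e}^{\pm\mathrm{i}s\z\int(V-V_{-})}$ are bounded and have limits at $\pm\infty$, which is what legitimizes rewriting the primitive as an integral from $+\infty$. Once these membership points are dispatched, directness of the sum is immediate: $\dot{\mathcal{E}}_{\mathscr{H}}^{\textup{in},\z}\cap\dot{\mathcal{E}}_{\mathscr{H}}^{\textup{out},\z}=\{0\}$ because $\mathrm{i}L_{\mathscr{H}}u_{0}=\mathrm{i}L_{+}u_{0}$ forces $(L_{+}-L_{\mathscr{H}})u_{0}=0$, i.e. $(L_{\mathscr{H}}+\mathrm{i}k_{\mathscr{H}})u_{0}=0$, whose only solution in $\dot{\HH}^{1}_{\mathscr{H}}$ is $0$.
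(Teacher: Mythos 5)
Your splitting via $\Psi_{\mathscr{H}}^{-1}$ is, up to packaging, the same computation as the paper's proof, which conjugates by the phase $\mathrm{e}^{\mathrm{i}s\z\int_{0}^{x}(V-V_{-})\mathrm{d}y}$ to reduce to the constant-coefficient situation of \cite[Lemma 13.3]{GGH17} and then takes the explicit formulas there, with primitives anchored at $+\infty$ for the incoming piece and at $-\infty$ for the outgoing piece; your algebra for the decomposition and your argument that $\dot{\mathcal{E}}_{\mathscr{H}}^{\textup{in},\z}\cap\dot{\mathcal{E}}_{\mathscr{H}}^{\textup{out},\z}=\{0\}$ match the paper. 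The genuine gap is in the support statement \eqref{Huygens}, and it comes precisely from the choice of primitive. The inverse you quote from Lemma \ref{Transformation Psi} is anchored at $0$, and on $\ker(\mathrm{i}\partial_{z}+\z)$ one has $L_{\mathscr{H}}+\mathrm{i}k_{\mathscr{H}}=-\big(\partial_{x}+\mathrm{i}s\z(V-V_{-})\big)$ (note the sign), whose kernel is spanned by the bounded, non-decaying function $\mathrm{e}^{-\mathrm{i}s\z\int_{0}^{x}(V-V_{-})}$. The vanishing-integral condition in $\dot{\mathcal{E}}_{\mathscr{H}}^{L,\z}$ makes the primitives anchored at $-\infty$ and at $+\infty$ agree with \emph{each other}, but not with the $0$-anchored one: they differ by $\mathrm{e}^{-\mathrm{i}s\z\int_{0}^{x}(V-V_{-})}\int_{-\infty}^{0}\mathrm{e}^{\mathrm{i}s\z\int_{0}^{y}(V-V_{-})}(\cdots)\,\mathrm{d}y$, a nonzero kernel element in general. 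Concretely, take $u_{0}=0$ and $u_{1}$ supported in $[R_{1},R_{2}]$ with $R_{1}<0<R_{2}$, total weighted integral zero but $\int_{0}^{R_{2}}\mathrm{e}^{\mathrm{i}s\z\int_{0}^{y}(V-V_{-})}u_{1}\,\mathrm{d}y\neq0$: then the incoming representative produced by your literal formula equals, for $x>R_{2}$, a nonzero multiple of $\mathrm{e}^{-\mathrm{i}s\z\int_{0}^{x}(V-V_{-})}$, so \eqref{Huygens} fails. Your sentence ``legitimate by the same cancellation'' is exactly the step that breaks: the re-anchoring is not a consequence of the cancellation, it must be built into the definition of the pieces, which is what the paper's formulas do (and then both \eqref{Huygens} and membership in $\dot{\mathcal{E}}_{\mathscr{H}}^{L,\z}$ are read off directly). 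Note the discrepancy $\big(c\,\mathrm{e}^{-\mathrm{i}s\z\int_{0}^{x}(V-V_{-})},\,\mathrm{i}L_{\mathscr{H}}c\,\mathrm{e}^{-\mathrm{i}s\z\int_{0}^{x}(V-V_{-})}\big)$ has zero energy, so your splitting agrees with the paper's in the abstract space $\dot{\mathcal{E}}_{\mathscr{H}}$; but \eqref{Huygens} is a statement about concrete representatives, and it is the part used later (Remark \ref{In/out remark}).

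A second flaw: the claim that $u_{0}\in\dot{\HH}^{1}_{\mathscr{H}}$ forces $u_{0}\to0$ at $\pm\infty$ is false. The homogeneous norm only controls $\big\|(L_{\mathscr{H}}+\mathrm{i}k_{\mathscr{H}})u_{0}\big\|_{\HH}$, representatives may grow like $|x|^{1/2}$, and the kernel function above does not decay at all; so the vanishing of the weighted integral for $u^{\textup{in}},u^{\textup{out}}$ is not ``automatic'' from your total-derivative computation. In the paper this point is handled by the explicit $\pm\infty$-anchored formulas combined with the hypothesis that $u$ itself lies in $\dot{\mathcal{E}}_{\mathscr{H}}^{L,\z}$. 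Your plan is repairable: shift $w_{\textup{in}}$ and $w_{\textup{out}}$ by opposite kernel elements (this changes neither component of the sum, since $L_{\mathscr{H}}=L_{+}$ on $\ker(L_{\mathscr{H}}+\mathrm{i}k_{\mathscr{H}})$) so that the primitives are anchored at $+\infty$ and $-\infty$ respectively — i.e.\ reproduce the paper's construction — but as written the two claims carrying the content of the lemma are not established.
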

\begin{proof}
	We only show the lemma for $\dot{\mathcal{E}}^{L,\z}_{\mathscr{H}}$. Recall that
	\begin{align*}
		\mathrm{i}L_{\mathscr{H}}u_{0}&=-\mathrm{i}\partial_{x}u_{0}+s\z Vu_{0},\qquad\qquad\mathrm{i}L_{+}u_{0}=\mathrm{i}\partial_{x}u_{0}-s\z(V-2V_{-})u_{0}.
	\end{align*}
	Let $u=(u_{0},u_{1})\in\dot{\mathcal{E}}_{\mathscr{H}}^{L,\z}$ and put $\tilde{u}(x):=\mathrm{e}^{\mathrm{i}s\z\int_{0}^{x}(V(y)-V_{-})\mathrm{d}y}u(x)$. Then $u\in\dot{\mathcal{E}}_{\mathscr{H}}^{\textup{in},\z}$ if and only if
	\begin{align*}
	\tilde{u}_{1}&=-\mathrm{i}\partial_{x}\tilde{u}_{0}+s\z V_{-}\tilde{u}_{0}
	\end{align*}
	whereas $u\in\dot{\mathcal{E}}_{\mathscr{H}}^{\textup{out},\z}$ if and only if
	\begin{align*}
	\tilde{u}_{1}&=\mathrm{i}\partial_{x}\tilde{u}_{0}+s\z V_{-}\tilde{u}_{0}.
	\end{align*}
	These conditions define incoming and outgoing states for the asymptotic profiles (see \cite{GGH17}, above Lemma 13.3 with $u_{1}$ therein being $\mathrm{i}u_{1}$ for us). We then define\footnote{We use formula (13.8) of \cite[Lemma 13.3]{GGH17}.} (omitting the dependence in $(z,\omega)\in\mathbb{S}^{1}\times\mathbb{S}^{2}$):
	\begin{align*}
	\tilde{u}_{0}^{\textup{in}}(x)&=\frac{1}{2}\int_{x}^{+\infty}\big[\!-\!\partial_{y}\tilde{u}_{0}-\mathrm{i}(\tilde{u}_{1}-s\z V_{-}\tilde{u}_{0})\big](y)\mathrm{d}y,\\[2mm]
	\tilde{u}_{1}^{\textup{in}}(x)&=\frac{1}{2}\big[\!-\!\mathrm{i}\partial_{x}\tilde{u}_{0}+(\tilde{u}_{1}-s\z V_{-}\tilde{u}_{0})\big](x)+\frac{s\z V_{-}}{2}\int_{x}^{+\infty}\big[\!-\!\partial_{y}\tilde{u}_{0}-\mathrm{i}(\tilde{u}_{1}-s\z V_{-}\tilde{u}_{0})\big](y)\mathrm{d}y,\\[5mm]
	%
	%
	\tilde{u}_{0}^{\textup{out}}(x)&=\frac{1}{2}\int_{-\infty}^{x}\big[\partial_{y}\tilde{u}_{0}-\mathrm{i}(\tilde{u}_{1}-s\z V_{-}\tilde{u}_{0})\big](y)\mathrm{d}y,\\[2mm]
	\tilde{u}_{1}^{\textup{out}}(x)&=\frac{1}{2}\big[\mathrm{i}\partial_{x}\tilde{u}_{0}+(\tilde{u}_{1}-s\z V_{-}\tilde{u}_{0})\big](x)+\frac{s\z V_{-}}{2}\int_{-\infty}^{x}\big[\partial_{y}\tilde{u}_{0}-\mathrm{i}(\tilde{u}_{1}-s\z V_{-} \tilde{u}_{0})\big](y)\mathrm{d}y.
	\end{align*}
	Direct computations show that
	\begin{align*}
	\tilde{u}_{1}^{\textup{in}}&=-\mathrm{i}\partial_{x}\tilde{u}_{0}^{\textup{in}}+s\z V_{-}\tilde{u}_{0}^{\textup{in}},\quad\quad\tilde{u}_{1}^{\textup{out}}=\mathrm{i}\partial_{x}\tilde{u}_{0}^{\textup{out}}+s\z V_{-}\tilde{u}_{0}^{\textup{out}},\quad\quad(\tilde{u}_{0},\tilde{u}_{1})=(\tilde{u}_{0}^{\textup{in}}+\tilde{u}_{0}^{\textup{out}},\tilde{u}_{1}^{\textup{in}}+\tilde{u}_{1}^{\textup{out}}).
	\end{align*}
	Here we use that
	\begin{align*}
	\int_{-\infty}^{+\infty}(\tilde{u}_{1}-s\z V_{-}\tilde{u}_{0})(y)\mathrm{d}y&=\int_{-\infty}^{+\infty}\mathrm{e}^{\mathrm{i}s\z\int_{0}^{y}(V(y')-V_{-})\mathrm{d}y'}\big(u_{1}-s\z V_{-} u_{0}\big)(y)\mathrm{d}x=0.
	\end{align*}
	It follows that
	\begin{align*}
	u^{\textup{in}/\textup{out}}&:=\mathrm{e}^{-\mathrm{i}s\z\int_{0}^{x}(V(y)-V_{-})\mathrm{d}y}\big(\tilde{u}_{0}^{\textup{in}/\textup{out}},\tilde{u}_{1}^{\textup{in}/\textup{out}}\big)
	\end{align*}
	satisfy $u^{\textup{in}/\textup{out}}\in\dot{\mathcal{E}}_{\mathscr{H}}^{\textup{in}/\textup{out},\z}$ and $u=u^{\textup{in}}+u^{\textup{out}}$.
	
	The support condition \eqref{Huygens} for $\tilde{u}^{\textup{in}/\textup{out}}$ directly reads on the above expressions and is in turn verified for $u^{\textup{in}/\textup{out}}$ as multiplication by $\mathrm{e}^{\mathrm{i}s\z\int_{0}^{x}(V(y)-V_{-})\mathrm{d}y}$ does not modify supports.
	
	It remains to show that $\dot{\mathcal{E}}_{\mathscr{H}}^{\textup{in},\z}\cap\dot{\mathcal{E}}_{\mathscr{H}}^{\textup{out},\z}=\{0\}$. If $u$ lies in the intersection, then $\tilde{u}$ satisfies
	\begin{align*}
	\begin{cases}
	\tilde{u}_{1}~=~-\mathrm{i}\partial_{x}\tilde{u}_{0}+s\z V_{-}\tilde{u}_{0}\\
	\tilde{u}_{1}~=~\mathrm{i}\partial_{x}\tilde{u}_{0}+s\z V_{-}\tilde{u}_{0}
	\end{cases}.
	\end{align*}
	Adding and subtracting both the conditions, we see that $\tilde{u}_{0}$ is constant and $\tilde{u}_{1}=s\z V_{-}\tilde{u}_{0}$ in $\HH$, whence $\tilde{u}_{0}=\tilde{u}_{1}=0$. This entails $u=0$ and we are done.
\end{proof}
\begin{remark}
\label{Psi in out}
	Using the definitions of $\Psi_{\mathscr{H}\!/\!\mathscr{I}}$ in the paragraph \ref{Transport along principal null geodesics}, we easily show that
	\begin{align*}
		\dot{\mathcal{E}}^{\textup{in},\z}_{\mathscr{H}}&=\Psi_{\mathscr{H}}\big(\dot{\mathcal{H}}^{1,\z}_{\mathscr{H}}\times\{0\}\big)\cap\dot{\mathcal{E}}_{\mathscr{H}}^{L,\z},\qquad\qquad\dot{\mathcal{E}}^{\textup{out},\z}_{\mathscr{I}}=\Psi_{\mathscr{I}}\big(\{0\}\times\dot{\mathcal{H}}^{1,\z}_{\mathscr{I}}\big)\cap\dot{\mathcal{E}}_{\mathscr{I}}^{L,\z}.
	\end{align*}
\end{remark}
We then have the following adaptation of \cite[Lemma 13.5]{GGH17} to our framework:
\begin{lemma}
	\label{Density in/out}
	Let
	\begin{align*}
	\mathcal{D}_{\mathscr{H}\!/\!\mathscr{I}}^{\textup{fin},\z}&:=\big(\mathcal{C}^{\infty}_{\mathrm{c}}(\mathbb{S}^{1}\times\mathbb{R}\times\mathbb{S}^{2})\times\mathcal{C}^{\infty}_{\mathrm{c}}(\mathbb{S}^{1}\times\mathbb{R}\times\mathbb{S}^{2})\big)\cap\mathcal{E}_{\mathscr{H}\!/\!\mathscr{I}}^{\textup{fin},\z}\cap\dot{\mathcal{E}}_{\mathscr{H}\!/\!\mathscr{I}}^{L,\z}.
	\end{align*}
	Then $\mathcal{D}_{\mathscr{H}\!/\!\mathscr{I}}^{\textup{fin},\z}$ is dense in $\big(\dot{\mathcal{E}}^{\z}_{\mathscr{H}\!/\!\mathscr{I}},\|\cdot\|_{\dot{\mathcal{E}}^{\z}_{\mathscr{H}\!/\!\mathscr{I}}}\big)$.
\end{lemma}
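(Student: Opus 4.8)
The plan is to adapt the proof of \cite[Lemma 13.5]{GGH17}, and I would carry it out only for the $\mathscr{H}$ case, the $\mathscr{I}$ case being identical after replacing $V_{-}$ by $V_{+}$ and reversing the sign of the oscillating phase. First, since smooth compactly supported pairs (valued in $\ker(\mathrm{i}\partial_{z}+\z)$) are by construction dense in $\dot{\mathcal{E}}^{\z}_{\mathscr{H}}$, it is enough to approximate a fixed such pair $(u_{0},u_{1})$ by elements of $\mathcal{D}^{\textup{fin},\z}_{\mathscr{H}}$. Next I would truncate the spherical harmonic expansion: since neither $h_{\mathscr{H}}=-\big(\partial_{x}+s(V(r)-V_{-})\partial_{z}\big)^{2}$ nor $k_{\mathscr{H}}$ acts on the angular variable, the first-order operator $L_{\mathscr{H}}+\mathrm{i}k_{\mathscr{H}}$ commutes with the projection onto $\bigoplus_{\ell\leq\ell_{0}}Y_{\ell}$, so the partial sums of $(u_{0},u_{1})$ over $\ell\leq\ell_{0}$ are again smooth and compactly supported, belong to $\mathcal{E}^{\textup{fin},\z}_{\mathscr{H}}$ (a smooth compactly supported pair lies in $\mathcal{E}^{\z}_{\mathscr{H}}=\langle h_{\mathscr{H}}\rangle^{-1/2}\HH^{\z}\times\HH^{\z}$), and converge to $(u_{0},u_{1})$ in $\dot{\mathcal{E}}^{\z}_{\mathscr{H}}$ as $\ell_{0}\to+\infty$. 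Hence I may assume $(u_{0},u_{1})$ smooth, compactly supported, of finite angular momentum; the $L^{1}$ requirement in the definition of $\dot{\mathcal{E}}^{L,\z}_{\mathscr{H}}$ is then automatic and only the integral cancellation must be produced.

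To that end, set $\Phi(y):=\int_{0}^{y}(V(x')-V_{-})\,\mathrm{d}x'$ and
\[
g(z,\omega):=\int_{\mathbb{R}}\mathrm{e}^{\mathrm{i}s\z\Phi(y)}\big(u_{1}-s\z V_{-}u_{0}\big)(z,y,\omega)\,\mathrm{d}y,
\]
a smooth function on $\mathbb{S}^{1}\times\mathbb{S}^{2}$ of finite angular momentum, in particular in $L^{2}(\mathbb{S}^{1}\times\mathbb{S}^{2})$. The essential observation is that $\Phi$ is bounded on $\left]-\infty,0\right]$: near $r=r_{-}$ one has $V-V_{-}=\mathcal{O}_{x\to-\infty}(\mathrm{e}^{2\kappa_{-}x})$ with $\kappa_{-}>0$ by \eqref{Decay r-r_- with kappa_pm}, so $\Phi(y)$ has a finite limit $\Phi(-\infty)$ as $y\to-\infty$. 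I would then fix $\chi\in\mathcal{C}^{\infty}_{\mathrm{c}}(\left]-1,0\right[,[0,+\infty[)$ with $\int\chi>0$ and, for $R>0$, put
\[
c_{R}:=\int_{\mathbb{R}}R^{-1}\chi(y/R)\,\mathrm{e}^{\mathrm{i}s\z\Phi(y)}\,\mathrm{d}y,\qquad w_{1}(z,y,\omega):=c_{R}^{-1}R^{-1}\chi(y/R)\,g(z,\omega).
\]
By dominated convergence $c_{R}\to\mathrm{e}^{\mathrm{i}s\z\Phi(-\infty)}\int\chi\neq0$ as $R\to+\infty$, so $|c_{R}|$ is bounded below and $w_{1}$ is a well-defined smooth compactly supported function of finite angular momentum with $\int_{\mathbb{R}}\mathrm{e}^{\mathrm{i}s\z\Phi(y)}w_{1}(z,y,\omega)\,\mathrm{d}y=g(z,\omega)$. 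Moreover its first component vanishes, so
\[
\big\|(0,w_{1})\big\|_{\dot{\mathcal{E}}^{\z}_{\mathscr{H}}}^{2}=\|w_{1}\|_{\HH^{\z}}^{2}=|c_{R}|^{-2}\,R^{-1}\|\chi\|_{L^{2}}^{2}\,\|g\|_{L^{2}(\mathbb{S}^{1}\times\mathbb{S}^{2})}^{2}\xrightarrow[R\to+\infty]{}0.
\]
Then $(u_{0},u_{1}-w_{1})$ is still smooth, compactly supported, of finite angular momentum, and satisfies $\int_{\mathbb{R}}\mathrm{e}^{\mathrm{i}s\z\Phi}\big((u_{1}-w_{1})-s\z V_{-}u_{0}\big)=g-g=0$, hence it belongs to $\mathcal{D}^{\textup{fin},\z}_{\mathscr{H}}$; since it differs from $(u_{0},u_{1})$ only by $(0,w_{1})$, which is arbitrarily small in $\dot{\mathcal{E}}^{\z}_{\mathscr{H}}$ for $R$ large, density follows when combined with the two reductions above.

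The hard part is this cancellation step, and specifically the presence of the oscillating weight $\mathrm{e}^{\pm\mathrm{i}s\z\Phi}$ that is absent from \cite[Lemma 13.5]{GGH17}: one must be sure that $\Phi$ remains bounded on the half-line adjacent to the horizon carrying $V_{-\!/\!+}$ (namely $\left]-\infty,0\right]$ for $\mathscr{H}$, $\left[0,+\infty\right[$ for $\mathscr{I}$), for otherwise the normalizing constants $c_{R}$ could degenerate in the limit; this boundedness is exactly what \eqref{Decay r-r_- with kappa_pm} provides, since it forces $V-V_{-\!/\!+}$ to be integrable near that horizon. An essentially equivalent alternative would be to transport everything through the isometry $\Psi_{\mathscr{H}\!/\!\mathscr{I}}$ of Lemma \ref{Transformation Psi}, under which $\dot{\mathcal{E}}^{\z}_{\mathscr{H}\!/\!\mathscr{I}}$ becomes a product of twisted homogeneous Sobolev spaces $\dot{\HH}^{1}_{\mathscr{H}\!/\!\mathscr{I}}$ and the $L^{1}$/cancellation condition turns into a decay condition at $\pm\infty$; density of compactly supported functions in that model space is standard. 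Either way, once the boundedness of $\Phi$ is granted, all remaining estimates are routine.
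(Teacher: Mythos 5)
Your proof is correct, and its skeleton (density of finite-harmonic data, then of smooth compactly supported data, then a spread-out corrector added to $u_{1}$ alone so that the energy cost is $\mathcal{O}(R^{-1/2})$ while the weighted integral is cancelled) is the same as the paper's. The only real difference is how the oscillating weight is handled in the corrector. The paper multiplies the dilated bump by the \emph{conjugate} phase, taking $u_{1}^{n}=u_{1}-n^{-1}\psi_{n}\,\mathrm{e}^{-\!/\!+\mathrm{i}s\z\int_{0}^{x}(V-V_{-\!/\!+})\mathrm{d}x'}\,g(z,\omega)$, so that in the cancellation integral the phases cancel identically, the contribution of the corrector is computed exactly with no normalizing constant, and no information on the behaviour of $\Phi(y)=\int_{0}^{y}(V-V_{-\!/\!+})$ at infinity is needed -- in particular the bump may be spread over all of $\mathbb{R}$ and \eqref{Decay r-r_- with kappa_pm} plays no role in this lemma. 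You instead keep the bump un-twisted, spread it only toward the horizon carrying $V_{-\!/\!+}$, and divide by $c_{R}$, whose non-degeneracy as $R\to+\infty$ rests precisely on the finiteness of $\Phi(\mp\infty)$, i.e.\ on the exponential decay of $V-V_{-\!/\!+}$ near that horizon; as you note, spreading toward the other end would make $c_{R}$ degenerate (the phase is asymptotically linear there when $s\z\neq0$). Both arguments are complete; the paper's phase-twisted corrector is marginally more robust and self-contained, while yours is a legitimate variant whose extra hypothesis (integrability of $V-V_{-\!/\!+}$ on the relevant half-line) is indeed supplied by \eqref{Decay r-r_- with kappa_pm}. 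Your remaining estimates ($\|(0,w_{1})\|_{\dot{\mathcal{E}}^{\z}_{\mathscr{H}}}=\|w_{1}\|_{\HH^{\z}}$, the $R^{-1}$ scaling of $\|\chi(\cdot/R)\|_{L^{2}}^{2}R^{-2}$, and the commutation of $L_{\mathscr{H}\!/\!\mathscr{I}}+\mathrm{i}k_{\mathscr{H}\!/\!\mathscr{I}}$ with the angular projections) are all correct.
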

\begin{proof}
	First of all, $\mathcal{E}_{\mathscr{H}\!/\!\mathscr{I}}^{\textup{fin},\z}$ is dense in $\dot{\mathcal{E}}^{\z}_{\mathscr{H}\!/\!\mathscr{I}}$ by definition of the homogeneous spaces and because the Hilbert direct sum of the eigenspaces $Y_{\ell}$ is $L^{2}(\mathbb{S}^{2},\mathrm{d}\omega)$.
	
	Next, $\big(\mathcal{C}^{\infty}_{\mathrm{c}}(\mathbb{S}^{1}_{z}\times\mathbb{R}_{x}\times\mathbb{S}^{2}_{\omega})\times\mathcal{C}^{\infty}_{\mathrm{c}}(\mathbb{S}^{1}_{z}\times\mathbb{R}_{x}\times\mathbb{S}^{2}_{\omega})\big)\cap\mathcal{E}_{\mathscr{H}\!/\!\mathscr{I}}^{\textup{fin},,\z}$ is dense in $\mathcal{E}_{\mathscr{H}\!/\!\mathscr{I}}^{\textup{fin},,\z}$ by standard regularization arguments.
	
	Finally, let $\psi\in\mathcal{C}^{\infty}_{\mathrm{c}}(\mathbb{S}^{1}_{z}\times\mathbb{R}_{x}\times\mathbb{S}^{2}_{\omega})\times\mathcal{C}^{\infty}_{\mathrm{c}}(\mathbb{S}^{1}_{z}\times\mathbb{R}_{x}\times\mathbb{S}^{2}_{\omega})$ such that $\psi\geq 0$, $\|\psi\|_{L^{1}(\mathbb{S}^{1}\times\mathbb{R}\times\mathbb{S}^{2})}=1$ and put $\psi_{n}(z,x,\omega):=\psi(z,n^{-1}x,\omega)$ for each $n\in\mathbb{N}\setminus\{0\}$. Pick $u\in\big(\mathcal{C}^{\infty}_{\mathrm{c}}(\mathbb{S}^{1}_{z}\times\mathbb{R}_{x}\times\mathbb{S}^{2}_{\omega})\times\mathcal{C}^{\infty}_{\mathrm{c}}(\mathbb{S}^{1}_{z}\times\mathbb{R}_{x}\times\mathbb{S}^{2}_{\omega})\big)\cap\mathcal{E}_{\textup{in}}^{\textup{fin},\z}$ and define
	\begin{align*}
	\begin{cases}
	u_{0}^{n}:=u_{0},\\
	u_{1}^{n}:=\displaystyle u_{1}-n^{-1}\psi_{n}\mathrm{e}^{-\!/\!+\mathrm{i}s\z\int_{0}^{x}(V(x')-V_{-\!/\!+})\mathrm{d}x'}\int_{\mathbb{R}}\mathrm{e}^{+\!/\!-\mathrm{i}s\z\int_{0}^{y}(V(x')-V_{-\!/\!+})\mathrm{d}x'}\big(u_{1}-s\z V_{-\!/\!+}u_{0}\big)\!(z,y,\omega)\mathrm{d}y
	\end{cases}.
	\end{align*}
	Then $u^{n}:=(u_{0}^{n},u_{1}^{n})\in\big(\mathcal{C}^{\infty}_{\mathrm{c}}(\mathbb{S}^{1}_{z}\times\mathbb{R}_{x}\times\mathbb{S}^{2}_{\omega})\times\mathcal{C}^{\infty}_{\mathrm{c}}(\mathbb{S}^{1}_{z}\times\mathbb{R}_{x}\times\mathbb{S}^{2}_{\omega})\big)\cap\mathcal{E}_{\textup{in}}^{\textup{fin},\z}\cap\dot{\mathcal{E}}_{\textup{in}}^{L,\z}$ and we have
	\begin{align*}
	\|u_{1}^{n}-u_{1}\|_{\HH^{z}}&\leq n^{-1/2}\|n^{-1/2}\psi_{n}\|_{\HH^{\z}}\|u_{1}-s\z V_{-\!/\!+}u_{0}\|_{L^{1}(\mathbb{S}^{1}\times\mathbb{R}\times\mathbb{S}^{2})}\\
	&\leq Cn^{-1/2}\|u_{1}-s\z V_{-\!/\!+}u_{0}\|_{L^{1}(\mathbb{S}^{1}\times\mathbb{R}\times\mathbb{S}^{2})}
	\end{align*}
	for some constant $C>0$ independent of $n$. It remains to let $n\to+\infty$ to conclude the proof.
\end{proof}
\begin{remark}[Minimal propagation speed]
	\label{In/out remark}
	Let $u=u^{\textup{in}}+u^{\textup{out}}\in\mathcal{D}_{\mathscr{H}}^{\textup{fin},\z}$ and $v=v^{\textup{in}}+v^{\textup{out}}\in\mathcal{D}_{\mathscr{I}}^{\textup{fin},\z}$ such that $\mathrm{Supp\,}u,\mathrm{Supp\,}v\subset\mathbb{S}^{1}_{z}\times[R_{1},R_{2}]_{x}\times\mathbb{S}^{2}_{\omega}$ for some $R_{1},R_{2}\in\mathbb{R}$. Using the relations
	\begin{align*}
		u^{\textup{in}}_{1}&=-\mathrm{i}\partial_{x}u^{\textup{in}}_{0}+s\z Vu^{\textup{in}}_{0},\qquad\qquad\qquad\quad\ \, u^{\textup{out}}_{1}=\mathrm{i}\partial_{x}u^{\textup{out}}_{0}-s\z(V-2V_{-})u^{\textup{out}}_{0},\\
		v^{\textup{in}}_{1}&=-\mathrm{i}\partial_{x}v^{\textup{in}}_{0}-s\z(V-2V_{+})v^{\textup{in}}_{0},\qquad\qquad v^{\textup{out}}_{1}=\mathrm{i}\partial_{x}v^{\textup{out}}_{0}+s\z Vv^{\textup{out}}_{0},
	\end{align*}
	we can integrate by parts in the Kirchhoff formulas \eqref{Kirchhoff type formula 2 simplified} and \eqref{Kirchhoff type formula 2 bis simplified} to obtain
	\begin{align*}
	\big(\mathrm{e}^{\mathrm{i}t\dot{H}^{\z}_{\mathscr{H}}}u^{\textup{in}}\big)_{0}(z,x,\omega)	&=\mathrm{e}^{\mathrm{i}s\z\int_{x}^{x+t}V(x')\mathrm{d}x'}u^{\textup{in}}_{0}(z,x+t,\omega),\\
	\big(\mathrm{e}^{\mathrm{i}t\dot{H}^{\z}_{\mathscr{H}}}u^{\textup{out}}\big)_{0}(z,x,\omega)	&=\mathrm{e}^{\mathrm{i}s\z\int_{x}^{x-t}(V(x')-2V_{-})\mathrm{d}x'}u^{\textup{out}}_{0}(z,x-t,\omega),\\
	\big(\mathrm{e}^{\mathrm{i}t\dot{H}^{\z}_{\mathscr{I}}}v^{\textup{in}}\big)_{0}(z,x,\omega)	&=\mathrm{e}^{-\mathrm{i}s\z\int_{x}^{x+t}(V(x')-2V_{+})\mathrm{d}x'}v_{0}^{\textup{in}}(z,x+t,\omega),\\
	\big(\mathrm{e}^{\mathrm{i}t\dot{H}^{\z}_{\mathscr{I}}}v^{\textup{out}}\big)_{0}(z,x,\omega)	&=\mathrm{e}^{-\mathrm{i}s\z\int_{x}^{x-t}V(x')\mathrm{d}x'}v_{0}^{\textup{out}}(z,x-t,\omega).
	\end{align*}
	It follows:
	\begin{align*}
	\mathrm{Supp\,}\big(\mathrm{e}^{\mathrm{i}t\dot{H}_{\mathscr{H}}^{\z}}u^{\textup{in}}\big),\mathrm{Supp\,}\big(\mathrm{e}^{\mathrm{i}t\dot{H}_{\mathscr{I}}^{\z}}v^{\textup{in}}\big)&\subset\mathbb{S}^{1}_{z}\times\left]-\infty,R_{2}-t\right]_{x}\times\mathbb{S}^{2}_{\omega},\\
	\mathrm{Supp\,}\big(\mathrm{e}^{\mathrm{i}t\dot{H}_{\mathscr{H}}^{\z}}u^{\textup{out}}\big),\mathrm{Supp\,}\big(\mathrm{e}^{\mathrm{i}t\dot{H}_{\mathscr{I}}^{\z}}v^{\textup{out}}\big)&\subset\mathbb{S}^{1}_{z}\times\left[R_{1}+t,+\infty\right[_{x}\times\mathbb{S}^{2}_{\omega}.
	\end{align*}
	The dynamics $(\mathrm{e}^{\mathrm{i}t\dot{H}_{\mathscr{H}\!/\!\mathscr{I}}^{\z}})_{t\in\mathbb{R}}$ thus verifies the \textup{Huygens principle} on $\mathcal{D}_{\mathscr{H}\!/\!\mathscr{I}}^{\textup{fin},\z}$.
\end{remark}
%
%
%
%
%
\subsection{Analytic scattering results}
\label{Analyic scattering results}
We state in this Subsection the scattering results we will prove in Subsection \ref{Proof of the analytic results}. For all of them, we need to first restrict to $\ker(\mathrm{i}\partial_{z}+\z)$ with $\z\in\mathbb{Z}$ in order to apply the abstract theory of \cite{GGH17} for Klein-Gordon equations with a non zero mass term, then to $|s|$ sufficiently small to use \cite[Theorem 3.8]{Be18}.

The first result concerns the uniform boundedness of the propagator $(\mathrm{e}^{\mathrm{i}t\dot{H}^{\z}})_{t\in\mathbb{R}}$. The proof is given in the paragraph \ref{Proof of Theorem {Uniform boundedness of the evolution}}.
\begin{theorem}[Uniform boundedness of the evolution]
	\label{Uniform boundedness of the evolution}
	Let $\z\in\mathbb{Z}\setminus\{0\}$. There exists $s_{0}>0$ such that for all $s\in\left]-s_{0},s_{0}\right[$, there exists a constant $C\equiv C(\z,s_{0})>0$ such that
	\begin{align*}
	\big\|\mathrm{e}^{\mathrm{i}t\dot{H}^{\z}}u\big\|_{\dot{\mathcal{E}}^{\z}}&\leq C\|u\|_{\dot{\mathcal{E}}^{\z}}\qquad\qquad\forall t\in\mathbb{R},\ \forall u\in\dot{\mathcal{E}}^{\z}.
	\end{align*}
\end{theorem}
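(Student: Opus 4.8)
The plan is to reduce the uniform boundedness of $(\mathrm{e}^{\mathrm{i}t\dot{H}^{\z}})_{t\in\mathbb{R}}$ on $\dot{\mathcal{E}}^{\z}$ to a statement about the resolvent of $\dot{H}^{\z}$ near the real axis, and then invoke the absence of real resonances from \cite{Be18}. Concretely, after restricting to $\ker(\mathrm{i}\partial_z+\z)$ with $\z\neq 0$ (so that $h_0^{\z}$ is a genuine massive Klein--Gordon operator with a positive mass gap coming from the $m^2\z^2$ term), one is precisely in the abstract framework of \cite{GGH17}. The point is that $\dot H^{\z}$ generates a $C^0$-group on $\dot{\mathcal E}^{\z}$ which a priori is only polynomially (or exponentially) bounded; uniform boundedness is equivalent, by a Gearhart--Pr\"uss / Datko type argument adapted to the superradiant setting in \cite{GGH17}, to (i) the absence of spectrum of $\dot H^{\z}$ in $\mathbb{C}^{+}$, (ii) the absence of real eigenvalues, and (iii) a limiting absorption principle giving uniform bounds on $(\dot H^{\z}-\lambda\mp\mathrm{i}0)^{-1}$ in suitable weighted spaces as $\lambda$ ranges over $\mathbb{R}$, including near the thresholds and near infinity.

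First I would recall from \cite{GGH17} (its Section 3, together with the structure of the charged Klein--Gordon operator and the smallness of $s$ used already in the excerpt to ensure $\hh_\pm\geq 0$) that for $|s|$ small the quadratic pencil $p(\lambda)=h-(\lambda - k)^2 \ \big(=h_0-\lambda^2+2\lambda k\big)$ restricted to the harmonic $\z$ has no complex eigenvalues and no embedded real eigenvalues; the smallness condition $|s|<s_0(\z)$ is exactly what makes the coupling $k^{\z}=-\mathrm{i}sV\partial_z$ a small perturbation of the self-adjoint massive operator $h_0^{\z}$, so that a Mourre estimate / positive commutator argument applies. Here the input of \cite[Theorem 3.8]{Be18} is decisive: it rules out real resonances, i.e.\ the threshold behaviour at $\lambda \to$ ends of the continuous spectrum does not produce a pole, which is precisely the obstruction that would allow $\|\mathrm{e}^{\mathrm{i}t\dot H^{\z}}\|$ to grow. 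With no resonances on the reals and no spectrum off the reals, the resolvent $(\dot H^{\z}-\lambda)^{-1}$ extends continuously from $\mathbb{C}^{+}$ to $\overline{\mathbb{C}^{+}}$ with values in $\mathcal{B}(\dot{\mathcal E}^{\z})$ in a weighted sense, and one gets the uniform-in-$\lambda$ bound that feeds the stationary representation of the propagator.

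Then I would assemble the bound: writing the group via the inverse Laplace/Fourier transform of the resolvent, $\mathrm{e}^{\mathrm{i}t\dot H^{\z}}=\frac{1}{2\pi\mathrm{i}}\int (\dot H^{\z}-\lambda)^{-1}\,\mathrm{e}^{\mathrm{i}t\lambda}\,\mathrm{d}\lambda$ along a contour pushed to the real axis, the uniform resolvent bounds combined with the decay of the resolvent as $|\lambda|\to\infty$ (high-energy estimate, again standard for a wave-type operator: $\hat P$ is elliptic and $\lesssim \partial_x^2$ at leading order) yield $\sup_{t}\|\mathrm{e}^{\mathrm{i}t\dot H^{\z}}\|_{\mathcal B(\dot{\mathcal E}^{\z})}<\infty$. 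Equivalently, and this is the cleaner route actually used in \cite{GGH17}, one shows that $\dot{\mathcal E}^{\z}$ decomposes (up to a finite-dimensional piece carrying no growth, since there are no real eigenvalues either) into the range of a bounded Borel functional calculus for a self-adjoint model, on which the dynamics is manifestly unitary, plus a complement on which the dynamics is controlled by the asymptotic Hamiltonians $\dot{\widetilde H}^{\z}_{\pm}$ — which are self-adjoint by the decay $\kk^{\z}_{\pm}=\mathcal O(\mathrm{e}^{2\kappa_\pm x})$, already noted in the excerpt — so the evolution is bounded there too. Patching the two pieces with the cut-offs $i_\pm, j_\pm$ gives the result.

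The main obstacle is step two: proving that the resolvent has no real resonance and remains uniformly bounded across the whole real axis, in particular at the spectral thresholds. This is where one genuinely needs the smallness of $s$ (to keep $k^{\z}$ subordinate and preserve positivity of $\hh^{\z}_\pm$, so that the framework of \cite{GGH17} and the commutator estimates go through) and the explicit no-resonance statement \cite[Theorem 3.8]{Be18}; without the mass term ($m=0$) the threshold at $\lambda=0$ would be a resonance (the constant state $1\notin\HH$, cf.\ the discussion of $\dot{\mathcal E}^{L,\z}$ in the excerpt) and the statement would fail — which is exactly why the theorem is stated for $\z\neq 0$, forcing the effective mass $m\z\neq 0$. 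Everything else (high-energy bounds, the inverse Fourier representation, the cut-off patching) is routine given the machinery of \cite{GGH17}.
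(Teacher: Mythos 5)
Your proposal takes essentially the same route as the paper: restrict to $\ker(\mathrm{i}\partial_{z}+\z)$ with $\z\neq 0$ and $|s|$ small so that the abstract framework of \cite{GGH17} applies, use \cite{Be18} (Subsections 3.3--3.4, Theorem 3.8, Corollary 3.9) to get the meromorphic extension of the weighted resolvents of $\dot{H}^{\z}$ and $\dot{\widetilde{H}}^{\z}_{\pm}$ with no real resonances and no eigenvalues, and then conclude by the abstract uniform-boundedness result \cite[Theorem 7.1]{GGH17} (the analogue of Theorem 12.1 therein). This is exactly what the paper does, after verifying the geometric hypotheses (G) in Subsection \ref{Geometric hypotheses}; your Gearhart--Pr\"uss/contour-integral gloss on the internals of that abstract theorem is only heuristic, but the decisive inputs and the reduction coincide.
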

The next result concerns the asymptotic completeness between the full dynamics and the separable comparison dynamics. The proof is given in the paragraph \ref{Proof of Theorem {Asymptotic completeness, separable comparison dynamics}}.
\begin{theorem}[Asymptotic completeness, separable comparison dynamics]
	\label{Asymptotic completeness, separable comparison dynamics}
	Let $\z\in\mathbb{Z}\setminus\{0\}$. There exists $s_{0}>0$ such that for all $s\in\left]-s_{0},s_{0}\right[$, the following strong limits
	\begin{align*}
	\boldsymbol{W}_{\pm}^{f}&:=\textup{s}-\lim_{t\to+\infty}\mathrm{e}^{-\mathrm{i}t\dot{H}^{\z}}i_{\pm}\mathrm{e}^{\mathrm{i}t\dot{H}^{\z}_{\pm\infty}},\\
	\boldsymbol{W}_{\pm}^{p}&:=\textup{s}-\lim_{t\to-\infty}\mathrm{e}^{-\mathrm{i}t\dot{H}^{\z}}i_{\pm}\mathrm{e}^{\mathrm{i}t\dot{H}^{\z}_{\pm\infty}},\\
	\boldsymbol{\Omega}_{\pm}^{f}&:=\textup{s}-\lim_{t\to+\infty}\mathrm{e}^{-\mathrm{i}t\dot{H}^{\z}_{\pm\infty}}i_{\pm}\mathrm{e}^{\mathrm{i}t\dot{H}^{\z}},\\
	\boldsymbol{\Omega}_{\pm}^{p}&:=\textup{s}-\lim_{t\to-\infty}\mathrm{e}^{-\mathrm{i}t\dot{H}^{\z}_{\pm\infty}}i_{\pm}\mathrm{e}^{\mathrm{i}t\dot{H}^{\z}}
	\end{align*}
	exist as bounded operators $\boldsymbol{W}_{\pm}^{f/p}\in\mathcal{B}(\dot{\mathcal{E}}^{\z}_{\pm\infty},\dot{\mathcal{E}}^{\z})$ and $\boldsymbol{\Omega}_{\pm}^{f/p}\in\mathcal{B}(\dot{\mathcal{E}}^{\z},\dot{\mathcal{E}}^{\z}_{\pm\infty})$.
	%
\end{theorem}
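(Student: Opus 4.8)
The plan is to follow the Cook–Kato–Birman strategy as implemented in the abstract framework of \cite{GGH17}, using the fact that the separable comparison Hamiltonian $\dot H^{\z}_{\pm\infty}$ differs from the full Hamiltonian $\dot H^{\z}$ only through short-range terms supported near the opposite horizon after multiplication by the cut-off $i_{\pm}$. The key structural inputs are: the uniform boundedness of $(\mathrm{e}^{\mathrm{i}t\dot H^{\z}})_{t\in\mathbb{R}}$ on $\dot{\mathcal E}^{\z}$ (Theorem \ref{Uniform boundedness of the evolution}), the unitarity of $(\mathrm{e}^{\mathrm{i}t\dot H^{\z}_{\pm\infty}})_{t\in\mathbb{R}}$ on $\HH^{\z}\times\HH^{\z}$ established above, and — crucially — the absence of real resonances for $h_0^{\z}$ provided by \cite[Theorem 3.8]{Be18}, which is why we must restrict to $\z\neq 0$ and $|s|$ small. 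I would fix $s_0>0$ small enough that both \cite[Theorem 3.8]{Be18} and the positivity $\hh_{\pm}^{\z}\geq 0$ (hence all the energy spaces are well defined) hold, and then prove the four limits.

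\textbf{Step 1 (existence of $\boldsymbol{W}^{f/p}_{\pm}$).} For $u$ in a suitable dense subspace (finite angular momentum, compactly supported data, as in Subsection \ref{Structure of the energy spaces for the comparison dynamics}), I would apply Cook's method: write
\begin{align*}
	\mathrm{e}^{-\mathrm{i}t\dot H^{\z}}i_{\pm}\mathrm{e}^{\mathrm{i}t\dot H^{\z}_{\pm\infty}}u - \mathrm{e}^{-\mathrm{i}t_0\dot H^{\z}}i_{\pm}\mathrm{e}^{\mathrm{i}t_0\dot H^{\z}_{\pm\infty}}u
	= \int_{t_0}^{t}\frac{\mathrm{d}}{\mathrm{d}\tau}\Big(\mathrm{e}^{-\mathrm{i}\tau\dot H^{\z}}i_{\pm}\mathrm{e}^{\mathrm{i}\tau\dot H^{\z}_{\pm\infty}}\Big)u\,\mathrm{d}\tau,
\end{align*}
and observe that the integrand equals $\mathrm{e}^{-\mathrm{i}\tau\dot H^{\z}}\big(\mathrm{i}[\dot H^{\z},i_{\pm}]\big)\mathrm{e}^{\mathrm{i}\tau\dot H^{\z}_{\pm\infty}}u + \mathrm{e}^{-\mathrm{i}\tau\dot H^{\z}}i_{\pm}\big(\mathrm{i}(\dot H^{\z}_{\pm\infty}-\dot H^{\z})\big)\mathrm{e}^{\mathrm{i}\tau\dot H^{\z}_{\pm\infty}}u$. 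The difference $i_{\pm}(\dot H^{\z}-\dot H^{\z}_{\pm\infty})$ involves $i_{\pm}(h^{\z}-h^{\z}_{\pm\infty})$ and $i_{\pm}(k^{\z}-k^{\z}_{\pm\infty})$; by construction these are short-range near the horizon where $i_{\pm}$ is supported, decaying like $F(r)$, i.e.\ exponentially in $x$ by \eqref{Decay r-r_- with kappa_pm}. The commutator $[\dot H^{\z},i_{\pm}]$ is a first-order operator with coefficients supported in $|x|\leq 1$. Applied to $\mathrm{e}^{\mathrm{i}\tau\dot H^{\z}_{\pm\infty}}u$ — whose first component propagates at speed one by the Kirchhoff formula \eqref{Kirchhoff type formula 1} — these terms are concentrated (respectively: along the correct null direction far from $|x|\leq 1$; and in a region where $F(r)$ is exponentially small), so their $\dot{\mathcal E}^{\z}$-norm is integrable in $\tau$. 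Combined with Theorem \ref{Uniform boundedness of the evolution} to control the left propagator, this gives a Cauchy criterion as $t_0,t\to\pm\infty$. Density and the uniform bound then extend $\boldsymbol{W}^{f/p}_{\pm}$ to $\mathcal B(\dot{\mathcal E}^{\z}_{\pm\infty},\dot{\mathcal E}^{\z})$.

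\textbf{Step 2 (existence of $\boldsymbol{\Omega}^{f/p}_{\pm}$).} Here the roles are reversed: we must show $\mathrm{e}^{-\mathrm{i}t\dot H^{\z}_{\pm\infty}}i_{\pm}\mathrm{e}^{\mathrm{i}t\dot H^{\z}}u$ converges. Since the comparison dynamics is unitary, Cook's method now requires the analogous integrated commutator terms applied to $\mathrm{e}^{\mathrm{i}\tau\dot H^{\z}}u$ to be $\tau$-integrable, and this is exactly where one needs propagation estimates for the full dynamics: a minimal velocity estimate (solutions escape to the horizons) and a decay/non-trapping estimate ruling out a bound state or a resonance at the bottom of the spectrum. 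This is supplied by \cite[Theorem 3.8]{Be18} (absence of real resonances), which is the abstract hypothesis that the RAGE-type argument of \cite{GGH17} needs; the minimal velocity estimate follows from the Mourre theory already set up in \cite{GGH17} applied to $h_0^{\z}$. I expect \textbf{this step to be the main obstacle}: one has to transport the propagation estimates of \cite{Be18,GGH17}, which are stated for the charged Klein–Gordon operator $h_0^{\z}$ on the De Sitter–Reissner–Nordström exterior, to the energy-space setting with the superradiant term $2k^{\z}$, and check that smallness of $|s|$ indeed guarantees the spectral hypotheses (no eigenvalues, no real resonances, limiting absorption principle) on the relevant energy range. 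Once the propagation estimates are in hand, the same Cook-type computation as in Step 1 closes the argument, and the resulting operators land in $\mathcal B(\dot{\mathcal E}^{\z},\dot{\mathcal E}^{\z}_{\pm\infty})$ after using density and the uniform bound of Theorem \ref{Uniform boundedness of the evolution} once more. Since all four limits are of the same nature (only the sign of $t$ and the gauge-shift $\mathrm{e}^{\mathrm{i}sV_{+}t\partial_z}$ distinguish the future/past and $\pm$ cases), they are handled simultaneously, completing the proof.
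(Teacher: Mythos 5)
There is a genuine gap, and it sits exactly where you lean on an explicit propagation formula in Step 1. The separable comparison dynamics $\dot H^{\z}_{\pm\infty}$ is \emph{not} the asymptotic profile $\dot H^{\z}_{-\!/\!+}$: its generator still contains the angular term $\tfrac{F(r)}{r^{2}}\Delta_{\mathbb{S}^{2}}$ and the term $m^{2}F(r)\partial_{z}^{2}$, so the Kirchhoff formula \eqref{Kirchhoff type formula 1} does not apply to $\mathrm{e}^{\mathrm{i}\tau\dot H^{\z}_{\pm\infty}}u$, and there is no "propagation at speed one" argument pushing the evolved data away from $\mathrm{Supp}\,[\dot H^{\z},i_{\pm}]$ or into the region where $h^{\z}-h^{\z}_{\pm\infty}$ is exponentially small. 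Consequently the $\tau$-integrability of your Cook integrand in Step 1 is unsubstantiated. What makes the Cook-type argument work for this comparison dynamics is a propagation (minimal velocity / weighted resolvent) estimate for $\dot H^{\z}_{\pm\infty}$ itself, and in the framework of \cite{GGH17} this requires precisely the absence of real resonances of the asymptotic Hamiltonians — an input you never invoke in Step 1, and which cannot be bypassed. The same remark explains why, unlike for the asymptotic profiles (Theorem \ref{Asymptotic completeness, asymptotic profiles}), the limits here exist strongly on all of $\dot{\mathcal{E}}^{\z}_{\pm\infty}$ without restricting to finite angular momentum: the angular information is retained by $\dot H^{\z}_{\pm\infty}$, so no dense subspace of fixed harmonics is needed, but the price is that both dynamics must be controlled by spectral/propagation estimates rather than by explicit transport.

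For comparison, the paper does not run Cook's method at all for this theorem: it verifies the geometric hypotheses (G1)--(G7) of \cite{GGH17} (Subsection \ref{Geometric hypotheses}), checks the two spectral conditions — absence of complex point spectrum of $\dot H^{\z}$ and absence of real resonances of $\dot H^{\z}_{\pm\infty}$, both obtained from \cite[Proposition 13.1]{GGH17} together with \cite[Theorem 3.8]{Be18} for $|s|$ small and $\z\neq 0$ — and then cites the abstract asymptotic completeness result \cite[Theorem 10.5]{GGH17} (as in \cite[Theorem 12.2]{GGH17}). Your Step 2 correctly senses that propagation estimates and the resonance-free condition are the heart of the matter, but leaving them as "the main obstacle" means the proposal defers the actual content of the theorem; to repair it you would either have to reprove the Mourre/propagation estimates for both $\dot H^{\z}$ and $\dot H^{\z}_{\pm\infty}$ in the energy-space setting, or do what the paper does and reduce to the abstract theorem of \cite{GGH17} after verifying its hypotheses.
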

We now state an asymptotic completeness result with the asymptotic profiles. The proof is given in the paragraph \ref{Proof of Theorem {Asymptotic completeness, asymptotic profiles}}.
\begin{theorem}[Asymptotic completeness, asymptotic profiles]
	\label{Asymptotic completeness, asymptotic profiles}
	Let $\z\in\mathbb{Z}\setminus\{0\}$. There exists $s_{0}>0$ such that for all $s\in\left]-s_{0},s_{0}\right[$, the following holds:
	\begin{enumerate}
		\item For all $u\in\mathcal{D}_{l/r}^{\textup{fin},\z}$, the limits
		\begin{align*}
		\boldsymbol{W}_{l/r}^{f}u&=\lim_{t\to+\infty}\mathrm{e}^{-\mathrm{i}t\dot{H}^{\z}}i_{-\!/\!+}\mathrm{e}^{\mathrm{i}t\dot{H}^{\z}_{-\!/\!+}}u,\\
		\boldsymbol{W}_{l/r}^{p}u&=\lim_{t\to-\infty}\mathrm{e}^{-\mathrm{i}t\dot{H}^{\z}}i_{-\!/\!+}\mathrm{e}^{\mathrm{i}t\dot{H}^{\z}_{-\!/\!+}}u
		\end{align*}
		exist in $\dot{\mathcal{E}}^{\z}$. The operators $\boldsymbol{W}_{l/r}^{f/p}$ extend to bounded operators $\boldsymbol{W}_{l/r}^{f/p}\in\mathcal{B}(\dot{\mathcal{E}}^{\z}_{-\!/\!+},\dot{\mathcal{E}}^{\z})$.
		\item The inverse wave operators
		\begin{align*}
		\boldsymbol{\Omega}_{l/r}^{f}&=\textup{s}-\lim_{t\to+\infty}\mathrm{e}^{-\mathrm{i}t\dot{H}^{\z}_{-\!/\!+}}i_{-\!/\!+}\mathrm{e}^{\mathrm{i}t\dot{H}^{\z}},\\
		\boldsymbol{\Omega}_{l/r}^{p}&=\textup{s}-\lim_{t\to-\infty}\mathrm{e}^{-\mathrm{i}t\dot{H}^{\z}_{-\!/\!+}}i_{-\!/\!+}\mathrm{e}^{\mathrm{i}t\dot{H}^{\z}}
		\end{align*}
		exist in $\mathcal{B}(\dot{\mathcal{E}}^{\z},\dot{\mathcal{E}}^{\z}_{-\!/\!+})$.
	\end{enumerate}
\end{theorem}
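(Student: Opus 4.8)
\textbf{Proof strategy for Theorem \ref{Asymptotic completeness, asymptotic profiles}.}
The plan is to reduce everything to the abstract scattering machinery of \cite{GGH17} and \cite{Be18}, exploiting the structural analysis of the energy spaces carried out in Subsection \ref{Structure of the energy spaces for the asymptotic profiles}. First I would fix $\z\in\mathbb{Z}\setminus\{0\}$ and choose $s_0\equiv s_0(\z)$ small enough so that simultaneously $h_0^{\z}\geq 0$ has no kernel, the asymptotic Hamiltonians $\dot{\widetilde H}{}^{\,\z}_{\pm}$ are self-adjoint (because $\kk^{\z}_{\pm}=\mathcal{O}(\mathrm{e}^{2\kappa_\pm x})$ gives $\kk_\pm^{\z}\lesssim\hh_\pm^{\z}$), Theorem \ref{Uniform boundedness of the evolution} applies (so that $(\mathrm{e}^{\mathrm{i}t\dot H^{\z}})_{t}$ is uniformly bounded on $\dot{\mathcal E}^{\z}$), and the absence of real resonances from \cite[Theorem 3.8]{Be18} holds. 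The uniform boundedness is the key a priori input: it lets me extend any limit defined on a dense subspace to a bounded operator, and it is exactly the hypothesis that powers the abstract existence/completeness statements.

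For part (1), the existence of $\boldsymbol{W}_{l/r}^{f/p}$ on the dense subspace $\mathcal{D}_{l/r}^{\textup{fin},\z}$, I would follow the Cook-type argument of \cite[Section 13]{GGH17}. Using the Kirchhoff formula \eqref{Kirchhoff type formula 1} and the in/out decomposition \eqref{In/out decomposition 1}, an element $u\in\mathcal{D}_{l/r}^{\textup{fin},\z}$ evolves under $\mathrm{e}^{\mathrm{i}t\dot H^{\z}_{-\!/\!+}}$ as a finite sum of compactly supported incoming/outgoing wave packets (with the phase $\mathrm{e}^{\mathrm{i}s\z t V_{-\!/\!+}}$) that travel with unit speed towards $x=\mp\infty$; the cut-off $i_{-\!/\!+}$ localizes these near the appropriate end. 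The Cook estimate then requires controlling $\frac{\mathrm{d}}{\mathrm{d}t}\big(\mathrm{e}^{-\mathrm{i}t\dot H^{\z}}i_{-\!/\!+}\mathrm{e}^{\mathrm{i}t\dot H^{\z}_{-\!/\!+}}u\big)$, whose norm in $\dot{\mathcal E}^{\z}$ is bounded (after using the uniform boundedness of $\mathrm{e}^{-\mathrm{i}t\dot H^{\z}}$) by the $\dot{\mathcal E}^{\z}$-norm of a commutator term $[\dot H^{\z},i_{-\!/\!+}]\mathrm{e}^{\mathrm{i}t\dot H^{\z}_{-\!/\!+}}u$ plus the term coming from the difference $h^{\z}-h^{\z}_{-\!/\!+}$ acting on the evolved data. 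Both are supported where the cut-off derivative or the potential difference $F(r)\big(-r^{-2}\Delta_{\mathbb{S}^2}+F'(r)/r-m^2\partial_z^2\big)+s^2(V^2-V_{-\!/\!+}^2)\partial_z^2$ is nonzero, i.e. in a fixed compact region in $x$; since the wave packet has left every compact set by time $|t|$ and the coefficients decay exponentially (cf.\ \eqref{Decay r-r_- with kappa_pm}), the integrand is $L^1$ in $t$. This is where the finite angular momentum condition in $\mathcal{E}_{l/r}^{\textup{fin},\z}$ is essential, because $h^{\z}$ acts on the angular variable and only on $\bigoplus_{\ell\le\ell_0}Y_\ell$ is the relevant operator bounded uniformly. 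Boundedness of $\boldsymbol{W}_{l/r}^{f/p}$ then follows from the uniform bound on $\mathrm{e}^{-\mathrm{i}t\dot H^{\z}}$ together with unitarity of $\mathrm{e}^{\mathrm{i}t\dot H^{\z}_{-\!/\!+}}$, and the extension to all of $\dot{\mathcal E}^{\z}_{-\!/\!+}$ is by density (Lemma \ref{Density in/out} for the analogous geometric case, or \cite[Lemma 13.5]{GGH17} here).

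For part (2), the existence of the inverse wave operators $\boldsymbol{\Omega}_{l/r}^{f/p}$, I would invoke the abstract asymptotic completeness theorem of \cite{GGH17} for Klein-Gordon equations with a genuinely positive mass term, applied to $\dot H^{\z}$ on $\ker(\mathrm{i}\partial_z+\z)$ with $\z\neq 0$. The ingredients that this abstract theorem demands are: a limiting absorption principle / absence of real resonances for the spatial operator, which is supplied by \cite[Theorem 3.8]{Be18} for $|s|<s_0$; uniform boundedness of the group, supplied by Theorem \ref{Uniform boundedness of the evolution}; and minimal/maximal velocity estimates and a propagation estimate, which hold in our setting exactly as in \cite[Section 13]{GGH17} because the operators $h^{\z}$, $k^{\z}$ have the same structure (with $\partial_z$ in the role of $\partial_\varphi$ and $sV_{\pm}$ in the role of the Kerr rotation speeds $\Omega_{\pm}$) and the potentials exhibit the same exponential decay to their limits at $x\to\pm\infty$. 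The only point needing care is the resonance at $0$ of the one-dimensional wave operator, handled exactly as in \cite[Remark 13.4]{GGH17}: on $\dot{\mathcal E}_{-\!/\!+}^{L,\z}$ the vanishing of $\int_{\mathbb R}(u_1-sV_{-\!/\!+}\z u_0)\,\mathrm dx$ kills the contribution of the constant resonant state, and the density of $\mathcal{D}_{l/r}^{\textup{fin},\z}$ lets one conclude on the whole space. The main obstacle, as in \cite{GGH17}, is verifying the propagation estimates away from the dyadorings and controlling the superradiant region uniformly in $t$; but once Theorem \ref{Uniform boundedness of the evolution} is in hand these are routine transcriptions of the arguments in \cite[Section 13]{GGH17}, so the proof amounts to checking that each hypothesis of those abstract results is met in our $1+4$-dimensional setting.
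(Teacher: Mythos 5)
Your proposal is correct and follows essentially the same route as the paper, which simply notes that the argument of \cite[Theorem 12.3]{GGH17} applies verbatim and carries out the details only for the slightly harder geometric profiles: Cook's method on the dense finite-angular-momentum, Huygens-type subspace combined with the exponential decay of $F$ and $V-V_{\mp}$ and the uniform boundedness of $\mathrm{e}^{\mathrm{i}t\dot{H}^{\z}}$ for the direct operators, then density; and the propagation estimates of \cite{GGH17} together with the absence of real resonances from \cite[Theorem 3.8]{Be18} for the inverse operators. The only cosmetic imprecision is your phrase that the error terms are supported in a fixed compact region in $x$ (the potential difference is only exponentially decaying, not compactly supported), but you immediately use the correct combination of moving support and exponential decay, exactly as in the paper's estimate for the geometric case.
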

Finally, we state a last asymptotic completeness result using the geometric profiles. We will prove the following theorem in the paragraph \ref{Proof of Theorem {Asymptotic completeness, geometric profiles}} and give a geometric interpretation in Subsection \ref{Traces on the energy spaces} (see Remark \ref{Geometric interpretation of full wave operators}).
\begin{theorem}[Asymptotic completeness, geometric profiles]
	\label{Asymptotic completeness, geometric profiles}
	Let $\z\in\mathbb{Z}\setminus\{0\}$. There exists $s_{0}>0$ such that for all $s\in\left]-s_{0},s_{0}\right[$, the following holds:
	\begin{enumerate}
		\item For all $u\in\mathcal{D}^{\textup{fin},\z}_{\mathscr{H}\!/\!\mathscr{I}}$, the limits
		\begin{align*}
		\boldsymbol{W}_{\mathscr{H}\!/\!\mathscr{I}}^{f}u&=\lim_{t\to+\infty}\mathrm{e}^{-\mathrm{i}t\dot{H}^{\z}}i_{-\!/\!+}\mathrm{e}^{\mathrm{i}t\dot{H}^{\z}_{\mathscr{H}\!/\!\mathscr{I}}}u,\\
		\boldsymbol{W}_{\mathscr{H}\!/\!\mathscr{I}}^{p}u&=\lim_{t\to-\infty}\mathrm{e}^{-\mathrm{i}t\dot{H}^{\z}}i_{-\!/\!+}\mathrm{e}^{\mathrm{i}t\dot{H}^{\z}_{\mathscr{H}\!/\!\mathscr{I}}}u
		\end{align*}
		exist in $\dot{\mathcal{E}}^{\z}$. The operators $\boldsymbol{W}_{\mathscr{H}\!/\!\mathscr{I}}^{f/p}$ extend to bounded operators $\boldsymbol{W}_{\mathscr{H}\!/\!\mathscr{I}}^{f/p}\in\mathcal{B}(\dot{\mathcal{E}}^{\z}_{\mathscr{H}\!/\!\mathscr{I}},\dot{\mathcal{E}}^{\z})$.
		\item The inverse future/past wave operators
		\begin{align*}
		\boldsymbol{\Omega}_{\mathscr{H}\!/\!\mathscr{I}}^{f}&=\textup{s}-\lim_{t\to+\infty}\mathrm{e}^{-\mathrm{i}t\dot{H}^{\z}_{\mathscr{H}\!/\!\mathscr{I}}}i_{-\!/\!+}\mathrm{e}^{\mathrm{i}t\dot{H}^{\z}},\\
		\boldsymbol{\Omega}_{\mathscr{H}\!/\!\mathscr{I}}^{p}&=\textup{s}-\lim_{t\to-\infty}\mathrm{e}^{-\mathrm{i}t\dot{H}^{\z}_{\mathscr{H}\!/\!\mathscr{I}}}i_{-\!/\!+}\mathrm{e}^{\mathrm{i}t\dot{H}^{\z}}
		\end{align*}
		exist in $\mathcal{B}(\dot{\mathcal{E}}^{\z},\dot{\mathcal{E}}^{\z}_{\mathscr{H}\!/\!\mathscr{I}})$.
	\end{enumerate}
\end{theorem}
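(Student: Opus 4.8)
The plan is to reduce the statement to Theorem~\ref{Asymptotic completeness, asymptotic profiles} by realizing the geometric profiles as the asymptotic profiles viewed through an explicit gauge transformation; this is the scheme of \cite[Section~13]{GGH17}. Throughout I restrict to $\ker(\mathrm{i}\partial_{z}+\z)$ with $\z\neq 0$ and take $|s|<s_{0}$ so that the abstract theory of \cite{GGH17} and the absence of real resonances \cite[Theorem~3.8]{Be18} apply. It suffices to treat the $\mathscr{H}$ case, the $\mathscr{I}$ case being identical after replacing $-$ by $+$, $i_{-}$ by $i_{+}$, $r_{-}$ by $r_{+}$ and reversing the sign in the phase below.

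\emph{Step 1: the gauge transformation.} I would introduce the operator $\Theta_{-}$ of multiplication (acting diagonally on pairs) by the unimodular phase $\mathrm{e}^{\mathrm{i}s\z\int_{0}^{x}(V(x')-V_{-})\mathrm{d}x'}$ — the phase already appearing in the proof of Lemma~\ref{Decomposition in/out H/I}. On the harmonic $-\mathrm{i}\partial_{z}=\z$ one has $\partial_{x}\Theta_{-}=\Theta_{-}\big(\partial_{x}+s(V(r)-V_{-})\partial_{z}\big)$, and $\Theta_{-}$ commutes with $k^{\z}_{\mathscr{H}}=k^{\z}_{-}$ and with the cut-offs because it depends only on $x$. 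Using $h_{\mathscr{H}}=-\big(L_{\mathscr{H}}+sV_{-}\partial_{z}\big)^{2}$, $h_{\mathscr{H}}-k_{\mathscr{H}}^{2}=L_{\mathscr{H}}L_{+}$ and $h_{-}+k_{-}^{2}=-\partial_{x}^{2}$, one then checks the algebraic identity
\begin{align*}
	\Theta_{-}\,\dot{H}^{\z}_{\mathscr{H}}\,\Theta_{-}^{-1}&=\dot{H}^{\z}_{-},\qquad\qquad\Theta_{-}\ :\ \dot{\mathcal{E}}^{\z}_{\mathscr{H}}\longrightarrow\dot{\mathcal{E}}^{\z}_{-}\ \text{ a unitary isomorphism,}
\end{align*}
so that $\mathrm{e}^{\mathrm{i}t\dot{H}^{\z}_{\mathscr{H}}}=\Theta_{-}^{-1}\mathrm{e}^{\mathrm{i}t\dot{H}^{\z}_{-}}\Theta_{-}$ for all $t$, and $\Theta_{-}$ carries $\mathcal{D}^{\textup{fin},\z}_{\mathscr{H}}$ onto $\mathcal{D}^{\textup{fin},\z}_{l}$ (the compact-support, finite-angular-mode and $L^{1}$/cancellation conditions transform exactly into those defining $\dot{\mathcal{E}}^{L,\z}_{-}$). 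Since $V-V_{-}$ decays like $\mathrm{e}^{2\kappa_{-}x}$ as $x\to-\infty$ (see \eqref{Decay r-r_- with kappa_pm}), the phase has a finite limit there, so near $\{r=r_{-}\}$ one may write $\Theta_{-}=\vartheta_{-}\mathds{1}+O(\mathrm{e}^{2\kappa_{-}x})$ for some constant $\vartheta_{-}\in\mathbb{S}^{1}$; $\Theta_{-}$ is bounded everywhere.

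\emph{Step 2: wave operators.} Conjugating the defining limits,
\begin{align*}
	\mathrm{e}^{-\mathrm{i}t\dot{H}^{\z}}i_{-}\mathrm{e}^{\mathrm{i}t\dot{H}^{\z}_{\mathscr{H}}}&=\mathrm{e}^{-\mathrm{i}t\dot{H}^{\z}}\,\Theta_{-}^{-1}\,i_{-}\,\mathrm{e}^{\mathrm{i}t\dot{H}^{\z}_{-}}\,\Theta_{-},\\
	\mathrm{e}^{-\mathrm{i}t\dot{H}^{\z}_{\mathscr{H}}}i_{-}\mathrm{e}^{\mathrm{i}t\dot{H}^{\z}}&=\Theta_{-}^{-1}\,\mathrm{e}^{-\mathrm{i}t\dot{H}^{\z}_{-}}\,i_{-}\,\Theta_{-}\,\mathrm{e}^{\mathrm{i}t\dot{H}^{\z}}.
\end{align*}
For $u\in\mathcal{D}^{\textup{fin},\z}_{\mathscr{H}}$, decompose $\Theta_{-}u$ along \eqref{In/out decomposition 1} into incoming and outgoing parts in $\mathcal{D}^{\textup{fin},\z}_{l}$; by the Kirchhoff formula \eqref{Kirchhoff type formula 1} together with the Huygens principle (\textit{cf.}~Remark~\ref{In/out remark}), the part of $\mathrm{e}^{\mathrm{i}t\dot{H}^{\z}_{-}}(\Theta_{-}u)$ that $i_{-}$ retains as $|t|\to\infty$ is supported within $O(\mathrm{e}^{-2\kappa_{-}|t|})$ of $\{r=r_{-}\}$, while the other part is annihilated by $i_{-}$ after a finite time. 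On the retained vectors $\Theta_{-}^{-1}$ differs from $\bar\vartheta_{-}$ by an operator whose $\dot{\mathcal{E}}^{\z}$-norm tends to $0$, so, combining this with the uniform bound of Theorem~\ref{Uniform boundedness of the evolution} and with Theorem~\ref{Asymptotic completeness, asymptotic profiles}(1), the limits $\boldsymbol{W}^{f/p}_{\mathscr{H}}u$ exist on $\mathcal{D}^{\textup{fin},\z}_{\mathscr{H}}$ and equal $\bar\vartheta_{-}\,\boldsymbol{W}^{f/p}_{l}\Theta_{-}u$; hence they extend to $\mathcal{B}(\dot{\mathcal{E}}^{\z}_{\mathscr{H}},\dot{\mathcal{E}}^{\z})$ by boundedness of $\boldsymbol{W}^{f/p}_{l}$ and density (Lemma~\ref{Density in/out}). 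For the inverse operators, the minimal-velocity estimate for the full dynamics — $\|\chi\,\mathrm{e}^{\mathrm{i}t\dot{H}^{\z}}v\|_{\dot{\mathcal{E}}^{\z}}\to 0$ for $\chi\in\mathcal{C}^{\infty}_{\mathrm{c}}(\mathbb{R}_{x})$, and concentration of $i_{-}\mathrm{e}^{\mathrm{i}t\dot{H}^{\z}}v$ in $\{x\lesssim-\varepsilon t\}$ — allows $\Theta_{-}$ to be replaced by $\vartheta_{-}$ in $i_{-}\,\Theta_{-}\,\mathrm{e}^{\mathrm{i}t\dot{H}^{\z}}v$ up to an error vanishing in $\dot{\mathcal{E}}^{\z}_{-}$; applying the unitary $\mathrm{e}^{-\mathrm{i}t\dot{H}^{\z}_{-}}$, then the bounded $\Theta_{-}^{-1}$, and Theorem~\ref{Asymptotic completeness, asymptotic profiles}(2), yields $\boldsymbol{\Omega}^{f/p}_{\mathscr{H}}=\vartheta_{-}\,\Theta_{-}^{-1}\,\boldsymbol{\Omega}^{f/p}_{l}\in\mathcal{B}(\dot{\mathcal{E}}^{\z},\dot{\mathcal{E}}^{\z}_{\mathscr{H}})$.

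\emph{The main obstacle.} Everything hinges on two points. The first is the exact gauge identity $\Theta_{-}\dot{H}^{\z}_{\mathscr{H}}\Theta_{-}^{-1}=\dot{H}^{\z}_{-}$: this is elementary, but sign-sensitive, and the real care lies in keeping track of the factors of $\mathrm{i}$ and of the labels $\mathscr{H}/\mathscr{I}$ and $f/p$ throughout. The second — and the only genuinely analytic input — is the minimal-velocity/local-energy-decay estimate for $(\mathrm{e}^{\mathrm{i}t\dot{H}^{\z}})$ used for the inverse operators; this is precisely where the smallness of $|s|$ is needed, both to exclude real resonances via \cite[Theorem~3.8]{Be18} and to keep the asymptotic generators non-negative, and it is the step I expect to require the most effort to import cleanly from \cite{GGH17}. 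Notice that the direct operators do not need this estimate at all, since the comparison evolution $\mathrm{e}^{\mathrm{i}t\dot{H}^{\z}_{-}}$ has the exact Huygens property.
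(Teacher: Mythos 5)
Your gauge identity is correct: on $\ker(\mathrm{i}\partial_{z}+\z)$ one has $(\partial_{x}+\mathrm{i}s\z(V-V_{\mp}))^{*}(\partial_{x}+\mathrm{i}s\z(V-V_{\mp}))=h^{\z}_{\mathscr{H}\!/\!\mathscr{I}}$, the phase $\Theta_{\mp}$ commutes with $k^{\z}_{\mathscr{H}\!/\!\mathscr{I}}=k^{\z}_{\mp}$ and with $i_{\mp}$, and indeed $\Theta_{\mp}\dot{H}^{\z}_{\mathscr{H}\!/\!\mathscr{I}}\Theta_{\mp}^{-1}=\dot{H}^{\z}_{-\!/\!+}$ with $\Theta_{\mp}$ unitary $\dot{\mathcal{E}}^{\z}_{\mathscr{H}\!/\!\mathscr{I}}\to\dot{\mathcal{E}}^{\z}_{-\!/\!+}$ mapping $\mathcal{D}^{\textup{fin},\z}_{\mathscr{H}\!/\!\mathscr{I}}$ onto $\mathcal{D}^{\textup{fin},\z}_{l/r}$; this is exactly the relation between the Kirchhoff formulas \eqref{Kirchhoff type formula 1} and \eqref{Kirchhoff type formula 2 simplified}, so your Step 1 is sound. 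Your route is genuinely different from the paper's: the paper proves the geometric case directly — Cook's method plus the Huygens property of Remark \ref{In/out remark} and the exponential decay \eqref{Decay r-r_- with kappa_pm} for the direct operators, and a Cauchy-sequence argument based on the propagation estimate \eqref{Propagation estimate}, the spectral cutoff \eqref{Strong limit chi(H/L)} and Lemma \ref{Injection lemma} for the inverse operators — and in fact regards Theorem \ref{Asymptotic completeness, asymptotic profiles} as the easier sibling whose proof it omits. You reverse the dependence and treat Theorem \ref{Asymptotic completeness, asymptotic profiles} as a black box (itself only justified in the paper by citation to \cite{GGH17}), which buys the clean identities $\boldsymbol{W}^{f/p}_{\mathscr{H}}=\bar\vartheta_{-}\boldsymbol{W}^{f/p}_{l}\Theta_{-}$ and $\boldsymbol{\Omega}^{f/p}_{\mathscr{H}}=\vartheta_{-}\Theta_{-}^{-1}\boldsymbol{\Omega}^{f/p}_{l}$ and avoids redoing Cook, but relocates rather than removes the analytic content; the paper's direct proof has the advantage of producing intermediate estimates (the error bound \eqref{Rest AS abstract}, the boundedness argument on $\mathcal{D}^{\textup{fin},\z}_{\mathscr{H}}$) that are reused later in Proposition \ref{Geo wave op prop} and Section \ref{Geometric interpretation}.

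The one step you must repair is the inverse-operator argument. The estimate you invoke — concentration of $i_{-}\mathrm{e}^{\mathrm{i}t\dot{H}^{\z}}v$ in $\{x\lesssim-\varepsilon t\}$ — is a minimal-velocity bound that is proved nowhere in the paper and is not what \cite{GGH17} supplies; as written, the replacement of $\Theta_{-}$ by $\vartheta_{-}$ is unjustified, because $\Theta_{-}-\vartheta_{-}$ is of size $O(1)$ on any bounded $x$-region inside $\mathrm{Supp\,}i_{-}$. What you actually need is weaker and is available: $|\Theta_{-}-\vartheta_{-}|+|\partial_{x}\Theta_{-}|\lesssim\mathrm{e}^{2\kappa_{-}x}\lesssim w^{2}$ on $\mathrm{Supp\,}i_{-}$, so the multiplier $(\Theta_{-}-\vartheta_{-})i_{-}$ is a $w$-weighted operator bounded $\dot{\mathcal{E}}^{\z}\to\dot{\mathcal{E}}^{\z}_{\mathscr{H}}$ (argue as in Lemma \ref{Injection lemma}), and then the pointwise-in-time weighted decay $\big\|w^{1/2}\mathrm{e}^{\mathrm{i}t\dot{H}^{\z}}\chi(\dot{H}^{\z})w^{1/2}\big\|_{\mathcal{B}(\dot{\mathcal{E}}^{\z})}\lesssim\langle t\rangle^{-1}$ of \cite[Proposition 6.7]{GGH17}, combined with \eqref{Strong limit chi(H/L)}, the uniform boundedness of Theorem \ref{Uniform boundedness of the evolution} and density of compactly supported data, gives $\|(\Theta_{-}-\vartheta_{-})i_{-}\mathrm{e}^{\mathrm{i}t\dot{H}^{\z}}v\|_{\dot{\mathcal{E}}^{\z}_{\mathscr{H}}}\to0$ for every $v\in\dot{\mathcal{E}}^{\z}$; this is precisely the $\varepsilon/3$ scheme the paper runs in Lemma \ref{Difference norms}. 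With that substitution your derivation of $\boldsymbol{\Omega}^{f/p}_{\mathscr{H}\!/\!\mathscr{I}}$ from Theorem \ref{Asymptotic completeness, asymptotic profiles}(2) closes, and the smallness of $|s|$ enters exactly where you say it does (no real resonances via \cite[Theorem 3.8]{Be18}, positivity of the asymptotic operators).
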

\begin{remark}
	Because of the cut-offs $i_{\pm}$, wave operators and inverse wave operators are not inverse. We will however justify this designation for the geometric profiles in Subsection \ref{The full wave operators}.
\end{remark}
The geometric wave and inverse wave operators satisfy the following properties (the proof is given in the paragraph \ref{Proof of Proposition {{Geo wave op prop}}}):
\begin{proposition}
\label{Geo wave op prop}
Under the hypotheses of Theorem \ref{Asymptotic completeness, geometric profiles}, it holds:
	\begin{align*}
		\boldsymbol{\Omega}_{\mathscr{H}}^{f/p}\dot{\mathcal{E}}^{\z}&\subset\Psi_{\mathscr{H}}\big(\dot{\mathcal{H}}^{1,\z}_{\mathscr{H}}\times\{0\}\big),&&\boldsymbol{\Omega}_{\mathscr{H}}^{f/p}\boldsymbol{W}_{\mathscr{H}}^{f/p}=\mathds{1}_{\Psi_{\mathscr{H}}(\dot{\mathcal{H}}^{1,\z}_{\mathscr{H}}\times\{0\})},\\
		\boldsymbol{\Omega}_{\mathscr{I}}^{f/p}\dot{\mathcal{E}}^{\z}&\subset\Psi_{\mathscr{I}}\big(\{0\}\times\dot{\mathcal{H}}^{1,\z}_{\mathscr{I}}\big),&&\boldsymbol{\Omega}_{\mathscr{I}}^{f/p}\boldsymbol{W}_{\mathscr{I}}^{f/p}=\mathds{1}_{\Psi_{\mathscr{I}}(\{0\}\times\dot{\mathcal{H}}^{1,\z}_{\mathscr{I}})}.
	\end{align*}
\end{proposition}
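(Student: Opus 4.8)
Working under the hypotheses of Theorem~\ref{Asymptotic completeness, geometric profiles}, so that all the operators involved exist, I would prove for $\mathscr{H}$ (the $\mathscr{I}$ case being symmetric, see below, and the past operators following by $t\to-\infty$) the two claims: $\mathrm{Ran}\,\boldsymbol{\Omega}^{f/p}_{\mathscr{H}}\subset\Psi_{\mathscr{H}}(\dot{\HH}^{1,\z}_{\mathscr{H}}\times\{0\})$ and $\boldsymbol{\Omega}^{f/p}_{\mathscr{H}}\boldsymbol{W}^{f/p}_{\mathscr{H}}=\mathds{1}$ there. The common tool is the unitary $\Psi_{\mathscr{H}}$ of Lemma~\ref{Transformation Psi}, which by Lemma~\ref{Link Lemma} conjugates $\mathrm{e}^{\mathrm{i}t\dot{H}^{\z}_{\mathscr{H}}}$ into the diagonal transport $\mathrm{diag}\big(\mathrm{e}^{-tL_{\mathscr{H}}},\mathrm{e}^{-tL_{+}}\big)$, with $\mathrm{e}^{-tL_{\mathscr{H}}}$ carrying data towards $\mathscr{H}$ (i.e.\ $x\to-\infty$) and $\mathrm{e}^{-tL_{+}}$ towards $\mathscr{I}$. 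I write $P^{\textup{in}}_{\mathscr{H}}$, $P^{\textup{out}}_{\mathscr{H}}$ for the orthogonal projections of $\dot{\mathcal{E}}^{\z}_{\mathscr{H}}$ onto $\Psi_{\mathscr{H}}(\dot{\HH}^{1,\z}_{\mathscr{H}}\times\{0\})$ and onto $\Psi_{\mathscr{H}}(\{0\}\times\dot{\HH}^{1,\z}_{\mathscr{H}})$; they commute with $\mathrm{e}^{\mathrm{i}t\dot{H}^{\z}_{\mathscr{H}}}$.

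\paragraph{Range inclusion.}
This is equivalent to $P^{\textup{out}}_{\mathscr{H}}\boldsymbol{\Omega}^{f/p}_{\mathscr{H}}=0$. First I would extract from Lemma~\ref{Transformation Psi} and the isometry property of $\Psi_{\mathscr{H}}$ (using $L_{+}-L_{\mathscr{H}}=-2(L_{\mathscr{H}}+\mathrm{i}k_{\mathscr{H}})$ and $[L_{\mathscr{H}},k_{\mathscr{H}}]=0$) the \emph{local} identity
\begin{align*}
\big\|P^{\textup{out}}_{\mathscr{H}}(v_{0},v_{1})\big\|_{\dot{\mathcal{E}}^{\z}_{\mathscr{H}}}&=\tfrac{1}{\sqrt{2}}\,\big\|L_{\mathscr{H}}v_{0}+\mathrm{i}v_{1}\big\|_{\HH}.
\end{align*}
Applying it to $(v_{0},v_{1})=i_{-}\mathrm{e}^{\mathrm{i}t\dot{H}^{\z}}u$ and setting $\psi:=(\mathrm{e}^{\mathrm{i}t\dot{H}^{\z}}u)_{0}$, so that $(\mathrm{e}^{\mathrm{i}t\dot{H}^{\z}}u)_{1}=-\mathrm{i}\partial_{t}\psi$, the product rule gives $L_{\mathscr{H}}v_{0}+\mathrm{i}v_{1}=i_{-}\boldsymbol{v}_{\textup{in}}\psi-(\partial_{x}i_{-})\psi$; since $\mathrm{e}^{-\mathrm{i}t\dot{H}^{\z}_{\mathscr{H}}}$ is unitary and commutes with $P^{\textup{out}}_{\mathscr{H}}$, this yields
\begin{align*}
\big\|P^{\textup{out}}_{\mathscr{H}}\boldsymbol{\Omega}^{f}_{\mathscr{H}}u\big\|_{\dot{\mathcal{E}}^{\z}_{\mathscr{H}}}&=\lim_{t\to+\infty}\tfrac{1}{\sqrt{2}}\,\big\|i_{-}\boldsymbol{v}_{\textup{in}}\psi-(\partial_{x}i_{-})\psi\big\|_{\HH}.
\end{align*}
The term $(\partial_{x}i_{-})\psi$, supported in the fixed strip $\{-1\le x\le 1\}$, tends to $0$ by local energy decay for $\dot{H}^{\z}$ (propagation bounds of \cite{GGH17} combined with the absence of real resonances \cite{Be18}); the term $i_{-}\boldsymbol{v}_{\textup{in}}\psi$ is the outgoing energy of the full solution localized near $\mathscr{H}$ and tends to $0$ by the minimal velocity estimate for $\dot{H}^{\z}$ established in the course of proving Theorem~\ref{Asymptotic completeness, geometric profiles} (the outgoing part escapes $\{x\le1\}$ towards $\mathscr{I}$). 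Hence $P^{\textup{out}}_{\mathscr{H}}\boldsymbol{\Omega}^{f}_{\mathscr{H}}=0$, and the past case is identical with $t\to-\infty$.

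\paragraph{The identity $\boldsymbol{\Omega}\boldsymbol{W}=\mathds{1}$.}
By density (Lemma~\ref{Density in/out}, Remark~\ref{Psi in out}) and continuity, it suffices to check $\boldsymbol{\Omega}^{f}_{\mathscr{H}}\boldsymbol{W}^{f}_{\mathscr{H}}u=u$ for incoming data $u\in\dot{\mathcal{E}}^{\textup{in},\z}_{\mathscr{H}}$ of the form $u=u^{\textup{in}}$ with the original datum in $\mathcal{D}^{\textup{fin},\z}_{\mathscr{H}}$, which by Lemma~\ref{Decomposition in/out H/I} is supported in $\{x\le R_{2}\}$ for some $R_{2}$. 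I would then use three facts. First, by Remark~\ref{In/out remark}, $\mathrm{e}^{\mathrm{i}t\dot{H}^{\z}_{\mathscr{H}}}u$ is supported in $\{x\le R_{2}-t\}$, so $i_{-}$ is eventually trivial on it, giving $\boldsymbol{W}^{f}_{\mathscr{H}}u=\textup{s}-\lim_{t\to+\infty}\mathrm{e}^{-\mathrm{i}t\dot{H}^{\z}}\mathrm{e}^{\mathrm{i}t\dot{H}^{\z}_{\mathscr{H}}}u$; combined with the finite propagation speed of $\dot{H}^{\z}$ (characteristic speed $1$ in $x$), $\boldsymbol{W}^{f}_{\mathscr{H}}$ preserves the family of subspaces of data supported in $\{x\le a\}$. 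Second, since the coefficients of $\dot{H}^{\z}-\dot{H}^{\z}_{\mathscr{H}}$ decay exponentially as $x\to-\infty$ — which is precisely the decay \eqref{Decay r-r_- with kappa_pm} of $V(r)-V_{-}$ and of $F$ near $r_{-}$ — Cook's method together with the uniform bound of Theorem~\ref{Uniform boundedness of the evolution} yields, for incoming $w$, $\|\boldsymbol{W}^{f}_{\mathscr{H}}w-w\|_{\dot{\mathcal{E}}^{\z}_{\mathscr{H}}}\to 0$ as $\mathrm{Supp}\,w$ is pushed into $\mathscr{H}$. Third, the intertwining $\mathrm{e}^{\mathrm{i}s\dot{H}^{\z}}\boldsymbol{W}^{f}_{\mathscr{H}}=\boldsymbol{W}^{f}_{\mathscr{H}}\mathrm{e}^{\mathrm{i}s\dot{H}^{\z}_{\mathscr{H}}}$ holds. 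For $s$ large, $\boldsymbol{W}^{f}_{\mathscr{H}}\mathrm{e}^{\mathrm{i}s\dot{H}^{\z}_{\mathscr{H}}}u$ is supported in $\{x\le R_{2}-s\}\subset\{x\le-1\}$, where $i_{-}\equiv1$, so
\begin{align*}
\boldsymbol{\Omega}^{f}_{\mathscr{H}}\boldsymbol{W}^{f}_{\mathscr{H}}u&=\textup{s}-\lim_{s\to+\infty}\mathrm{e}^{-\mathrm{i}s\dot{H}^{\z}_{\mathscr{H}}}i_{-}\mathrm{e}^{\mathrm{i}s\dot{H}^{\z}}\boldsymbol{W}^{f}_{\mathscr{H}}u=\textup{s}-\lim_{s\to+\infty}\mathrm{e}^{-\mathrm{i}s\dot{H}^{\z}_{\mathscr{H}}}\boldsymbol{W}^{f}_{\mathscr{H}}\big(\mathrm{e}^{\mathrm{i}s\dot{H}^{\z}_{\mathscr{H}}}u\big),
\end{align*}
and the second fact applied to $w=\mathrm{e}^{\mathrm{i}s\dot{H}^{\z}_{\mathscr{H}}}u$ turns the right-hand side into $\textup{s}-\lim_{s\to+\infty}\big(u+\mathrm{e}^{-\mathrm{i}s\dot{H}^{\z}_{\mathscr{H}}}r_{s}\big)=u$ with $\|r_{s}\|_{\dot{\mathcal{E}}^{\z}_{\mathscr{H}}}\to0$. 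This gives $\boldsymbol{\Omega}^{f}_{\mathscr{H}}\boldsymbol{W}^{f}_{\mathscr{H}}=\mathds{1}_{\Psi_{\mathscr{H}}(\dot{\HH}^{1,\z}_{\mathscr{H}}\times\{0\})}$, the past version being the same.

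\paragraph{The $\mathscr{I}$ case and the main difficulty.}
For $\mathscr{I}$ the same scheme applies with $(i_{-},L_{\mathscr{H}},L_{+})$ replaced by $(i_{+},L_{-},L_{\mathscr{I}})$; the only structural change is that the artificial channel of $\dot{H}_{\mathscr{I}}$ is now the \emph{first} one ($L_{-}$), so one proves $P^{\textup{in}}_{\mathscr{I}}\boldsymbol{\Omega}^{f/p}_{\mathscr{I}}=0$ and uses $i_{+}$ (trivial on outgoing data transported towards $\mathscr{I}$) together with $\dot{\mathcal{E}}^{\textup{out},\z}_{\mathscr{I}}$. The only genuinely analytic ingredient is the minimal velocity / local energy decay estimate for $\dot{H}^{\z}$ invoked in the range step (and, in a milder form, behind the Cook estimate): since $\dot{H}^{\z}$ is not self-adjoint because of superradiance, it has to be imported from the propagation theory of \cite{GGH17} rather than from a functional calculus — but it is already a by-product of the proof of Theorem~\ref{Asymptotic completeness, geometric profiles}, so no new estimate is needed.
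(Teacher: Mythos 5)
Your second identity ($\boldsymbol{\Omega}_{\mathscr{H}}^{f/p}\boldsymbol{W}_{\mathscr{H}}^{f/p}=\mathds{1}$) is handled by a genuinely different and essentially workable route: instead of the paper's explicit Kirchhoff computation of $\mathrm{e}^{-\mathrm{i}t\dot{H}^{\z}_{\mathscr{H}}}j_{-}\mathrm{e}^{\mathrm{i}t\dot{H}^{\z}_{\mathscr{H}}}$ on incoming data, you use the intertwining relation, finite propagation speed of the full dynamics, and a quantitative Cook estimate showing $\boldsymbol{W}^{f}_{\mathscr{H}}w\approx w$ when $\mathrm{Supp}\,w$ recedes into the horizon. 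This works, modulo small points you should still write down: the remainder $r_{s}$ must be estimated in $\dot{\mathcal{E}}^{\z}_{\mathscr{H}}$ rather than $\dot{\mathcal{E}}^{\z}$ (since it is supported in $\{x\le-1\}$ one can use $i_{-}r_{s}=r_{s}$ and Lemma \ref{Injection lemma}), finite propagation speed for $\dot{H}^{\z}$ is nowhere stated in the paper and needs at least a sentence, and the density of your class of incoming data in $\Psi_{\mathscr{H}}(\dot{\mathcal{H}}^{1,\z}_{\mathscr{H}}\times\{0\})$ should be justified.

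The range inclusion, however, has a genuine gap at its central step. Your reduction to showing $\lim_{t\to+\infty}\big\|i_{-}\boldsymbol{v}_{\textup{in}}\psi-(\partial_{x}i_{-})\psi\big\|_{\HH}=0$ (equivalently $\Upsilon_{\mathscr{H}}i_{-}\mathrm{e}^{\mathrm{i}t\dot{H}^{\z}}u\to0$ in the paper's notation) is exactly the right quantity, but you then dispose of the main term $i_{-}\boldsymbol{v}_{\textup{in}}\psi$ by invoking a ``minimal velocity estimate for $\dot{H}^{\z}$ established in the course of proving Theorem \ref{Asymptotic completeness, geometric profiles}''. No such estimate is established there, and the estimates that are available do not give what you need: the propagation estimate \eqref{Propagation estimate} is time-integrated, requires the spectral cutoff $\chi(\dot{H}^{\z})$, and carries the weight $w^{\delta}$ which \emph{vanishes} at the horizon, while \cite[Proposition 6.5]{GGH17} only controls spatially compactly supported observables such as $(\partial_{x}i_{-})\psi$. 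Since $i_{-}$ is supported all the way down to $x=-\infty$, neither statement controls $\|i_{-}\boldsymbol{v}_{\textup{in}}\psi\|_{\HH}$; the assertion ``the outgoing part escapes $\{x\le1\}$'' is a restatement of the claim, not a proof. The missing argument is the one the paper actually supplies: after inserting $\chi(\dot{H}^{\z})$ and truncating to finitely many spherical harmonics, one shows that $\Upsilon_{\mathscr{H}}i_{-}\mathrm{e}^{\mathrm{i}t\dot{H}^{\z}}u$ solves an inhomogeneous transport equation along the \emph{outgoing} generator $L_{+}$ with source $\Xi(t)\in L^{1}(\mathbb{R}_{t};\HH^{\z})$ (here the weighted propagation estimate, the Hardy inequality and the exponential decay \eqref{Decay r-r_- with kappa_pm} of $h-h_{\mathscr{H}}$, $k-k_{\mathscr{H}}$ enter), and then uses that $j_{-}\mathrm{e}^{-tL_{+}}\phi\to0$ because the free outgoing transport pushes everything out of $\mathrm{Supp}\,j_{-}$, the leftover $j_{-}'$ term being killed by local decay. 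Without this Duhamel-along-$L_{+}$ mechanism (or an equivalent), your range-inclusion argument does not close.
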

%
%
%
%
%
\subsection{Proof of the analytic results}
\label{Proof of the analytic results}
This Subsection is devoted to the proofs of the scattering results stated in Subsection \ref{Analyic scattering results}. Theorems \ref{Uniform boundedness of the evolution}, \ref{Asymptotic completeness, separable comparison dynamics} and \ref{Asymptotic completeness, asymptotic profiles} are direct consequences of the results obtained in \cite{GGH17} once some geometric hypotheses are checked. Theorem \ref{Asymptotic completeness, geometric profiles} follows from standard arguments as well as propagation estimates of the full dynamics showed in \cite{GGH17}.
%
%
%
%
\subsubsection{Geometric hypotheses}
\label{Geometric hypotheses}
In this paragraph, we check that the geometric hypotheses (G) of \cite{GGH17} are verified in our setting on $\ker(\mathrm{i}\partial_{z}+\z)$, $\z\in\mathbb{Z}\setminus\{0\}$ ($\z$ must not be zero to conserve a mass term). We use the weight $w(r):=\sqrt{(r-r_{-})(r_{+}-r)}$ defined for all $r\in\left]r_{-},r_{+}\right[$.
\begin{itemize}
	\item [(G1)] The operator $P$ in \cite{GGH17} is\footnote{It is assumed in \cite{GGH17} that the coefficients of $P$ are independent of $r$; we can check however that this restriction can be relaxed to a broader class of operators including the one we use.} $-\Delta_{\mathbb{S}^{2}}-m^{2}r^{2}\partial_{z}^{2}$ for us, and satisfies of course $[P,\partial_{z}]=0$.
	\item [(G2)] Set
	\begin{align*}
	h_{0,s}^{\z}&:=F(r)^{-1/2}h_{0}^{\z}F(r)^{1/2}=-r^{-1}F(r)^{1/2}\partial_{r}r^{2}F(r)\partial_{r}r^{-1}F(r)^{1/2}-r^{-2}F(r)\Delta_{\mathbb{S}^{2}}+m^{2}F(r)\z^{2}.
	\end{align*}
	Then $\alpha_1(r)=\alpha_3(r)=r^{-1}F(r)^{1/2}$, $\alpha_2(r)=rF(r)^{1/2}$ and $\alpha_4(r)=mF(r)^{1/2}\z$. These coefficients are clearly smooth in $r$. Furthermore, since we can write $F(r)=g(r)w(r)^2$ with $g(r)=\frac{\Lambda}{3r^2}(r-r_n)(r-r_c)\gtrsim1$ for all $r\in\left]r_-,r_+\right[$, it comes for all $j\in\{1,2,3,4\}$
	\begin{align*}
	&\alpha_j(r)-w(r)\left(i_-(r)\,\alpha_j^-+i_+(r)\,\alpha_j^+\right)=w(r)\left(g(r)^{1/2}-\alpha_j^{\pm}\right)=\mathcal{O}_{r\to r_\pm}\big(w(r)^{2}\big)
	\end{align*}
	for
	\begin{align*}
	\alpha_1^\pm&=\alpha_3^\pm=\frac{\alpha_{2}^{\pm}}{r_{\pm}^{2}}=\frac{r_{\pm}\alpha_{4}^{\pm}}{m\z}=\frac{1}{r_\pm^2}\sqrt{\frac{\Lambda(r_\pm-r_n)(r_\pm-r_c)}{3}}.
	\end{align*}
	Also, we clearly have $\alpha_j(r)\gtrsim w(r)$. Direct computations show that
	\begin{align*}
	\partial_r^m\partial_{\omega}^n\left(\alpha_j-w\left(i_-\,\alpha_j^-+i_+\,\alpha_j^+\right)\right)\!(r)=\mathcal{O}_{r\to r_\pm}\big(w(r)^{2-2m}\big)
	\end{align*}
	for all $m,n\in\mathbb{N}$.
	\item [(G3)] Set $k_{s,v}^{\z}:=k_{s}^{\z}:=k^{\z}$ and $k_{s,r}^{\z}:=0$, and put $k_{s,v}^{\z,-}:=sV_{-}$ and $k_{s,r}^{\z,+}:=sV_{+}$. We have $V(r)-V_{\pm}=\mathcal{O}_{r\to r_\pm}(|r_+-r_{\pm}|)=\mathcal{O}_{r\to r_{\pm}}\big(w(r)^2\big)$ and $\partial_r^m\partial_{\omega}^nV(r)$ is bounded for any $m,n\in\mathbb{N}$.
	\item [(G4) \& (G6)] Our perturbed operator $h_{0,s}^{\z}$ is the same as in \cite{GGH17} at the beginning of the paragraph 13.1 with $\Delta_{r}=F$, but without the bounded term $\lambda(r^{2}+a^{2})$. We then copy the proof, details are omitted.
	\item [(G5)] Because $\z\neq0$, we clearly have
	\begin{align*}
	h_{0}^{\z}&=-\alpha_1(r)\partial_r w(r)^2r^2g(r)\partial_r\alpha_1(r)-\alpha_1(r)^2\Delta_{\mathbb{S}^{2}}+\alpha_1(r)^2m^2r^2\z^{2}\\
	&=\alpha_1(r)\left(-\partial_r w(r)^2r^2g(r)\partial_r-\Delta_{\mathbb{S}^{2}}+m^2r^2\z^{2}\right)\alpha_1(r)\\
	&\gtrsim\alpha_1(r)\left(-\partial_r w(r)^2\partial_r-\Delta_{\mathbb{S}^{2}}+1\right)\alpha_1(r).
	\end{align*}
	\item [(G7)] We check that $(h_{0}-\kk_{+}^{2},\kk_{+})$ and $(h_{0}-(\kk_{-}-k_{-})^{2},\kk_{-}-k_{-})$ satisfy (G5). Since $\alpha_1(r),k_{\pm}(r)-sV_{\pm}=\mathcal{O}_{r\to r_\pm}(|r_\pm-r|)$, we can write for $|s|<mr_-$:
	\begin{align*}
	\hh_{\pm}&=-\alpha_1(r)\partial_r w(r)^2r^2g(r)\partial_r\alpha_1(r)-\alpha_1(r)^2\Delta_{\mathbb{S}^{2}}+\alpha_1(r)^2m^2r^2\z^{2}-(k_{\pm}(r)-sV_{\pm})^2\\
	&=\alpha_1(r)\left(-\partial_r w(r)^2r^2g(r)\partial_r-\Delta_{\mathbb{S}^{2}}+m^2r^2\z^{2}-\frac{(k_{\pm}(r)-V_{\pm})^2}{\alpha_1(r)^2}\right)\alpha_1(r)\\
	&\gtrsim\alpha_1(r)\left(-\partial_r w(r)^2\partial_r-\Delta_{\mathbb{S}^{2}}+1\right)\alpha_1(r).
	\end{align*}
\end{itemize}
The geometric hypotheses are thus satisfied and we can apply the scattering results of Section 10 in \cite{GGH17}.
%
%
%
%
\subsubsection{Proof of Theorem \ref{Uniform boundedness of the evolution}}
\label{Proof of Theorem {Uniform boundedness of the evolution}}
Let us show Theorem \ref{Uniform boundedness of the evolution}. This result is similar to \cite[Theorem 12.1]{GGH17}.

First, pick $\delta>0$ and set $w(r):=\sqrt{(r-r_{-})(r_{+}-r)}$ defined for all $r\in\left]r_{-},r_{+}\right[$. In \cite{Be18}, Subsections 3.3 and 3.4, it is shown that for a positive mass $m^{2}>0$ (which correspond to any $\z\neq 0$ here), there exists $\varepsilon>0$ such that for all $s\in\mathbb{R}$ sufficiently small, the weighted resolvents
\begin{align*}
w^{\delta}(\dot{\widetilde{H}}\,\!^{\z}_{\pm}-z)^{-1}w^{\delta}&:\dot{\widetilde{\mathcal{E}}}_{\pm}\to\dot{\widetilde{\mathcal{E}}}_{\pm},\qquad\qquad	w^{\delta}(\dot{H}^{\z}-z)^{-1}w^{\delta}:\dot{\mathcal{E}}\to\dot{\mathcal{E}}
\end{align*}
extend form $\mathbb{C}^{+}$ into the strip $\big\{\lambda\in\mathbb{C}\mid\Im\lambda>-\varepsilon\big\}$ as meromorphic operators. The poles are called \textit{resonances}. Then Proposition 3.3 and Theorem 3.8 state that for both the above operators, there is no resonance in a tighter strip $\big\{\lambda\in\mathbb{C}\mid\Im\lambda>-\varepsilon'\big\}$ for some $\varepsilon'\in\left]0,\varepsilon\right[$. Furthermore, \cite[Corollary 3.9]{Be18} shows that the spectrum of $\dot{H}$ is real provided that $s$ is small enough.

We can now follow the proof of \cite[Theorem 12.1]{GGH17} in Subsection 13.2 therein. It follows from \cite[Theorem 7.1]{GGH17} since $\dot{H}^{\z}$ has no eigenvalue and both $\dot{\widetilde{H}}_{\pm}^{\z}$ and $\dot{H}^{\z}$ have no resonance on $\mathbb{R}$. 
%
%
%
%
\subsubsection{Proof of Theorem \ref{Asymptotic completeness, separable comparison dynamics}}
\label{Proof of Theorem {Asymptotic completeness, separable comparison dynamics}}
Let us show Theorem \ref{Asymptotic completeness, separable comparison dynamics}. It is similar to \cite[Theorem 12.2 ]{GGH17} for $n\neq0$ which follows from Theorem 10.5 therein once we have showed the absence of complex pure point spectrum of $\dot{H}$ and also the absence of real resonances of $\dot{H}_{\pm\infty}$. These conditions follow again from \cite[Proposition 13.1]{GGH17} for $s$ small enough and $\z\neq 0$.
%
%
%
%
\subsubsection{Proof of Theorem \ref{Asymptotic completeness, asymptotic profiles}}
\label{Proof of Theorem {Asymptotic completeness, asymptotic profiles}}
Theorem \ref{Asymptotic completeness, asymptotic profiles} is similar to \cite[Theorem 12.3]{GGH17} and the proof therein applies to our setting. As we will show below the same result for the slightly more complicated geometric profiles (\textit{cf.} Subsection \ref{Proof of Theorem {Asymptotic completeness, geometric profiles}}), we omit the details here.
%
%
%
%
\subsubsection{Proof of Theorem \ref{Asymptotic completeness, geometric profiles}}
\label{Proof of Theorem {Asymptotic completeness, geometric profiles}}
The proof of Theorem \ref{Asymptotic completeness, geometric profiles} uses the following fact:
\begin{lemma}
	\label{Injection lemma}
	The operators $i_{-\!/\!+}:\dot{\mathcal{E}}^{\z}\to\dot{\mathcal{E}}^{\z}_{\mathscr{H}\!/\!\mathscr{I}}$ are bounded.
\end{lemma}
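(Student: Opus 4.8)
Throughout, $i_{-/+}$ denotes the operator $(u_0,u_1)\mapsto(i_{-/+}u_0,i_{-/+}u_1)$ and, since all objects are restricted to $\ker(\mathrm{i}\partial_z+\z)$, I will identify $\partial_z$ with multiplication by $\mathrm{i}\z$; then $k^{\z}=s\z V$, $k^{\z}_{\mathscr{H}/\mathscr{I}}=s\z V_{-/+}$ are real multiplication operators and $h^{\z}_{\mathscr{H}/\mathscr{I}}=-(\partial_x+\!/\!-\mathrm{i}s\z(V-V_{-/+}))^2\geq 0$, so that $\langle h^{\z}_{\mathscr{H}}v,v\rangle_{\HH}=\|(\partial_x+\mathrm{i}s\z(V-V_-))v\|_{\HH}^2$. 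It suffices to treat the operator $i_-:\dot{\mathcal{E}}^{\z}\to\dot{\mathcal{E}}^{\z}_{\mathscr{H}}$, the $\mathscr{I}$ case being identical upon exchanging $x\to-\infty$ with $x\to+\infty$ and $\kappa_-$ with $\kappa_+$ (and, in the gauge of Subsection \ref{The full dynamics}, $V_+=0$). Fix $(u_0,u_1)\in\mathcal{C}^{\infty}_{\mathrm{c}}(\Sigma_0)$; the whole point is to bound $\|(i_-u_0,i_-u_1)\|_{\dot{\mathcal{E}}^{\z}_{\mathscr{H}}}^2$ by $\|(u_0,u_1)\|_{\dot{\mathcal{E}}^{\z}}^2$.

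The first reduction is purely algebraic. For the kinetic term, write $i_-u_1-s\z V_-\,i_-u_0=i_-(u_1-k^{\z}u_0)+s\z\,i_-(V-V_-)u_0$; for the potential term, use $[\partial_x,i_-]=i_-'$ (smooth, supported in $[-1,1]$) to get $(\partial_x+\mathrm{i}s\z(V-V_-))(i_-u_0)=i_-'u_0+i_-\partial_xu_0+\mathrm{i}s\z(V-V_-)i_-u_0$. Since $\|i_-\,\cdot\,\|_{\HH}\le\|\cdot\|_{\HH}$, everything is then controlled once we establish the three estimates
\begin{align*}
\text{(a)}\ \ \|\partial_xu_0\|_{\HH}\lesssim\|(u_0,u_1)\|_{\dot{\mathcal{E}}^{\z}},\qquad\text{(b)}\ \ \|u_0\|_{H^1_x([-1,1];\HH_\omega)}\lesssim\|(u_0,u_1)\|_{\dot{\mathcal{E}}^{\z}},\qquad\text{(c)}\ \ \|(V-V_-)\,i_-u_0\|_{\HH}\lesssim\|(u_0,u_1)\|_{\dot{\mathcal{E}}^{\z}},
\end{align*}
because $\|i_-'u_0\|_{\HH}\lesssim\|u_0\|_{L^2_x([-1,1];\HH_\omega)}$ is covered by (b), $\|i_-\partial_xu_0\|_{\HH}$ by (a), $\|i_-(u_1-k^{\z}u_0)\|_{\HH}$ trivially by the full energy, and the two remaining terms by (c). Estimate (c) is the soft one: $i_-u_0$ is supported in $\{x\le 1\}$; on $[-1,1]$ the function $V-V_-$ is bounded and the bound follows from (b) (in particular from the one–dimensional Sobolev embedding $H^1([-1,1];\HH_\omega)\hookrightarrow C([-1,1];\HH_\omega)$, which gives $\sup_{x\in[-1,1]}\|u_0(x,\cdot)\|_{\HH_\omega}\lesssim\|(u_0,u_1)\|_{\dot{\mathcal{E}}^{\z}}$); while on $\{x\le-1\}$ one has $|V(r(x))-V_-|=\mathcal{O}(\mathrm{e}^{2\kappa_-x})$ by \eqref{Decay r-r_- with kappa_pm} (recall $\kappa_->0$), so writing $u_0(x,\cdot)=u_0(-1,\cdot)+\int_{-1}^{x}\partial_xu_0$ and using Cauchy–Schwarz gives $\int_{-\infty}^{-1}|V-V_-|^2\|u_0(x,\cdot)\|_{\HH_\omega}^2\,\mathrm{d}x\lesssim\sup_{[-1,1]}\|u_0\|_{\HH_\omega}^2+\|\partial_xu_0\|_{\HH}^2$, again controlled by (a) and (b).

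It remains to prove (a) and (b), which involve only $h_0^{\z}$. Decomposing on the spherical harmonics, $h_0^{\ell,\z}=-\partial_x^2+W^{\ell,\z}$ with $W^{\ell,\z}(x)=F(r)\big(\tfrac{\ell(\ell+1)}{r^2}+\tfrac{F'(r)}{r}+m^2\z^2\big)$, a smooth potential that decays exponentially as $x\to\pm\infty$ by \eqref{Decay r-r_- with kappa_pm}. Here the hypothesis $\z\neq 0$ enters decisively through the mass term $m^2\z^2F$: the operator $h_0^{\ell,\z}$ is nonnegative (indeed $h_0^{\z}>0$ with trivial kernel), and it has no zero–energy resonance — for $\ell$ large $W^{\ell,\z}\geq0$ everywhere, so this is immediate, and for the remaining finitely many $\ell$ it is the content of the spectral analysis in \cite{Be18} (see also \cite{BoHa08}); note that the zero–resonance $r(x)$ of the massless operator, which solves $-\partial_x^2 r+F\tfrac{F'}{r}r=0$, satisfies $h_0^{\ell,\z}r=m^2\z^2Fr\not\equiv 0$ as soon as $\z\neq0$. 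A standard compactness argument (if it failed there would be $v_n\in\mathcal{C}^{\infty}_{\mathrm{c}}$ with $\langle h_0^{\ell,\z}v_n,v_n\rangle\to0$ while $\|v_n\|_{H^1([-1,1])}+\|\partial_xv_n\|_{L^2(\mathbb{R})}=1$; a weak–$\dot H^1$ limit would be a zero–energy resonance state, a contradiction) then yields, uniformly in $\ell$, $\langle h_0^{\ell,\z}v,v\rangle\gtrsim\|v\|_{H^1_x([-1,1])}^2+\|\partial_xv\|_{L^2(\mathbb{R}_x)}^2$; summing over $\ell$ gives (a) and (b). The main obstacle is precisely this last coercivity statement — equivalently, the continuous embedding $\dot{\mathcal{E}}^{\z}\hookrightarrow H^1_{\mathrm{loc},x}(\mathbb{R}_x;\HH_\omega)\times\HH^{\z}$ of the homogeneous energy space, which has no spectral gap — and it is exactly the place where $\z\neq0$ (i.e.\ the presence of the mass term) cannot be dispensed with, since for $\z=0$ the resonance $r\otimes Y_0$ makes it false. $\hfill\square$
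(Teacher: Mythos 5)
Your reduction coincides with the first two lines of the paper's own proof: expand $\|i_-u\|_{\dot{\mathcal{E}}^{\z}_{\mathscr{H}}}^2$, commute $\partial_x$ through $i_-$, and isolate error terms of the form $\|f u_0\|_{\HH^{\z}}$ with $f$ exponentially decaying (plus $\|\partial_x u_0\|_{\HH^{\z}}$). Where you diverge is in how these terms are absorbed into $\|u\|_{\dot{\mathcal{E}}^{\z}}$. The paper does this in one stroke with the Hardy-type inequality (ii) of \cite[Lemma 9.5]{GGH17}, $\|fu_0\|_{\HH^{\z}}\lesssim\|h_0^{1/2}u_0\|_{\HH^{\z}}$, which is purely structural: it rests on hypothesis (G5), verified in the paper precisely because the mass term $m^2\z^2 F$ makes $h_0\gtrsim\alpha_1(-\partial_rw^2\partial_r-\Delta_{\mathbb{S}^2}+1)\alpha_1$; no resonance-theoretic input is needed. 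You instead prove a coercivity estimate $\langle h_0^{\ell,\z}v,v\rangle\gtrsim\|v\|_{H^1([-1,1])}^2+\|\partial_xv\|_{L^2}^2$ (direct for large $\ell$, by compactness for the finitely many remaining $\ell$) and then treat the exponential weight by hand via the fundamental theorem of calculus and a Sobolev embedding. This route is viable and reaches the same conclusion, and your closing remark about where $\z\neq0$ is indispensable matches the paper's mechanism; but it is genuinely heavier, and two points deserve care. First, the absence of a zero-energy resonance of $h_0^{\ell,\z}$ for low harmonics is asserted with references rather than proven: the statements of \cite{Be18} concern the quadratic pencil attached to $h$ and $k$, so you should say explicitly that $h_0$ is $s$-independent (it is the $s=0$, mass $m\z$ operator) before invoking them, while \cite{BoHa08} is massless and exhibits a zero resonance at $\ell=0$, so it cannot carry the claim by itself. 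Second, your compactness argument leaves implicit the delicate steps where the weak limit fails to be in $L^2$: extending the nonnegativity of the quadratic form to a function that is only in $\dot H^1$ plus $H^1_{\mathrm{loc}}$ (a naive cut-off does not obviously vanish in the limit, since the trial functions may grow like $|x|^{1/2}$), showing the limit solves $h_0^{\ell,\z}v=0$ and tends to constants at $\pm\infty$, and ruling out total loss of mass. A cheaper way to your estimates (a)--(c), bypassing all of this, is the identity $\partial_xu_0=r\partial_x(r^{-1}u_0)+\frac{F}{r}u_0$ (so that $\|r\partial_x(r^{-1}u_0)\|_{\HH^{\z}}^2\leq\langle h_0u_0,u_0\rangle_{\HH^{\z}}$) combined with the quoted Hardy inequality for the exponentially decaying weights $\frac{F}{r}$ and $V-V_-$; that is essentially what the paper does.
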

\begin{proof}
	We only show the result for $i_{-}$. Let $u=(u_{0},u_{1})\in\dot{\mathcal{E}}$ and write
	\begin{align*}
	\|i_{-}u\|_{\dot{\mathcal{E}}^{\z}_{\mathscr{H}}}^{2}&=\big\|(\partial_{x}-\mathrm{i}(k^{z}-k_{\mathscr{H}}^{z}))i_{-}u_{0}\big\|_{\HH^{\z}}^{2}+\big\|i_{-}(u_{1}-\mathrm{i}k^{\z}_{\mathscr{H}}u_{0})\big\|_{\HH^{\z}}^{2}\\
	&\lesssim\big\|[\partial_{x},i_{-}]u_{0}\big\|_{\HH^{\z}}^{2}+\|\partial_{x}u_{0}\|_{\HH^{\z}}^{2}+\big\|(k^{z}-k_{\mathscr{H}}^{z})i_{-}u_{0}\big\|_{\HH^{\z}}^{2}+\|u_{1}-\mathrm{i}k^{\z}u_{0}\|_{\HH^{\z}}^{2}+\big\|i_{-}(k^{\z}-k^{\z}_{\mathscr{H}})u_{0}\big\|_{\HH^{\z}}^{2}\\
	&\lesssim\|u\|_{\dot{\mathcal{E}}^{\z}}^{2}+\|fu_{0}\|_{\HH^{\z}}^{2}
	\end{align*}
	where $f\in\mathcal{C}^{\infty}(\mathbb{R}_{x},\mathbb{R})$ is exponentially decaying at infinity. Using Hardy type inequality (ii) of \cite[Lemma 9.5]{GGH17}, we get
	\begin{align*}
	\|fu_{0}\|_{\HH^{\z}}&\lesssim\|h_{0}^{1/2}u_{0}\|_{\HH^{\z}}\lesssim\|u\|_{\dot{\mathcal{E}}^{\z}}
	\end{align*}
	and the lemma follows.
\end{proof}
\begin{remark}
	\label{Remark i_+-}
	We may notice that $i_{-\!/\!+}\dot{\mathcal{E}}_{\mathscr{H}\!/\!\mathscr{I}}^{\z}\to\dot{\mathcal{E}}^{\z}$ are not bounded operators. However, the proof of the existence of the direct future wave operator $\boldsymbol{W}_{\mathscr{H}}^{f}$ below will show that
	\begin{align*}
	i_{-\!/\!+}\mathrm{e}^{\mathrm{i}t\dot{H}^{\z}_{\mathscr{H}\!/\!\mathscr{I}}}:\dot{\mathcal{E}}_{\mathscr{H}\!/\!\mathscr{I}}^{\z}\to\dot{\mathcal{E}}^{\z}
	\end{align*}
	are bounded at the limits $t\to\pm\infty$ as extensions of bounded operators defined on the dense subspaces $\mathcal{D}^{\textup{fin},\z}_{\mathscr{H}\!/\!\mathscr{I}}$.
\end{remark}
\begin{proof}[Proof of Theorem \ref{Asymptotic completeness, geometric profiles}]
	We will only show the theorem for the future operators in the $\mathscr{H}$ case. We closely follow the proof of \cite[Theorem 12.3]{GGH17}.
	\paragraph{Existence of the future direct wave operator $\boldsymbol{W}_{\mathscr{H}}^{f}$.} Let $u=u^{\textup{in}}+u^{\textup{out}}\in\mathcal{D}^{\textup{fin},\z}_{\mathscr{H}}$. By Remark \ref{In/out remark},
	\begin{align*}
	(\mathrm{e}^{\mathrm{i}t\dot{H}_{\mathscr{H}}^{\z}}u^{\textup{in}})_{0}(z,x,\omega)&=\mathrm{e}^{\mathrm{i}s\z\int_{x}^{x+t}V(x')\mathrm{d}x'}u_{0}^{\textup{in}}(z,x+t,\omega),\\
	\big(\mathrm{e}^{\mathrm{i}t\dot{H}^{\z}_{\mathscr{H}}}u^{\text{out}}\big)_{0}(z,x,\omega)	&=\mathrm{e}^{\mathrm{i}s\z\int_{x}^{x-t}(V(x')-2V_{-})\mathrm{d}x'}u^{\textup{out}}_{0}(z,x-t,\omega)
	\end{align*}
	and then $\boldsymbol{W}_{\mathscr{H}}^{f}u^{\textup{out}}=0$ because of the support of $i_{-}$. From now on, we write $u=u^{\textup{in}}$. We use Cook's method: a sufficient condition for the limit to exist in $\dot{\mathcal{E}}^{\z}$ is
	\begin{align*}
	\frac{\mathrm{d}}{\mathrm{d}t}\left(\mathrm{e}^{-\mathrm{i}t\dot{H}^{\z}}i_{-}\mathrm{e}^{\mathrm{i}t\dot{H}^{\z}_{\mathscr{H}}}u\right)&=\mathrm{i}\mathrm{e}^{-\mathrm{i}t\dot{H}^{\z}}\big(\dot{H}^{\z}i_{-}-i_{-}\dot{H}^{\z}_{\mathscr{H}}\big)\mathrm{e}^{\mathrm{i}t\dot{H}^{\z}_{\mathscr{H}}}u\in L^{1}(\mathbb{R}^{+}_{t},\mathrm{d}t;\dot{\mathcal{E}}^{\z}).
	\end{align*}
	Put $v=(v_0,v_1):=\mathrm{e}^{\mathrm{i}t\dot{H}^{\z}_{\mathscr{H}}}u$. The above expression makes sense since $v$ is smooth and compactly supported. Recalling the definition of the operators $h^{\z}$ and $k^{\z}$ in the paragraph \ref{The full dynamics} as well as $h^{\z}_{\mathscr{H}}$ and $k^{\z}_{\mathscr{H}}$ in the paragraph \ref{Introduction of the dynamics}, we compute
	\begin{align}
	\label{Cook error term}
	&\dot{H}^{\z}i_{-}-i_{-}\dot{H}^{\z}_{\mathscr{H}}\nonumber\\
	&=\begin{pmatrix}
	0&0\\h^{\z}i_{-}-i_{-}(h^{\z}_{\mathscr{H}}-(k^{\z}_{\mathscr{H}})^{2})&2i_{-}(k^{\z}-k^{\z}_{\mathscr{H}})
	\end{pmatrix}\nonumber\\[1mm]
	&=\begin{pmatrix}
	0&0\\
	-[\partial_{x}^{2},i_{-}]+i_{-}\Big(\mathcal{V}_{\ell}(r)-\mathrm{i}s\z\big(\partial_{x}\mathcal{W}(r)+\mathcal{W}(r)\partial_{x}\big)-2s^{2}\z^{2}V(r)^{2}\mathcal{W}(r)\Big)&-2i_{-}\mathrm{i}s\z \mathcal{W}(r)
	\end{pmatrix}.
	\end{align}
	where
	\begin{align*}
	\mathcal{V}_{\ell}(r)&:=F(r)\left(r^{-1}F'(r)+r^{-2}\sum_{0\leq \ell\leq\ell_{0}}\ell(\ell+1)+m^{2}\z^{2}\right),\qquad\qquad \mathcal{W}(r):=V(r)-V_{-}.
	\end{align*}
	Using the minimal speed of $v_{0}$ (\textit{cf.} Remark \ref{In/out remark}), the uniform boundedness of $\mathrm{e}^{\mathrm{i}t\dot{H}^{\z}}$ (\textit{cf.} Theorem \ref{Uniform boundedness of the evolution}) as well as the exponential decay of $F$ and $V-V_{-}$ (\textit{cf.} \eqref{Decay r-r_- with kappa_pm}), we get:
	\begin{align}
	\label{Rest AS abstract}
	\left\|\frac{\mathrm{d}}{\mathrm{d}t}\left(\mathrm{e}^{-\mathrm{i}t\dot{H}^{\z}}i_{-}\mathrm{e}^{\mathrm{i}t\dot{H}^{\z}_{\mathscr{H}}}u\right)\right\|_{\dot{\mathcal{E}}^{\z}}&\lesssim(1+\ell_{0})^{3}\mathrm{e}^{-2\kappa_{-} t}\big(\|v_{0}\|_{\HH^{\z}}+\|\partial_{x}v_{0}\|_{\HH^{\z}}+\|v_{1}\|_{\HH^{\z}}\big)\nonumber\\
	&\lesssim(1+\ell_{0})^{3}\mathrm{e}^{-2\kappa_{-} t}\big(\|v_{0}\|_{\HH^{\z}}+\|(\partial_{x}-\mathrm{i}s\z (V-V_{-}))v_{0}\|_{\HH^{\z}}\nonumber\\
	&\qquad\qquad\qquad\qquad\qquad\qquad\qquad\ \,+\|v_{1}-\mathrm{i}s\z V_{-}v_{0}\|_{\HH^{\z}}\big)\nonumber\\
	&\lesssim(1+\ell_{0})^{3}\mathrm{e}^{-2\kappa_{-} t}\big(\|v_{0}\|_{\HH^{\z}}+\|v\|_{\dot{\mathcal{E}}^{\z}_{\mathscr{H}}}\big).
	\end{align}
	Here the symbol $\lesssim$ contains no dependence in $\ell_{0}$ (but depends on $\z$). We have $\|v\|_{\dot{\mathcal{E}}^{\z}_{\mathscr{H}}}=\|u\|_{\dot{\mathcal{E}}^{\z}_{\mathscr{H}}}$ and $\|v_{0}(t)\|_{\HH^{\z}}=\|u^{\textup{in}}_{0}(\cdot+t)\|_{\HH^{\z}}=\|u^{\textup{in}}_{0}\|_{\HH^{\z}}$ as translations are unitary on $\HH^{\z}$. In particular, these norms are uniformly bounded in time. This provides a sufficient decay as $t\to+\infty$ in \eqref{Rest AS abstract} and proves the existence of the limit $\boldsymbol{W}_{\mathscr{H}}^{f}$ on $\mathcal{D}^{\textup{fin},\z}_{\mathscr{H}}$.
	
	To extend the existence of the future wave operator to $\dot{\mathcal{E}}_{\mathscr{H}}^{\z}$, we use a density argument: we show that there exists a constant $C>0$ such that for all $u\in\mathcal{D}^{\textup{fin},\z}_{\mathscr{H}}$,
	\begin{align*}
	\|\boldsymbol{W}_{\mathscr{H}}^{f}u\|_{\dot{\mathcal{E}}^{\z}}&\leq C\|u\|_{\dot{\mathcal{E}}^{\z}_{\mathscr{H}}}.
	\end{align*}
	Using again the uniform boundedness of $\mathrm{e}^{\mathrm{i}t\dot{H}^{\z}}$, we can write:
	\begin{align*}
	\|\mathrm{e}^{-\mathrm{i}t\dot{H}^{\z}}i_{-}\mathrm{e}^{\mathrm{i}t\dot{H}^{\z}_{\mathscr{H}}}u\|_{\dot{\mathcal{E}}^{\z}}^{2}&\lesssim\|i_{-}\mathrm{e}^{\mathrm{i}t\dot{H}^{\z}_{\mathscr{H}}}u\|_{\dot{\mathcal{E}}^{\z}}^{2}\\
	&\lesssim\big\langle h^{\z}_{0}i_{-}v_{0},i_{-}v_{0}\big\rangle_{\HH^{\z}}+\|i_{-}(v_{1}-k^{\z}v_{0})\|_{\HH^{\z}}^{2}\\
	&\lesssim\big\langle i_{-}h^{\z}_{0}v_{0},i_{-}v_{0}\big\rangle_{\HH^{\z}}+\big\langle[h^{\z}_{0},i_{-}]v_{0},i_{-}v_{0}\big\rangle_{\HH^{\z}}+\|i_{-}(v_{1}-k^{\z}v_{0})\|_{\HH^{\z}}^{2}\\
	&\lesssim\big\langle i_{-}(h^{\z}_{0}-h^{\z}_{\mathscr{H}})v_{0},i_{-}v_{0}\big\rangle_{\HH^{\z}}+\big\langle h^{\z}_{\mathscr{H}}v_{0},v_{0}\big\rangle_{\HH^{\z}}+\big\langle[h_{0},i_{-}]v_{0},i_{-}v_{0}\big\rangle_{\HH^{\z}}\\
	&+\|i_{-}(k^{\z}_{\mathscr{H}}-k^{\z})v_{0})\|_{\HH^{\z}}^{2}+\|v_{1}-k^{\z}_{\mathscr{H}}v_{0}\|_{\HH^{\z}}^{2}\\
	&\lesssim\|v\|_{\dot{\mathcal{E}}^{\z}_{\mathscr{H}}}^{2}+r(t)\\
	&\lesssim\|u\|_{\dot{\mathcal{E}}^{\z}_{\mathscr{H}}}^{2}+r(t)
	\end{align*}
	where $r(t)=\mathcal{O}_{t\to+\infty}(\mathrm{e}^{-4\kappa_{-}t}\|v_{0}\|^{2}_{\HH^{\z}})$ is a rest similar to the right-hand side of \eqref{Rest AS abstract}. Letting $t\to+\infty$ thus gives the desired result.
	\paragraph{Existence of the future inverse wave operator $\boldsymbol{\Omega}_{\mathscr{H}}^{f}$.} The existence of $\boldsymbol{\Omega}_{\mathscr{H}}^{f}$ necessarily relies on a propagation estimate for the full dynamics $(\mathrm{e}^{\mathrm{i}t\dot{H}^{\z}})_{t\in\mathbb{R}}$. In \cite[Proposition 6.8]{GGH17}, it is shown that for all $\delta>0$ and all $u\in\dot{\mathcal{E}}^{\z}$, we have
	\begin{align}
	\label{Propagation estimate}
	\int_{\mathbb{R}}\big\|w^{\delta}\mathrm{e}^{\mathrm{i}t\dot{H}^{\z}}\chi(\dot{H}^{\z})u\big\|_{\dot{\mathcal{E}}^{\z}}\mathrm{d}t&\lesssim\|u\|_{\dot{\mathcal{E}}^{\z}}
	\end{align}
	where $w(r(x))=\sqrt{(r(x)-r_{-})(r_{+}-r(x))}$ and $\chi\in\mathcal{C}^{\infty}_{\mathrm{c}}(\mathbb{R})$ ($\chi$ must cancel in a neighborhood of the real resonances of $\dot{H}$ in \cite{GGH17}, but \cite[Theorem 3.8]{Be18} shows that no resonance lies on $\mathbb{R}$ for us). The operator $\chi(\dot{H}^{\z})$ is defined with a Helffer-Sj\"{o}strand type formula and is bounded on $\dot{\mathcal{E}}^{\z}$, see \cite[Subsection 5.5]{GGH17}.
	
	To prove that $\boldsymbol{\Omega}_{\mathscr{H}}^{f}$ exists, we show that the sequence $\big(\mathrm{e}^{-\mathrm{i}t\dot{H}_{\mathscr{H}}^{\z}}i_{-}\mathrm{e}^{\mathrm{i}t\dot{H}^{\z}}u\big)_{t>0}$ is Cauchy in $\dot{\mathcal{E}}^{\z}_{\mathscr{H}}$. Let $\varepsilon>0$ and $u\in\dot{\mathcal{E}}^{\z}$. First of all, the uniform boundedness of $\mathrm{e}^{\mathrm{i}t\dot{H}_{\mathscr{H}}^{\z}}$ and $\mathrm{e}^{\mathrm{i}t\dot{H}^{\z}}$ as well as Lemma \ref{Injection lemma} imply that $\mathrm{e}^{-\mathrm{i}t\dot{H}_{\mathscr{H}}^{\z}}i_{-}\mathrm{e}^{\mathrm{i}t\dot{H}^{\z}}\in\mathcal{B}(\dot{\mathcal{E}}^{\z},\dot{\mathcal{E}}_{\mathcal{H}}^{\z})$ for all $t\in\mathbb{R}$. Let us write $u=\sum_{\ell\in\mathbb{N}}u_{\ell}$ where $\omega\mapsto u_{\ell}(z,x,\omega)\in H^{2}(\mathbb{S}^{2},\mathrm{d}\omega)$ and $-\Delta_{\mathbb{S}^{2}}u_{\ell}=\ell(\ell+1)u_{\ell}$ for almost every $(z,x)\in\mathbb{S}^{1}_{z}\times\mathbb{R}_{x}$ (we identify $u_{\ell}$ to an element of $\dot{\mathcal{E}}^{\z}$). Then we have by dominated convergence
	\begin{align*}
		\left\|\mathrm{e}^{-\mathrm{i}t\dot{H}_{\mathscr{H}}^{\z}}i_{-}\mathrm{e}^{\mathrm{i}t\dot{H}^{\z}}u-\mathrm{e}^{-\mathrm{i}t'\dot{H}_{\mathscr{H}}^{\z}}i_{-}\mathrm{e}^{\mathrm{i}t'\dot{H}^{\z}}\sum_{0\leq\ell\leq\ell_{0}}u_{\ell}\right\|_{\dot{\mathcal{E}}^{\z}_{\mathscr{H}}}&=\left\|\mathrm{e}^{-\mathrm{i}t\dot{H}_{\mathscr{H}}^{\z}}i_{-}\mathrm{e}^{\mathrm{i}t\dot{H}^{\z}}\chi(\dot{H}^{\z})\sum_{\ell>\ell_{0}}u_{\ell}\right\|_{\dot{\mathcal{E}}^{\z}_{\mathscr{H}}}\\
		&\leq\sum_{\ell>\ell_{0}}\big\|\mathrm{e}^{-\mathrm{i}t\dot{H}_{\mathscr{H}}^{\z}}i_{-}\mathrm{e}^{\mathrm{i}t\dot{H}^{\z}}u_{\ell}\big\|_{\dot{\mathcal{E}}^{\z}_{\mathscr{H}}}\\
		&\leq C\sum_{\ell>\ell_{0}}\|u_{\ell}\|_{\dot{\mathcal{E}}^{\z}_{\mathscr{H}}}\\
		&<\varepsilon/6
	\end{align*}
	for $\ell_{0}$ large enough as the remainder of a convergent series. Fix such a $\ell_{0}$ and call again $u$ the truncated sum $\sum_{0\leq\ell\leq\ell_{0}}u_{\ell}$. Next,
	\begin{align*}
	\big\|\mathrm{e}^{-\mathrm{i}t\dot{H}_{\mathscr{H}}^{\z}}i_{-}\mathrm{e}^{\mathrm{i}t\dot{H}^{\z}}u-\mathrm{e}^{-\mathrm{i}t'\dot{H}_{\mathscr{H}}^{\z}}i_{-}\mathrm{e}^{\mathrm{i}t'\dot{H}^{\z}}u\big\|_{\dot{\mathcal{E}}^{\z}_{\mathscr{H}}}&\leq\big\|\mathrm{e}^{-\mathrm{i}t\dot{H}_{\mathscr{H}}^{\z}}i_{-}\mathrm{e}^{\mathrm{i}t\dot{H}^{\z}}(1-\chi(\dot{H}^{\z}))u\big\|_{\dot{\mathcal{E}}^{\z}_{\mathscr{H}}}\\
	&+\big\|\mathrm{e}^{-\mathrm{i}t'\dot{H}_{\mathscr{H}}^{\z}}i_{-}\mathrm{e}^{\mathrm{i}t'\dot{H}^{\z}}(1-\chi(\dot{H}^{\z}))u\big\|_{\dot{\mathcal{E}}^{\z}_{\mathscr{H}}}\\
	&+\big\|\mathrm{e}^{-\mathrm{i}t\dot{H}_{\mathscr{H}}^{\z}}i_{-}\mathrm{e}^{\mathrm{i}t\dot{H}^{\z}}\chi(\dot{H}^{\z})u-\mathrm{e}^{-\mathrm{i}t'\dot{H}_{\mathscr{H}}^{\z}}i_{-}\mathrm{e}^{\mathrm{i}t'\dot{H}^{\z}}\chi(\dot{H}^{\z})u\big\|_{\dot{\mathcal{E}}^{\z}_{\mathscr{H}}}.
	\end{align*}
	We will show that the right-hand side above is lesser than $2\varepsilon/3$ for $t,t'$ sufficiently large.
	
	Using again the uniform boundedness of $\mathrm{e}^{\mathrm{i}t\dot{H}_{\mathscr{H}}^{\z}}$ and $\mathrm{e}^{\mathrm{i}t\dot{H}^{\z}}$ as well as Lemma \ref{Injection lemma}, we can write
	\begin{align*}
	\big\|\mathrm{e}^{-\mathrm{i}t\dot{H}_{\mathscr{H}}^{\z}}i_{-}\mathrm{e}^{\mathrm{i}t\dot{H}^{\z}}(1-\chi(\dot{H}^{\z}))u\big\|_{\dot{\mathcal{E}}^{\z}_{\mathscr{H}}}+\big\|\mathrm{e}^{-\mathrm{i}t'\dot{H}_{\mathscr{H}}^{\z}}i_{-}\mathrm{e}^{\mathrm{i}t'\dot{H}^{\z}}(1-\chi(\dot{H}^{\z}))u\big\|_{\dot{\mathcal{E}}^{\z}_{\mathscr{H}}}&\leq 2C\big\|(1-\chi(\dot{H}^{\z}))u\big\|_{\dot{\mathcal{E}}^{\z}}.
	\end{align*}
	By \cite[Proposition 5.11]{GGH17} combined with the fact that $\dot{H}^{\z}$ has no eigenvalue if $s$ is small enough (see the proof of Theorem \ref{Uniform boundedness of the evolution} in the paragraph \ref{Proof of Theorem {Uniform boundedness of the evolution}}),
	\begin{align}
	\label{Strong limit chi(H/L)}
		\textup{s}-\lim_{L\to+\infty}\chi(\dot{H}^{\z}/L)&=1.
	\end{align}
	Fix $\chi$ so that $\big\|(1-\chi(\dot{H}^{\z}))u\big\|_{\dot{\mathcal{E}}^{\z}}<\varepsilon/6C$. Thus
	\begin{align*}
		\big\|\mathrm{e}^{-\mathrm{i}t\dot{H}_{\mathscr{H}}^{\z}}i_{-}\mathrm{e}^{\mathrm{i}t\dot{H}^{\z}}(1-\chi(\dot{H}^{\z}))u\big\|_{\dot{\mathcal{E}}^{\z}_{\mathscr{H}}}+\big\|\mathrm{e}^{-\mathrm{i}t'\dot{H}_{\mathscr{H}}^{\z}}i_{-}\mathrm{e}^{\mathrm{i}t'\dot{H}^{\z}}(1-\chi(\dot{H}^{\z}))u\big\|_{\dot{\mathcal{E}}^{\z}_{\mathscr{H}}}&<\frac{\varepsilon}{3}.
	\end{align*}
	It remains to show that
	\begin{align*}
		\big\|\mathrm{e}^{-\mathrm{i}t\dot{H}_{\mathscr{H}}^{\z}}i_{-}\mathrm{e}^{\mathrm{i}t\dot{H}^{\z}}\chi(\dot{H}^{\z})u-\mathrm{e}^{-\mathrm{i}t'\dot{H}_{\mathscr{H}}^{\z}}i_{-}\mathrm{e}^{\mathrm{i}t'\dot{H}^{\z}}\chi(\dot{H}^{\z})u\big\|_{\dot{\mathcal{E}}^{\z}_{\mathscr{H}}}&<\frac{\varepsilon}{3}
	\end{align*}
	for $t,t'$ large enough.
	
	Pick $\delta\in\left]0,2\right[$ and compute:
	\begin{align*}
	\frac{\mathrm{d}}{\mathrm{d}t}\left(\mathrm{e}^{-\mathrm{i}t\dot{H}_{\mathscr{H}}^{\z}}i_{-}\mathrm{e}^{\mathrm{i}t\dot{H}^{\z}}\chi(\dot{H}^{\z})u\right)&=\mathrm{i}\mathrm{e}^{-\mathrm{i}t\dot{H}_{\mathscr{H}}^{\z}}\left(\dot{H}_{\mathscr{H}}^{\z}i_{-}-i_{-}\dot{H}^{\z}\right)w^{-\delta}w^{\delta}\mathrm{e}^{\mathrm{i}t\dot{H}^{\z}}\chi(\dot{H}^{\z})u
	\end{align*}
	Then \eqref{Cook error term} above shows that for all $v=(v_{0},v_{1})\in\dot{\mathcal{E}}^{\z}$,
	\begin{align*}
	\left\|\left(\dot{H}_{\mathscr{H}}^{\z}i_{-}-i_{-}\dot{H}^{\z}\right)w^{-\delta}v\right\|_{\dot{\mathcal{E}}^{\z}_{\mathscr{H}}}&\lesssim(1+\ell_{0})^{3}\left(\|fv_{0}\|_{\HH^{\z}}+\|gv_{1}\|_{\HH^{\z}}\right)
	\end{align*}
	for some smooth functions $f,g\in\mathcal{O}_{|x|\to+\infty}\big(\mathrm{e}^{(\delta-2\kappa_{-})|x|}\big)$. Using the Hardy type inequality (ii) of \cite[Lemma 9.5]{GGH17}, it follows
	\begin{align*}
	\left\|\left(\dot{H}_{\mathscr{H}}^{\z}i_{-}-i_{-}\dot{H}^{\z}\right)w^{-\delta}v\right\|_{\dot{\mathcal{E}}^{\z}_{\mathscr{H}}}&\lesssim(1+\ell_{0})^{3}\left(\|h_{0}^{1/2}v_{0}\|_{\HH^{\z}}+\|v_{1}-k^{\z}v_{0}\|_{\HH^{\z}}\right)\lesssim(1+\ell_{0})^{3}\|v\|_{\dot{\mathcal{E}}^{\z}}.
	\end{align*}
	With Lemma \ref{Injection lemma}, this gives
	\begin{align*}
	\mathrm{e}^{-\mathrm{i}t\dot{H}_{\mathscr{H}}^{\z}}\left(\dot{H}_{\mathscr{H}}^{\z}i_{-}-i_{-}\dot{H}^{\z}\right)w^{-\delta}&\in\mathcal{B}(\dot{\mathcal{E}}^{\z}_{\mathscr{H}},\dot{\mathcal{E}}^{\z}).
	\end{align*}
	The propagation estimate \eqref{Propagation estimate} then implies that
	\begin{align*}
	\frac{\mathrm{d}}{\mathrm{d}t}\left(\mathrm{e}^{-\mathrm{i}t\dot{H}_{\mathscr{H}}^{\z}}i_{-}\mathrm{e}^{\mathrm{i}t\dot{H}^{\z}}\chi(\dot{H}^{\z})u\right)&\in L^{1}(\mathbb{R},\mathrm{d}t)
	\end{align*}
	whence
	\begin{align*}
	\big\|\mathrm{e}^{-\mathrm{i}t\dot{H}_{\mathscr{H}}^{\z}}i_{-}\mathrm{e}^{\mathrm{i}t\dot{H}^{\z}}\chi(\dot{H}^{\z})u-\mathrm{e}^{-\mathrm{i}t'\dot{H}_{\mathscr{H}}^{\z}}i_{-}\mathrm{e}^{\mathrm{i}t'\dot{H}^{\z}}\chi(\dot{H}^{\z})u\big\|_{\dot{\mathcal{E}}^{\z}_{\mathscr{H}}}&<\varepsilon/3
	\end{align*}
	for $t,t'$ large enough. The proof is complete.
\end{proof}
%
%
%
%
%
\subsubsection{Proof of Proposition \ref{Geo wave op prop}}
\label{Proof of Proposition {{Geo wave op prop}}}
We only show the $f$, $\mathscr{H}$ case:
\begin{align}
\label{Range in}
	\boldsymbol{\Omega}_{\mathscr{H}}^{f}\dot{\mathcal{E}}^{\z}&\subset\Psi_{\mathscr{H}}\big(\dot{\mathcal{H}}^{1,\z}_{\mathscr{H}}\times\{0\}\big),\\
\label{Semi inversion in}
	\boldsymbol{\Omega}_{\mathscr{H}}^{f}\boldsymbol{W}_{\mathscr{H}}^{f}&=\mathds{1}_{\Psi_{\mathscr{H}}(\dot{\mathcal{H}}^{1,\z}_{\mathscr{H}}\times\{0\})}.
\end{align}
\begin{proof}[Proof of \eqref{Range in}]
	We follow the proof of a similar proposition in \cite{Mi}. Let
	\begin{align*}
		\Upsilon_{\mathscr{H}}&:\dot{\mathcal{E}}^{\z}_{\mathscr{H}}\ni u=(u_{0},u_{1})\longmapsto u_{1}-\mathrm{i}L_{\mathscr{H}}u_{0}\in\HH^{\z}.
	\end{align*}
	It defines a continuous operator as
	\begin{align*}
		\|\Upsilon_{\mathscr{H}}u\|_{\HH^{\z}}&=\|u_{1}-\mathrm{i}L_{\mathscr{H}}u_{0}\|_{\HH^{\z}}\leq\|(L_{\mathscr{H}}+\mathrm{i}k_{\mathscr{H}})u_{0}\|_{\HH^{\z}}+\|u_{1}-k_{\mathscr{H}}u_{0}\|_{\HH^{\z}}=\|u\|_{\dot{\mathcal{E}}_{\mathscr{H}}^{\z}}.
	\end{align*}
	Clearly $\ker\Upsilon_{\mathscr{H}}=\Psi_{\mathscr{H}}\big(\dot{\mathcal{H}}^{1,\z}_{\mathscr{H}}\times\{0\}\big)$ so we will show that
	\begin{align*}
		\Upsilon_{\mathscr{H}}\boldsymbol{\Omega}^{f}_{\mathscr{H}}&=0.
	\end{align*}

	We claim that
	\begin{align}
	\label{Claim Upsilon}
		\big\|\Upsilon_{\mathscr{H}}\mathrm{e}^{\mathrm{i}t\dot{H}^{\z}_{\mathscr{H}}}u\big\|_{\HH^{\z}}&=\sqrt{2}\big\|\Upsilon_{\mathscr{H}}u\big\|_{\HH^{\z}}\qquad\qquad\forall u\in\dot{\mathcal{E}}^{\z}_{\mathscr{H}}.
	\end{align}
	To see this, write $\Upsilon_{\mathscr{H}}=\frac{\mathrm{i}}{\sqrt{2}}(L_{+}-L_{\mathscr{H}})\pi_{1}\Psi_{\mathscr{H}}^{-1}$ with $\pi_{1}(u_{0},u_{1}):=u_{1}$. Now \eqref{Claim Upsilon} follows from $\Psi_{\mathscr{H}}^{-1}\mathrm{e}^{\mathrm{i}t\dot{H}_{\mathscr{H}}^{\z}}=\mathrm{e}^{\mathrm{i}t\dot{\mathbb{H}}_{\mathscr{H}}^{\z}}\Psi_{\mathscr{H}}^{-1}$ and the unitarity of $\mathrm{e}^{\mathrm{i}t\dot{\mathbb{H}}_{\mathscr{H}}^{\z}}\Psi_{\mathscr{H}}^{-1}:\dot{\mathcal{E}}_{\mathscr{H}}^{\z}\to\dot{\mathcal{H}}^{1,\z}_{\mathscr{H}}\times\dot{\mathcal{H}}^{1,\z}_{\mathscr{H}}$ and $L_{\mathscr{H}}+\mathrm{i}k_{\mathscr{H}}=-\frac{1}{2}(L_{+}-\mathrm{i}L_{\mathscr{H}}):\dot{\mathcal{H}}^{1,\z}_{\mathscr{H}}\to\HH^{\z}$.

	Let now $u\in\dot{\mathcal{E}}^{\z}$. In view of \eqref{Claim Upsilon}, we are boiled to show that\footnote{Recall that $i_-\in\mathcal{B}\big(\dot{\mathcal{E}}^{\z},\dot{\mathcal{E}}^{\z}_{\mathscr{H}}\big)$, \textit{cf.} Lemma \ref{Injection lemma}.}
	\begin{align}
	\label{To show Upsilon}
		\lim_{t\to+\infty}\big\|\Upsilon_{\mathscr{H}}i_{-}\mathrm{e}^{\mathrm{i}t\dot{H}^{\z}}u\big\|_{\HH^{\z}}&=0.
	\end{align}
	%
	%
	%
	Fix $\varepsilon>0$ and pick $\ell_{0}\in\mathbb{N}$ so that
	\begin{align}
	\label{First error Upsilon}
		\big\|\Upsilon_{\mathscr{H}}i_{-}\mathrm{e}^{\mathrm{i}t\dot{H}^{\z}}u\big\|_{\HH^{\z}}&\leq\left\|\Upsilon_{\mathscr{H}}i_{-}\mathrm{e}^{\mathrm{i}t\dot{H}^{\z}}\sum_{0\leq\ell\leq\ell_{0}}u_{\ell}\right\|_{\HH^{\z}}+\left\|\Upsilon_{\mathscr{H}}i_{-}\mathrm{e}^{\mathrm{i}t\dot{H}^{\z}}\left(u-\sum_{0\leq\ell\leq\ell_{0}}u_{\ell}\right)\right\|_{\HH^{\z}}\nonumber\\
		&\leq\left\|\Upsilon_{\mathscr{H}}i_{-}\mathrm{e}^{\mathrm{i}t\dot{H}^{\z}}\sum_{0\leq\ell\leq\ell_{0}}u_{\ell}\right\|_{\HH^{\z}}+\varepsilon/2
	\end{align}
	where $-\Delta_{\mathbb{S}^{2}}u_{\ell}=\ell(\ell+1)u_{\ell}$ as in the proof of Theorem \ref{Asymptotic completeness, geometric profiles} (we used the continuity of $\Upsilon_{\mathscr{H}}i_{-}\mathrm{e}^{\mathrm{i}t\dot{H}^{\z}}:\dot{\mathcal{E}}^{\z}\to\HH^{\z}$ as well as dominated convergence). We will write again $u$ for $\sum_{0\leq \ell\leq\ell_{0}}u_{\ell}$. Set $v(t):=\mathrm{e}^{\mathrm{i}t\dot{H}^{\z}}u$ and compute:
	\begin{align}
	\label{Eq Upsilon}
		\mathrm{i}(\partial_{t}+L_{+})\Upsilon_{\mathscr{H}}i_{-}\mathrm{e}^{\mathrm{i}t\dot{H}^{\z}}u&=(\partial_{t}+L_{+})(\partial_{t}+L_{\mathscr{H}})i_{-}v_{0}(t)\nonumber\\
		&=\big((\partial_{t}-\mathrm{i}k_{\mathscr{H}})^{2}+h_{\mathscr{H}}\big)i_{-}v_{0}(t)\nonumber\\
		%
		%
		%
		%
		&=\Big([h_{\mathscr{H}},i_{-}]-i_{-}(k_{\mathscr{H}}^{2}-k^{2})+i_{-}(h_{\mathscr{H}}-h)\Big)v_{0}(t)+2i_{-}(k_{\mathscr{H}}-k)v_{1}(t)\nonumber\\
		&=:\Xi(t).
	\end{align}
	This expression makes sense in $\HH^{\z}$: we have
	\begin{align*}
		\|\Xi(t)\|_{\HH^{\z}}&\lesssim\|f_{1}(x)v_{0}\|_{\HH^{\z}}+\|f_{2}(x)\partial_{x}v_{0}\|_{\HH^{\z}}+\|f_{3}(x)v_{1}\|_{\HH^{\z}}\\
		&\lesssim\|g(x)v_{0}\|_{\HH^{\z}}+\|\partial_{x}v_{0}\|_{\HH^{\z}}+\|v_{1}-kv_{0}\|_{\HH^{\z}}
	\end{align*}
	with $|f_{j}(x)|,|g(x)|\leq C_{j}\ell_{0}^{3}\mathrm{e}^{-2\kappa|x|}$, then Hardy type inequality \cite[Lemma 9.5]{GGH17} as well as uniform boundedness of $\mathrm{e}^{\mathrm{i}t\dot{H}^{\z}}$ yield
	\begin{align*}
		\|\Xi(t)\|_{\HH^{\z}}&\lesssim\|\tilde{g}(x)w^{\delta}v(t)\|_{\dot{\mathcal{E}}^{\z}}\lesssim\|\tilde{g}(x)w^{\delta}u\|_{\dot{\mathcal{E}}^{\z}}
	\end{align*}
	with $\delta\in\left]0,2\kappa\right[$ and $|\tilde{g}(x)|\leq \tilde{C}\ell_{0}^{3}\mathrm{e}^{-2\kappa|x|-\delta}$. By \cite[Proposition 6.8]{GGH17}, we have $\Xi\in L^{1}(\mathbb{R}_{t},\HH^{\z})$. From \eqref{Eq Upsilon} and the fact that $L_{+}$ generates a strongly continuous group on $\HH^{\z}$, we then deduce the following Kirchhoff type formula
	\begin{align*}
		\Upsilon_{\mathscr{H}}i_{-}\mathrm{e}^{\mathrm{i}t\dot{H}^{\z}}u&=\mathrm{i}\mathrm{e}^{-tL_{+}}\Upsilon_{\mathscr{H}}i_{-}u+\mathrm{i}\int_{0}^{t}\mathrm{e}^{-(t-t')L_{+}}\Xi(t')\mathrm{d}t'
	\end{align*}
	which we can rewritten as
	\begin{align}
	\label{Error Kirchhoff}
	\Upsilon_{\mathscr{H}}i_{-}\mathrm{e}^{-\mathrm{i}t\dot{H}^{\z}}u&=\mathrm{i}\mathrm{e}^{-tL_{+}}\left(\Upsilon_{\mathscr{H}}i_{-}u+\mathrm{i}\int_{0}^{+\infty}\mathrm{e}^{t'L_{+}}\Xi(t')\mathrm{d}t'\right)+o_{t\to+\infty}(1)
	\end{align}
	in the $L^{1}(\mathbb{R}_{t},\HH^{\z})$ sense. Finally, write $i_{-}=j_{-}i_{-}$ so that
	\begin{align}
	\label{Error Kirchhoff 2}
		\Upsilon_{\mathscr{H}}i_{-}\mathrm{e}^{\mathrm{i}t\dot{H}^{\z}}u&=\mathrm{i}j_{-}'i_{-}(\mathrm{e}^{\mathrm{i}t\dot{H}^{\z}}u)_{0}+j_{-}\Upsilon_{\mathscr{H}}i_{-}\mathrm{e}^{\mathrm{i}t\dot{H}^{\z}}u.
	\end{align}
	Since $j_{-}'$ is supported near $0$, \cite[Proposition 6.5]{GGH17} shows that the first term above goes to 0 as $t\to+\infty$; the second term above also falls off at the limit using \eqref{Error Kirchhoff} since $j_{-}\mathrm{e}^{-tL_{+}}\phi\to 0$ as $t\to+\infty$ 
	for any $\phi\in\HH^{\z}$. This shows that the expression in \eqref{Error Kirchhoff 2} is smaller than $\varepsilon/2$ for $t\gg0$; back into \eqref{First error Upsilon}, this gives \eqref{To show Upsilon}.
\end{proof}
\begin{proof}[Proof of \eqref{Semi inversion in}]
	Notice that we can define $\boldsymbol{\Omega}_{\mathscr{H}}^{f}$ using $j_{-}$ instead of $i_{-}$ in part 2. of Theorem \ref{Asymptotic completeness, geometric profiles}; this will immediately cancel mixed terms $j_{-}i_{+}$ below\footnote{Otherwise, we have to involve a propagation estimate to make $i_{-}i_{+}\mathrm{e}^{\mathrm{i}t\dot{H}_{\mathcal{H}}^{\z}}$ vanish at the limit $t\to+\infty$.}.
	
	Let first $u\in\Psi_{\mathscr{H}}(\dot{\mathcal{H}}^{1,\z}_{\mathscr{H}}\times\{0\})\cap\mathcal{D}^{\textup{fin},\z}_{\mathscr{H}}$ so that $\boldsymbol{W}_{\mathscr{H}}^{f}u$ is the limit of $\mathrm{e}^{-\mathrm{i}t\dot{H}^{\z}}i_{-}\mathrm{e}^{\mathrm{i}t\dot{H}^{\z}_{\mathcal{H}}}u$ as $t\to+\infty$.	Since $u_{1}=\mathrm{i}L_{\mathscr{H}}$, we obtain as in Remark \ref{In/out remark}
	\begin{align}
	\label{Action on in}
	\big(\mathrm{e}^{\mathrm{i}t\dot{H}^{\z}_{\mathscr{H}}}u\big)_{0}(x)	&=\mathrm{e}^{\mathrm{i}s\z\int_{x}^{x+t}V(x')\mathrm{d}x'}u_{0}(x+t)=\mathrm{e}^{-tL_{\mathscr{H}}}u_{0}(x)
	\end{align}
	where we omit the dependence in $(z,\omega)\in\mathbb{S}^{1}\times\mathbb{S}^{2}$. Set then $\widetilde{u}:=\mathrm{e}^{\mathrm{i}t\dot{H}^{\z}_{\mathscr{H}}}u$; integrating by parts in Kirchhoff formula \eqref{Kirchhoff type formula 2 simplified}, we get:
	\begin{align*}
	&\big(\mathrm{e}^{-\mathrm{i}t\dot{H}_{\mathscr{H}}^{\z}}j_{-}\widetilde{u}\big)_{0}(x)\\
	&=\frac{\mathrm{e}^{-\mathrm{i}s\z tV_{-}}}{2}\left(\sum_{\pm}\mathrm{e}^{\mathrm{i}s\z\int_{x}^{x\pm t}(V(x')-V_{-})\mathrm{d}x'}j_{-}(x\pm t)\widetilde{u}_{0}(x\pm t)\right.\\
	&\qquad\qquad\quad\ \left.+\mathrm{i}\int_{x+t}^{x-t}\mathrm{e}^{\mathrm{i}s\z\int_{x}^{y}(V(x')-V_{-})\mathrm{d}x'}j_{-}(y)\Big(\widetilde{u}_{1}-s\z V_{-}\widetilde{u}_{0}\Big)(y)\mathrm{d}y\right)\\
	%
	%
	%
	%
	&=j_{-}(x-t)u_{0}(x)-\frac{\mathrm{e}^{-\mathrm{i}s\z tV_{-}}}{2}\int_{x+t}^{x-t}\mathrm{e}^{\mathrm{i}s\z\int_{x}^{y}(V(x')-V_{-})\mathrm{d}x'}\mathrm{e}^{\mathrm{i}s\z\int_{y}^{y+t}V(x')\mathrm{d}x'}j_{-}'(y)u_{0}(y+t)\mathrm{d}y.
	\end{align*}
	Since $j_{-}(x-t)\to 1$ as $t\to+\infty$, $j_{-}(x-t)u_{0}(x)=u_{0}(x)$ for $t\gg0$. The integral term vanishes for large $t$ because $j'$ is supported near $0$ whereas $\mathrm{Supp\,}u_{0}(x+t)$ leaves any neighborhood of 0. This means that $\big(\mathrm{e}^{-\mathrm{i}t\dot{H}_{\mathscr{H}}^{\z}}j_{-}\mathrm{e}^{\mathrm{i}t\dot{H}^{\z}_{\mathscr{H}}}u\big)_{0}=u_{0}$ in finite time and then $\big(\mathrm{e}^{-\mathrm{i}t\dot{H}_{\mathscr{H}}^{\z}}j_{-}\mathrm{e}^{\mathrm{i}t\dot{H}^{\z}_{\mathscr{H}}}u\big)_{1}=\mathrm{i}L_{\mathscr{H}}u_{0}$ using \eqref{Action on in}.
	
	By Theorem \ref{Asymptotic completeness, geometric profiles}, $\boldsymbol{W}_{\mathscr{H}}^{f}$ and $\boldsymbol{\Omega}_{\mathscr{H}}^{f}$ are bounded operators so that
	\begin{align*}
		s-\lim_{t\to+\infty}\left(\mathrm{e}^{-\mathrm{i}t\dot{H}^{\z}_{\mathscr{H}}}j_{-}\mathrm{e}^{\mathrm{i}t\dot{H}^{\z}}\right)\left(\mathrm{e}^{-\mathrm{i}t\dot{H}_{\mathscr{H}}^{\z}}i_{-}\mathrm{e}^{\mathrm{i}t\dot{H}^{\z}}\right)&=\boldsymbol{\Omega}_{\mathscr{H}}^{f}\boldsymbol{W}_{\mathscr{H}}^{f}.
	\end{align*}
	The above computations show that the left-hand side above is $\mathds{1}_{\Psi_{\mathscr{H}}(\dot{\mathcal{H}}^{1,\z}_{\mathscr{H}}\times\{0\})}$ (first proved on a dense subspace then extended to $\Psi_{\mathscr{H}}(\dot{\mathcal{H}}^{1,\z}_{\mathscr{H}}\times\{0\})$ by continuity) which entails \eqref{Semi inversion in}.
\end{proof}
%
%
%
%
%
%
%
%
%
\section{Geometric interpretation}
\label{Geometric interpretation}
We provide the geometric interpretation of the scattering associated to the dynamics $\dot{H}_{\mathscr{H}\!/\!\mathscr{I}}$ of Subsection \ref{Comparison dynamics}. We will show how the inverse wave operators of Theorem \ref{Asymptotic completeness, geometric profiles} are related to traces onto the horizons $\mathscr{H}$ and $\mathscr{I}$. We adapt \cite{HaNi04} which deals with Dirac equation in Kerr spacetime.

This section is organized as follows: in Subsection \ref{Energy spaces on the horizons}, we define energy spaces on horizons using the principal null geodesics; we construct in Subsection \ref{The full wave operators} full wave and inverse wave operators using Theorem \ref{Asymptotic completeness, geometric profiles}; Subsection \ref{Inversion of the full wave operators} then shows that they are indeed inverse; in Subsection \ref{Traces on the energy spaces}, we extend the trace operators from smooth compactly supported data to energy spaces as bounded invertible operators; finally, we solve an abstract Goursat problem in Subsection \ref{Solution to the Goursat problem}.
%
%
%
%
\subsection{Energy spaces on the horizons}
\label{Energy spaces on the horizons}
We define in this Subsection the energy spaces on the horizons obtained by transport of the principal null geodesics. This will allow us to define traces in Subsection \ref{Traces on the energy spaces} and extend them as abstract operators acting on energy spaces.

First, we explicit the correspondence between horizons and the initial data slice $\Sigma_{0}$ using principal null geodesics. Recall from Subsection \ref{Principal null geodesics} the extended-star $(t^{\star},z^{\star},\omega^{\star})$ and star extended $(^{\star}t,\,\!^{\star}z,\,\!^{\star}\omega)$ coordinates which describe the horizons:
\begin{align*}
	t^{\star}&=t+T(r),\qquad\qquad ^{\star}t=t-T(r),\\
	z^{\star}&=z+Z(r),\qquad\qquad^{\star}z=z-Z(r),\\
	\omega^{\star}&=\,\!^{\star}\omega=\omega
\end{align*}
with $T=x$ and
\begin{align*}
	T'(r)&=F(r)^{-1},\qquad\qquad Z'(r)=-sV(r)F(r)^{-1}.
\end{align*}
Then
\begin{align*}
	\mathscr{H}^{+}&=\mathbb{R}_{t^{\star}}\times\mathbb{S}^{1}_{z^{\star}}\times\{-r_{-}\}_{r}\times\mathbb{S}^{2}_{\omega},\qquad\qquad\mathscr{I}^{+}=\mathbb{R}_{^{\star}\!t}\times\mathbb{S}^{1}_{^{\star}\!z}\times\{r_{+}\}_{r}\times\mathbb{S}^{2}_{\omega}\\
	%
	%
	\mathscr{H}^{-}&=\mathbb{R}_{^{\star}\!t}\times\mathbb{S}^{1}_{^{\star}\!z}\times\{r_{-}\}_{r}\times\mathbb{S}^{2}_{\omega},\qquad\qquad\mathscr{I}^{-}=\mathbb{R}_{t^{\star}}\times\mathbb{S}^{1}_{z^{\star}}\times\{-r_{+}\}_{r}\times\mathbb{S}^{2}_{\omega}.
\end{align*}
Let $(t_{0},z_0,r_0,\omega_0)\in\M$. Since $t^{\star}$ and $z^{\star}$ (respectively $^{\star}t$ and $^{\star}z$) are constant along $\gamma_{\textup{in}}$ (respectively along $\gamma_{\textup{out}}$), we find
\begin{align*}
	\lim_{r\to r_{-}}\gamma_{\textup{in}}(r)&=(t_{0}+T(r_{0}),z_{0}+Z(r_{0}),-r_{-},\omega_{0})\in\mathscr{H}^{+},\\
	\lim_{r\to r_{+}}\gamma_{\textup{out}}(r)&=\big(t_{0}-T(r_{0}),z-Z(r_{0}),r_{+},\omega_{0}\big)\in\mathscr{I}^{+},\\
	\lim_{r\to r_{-}}\gamma_{\textup{out}}(r)&=\big(t_{0}-T(r_{0}),z-Z(r_{0}),r_{-},\omega_{0}\big)\in\mathscr{H}^{-},\\
	\lim_{r\to r_{+}}\gamma_{\textup{in}}(r)&=\big(t_{0}+T(r_{0}),z+Z(r_{0}),-r_{+},\omega_{0}\big)\in\mathscr{I}^{-}
\end{align*}
for the end points of the principal null geodesics intersecting $(t_{0},z_0,r_0,\omega_0)$ at $r=r_{0}$ (the explicit expressions of $T(r)=x(r)$ and $Z(r)$ are given in \eqref{t^*} and \eqref{z^*}). As $T,Z:\left]r_-,r_+\right[\to\mathbb{R}$ are smooth diffeomorphisms, the applications $\mathscr{F}_{\mathscr{H}}^{\pm}:\mathscr{H}^{\pm}\to\Sigma_{0}$ and $\mathscr{F}_{\mathscr{I}}^{\pm}:\mathscr{I}^{\pm}\to\Sigma_{0}$ defined by
\begin{align*}
	\mathscr{F}_{\mathscr{H}}^{-\!/\!+}\left(\lim_{r\to r_{-}}\gamma_{\textup{out/in}}(r)(p_{0})\right)&:=p_{0},\qquad\qquad\mathscr{F}_{\mathscr{I}}^{-\!/\!+}\left(\lim_{r\to r_{+}}\gamma_{\textup{in/out}}(r)(p_{0})\right):=p_{0}
\end{align*}
for all $p_{0}\in\Sigma_{0}$ are well-defined diffeomorphisms which identify end points on the future/past horizons to the initial point on $\Sigma_{0}$.

Let us turn to the definition of the energy spaces on the horizons. We define the \textit{asymptotic future/past energy spaces} $\dot{\mathscr{E}}_{\pm}^{\z}:=((\mathscr{F}_{\mathscr{H}}^{\pm})^{-1})^{*}\dot{\HH}^{1}_{\mathscr{H}}\times((\mathscr{F}_{\mathscr{I}}^{\pm})^{-1})^{*}\dot{\HH}^{1}_{\mathscr{I}}$ and their restrictions $\dot{\mathscr{E}}^{\z}_{\pm}$ to $\ker(\mathrm{i}\partial_{z}+\z)$ endowed with the norms
\begin{align*}
	\|(\xi,\zeta)\|_{\dot{\mathscr{E}}_{\pm}}^{2}&:=\big\|(L_{\mathscr{H}}+\mathrm{i}k_{\mathscr{H}})\big((\mathscr{F}^{\pm}_{\mathscr{H}})^{-1}\big)^{*}\xi\big\|_{\mathcal{H}}^{2}+\big\|(L_{\mathscr{I}}+\mathrm{i}k_{\mathscr{I}})\big((\mathscr{F}^{\pm}_{\mathscr{I}})^{-1}\big)^{*}\zeta\big\|_{\mathcal{H}}^{2}.
\end{align*}
Using the coordinates recalled in Subsection \ref{Energy spaces on the horizons}, we can write
\begin{align*}
\partial_{x}&=\partial_{t^{\star}}-sV(r)\partial_{z^{\star}},\qquad\quad\partial_{z}=\partial_{z^{\star}},\\
\partial_{x}&=-\partial_{^{\star}\!t}+sV(r)\partial_{^{\star}\!z},\qquad\quad\partial_{z}=\partial_{^{\star}\!z}.
\end{align*}
Using $L_{\mathscr{H}}=-\partial_{x}-sV\partial_{z}$ and $L_{\mathscr{I}}=\partial_{x}-sV\partial_{z}$, we explicitly get:
\begin{align*}
	\big\|(L_{\mathscr{H}}+\mathrm{i}k_{\mathscr{H}})\big((\mathscr{F}^{+}_{\mathscr{H}})^{-1}\big)^{*}\xi\big\|_{\mathcal{H}}^{2}&=\int_{\mathbb{R}_{t^{\star}}\times\mathbb{S}^{1}_{z^{\star}\!}\times\mathbb{S}^{2}_{\omega}}\Big(\partial_{t^{\star}}\xi-sV_{-}\partial_{z^{\star}}\xi\Big)^{2}(t^{\star},z^{\star},\omega)\mathrm{d}t^{\star}\mathrm{d}z^{\star}\mathrm{d}\omega,\\
	\big\|(L_{\mathscr{I}}+\mathrm{i}k_{\mathscr{I}})\big((\mathscr{F}^{+}_{\mathscr{i}})^{-1}\big)^{*}\zeta\big\|_{\mathcal{H}}^{2}&=\int_{\mathbb{R}_{^{\star}\!t}\times\mathbb{S}^{1}_{\,\!^{\star}\!z\!}\times\mathbb{S}^{2}_{\omega}}\Big(\partial_{^{\star}\;\!\!t}\zeta-sV_{+}\partial_{^{\star}\!z}\zeta\Big)^{2}(^{\star}t,\,\!^{\star}z,\omega)\mathrm{d}^{\star}t\mathrm{d}^{\star}z\mathrm{d}\omega.
\end{align*}
Similar formulas hold on $\mathscr{H}^{-}$ and $\mathscr{I}^{-}$.
\begin{remark}
\label{Remark energies horizons}
	The asymptotic energies on the horizons are nothing but the flux of the Killing generators $X_{-}:=\partial_{t^{\star}}-sV_{-}\partial_{z^{\star}}$, $X_{+}:=\partial_{^{\star}\;\!\!t}-sV_{+}\partial_{^{\star}\!z}$ defined in \eqref{Killing generators} through the corresponding horizon.
\end{remark}
%
%
%
%
%
%
\subsection{The full wave operators}
\label{The full wave operators}
The operators in Theorem \ref{Asymptotic completeness, geometric profiles} are not inverse despite their name because the cut-offs $i_{\pm}$ cancel outgoing/incoming data. In this Subsection, we construct full wave and inverse wave operators which encode scattering in both the ends of the spacetime.

Let $\Pi_{0}:\dot{\mathcal{H}}^{1,\z}_{\mathscr{H}}\times\dot{\mathcal{H}}^{1,\z}_{\mathscr{I}}\ni(u_0,u_1)\mapsto(u_0,0)\in\dot{\mathcal{H}}^{1,\z}_{\mathscr{H}}$, $\Pi_{1}:\dot{\mathcal{H}}^{1,\z}_{\mathscr{H}}\times\dot{\mathcal{H}}^{1,\z}_{\mathscr{I}}\ni(u_0,u_1)\mapsto(0,u_1)\in\dot{\mathcal{H}}^{1,\z}_{\mathscr{I}}$ and $\widehat{\Pi}_{0}:\dot{\mathcal{E}}^{\z}_{\mathscr{H}}\ni(u_{0},u_{1})\mapsto(u_{0},0)\in\dot{\mathcal{H}}^{1,\z}_{\mathscr{H}}\times\{0\}$, $\widehat{\Pi}_{1}:\dot{\mathcal{E}}^{\z}_{\mathscr{I}}\ni(u_{0},u_{1})\mapsto(0,u_{0})\in\{0\}\times\dot{\mathcal{H}}^{1,\z}_{\mathscr{I}}$. For all $t\in\mathbb{R}$, we define the following operators:
\begin{align}
\label{W(t)}
	W(t)&:=\sqrt{2}\mathrm{e}^{-\mathrm{i}t\dot{H}}i_{-}\Psi_{\mathscr{H}}\mathrm{e}^{\mathrm{i}t\dot{\mathbb{H}}_{\mathscr{H}}}\Pi_{0}+\sqrt{2}\mathrm{e}^{-\mathrm{i}t\dot{H}}i_{+}\Psi_{\mathscr{I}}\mathrm{e}^{\mathrm{i}t\dot{\mathbb{H}}_{\mathscr{I}}}\Pi_{1},\\
\label{Omega(t)}
	\Omega(t)&:=\mathrm{e}^{-\mathrm{i}t\dot{\mathbb{H}}_{\mathscr{H}}}\widehat{\Pi}_{0}j_{-}\mathrm{e}^{\mathrm{i}t\dot{H}^{\z}}+\mathrm{e}^{-\mathrm{i}t\dot{\mathbb{H}}_{\mathscr{I}}}\widehat{\Pi}_{1}j_{+}\mathrm{e}^{\mathrm{i}t\dot{H}^{\z}}.
	%
\end{align}
Recall that $j_{-\!/\!+}\in\mathcal{B}\big(\dot{\mathcal{E}}^{\z},\dot{\mathcal{E}}_{\mathscr{H}\!/\!\mathscr{I}}^{\z}\big)$ by Lemma \ref{Injection lemma} so that \eqref{Omega(t)} makes sense.
\begin{lemma}
	\label{Extension W(t)}
	Let $\z\in\mathbb{Z}\setminus\{0\}$. There exists $s_{0}>0$ such that for all $s\in\left]-s_{0},s_{0}\right[$, the following holds:
	\begin{enumerate}
		\item For all $u=(u_{0},u_{1})$ such that $(u_{0},0)\in\Psi_{\mathscr{H}}^{-1}\big(\mathcal{D}^{\textup{fin},\z}_{\mathscr{H}}\big)$ and $(0,u_{1})\in\Psi_{\mathscr{I}}^{-1}\big(\mathcal{D}^{\textup{fin},\z}_{\mathscr{I}}\big)$, the limits
		\begin{align*}
			\mathbb{W}^{\pm}u&:=\lim_{t\to\pm\infty}W(t)u=\boldsymbol{W}_{\mathscr{H}}^{f/p}\Psi_{\mathscr{H}}\Pi_{0}u+\boldsymbol{W}_{\mathscr{I}}^{f/p}\Psi_{\mathscr{I}}\Pi_{1}u
		\end{align*}
		exist in $\dot{\mathcal{E}}^{\z}$. The operators $\mathbb{W}^{\pm}$ extend to bounded operators $\boldsymbol{W}_{\pm}\in\mathcal{B}\big(\dot{\mathcal{H}}^{1,\z}_{\mathscr{H}}\times\dot{\mathcal{H}}^{1,\z}_{\mathscr{I}},\dot{\mathcal{E}}^{\z}\big)$.\\
		We call $\mathbb{W}^{\pm}$ the \textup{full future/past wave operators}.
		\item The following strong limits exist:
		\begin{align*}
			\OO^{\pm}&:=\textup{s}-\lim_{t\to\pm\infty}\Omega(t)=\Psi_{\mathscr{H}}^{-1}\boldsymbol{\Omega}^{f/p}_{\mathscr{H}}+\Psi_{\mathscr{I}}^{-1}\boldsymbol{\Omega}^{f/p}_{\mathscr{I}}\in\mathcal{B}\big(\dot{\mathcal{E}}^{\z},\dot{\mathcal{H}}^{1,\z}_{\mathscr{H}}\times\dot{\mathcal{H}}^{1,\z}_{\mathscr{I}}\big).
		\end{align*}
		The operators $\OO^{\pm}$ are the \textup{full future/past inverse wave operators}.
	\end{enumerate}
\end{lemma}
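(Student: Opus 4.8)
The plan is to reduce everything to Theorem \ref{Asymptotic completeness, geometric profiles} and Lemma \ref{Link Lemma}, using the block structure of $W(t)$ and $\Omega(t)$: the $\mathscr{H}$-block and the $\mathscr{I}$-block decouple, so it suffices to treat each one separately and then add the two contributions. For part 1, I would first rewrite $\sqrt{2}\,i_{-}\Psi_{\mathscr{H}}\mathrm{e}^{\mathrm{i}t\dot{\mathbb{H}}_{\mathscr{H}}}\Pi_{0}u$ in terms of $\mathrm{e}^{\mathrm{i}t\dot{H}^{\z}_{\mathscr{H}}}$ acting on a genuine energy-space element. Indeed, by Lemma \ref{Link Lemma} we have $\Psi_{\mathscr{H}}\mathrm{e}^{\mathrm{i}t\dot{\mathbb{H}}_{\mathscr{H}}}=\mathrm{e}^{\mathrm{i}t\dot{H}_{\mathscr{H}}}\Psi_{\mathscr{H}}$, so
\begin{align*}
	\sqrt{2}\,\mathrm{e}^{-\mathrm{i}t\dot{H}^{\z}}i_{-}\Psi_{\mathscr{H}}\mathrm{e}^{\mathrm{i}t\dot{\mathbb{H}}_{\mathscr{H}}}\Pi_{0}u&=\sqrt{2}\,\mathrm{e}^{-\mathrm{i}t\dot{H}^{\z}}i_{-}\mathrm{e}^{\mathrm{i}t\dot{H}^{\z}_{\mathscr{H}}}\Psi_{\mathscr{H}}\Pi_{0}u.
\end{align*}
The hypothesis $(u_{0},0)\in\Psi_{\mathscr{H}}^{-1}(\mathcal{D}^{\textup{fin},\z}_{\mathscr{H}})$ means exactly that $\Psi_{\mathscr{H}}\Pi_{0}u\in\mathcal{D}^{\textup{fin},\z}_{\mathscr{H}}$, so Theorem \ref{Asymptotic completeness, geometric profiles}.1 applies and gives $\lim_{t\to\pm\infty}\mathrm{e}^{-\mathrm{i}t\dot{H}^{\z}}i_{-}\mathrm{e}^{\mathrm{i}t\dot{H}^{\z}_{\mathscr{H}}}(\Psi_{\mathscr{H}}\Pi_{0}u)=\boldsymbol{W}^{f/p}_{\mathscr{H}}\Psi_{\mathscr{H}}\Pi_{0}u$ (the $\sqrt 2$ is swallowed into $\Psi_{\mathscr H}$ since $\Psi_{\mathscr H}$ already contains the $1/\sqrt 2$ factor). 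The same argument with $i_{+},\Psi_{\mathscr{I}},\Pi_{1}$ handles the $\mathscr{I}$-block, and the two limits add to give the stated formula for $\mathbb{W}^{\pm}$. For the boundedness/extension statement, I would invoke that $\boldsymbol{W}^{f/p}_{\mathscr{H}/\mathscr{I}}\in\mathcal{B}(\dot{\mathcal{E}}^{\z}_{\mathscr{H}/\mathscr{I}},\dot{\mathcal{E}}^{\z})$ from Theorem \ref{Asymptotic completeness, geometric profiles} together with the fact (Lemma \ref{Transformation Psi}) that $\Psi_{\mathscr{H}}:\dot{\HH}^{1,\z}_{\mathscr{H}}\times\dot{\HH}^{1,\z}_{\mathscr{H}}\to\dot{\mathcal{E}}^{\z}_{\mathscr{H}}$ is an isometry; restricting to $\Pi_{0}$ shows $\Psi_{\mathscr{H}}\Pi_{0}:\dot{\HH}^{1,\z}_{\mathscr{H}}\times\dot{\HH}^{1,\z}_{\mathscr{I}}\to\dot{\mathcal{E}}^{\z}_{\mathscr{H}}$ (via the first factor) is bounded, hence the composite $\boldsymbol{W}^{f/p}_{\mathscr{H}}\Psi_{\mathscr{H}}\Pi_{0}$ is bounded; likewise for the $\mathscr{I}$-term, and a sum of bounded operators is bounded. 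Density of the set of $u$ described in the hypothesis then lets us extend $\mathbb{W}^{\pm}$ to all of $\dot{\HH}^{1,\z}_{\mathscr{H}}\times\dot{\HH}^{1,\z}_{\mathscr{I}}$; this density follows because $\mathcal{D}^{\textup{fin},\z}_{\mathscr{H}/\mathscr{I}}$ is dense in $\dot{\mathcal{E}}^{\z}_{\mathscr{H}/\mathscr{I}}$ by Lemma \ref{Density in/out} and $\Psi_{\mathscr{H}/\mathscr{I}}^{-1}$ is a homeomorphism.

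\textbf{Part 2.} Here the block structure is even more immediate. Using Lemma \ref{Link Lemma} again, $\mathrm{e}^{-\mathrm{i}t\dot{\mathbb{H}}_{\mathscr{H}}}\widehat{\Pi}_{0}$ can be compared with $\mathrm{e}^{-\mathrm{i}t\dot{H}^{\z}_{\mathscr{H}}}$: precisely, $\widehat{\Pi}_{0}:\dot{\mathcal{E}}^{\z}_{\mathscr{H}}\to\dot{\HH}^{1,\z}_{\mathscr{H}}\times\{0\}$ is $\Pi_{0}$ composed with the inverse transform, and conjugating by $\Psi_{\mathscr{H}}$ turns $\mathrm{e}^{\mathrm{i}t\dot{\mathbb{H}}_{\mathscr{H}}}$ into $\mathrm{e}^{\mathrm{i}t\dot{H}_{\mathscr{H}}}$. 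Thus the first summand of $\Omega(t)$ is $\Psi_{\mathscr{H}}^{-1}$ applied (after composing with the projection onto the first $\Psi_{\mathscr{H}}$-column) to $\mathrm{e}^{-\mathrm{i}t\dot{H}^{\z}_{\mathscr{H}}}j_{-}\mathrm{e}^{\mathrm{i}t\dot{H}^{\z}}$, which by Theorem \ref{Asymptotic completeness, geometric profiles}.2 converges strongly to $\boldsymbol{\Omega}^{f/p}_{\mathscr{H}}$ — with the slight point, already flagged in the proof of Proposition \ref{Geo wave op prop}, that one may replace $i_{-}$ by $j_{-}$ in the definition of $\boldsymbol{\Omega}^{f/p}_{\mathscr{H}}$ without changing the limit, since $j_{-}i_{-}=j_{-}$ and the mixed term $j_{-}i_{+}$ vanishes. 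The $\mathscr{I}$-block is identical with $j_{+},\widehat{\Pi}_{1},\Psi_{\mathscr{I}}$. Strong convergence of a sum is the sum of the strong limits, giving $\OO^{\pm}=\Psi_{\mathscr{H}}^{-1}\boldsymbol{\Omega}^{f/p}_{\mathscr{H}}+\Psi_{\mathscr{I}}^{-1}\boldsymbol{\Omega}^{f/p}_{\mathscr{I}}$. Boundedness into $\dot{\HH}^{1,\z}_{\mathscr{H}}\times\dot{\HH}^{1,\z}_{\mathscr{I}}$ follows from $\boldsymbol{\Omega}^{f/p}_{\mathscr{H}/\mathscr{I}}\in\mathcal{B}(\dot{\mathcal{E}}^{\z},\dot{\mathcal{E}}^{\z}_{\mathscr{H}/\mathscr{I}})$ (Theorem \ref{Asymptotic completeness, geometric profiles}) composed with the homeomorphisms $\Psi_{\mathscr{H}/\mathscr{I}}^{-1}$ (Lemma \ref{Transformation Psi}), landing respectively in the first and second factor.

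\textbf{Main obstacle.} The genuinely delicate point is not the convergence (which is handed to us by Theorem \ref{Asymptotic completeness, geometric profiles}) but the bookkeeping around the projections $\Pi_{0},\Pi_{1},\widehat{\Pi}_{0},\widehat{\Pi}_{1}$ and the transforms $\Psi_{\mathscr{H}/\mathscr{I}}$: one must check carefully that $\Psi_{\mathscr{H}}\Pi_{0}u$ really lands in $\mathcal{D}^{\textup{fin},\z}_{\mathscr{H}}$ under the stated hypothesis, that the $\sqrt 2$ normalizations match (because $\Psi_{\mathscr{H}/\mathscr{I}}$ already carry a $1/\sqrt 2$), and — in part 2 — that switching the cut-off from $i_{\mp}$ to $j_{\mp}$ is harmless. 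For this last point I would simply refer to the argument already given in the proof of Proposition \ref{Geo wave op prop} (the footnote there records that with $j_{\mp}$ the mixed term $j_{\mp}i_{\pm}$ cancels identically, so no extra propagation estimate is needed). Once these identifications are in place, both statements reduce to "a sum of two operators, each obtained from Theorem \ref{Asymptotic completeness, geometric profiles} by pre/post-composition with the bounded maps $\Pi_\bullet$ and the homeomorphisms $\Psi_\bullet^{\pm1}$", and the density extension in part 1 uses Lemma \ref{Density in/out} verbatim.
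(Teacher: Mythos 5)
Part 1 of your proposal is correct and follows the paper's own route: intertwine via $\Psi_{\mathscr{H}\!/\!\mathscr{I}}\mathrm{e}^{\mathrm{i}t\dot{\mathbb{H}}_{\mathscr{H}\!/\!\mathscr{I}}}=\mathrm{e}^{\mathrm{i}t\dot{H}_{\mathscr{H}\!/\!\mathscr{I}}}\Psi_{\mathscr{H}\!/\!\mathscr{I}}$, apply Theorem \ref{Asymptotic completeness, geometric profiles} on the dense set, and extend by the density coming from Lemma \ref{Density in/out} together with Lemma \ref{Transformation Psi}; the $\sqrt{2}$ bookkeeping is treated loosely in the paper as well, so I will not hold that against you.

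Part 2, however, contains a genuine gap. Your reduction rests on the identification ``$\widehat{\Pi}_{0}$ is $\Pi_{0}$ composed with the inverse transform'', i.e.\ $\widehat{\Pi}_{0}=\Pi_{0}\Psi_{\mathscr{H}}^{-1}$, which is false: by Lemma \ref{Transformation Psi},
\begin{align*}
\Pi_{0}\Psi_{\mathscr{H}}^{-1}(u_{0},u_{1})&=\sqrt{2}\,(L_{+}-L_{\mathscr{H}})^{-1}\big(L_{+}u_{0}+\mathrm{i}u_{1}\big),
\end{align*}
which agrees with $u_{0}$ (up to normalization) only on incoming states $u_{1}=\mathrm{i}L_{\mathscr{H}}u_{0}$. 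Hence the first summand of $\Omega(t)$ is \emph{not} a fixed, $t$-independent operator applied to $\mathrm{e}^{-\mathrm{i}t\dot{H}^{\z}_{\mathscr{H}}}j_{-}\mathrm{e}^{\mathrm{i}t\dot{H}^{\z}}$, because $j_{-}\mathrm{e}^{\mathrm{i}t\dot{H}^{\z}}u$ is not incoming at finite $t$. The honest prefactor is $\mathrm{e}^{-\mathrm{i}t\dot{\mathbb{H}}^{\z}_{\mathscr{H}}}\widehat{\Pi}_{0}\,\mathrm{e}^{\mathrm{i}t\dot{H}^{\z}_{\mathscr{H}}}$, and since $\widehat{\Pi}_{0}\Psi_{\mathscr{H}}$ does not commute with the diagonal group $\mathrm{diag}(\mathrm{e}^{-tL_{\mathscr{H}}},\mathrm{e}^{-tL_{+}})$ (the generators $L_{\mathscr{H}}$ and $L_{+}$ differ), this prefactor is genuinely $t$-dependent and has no strong limit; so one cannot simply pass to the limit in the inner factor and then compose with a constant operator. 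The paper's proof supplies precisely the two ingredients your argument omits: (i) the prefactor $\mathrm{e}^{-\mathrm{i}t\dot{\mathbb{H}}^{\z}_{\mathscr{H}}}\widehat{\Pi}_{0}\,\mathrm{e}^{\mathrm{i}t\dot{H}^{\z}_{\mathscr{H}}}$ is uniformly bounded in $t$, which allows the substitution $\mathrm{e}^{-\mathrm{i}t\dot{H}^{\z}_{\mathscr{H}}}j_{-}\mathrm{e}^{\mathrm{i}t\dot{H}^{\z}}u=\boldsymbol{\Omega}^{f/p}_{\mathscr{H}}u+o_{|t|\to\infty}(1)$; and (ii) the range inclusion $\boldsymbol{\Omega}^{f/p}_{\mathscr{H}}\dot{\mathcal{E}}^{\z}\subset\Psi_{\mathscr{H}}\big(\dot{\mathcal{H}}^{1,\z}_{\mathscr{H}}\times\{0\}\big)$ of Proposition \ref{Geo wave op prop}, on which subspace an explicit computation with the translation groups shows that the $t$-dependence of the prefactor cancels and it acts as $\Psi_{\mathscr{H}}^{-1}$. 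You invoke Proposition \ref{Geo wave op prop} only for the harmless replacement of $i_{\mp}$ by $j_{\mp}$, but its range statement is the crux of part 2; as written, your argument establishes the convergence of a family of operators different from $\Omega(t)$, so the conclusion for $\OO^{\pm}$ does not follow without restoring steps (i) and (ii).
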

\begin{proof}
\begin{enumerate}
	\item Let $\pi_{j}:(u_0,u_1)\mapsto u_j$ be the projection onto the $j$-th component, $j\in\{0,1\}$. The existence of the strong limits on $\pi_{0}\left(\Psi_{\mathscr{H}}^{-1}\big(\mathcal{D}^{\textup{fin},\z}_{\mathscr{H}}\big)\right)\times\pi_{1}\left(\Psi_{\mathscr{I}}^{-1}\big(\mathcal{D}^{\textup{fin},\z}_{\mathscr{I}}\big)\right)$ follows from part 1. of Theorem \ref{Asymptotic completeness, geometric profiles} as $\Psi_{\mathscr{H}\!/\!\mathscr{I}}\mathrm{e}^{\mathrm{i}t\dot{\mathbb{H}}_{\mathscr{H}\!/\!\mathscr{I}}}=\mathrm{e}^{\mathrm{i}t\dot{H}_{\mathscr{H}\!/\!\mathscr{I}}}\Psi_{\mathscr{H}\!/\!\mathscr{I}}$. To define the operator on any $u\in\dot{\mathcal{H}}^{1,\z}_{\mathscr{H}}\times\dot{\mathcal{H}}^{1,\z}_{\mathscr{I}}$, it suffices to observe that
	\begin{align}
	\label{Density HxH}
	\dot{\mathcal{H}}^{1,\z}_{\mathscr{H}}\times\dot{\mathcal{H}}^{1,\z}_{\mathscr{I}}&=\overline{\pi_{0}\left(\Psi_{\mathscr{H}}^{-1}\big(\mathcal{D}^{\textup{fin},\z}_{\mathscr{H}}\big)\right)\times\pi_{1}\left(\Psi_{\mathscr{I}}^{-1}\big(\mathcal{D}^{\textup{fin},\z}_{\mathscr{I}}\big)\right)}^{_{\|\cdot\|_{\dot{\mathcal{H}}^{1,\z}_{\mathscr{H}}\times\dot{\mathcal{H}}^{1,\z}_{\mathscr{I}}}}}.
	\end{align}
	This follows from the facts that $\mathcal{D}^{\textup{fin},\z}_{\mathscr{H}\!/\!\mathscr{I}}$ are dense in $\dot{\mathcal{E}}_{\mathscr{H}\!/\!\mathscr{I}}^{\z}$ (\textit{cf.} Lemma \ref{Density in/out}), that $\Psi_{\mathscr{H}\!/\!\mathscr{I}}$ are homeomorphisms (\textit{cf.} Lemma \ref{Transformation Psi}) and that projections are continuous with respect to the product topology. We then conclude using that $\boldsymbol{W}_{\mathscr{H}}^{f/p}$ and $\boldsymbol{W}_{\mathscr{I}}^{f/p}$ have continuous extensions by part 1. of Theorem \ref{Asymptotic completeness, geometric profiles}.
	\item Let us write
	\begin{align*}
	\Omega(t)&=\mathrm{e}^{-\mathrm{i}t\dot{\mathbb{H}}^{\z}_{\mathscr{H}}}\widehat{\Pi}_{0}\mathrm{e}^{\mathrm{i}t\dot{H}^{\z}_{\mathscr{H}}}\left(\mathrm{e}^{-\mathrm{i}t\dot{H}^{\z}_{\mathscr{H}}}j_{-}\mathrm{e}^{\mathrm{i}t\dot{H}^{\z}}\right)+\mathrm{e}^{-\mathrm{i}t\dot{\mathbb{H}}^{\z}_{\mathscr{I}}}\widehat{\Pi}_{1}\mathrm{e}^{\mathrm{i}t\dot{H}^{\z}_{\mathscr{I}}}\left(\mathrm{e}^{-\mathrm{i}t\dot{H}^{\z}_{\mathscr{I}}}j_{+}\mathrm{e}^{\mathrm{i}t\dot{H}^{\z}}\right)
	\end{align*}
	where
	\begin{align*}
	\mathrm{e}^{-\mathrm{i}t\dot{\mathbb{H}}^{\z}_{\mathscr{H}}}\widehat{\Pi}_{0}\mathrm{e}^{\mathrm{i}t\dot{H}^{\z}_{\mathscr{H}}}&\in\mathcal{B}\big(\dot{\mathcal{E}}^{\z},\dot{\mathcal{H}}^{1,\z}_{\mathscr{H}}\times\dot{\mathcal{H}}^{1,\z}_{\mathscr{H}}\big),\qquad\qquad\mathrm{e}^{-\mathrm{i}t\dot{\mathbb{H}}^{\z}_{\mathscr{I}}}\widehat{\Pi}_{1}\mathrm{e}^{\mathrm{i}t\dot{H}^{\z}_{\mathscr{I}}}\in\mathcal{B}\big(\dot{\mathcal{E}}^{\z},\dot{\mathcal{H}}^{1,\z}_{\mathscr{I}}\times\dot{\mathcal{H}}^{1,\z}_{\mathscr{I}}\big)
	\end{align*}
	uniformly in $t\in\mathbb{R}$. Theorem \ref{Asymptotic completeness, geometric profiles} then implies that
	\begin{align*}
	\Omega(t)\begin{pmatrix}
	\phi_{0}\\\phi_{1}
	\end{pmatrix}&=\mathrm{e}^{-\mathrm{i}t\dot{\mathbb{H}}^{\z}_{\mathscr{H}}}\widehat{\Pi}_{0}\mathrm{e}^{\mathrm{i}t\dot{H}^{\z}_{\mathscr{H}}}\boldsymbol{\Omega}^{f}_{\mathscr{H}}\begin{pmatrix}
	\phi_{0}\\\phi_{1}
	\end{pmatrix}+\mathrm{e}^{-\mathrm{i}t\dot{\mathbb{H}}^{\z}_{\mathscr{I}}}\widehat{\Pi}_{1}\mathrm{e}^{\mathrm{i}t\dot{H}^{\z}_{\mathscr{I}}}\boldsymbol{\Omega}^{f}_{\mathscr{I}}\begin{pmatrix}
	\phi_{0}\\\phi_{1}
	\end{pmatrix}+o_{t\to+\infty}(1).
	\end{align*}
	Next, Proposition \ref{Geo wave op prop} shows that $\boldsymbol{\Omega}^{f}_{\mathscr{H}}\dot{\mathcal{E}}^{\z}\subset\Psi_{\mathscr{H}}\big(\dot{\mathcal{H}}^{1,\z}_{\mathscr{H}}\times\{0\}\big)$ and $\boldsymbol{\Omega}^{f}_{\mathscr{I}}\dot{\mathcal{E}}^{\z}\subset\Psi_{\mathscr{I}}\big(\{0\}\times\dot{\mathcal{H}}^{1,\z}_{\mathscr{I}}\big)$. Since $\Psi_{\mathscr{H}}\widehat{\Pi}_{0}$ projects onto states $u$ satisfying $u_1=\mathrm{i}L_{\mathscr{H}}u_0$, it comes:
	\begin{align*}
		\mathrm{e}^{-\mathrm{i}t\dot{\mathbb{H}}^{\z}_{\mathscr{H}}}\widehat{\Pi}_{0}\mathrm{e}^{\mathrm{i}t\dot{H}^{\z}_{\mathscr{H}}}\boldsymbol{\Omega}_{\mathscr{H}}^{f}\begin{pmatrix}
		\phi_{0}\\\phi_{1}
		\end{pmatrix}&=\mathrm{e}^{-\mathrm{i}t\dot{\mathbb{H}}^{\z}_{\mathscr{H}}}\widehat{\Pi}_{0}\mathrm{e}^{\mathrm{i}t\dot{H}^{\z}_{\mathscr{H}}}\Psi_H\widehat{\Pi}_{0}\Psi_{\mathscr{H}}^{-1}\boldsymbol{\Omega}_{\mathscr{H}}^{f}\begin{pmatrix}
		\phi_{0}\\\phi_{1}
		\end{pmatrix}\\
		&=\mathrm{e}^{-\mathrm{i}t\dot{\mathbb{H}}^{\z}_{\mathscr{H}}}\widehat{\Pi}_{0}\Psi_{\mathscr{H}}\mathrm{e}^{\mathrm{i}t\dot{\mathbb{H}}^{\z}_H}\widehat{\Pi}_{0}\Psi_{\mathscr{H}}^{-1}\boldsymbol{\Omega}_{\mathscr{H}}^{f}\begin{pmatrix}
		\phi_{0}\\\phi_{1}
		\end{pmatrix}\\
		&=\begin{pmatrix}
		\mathrm{e}^{tL_{\mathscr{H}}^{\z}}&0\\0&\mathrm{e}^{tL_{\mathscr{H}}^{\z}}
		\end{pmatrix}\widehat{\Pi}_{0}\Psi_{\mathscr{H}}\begin{pmatrix}
		\mathrm{e}^{-tL_{\mathscr{H}}^{\z}}&0\\0&\mathrm{e}^{-tL_{\mathscr{H}}^{\z}}
		\end{pmatrix}\widehat{\Pi}_{0}\Psi_{\mathscr{H}}^{-1}\boldsymbol{\Omega}_{\mathscr{H}}^{f}\begin{pmatrix}
		\phi_{0}\\\phi_{1}
	\end{pmatrix}\\
		&=\widehat{\Pi}_{0}\Psi_{\mathscr{H}}\widehat{\Pi}_{0}\Psi_{\mathscr{H}}^{-1}\boldsymbol{\Omega}_{\mathscr{H}}^{f}\begin{pmatrix}
		\phi_{0}\\\phi_{1}
		\end{pmatrix}\\
		&=\Psi_{\mathscr{H}}^{-1}\boldsymbol{\Omega}_{\mathscr{H}}^{f}\begin{pmatrix}
		\phi_{0}\\\phi_{1}
		\end{pmatrix}.
	\end{align*}
	Similarly, we have
	\begin{align*}
		\Psi_{\mathscr{I}}^{-1}\mathrm{e}^{-\mathrm{i}t\dot{H}^{\z}_{\mathscr{I}}}\Psi_{\mathscr{I}}\widehat{\Pi}_{1}\mathrm{e}^{\mathrm{i}t\dot{H}^{\z}_{\mathscr{I}}}\boldsymbol{\Omega}^{f}_{\mathscr{I}}\begin{pmatrix}
		\phi_{0}\\\phi_{1}
		\end{pmatrix}&=\Psi_{\mathscr{I}}^{-1}\boldsymbol{\Omega}^{f}_{\mathscr{I}}\begin{pmatrix}
		\phi_{0}\\\phi_{1}
		\end{pmatrix}
	\end{align*}
	%
	%
	%
	%
	%
	%
	%
	%
	%
	whence finally:
	\begin{align*}
	\Omega(t)\begin{pmatrix}
	\phi_{0}\\\phi_{1}
	\end{pmatrix}&=\Psi_{\mathscr{H}}^{-1}\boldsymbol{\Omega}^{f}_{\mathscr{H}}\begin{pmatrix}
	\phi_{0}\\\phi_{1}
	\end{pmatrix}+\Psi_{\mathscr{I}}^{-1}\boldsymbol{\Omega}^{f}_{\mathscr{I}}\begin{pmatrix}
	\phi_{0}\\\phi_{1}
	\end{pmatrix}+o_{t\to+\infty}(1).
	\end{align*}
\end{enumerate}
\end{proof}
\begin{remark}
\label{Remark kernels}
	We have
	\begin{align}
	\label{Kernels}
		\ker\OO^{\pm}&=\ker\boldsymbol{\Omega}^{f/p}_{\mathscr{H}}\cap\ker\boldsymbol{\Omega}^{f/p}_{\mathscr{I}}.
	\end{align}
	Indeed, let $u=(u_{0},u_{1})\in\dot{\mathcal{E}}^{\z}$.	Using that $\Psi_{\mathscr{H}\!/\!\mathscr{I}}^{-1}$ are isometries by Lemma \ref{Transformation Psi}, we can write
	\begin{align}
	\label{Injectivity inequality}
	\left\|\OO^{\pm}u\right\|_{\dot{\mathcal{H}}^{1,\z}_{\mathscr{H}}\times\dot{\mathcal{H}}^{1,\z}_{\mathscr{I}}}^{2}&=\left\|\big(\boldsymbol{\Omega}^{f/p}_{\mathscr{H}}u\big)_{0}\right\|_{\dot{\mathcal{H}}^{1,\z}_{\mathscr{H}}}^{2}+\left\|\big(\boldsymbol{\Omega}^{f/p}_{\mathscr{I}}u\big)_{0}\right\|_{\dot{\mathcal{H}}^{1,\z}_{\mathscr{I}}}^{2}\leq\left\|\boldsymbol{\Omega}^{f/p}_{\mathscr{H}}u\right\|_{\dot{\mathcal{E}}^{\z}_{\mathscr{H}}}^{2}+\left\|\boldsymbol{\Omega}^{f/p}_{\mathscr{I}}u\right\|_{\dot{\mathcal{E}}^{\z}_{\mathscr{I}}}^{2}
	\end{align}
	so that $\ker\OO^{\pm}\supset\ker\boldsymbol{\Omega}^{f/p}_{\mathscr{H}}\cap\ker\boldsymbol{\Omega}^{f/p}_{\mathscr{I}}$. Since $\boldsymbol{\Omega}_{\mathscr{H}}^{f/p}\dot{\mathcal{E}}^{\z}\subset\Psi_{\mathscr{H}}\big(\dot{\mathcal{H}}^{1,\z}_{\mathscr{H}}\times\{0\}\big)$ and $\boldsymbol{\Omega}_{\mathscr{I}}^{f/p}\dot{\mathcal{E}}^{\z}\subset\Psi_{\mathscr{I}}\big(\{0\}\times\dot{\mathcal{H}}^{1,\z}_{\mathscr{I}}\big)$ by Proposition \ref{Geo wave op prop},  $\big(\boldsymbol{\Omega}^{f/p}_{\mathscr{H}\!/\!\mathscr{I}}u\big)_{0}=0$ entails $\big(\boldsymbol{\Omega}^{f/p}_{\mathscr{H}\!/\!\mathscr{I}}u\big)_{1}=0$. This means that the left-hand side in \eqref{Injectivity inequality} vanishes if and only if the right-hand side does, and \eqref{Kernels} follows.
	
	We will show in the proof of Theorem \ref{Inversion of full wave operators} that both the sets in \eqref{Kernels} are actually trivial.
\end{remark}
%
%
%
%
%
\subsection{Inversion of the full wave operators}
\label{Inversion of the full wave operators}
In this Subsection, we show that the full wave operators and the full inverse wave operators are indeed inverses in the energy spaces.
\begin{lemma}
	\label{Difference norms}
	Let $\z\in\mathbb{Z}\setminus\{0\}$ and pick\footnote{Recall as in the proof of Theorem \ref{Asymptotic completeness, geometric profiles} that there is no restriction on the support of $\chi$ because we assume $s$ small enough so that \cite[Theorem 3.8]{Be18} implies that there is no resonance on $\mathbb{R}$; in the general case, $\chi$ must cancel in a neighborhood of the real resonances.} $\chi\in\mathcal{C}^{\infty}_{\mathrm{c}}(\mathbb{R})$. For all $u\in\dot{\mathcal{E}}^{\z}$,
	\begin{align*}
	\lim_{|t|\to+\infty}\left|\big\|i_{-\!/\!+}\mathrm{e}^{\mathrm{i}t\dot{H}^{\z}}\chi(\dot{H}^{\z})u\big\|_{\dot{\mathcal{E}}^{\z}}-\big\|i_{-\!/\!+}\mathrm{e}^{\mathrm{i}t\dot{H}^{\z}}\chi(\dot{H}^{\z})u\big\|_{\dot{\mathcal{E}}_{\mathscr{H}\!/\!\mathscr{I}}^{\z}}\right|&=0.
	\end{align*}
\end{lemma}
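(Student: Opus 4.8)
The plan is to reduce the statement to the vanishing of the difference of the \emph{squared} norms, which is enough since for nonnegative reals $a,b$ one has $|a-b|\le|a^{2}-b^{2}|^{1/2}$. Write $v(t)=(v_0(t),v_1(t)):=\mathrm{e}^{\mathrm{i}t\dot{H}^{\z}}\chi(\dot{H}^{\z})u$, so that $\|v(t)\|_{\dot{\mathcal{E}}^{\z}}\lesssim\|u\|_{\dot{\mathcal{E}}^{\z}}$ uniformly in $t$ by Theorem~\ref{Uniform boundedness of the evolution} and boundedness of $\chi(\dot{H}^{\z})$. I would treat only the $\mathscr{H}$ case, the $\mathscr{I}$ case being symmetric (replace $i_-$, $x\to-\infty$, $r_-$, $\kappa_-$, $V_-$ by $i_+$, $x\to+\infty$, $r_+$, $\kappa_+$, $V_+$). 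Since $i_-v(t)\in\dot{\mathcal{E}}^{\z}_{\mathscr{H}}$ by Lemma~\ref{Injection lemma} and $[i_-,k^{\z}]=[i_-,k^{\z}_{\mathscr{H}}]=0$, a direct expansion gives
\begin{align*}
D(t):=\|i_-v\|_{\dot{\mathcal{E}}^{\z}}^{2}-\|i_-v\|_{\dot{\mathcal{E}}^{\z}_{\mathscr{H}}}^{2}&=\big\langle(h_0^{\z}-h_{\mathscr{H}}^{\z})\,i_-v_0,\,i_-v_0\big\rangle_{\HH^{\z}}\\
&\quad-2\,\Re\big\langle i_-(v_1-k^{\z}_{\mathscr{H}}v_0),\,i_-(k^{\z}-k^{\z}_{\mathscr{H}})v_0\big\rangle_{\HH^{\z}}+\big\|i_-(k^{\z}-k^{\z}_{\mathscr{H}})v_0\big\|_{\HH^{\z}}^{2},
\end{align*}
and the goal becomes $D(t)\to0$.

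Next I would reduce to a state with finitely many spherical harmonics. Since $\dot{H}^{\z}$ and $\chi(\dot{H}^{\z})$ commute with $\Delta_{\mathbb{S}^2}$, if $u$ is supported on $\bigoplus_{\ell\le\ell_0}Y_\ell$ then so is $v(t)$ for every $t$; using the boundedness of $i_-$ both as $\dot{\mathcal{E}}^{\z}\to\dot{\mathcal{E}}^{\z}$ and as $\dot{\mathcal{E}}^{\z}\to\dot{\mathcal{E}}^{\z}_{\mathscr{H}}$, together with uniform boundedness of the three evolutions, a routine $\varepsilon/3$ argument reduces the claim to such $u$. This is the step that makes the otherwise troublesome angular term harmless: on the subspace $\bigoplus_{\ell\le\ell_0}Y_\ell$ one has $\|\nabla_{\mathbb{S}^2}\psi\|_{\HH^{\z}}\le\sqrt{\ell_0(\ell_0+1)}\,\|\psi\|_{\HH^{\z}}$, so $\|F^{1/2}r^{-1}\nabla_{\mathbb{S}^2}(i_-v_0)\|_{\HH^{\z}}\lesssim\sqrt{\ell_0(\ell_0+1)}\,\|i_-F^{1/2}r^{-1}v_0\|_{\HH^{\z}}$.

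Then I would estimate $D(t)$ term by term. A short computation gives $k^{\z}-k^{\z}_{\mathscr{H}}=s\z\big(V(r)-V_-\big)$ and
\begin{align*}
h_0^{\z}-h_{\mathscr{H}}^{\z}&=-\frac{F(r)}{r^{2}}\Delta_{\mathbb{S}^2}+F(r)\Big(\frac{F'(r)}{r}+m^{2}\z^{2}\Big)-s^{2}\z^{2}\big(V(r)-V_-\big)^{2}\\
&\quad+\mathrm{i}s\z\big(V(r)-V_-\big)'+2\mathrm{i}s\z\big(V(r)-V_-\big)\partial_x,\qquad\big(V(r)-V_-\big)'=-\frac{F(r)}{r^{2}}.
\end{align*}
The decisive observation is that, \emph{on} $\mathrm{Supp}\,i_-=\{x\le1\}$, every coefficient above — $F$, $FF'/r$, $m^{2}\z^{2}F$, $(V-V_-)^{2}$, $(V-V_-)'$ and $V-V_-$ itself — decays like $\mathcal{O}_{x\to-\infty}(\mathrm{e}^{2\kappa_-x})$ by \eqref{Decay r-r_- with kappa_pm}, hence is $\lesssim w(r(x))^{2}\lesssim w(r(x))$ there (recall $F\sim w^{2}$, $w$ bounded). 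Commuting the $x$-dependent cut-offs and weights past $\nabla_{\mathbb{S}^2}$, writing $\partial_x(i_-v_0)=i_-'v_0+i_-\partial_xv_0$ with $i_-'$ compactly supported, and using Cauchy--Schwarz, $0\le i_-\le1$, the finite-angular-momentum bound, and the Hardy inequality \cite[Lemma~9.5]{GGH17} to absorb $\|\partial_xv_0\|_{\HH^{\z}}$, $\|v_1-k^{\z}v_0\|_{\HH^{\z}}$ and weighted $\HH^{\z}$-norms into $\|v(t)\|_{\dot{\mathcal{E}}^{\z}}\lesssim\|u\|$, one reaches an estimate of the form
\[
|D(t)|\ \le\ C(\ell_0,s,\z)\Big(\|w\,v_0(t)\|_{\HH^{\z}}^{2}+\|u\|_{\dot{\mathcal{E}}^{\z}}\,\|w\,v_0(t)\|_{\HH^{\z}}\Big).
\]
I would keep the cut-off $i_-$ in every term throughout: it localizes $V-V_-$ precisely to $\{x\le1\}$, where it decays (near $r_+$ it does not), which is what allows the Hardy step.

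Everything therefore reduces to the single local-energy-decay statement $\|w\,v_0(t)\|_{\HH^{\z}}\to0$ as $|t|\to\infty$. This is where I would invoke \cite[Proposition~6.5]{GGH17}, exactly as in the proof of Proposition~\ref{Geo wave op prop}: for $|s|$ small $\dot{H}^{\z}$ has no eigenvalue and, by \cite[Theorem~3.8]{Be18}, no real resonance, so $g(x)\,(\mathrm{e}^{\mathrm{i}t\dot{H}^{\z}}\chi(\dot{H}^{\z})u)_0\to0$ in $\HH^{\z}$ for every bounded $g$ vanishing at $x=\pm\infty$ — applied to a compactly supported approximant of $w$ on $\{x\le1\}$, with the exponentially small tail handled by $\lesssim\|w^{1/2}v_0(t)\|_{\HH^{\z}}\lesssim\|u\|$ via Hardy once more. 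The main obstacle, and the point I would be most careful about, is the angular term $\|F^{1/2}r^{-1}\nabla_{\mathbb{S}^2}(i_-v_0)\|_{\HH^{\z}}^{2}$: without the reduction to finitely many harmonics it is only bounded by the uniformly-bounded-but-not-decaying quantity $\langle h_0^{\z}v_0,v_0\rangle_{\HH^{\z}}$, while the propagation estimate \eqref{Propagation estimate} yields only control of $\|w^{1+\delta}\nabla_{\mathbb{S}^2}v_0\|_{\HH^{\z}}$, which is too weak since $F^{1/2}/r\sim w\gg w^{1+\delta}$ near the horizons — so the finite-$\ell_0$ reduction is exactly what turns this into ordinary local decay of $v_0$.
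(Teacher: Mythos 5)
Your proposal is correct and follows essentially the same route as the paper's proof: expand the difference of (squared) norms, use the exponential decay of $F$ and $V-V_{-}$ on $\mathrm{Supp\,}i_{-}$ together with the Hardy inequality of \cite[Lemma 9.5]{GGH17} and the reduction to finitely many spherical harmonics to bound everything by weighted norms of $v_{0}(t)$, then conclude by local energy decay of the cut-off evolution. The only (harmless) deviation is the final decay step: the paper invokes \cite[Proposition 6.7]{GGH17}, i.e. the $\langle t\rangle^{-1}$ bound on $w^{1/2}\mathrm{e}^{\mathrm{i}t\dot{H}^{\z}}\chi(\dot{H}^{\z})w^{1/2}$ applied to compactly supported approximants of the data $u$, whereas you invoke the qualitative local decay \cite[Proposition 6.5]{GGH17} applied to a compactly supported approximant of the weight $w$ (with the exponentially small tails absorbed via Hardy) --- both yield $\big\|w\,v_{0}(t)\big\|_{\HH^{\z}}\to 0$ and hence the claim.
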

\begin{proof}
	We only treat the $-,\mathscr{H}$ case. As in the proof of the existence of the future inverse wave operator in Theorem \ref{Asymptotic completeness, geometric profiles} (\textit{cf.} Subsection \ref{Proof of Theorem {Asymptotic completeness, geometric profiles}}), we can assume that $-\Delta_{\mathbb{S}^{2}}u=\sum_{0\leq \ell\leq\ell_{0}}\ell(\ell+1)u$. Set $v:=\mathrm{e}^{\mathrm{i}t\dot{H}^{\z}}u\in\dot{\mathcal{E}}^{\z}$. Then
	\begin{align*}
	\left|\|i_{-}v\|_{\dot{\mathcal{E}}^{\z}_{\mathscr{H}}}^{2}-\|i_{-}v\|_{\dot{\mathcal{E}}^{\z}}^{2}\right|&\leq\left|\big\langle(h_{\mathscr{H}}-h_{0})i_{-}v_{0},i_{-}v_{0}\big\rangle_{\HH^{\z}}\right|+\left|\big\langle i_{-}(v_{1}-\mathrm{i}s\z V_{-}v_{0}),i_{-}\mathrm{i}(V-V_{-})v_{0}\big\rangle_{\HH^{\z}}\right|\\
	&+\left|\big\langle i_{-}\mathrm{i}(V-V_{-})v_{0},i_{-}(v_{1}-\mathrm{i}s\z Vv_{0})\big\rangle_{\HH^{\z}}\right|\\[1.5mm]
	&\lesssim\left|\big\langle(f(x)+(V-V_{-})\partial_{x})i_{-}v_{0},i_{-}v_{0}\big\rangle_{\HH^{\z}}\right|+\big\|v_{1}-\mathrm{i}s\z V_{-}v_{0}\big\|_{\HH^{\z}}\big\|i_{-}(V-V_{-})v_{0}\big\|_{\HH^{\z}}\\
	&+\big\|i_{-}(V-V_{-})v_{0}\|_{\HH^{\z}}\big\|v_{1}-\mathrm{i}s\z Vv_{0}\big\|_{\HH^{\z}}\\[1.5mm]
	&\lesssim\left|\big\langle g(x)i_{-}v_{0},i_{-}v_{0}\big\rangle_{\HH^{\z}}\right|+\|\partial_{x}v_{0}\|_{\HH^{\z}}\|i_{-}(V-V_{-})v_{0}\big\|_{\HH^{\z}}\\
	&+\big\|i_{-}(V-V_{-})v_{0}\|_{\HH^{\z}}\big\|v_{1}-\mathrm{i}s\z Vv_{0}\big\|_{\HH^{\z}}
	\end{align*}
	with $|f(x)|,|g(x)|\lesssim(1+\ell_{0}^{3})\mathrm{e}^{-2\kappa|x|}$ and $\kappa:=\min\{\kappa_{-},|\kappa_{+}|\}$ (as in \eqref{Rest AS abstract}, $\lesssim$ is $\ell_{0}$-independent but depends on $\z$). It follows
	\begin{align*}
	\left|\|i_{-}v\|_{\dot{\mathcal{E}}^{\z}_{\mathscr{H}}}^{2}-\|i_{-}v\|_{\dot{\mathcal{E}}^{\z}}^{2}\right|&\lesssim\left|\big\langle w^{2}v_{0},v_{0}\big\rangle_{\HH^{\z}}\right|+\|v\|_{\dot{\mathcal{E}}^{\z}}\|w^{2}v_{0}\big\|_{\HH^{\z}}
	\end{align*}
	where $w(r(x))=\sqrt{(r(x)-r_{-})(r_{+}-r(x))}=\mathcal{O}_{|x|\to+\infty}(\mathrm{e}^{-2\kappa|x|})$. The $\HH^{\z}$ norms make sense thanks to Hardy type inequality \cite[Lemma 9.5]{GGH17}:
	\begin{align*}
	\left|\|i_{-}v\|_{\dot{\mathcal{E}}^{\z}_{\mathscr{H}}}^{2}-\|i_{-}v\|_{\dot{\mathcal{E}}^{\z}}^{2}\right|&\lesssim\|h_{0}^{1/2}w^{1/2}v_{0}\|_{\HH^{\z}}^{2}+\|v\|_{\dot{\mathcal{E}^{\z}}}\|h_{0}^{1/2}w^{1/2}v_{0}\big\|_{\HH^{\z}}\\
	&\lesssim\|w^{1/2}v\|_{\dot{\mathcal{E}}^{\z}}^{2}+\|v\|_{\dot{\mathcal{E}}^{\z}}\|w^{1/2}v\|_{\dot{\mathcal{E}}^{\z}}\\
	&\lesssim\|w^{1/2}v\|_{\dot{\mathcal{E}}^{\z}}.
	\end{align*}
	Fix now $\varepsilon>0$ arbitrarily small and pick $(\phi_{n})_{\in\mathbb{R}}$ a sequence of smooth compactly supported functions such that $\phi_{n}\to u$ in $\dot{\mathcal{E}}^{\z}$ as $n\to+\infty$. We have
	\begin{align*}
	\|w^{1/2}\mathrm{e}^{\mathrm{i}t\dot{H}^{\z}}u\|_{\dot{\mathcal{E}}^{\z}}&\lesssim\|w^{1/2}\mathrm{e}^{\mathrm{i}t\dot{H}^{\z}}(\phi_{n}-u)\|_{\dot{\mathcal{E}}^{\z}}+\|w^{1/2}\mathrm{e}^{\mathrm{i}t\dot{H}^{\z}}\phi_{n}\|_{\dot{\mathcal{E}}^{\z}}.
	\end{align*}
	Fix $N\gg0$ so that
	\begin{align*}
	\|w^{1/2}\mathrm{e}^{\mathrm{i}t\dot{H}^{\z}}(\phi_{N}-u)\|_{\dot{\mathcal{E}}^{\z}}&<\varepsilon/2
	\end{align*}
	then apply \cite[Proposition 6.7]{GGH17} to get
	\begin{align*}
		\big\|w^{1/2}\mathrm{e}^{\mathrm{i}t\dot{H}^{\z}}\chi(\dot{H}^{\z})\phi_{N}\big\|_{\dot{\mathcal{E}}^{\z}}&\leq\big\|w^{1/2}\mathrm{e}^{\mathrm{i}t\dot{H}^{\z}}\chi(\dot{H}^{\z})w^{1/2}\big\|_{\mathcal{B}(\dot{\mathcal{E}}^{\z})}\big\|w^{-1/2}\phi_{N}\big\|_{\dot{\mathcal{E}}^{\z}}\leq C_{N}\langle t\rangle^{-1}
	\end{align*}
	with $C_{N}>0$ depending on the support of $\phi_{N}$. Choose $t\gg0$ (depending on $N$) so that $C_{N}\langle t\rangle^{-1}<\varepsilon/2$. Hence, for $t$ large enough, we have
	\begin{align*}
	\left|\big\|i_{-}\mathrm{e}^{\mathrm{i}t\dot{H}^{\z}}\chi(\dot{H}^{\z})u\big\|_{\dot{\mathcal{E}}^{\z}}-\big\|i_{-}\mathrm{e}^{\mathrm{i}t\dot{H}^{\z}}\chi(\dot{H}^{\z})u\big\|_{\dot{\mathcal{E}}_{\mathscr{H}}^{\z}}\right|&<C\varepsilon
	\end{align*}
	for some $C>0$ (independent of $\phi_{N}$). This completes the proof.
\end{proof}
\begin{theorem}[Inversion of the full wave operators]
	\label{Inversion of full wave operators}
	Let $\z\in\mathbb{Z}\setminus\{0\}$. There exists $s_{0}>0$ such that for all $s\in\left]-s_{0},s_{0}\right[$,
	\begin{align}
	\label{OmegaW}
		\OO^{\pm}\mathbb{W}^{\pm}&=\mathds{1}_{\dot{\mathcal{H}}_{\mathscr{H}}^{1,\z}\times\dot{\mathcal{H}}_{\mathscr{I}}^{1,\z}},\\
	\label{WOmega}
		\mathbb{W}^{\pm}\OO^{\pm}&=\mathds{1}_{\dot{\mathcal{E}}^{\z}}.
	\end{align}
\end{theorem}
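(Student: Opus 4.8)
The plan is to exploit the two structural facts already in hand: the semi-inversion identities of Proposition \ref{Geo wave op prop}, namely $\boldsymbol{\Omega}_{\mathscr{H}\!/\!\mathscr{I}}^{f/p}\boldsymbol{W}_{\mathscr{H}\!/\!\mathscr{I}}^{f/p}=\mathds{1}$ on $\Psi_{\mathscr{H}}(\dot{\mathcal{H}}^{1,\z}_{\mathscr{H}}\times\{0\})$ and $\Psi_{\mathscr{I}}(\{0\}\times\dot{\mathcal{H}}^{1,\z}_{\mathscr{I}})$ respectively, together with the range inclusions $\boldsymbol{\Omega}_{\mathscr{H}}^{f/p}\dot{\mathcal{E}}^{\z}\subset\Psi_{\mathscr{H}}(\dot{\mathcal{H}}^{1,\z}_{\mathscr{H}}\times\{0\})$ and $\boldsymbol{\Omega}_{\mathscr{I}}^{f/p}\dot{\mathcal{E}}^{\z}\subset\Psi_{\mathscr{I}}(\{0\}\times\dot{\mathcal{H}}^{1,\z}_{\mathscr{I}})$. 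Combined with the explicit decompositions $\mathbb{W}^{\pm}=\boldsymbol{W}_{\mathscr{H}}^{f/p}\Psi_{\mathscr{H}}\Pi_{0}+\boldsymbol{W}_{\mathscr{I}}^{f/p}\Psi_{\mathscr{I}}\Pi_{1}$ and $\OO^{\pm}=\Psi_{\mathscr{H}}^{-1}\boldsymbol{\Omega}^{f/p}_{\mathscr{H}}+\Psi_{\mathscr{I}}^{-1}\boldsymbol{\Omega}^{f/p}_{\mathscr{I}}$ from Lemma \ref{Extension W(t)}, the identity \eqref{OmegaW} should follow from a bookkeeping computation once the \emph{cross terms} $\boldsymbol{\Omega}_{\mathscr{H}}^{f/p}\boldsymbol{W}_{\mathscr{I}}^{f/p}$ and $\boldsymbol{\Omega}_{\mathscr{I}}^{f/p}\boldsymbol{W}_{\mathscr{H}}^{f/p}$ are shown to vanish; the truly substantial input is \eqref{WOmega}, which expresses surjectivity of $\mathbb{W}^{\pm}$ (equivalently, triviality of $\ker\OO^{\pm}$, cf. Remark \ref{Remark kernels}) and requires an energy/flux argument.

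\textbf{Step 1: the cross terms vanish.} I would first treat, say, $\boldsymbol{\Omega}_{\mathscr{I}}^{f}\boldsymbol{W}_{\mathscr{H}}^{f}$. On $\mathcal{D}^{\textup{fin},\z}_{\mathscr{H}}$ write $\boldsymbol{W}_{\mathscr{H}}^{f}u=\textup{s}-\lim_t\mathrm{e}^{-\mathrm{i}t\dot{H}^{\z}}i_{-}\mathrm{e}^{\mathrm{i}t\dot{H}^{\z}_{\mathscr{H}}}u$, and apply $\boldsymbol{\Omega}_{\mathscr{I}}^{f}=\textup{s}-\lim_t\mathrm{e}^{-\mathrm{i}t\dot{H}^{\z}_{\mathscr{I}}}i_{+}\mathrm{e}^{\mathrm{i}t\dot{H}^{\z}}$ (one may use $j_+$ in place of $i_+$, as in the proof of \eqref{Semi inversion in}). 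The composed strong limit is $\textup{s}-\lim_t\mathrm{e}^{-\mathrm{i}t\dot{H}^{\z}_{\mathscr{I}}}j_{+}i_{-}\mathrm{e}^{\mathrm{i}t\dot{H}^{\z}_{\mathscr{H}}}u$, and since $j_+ i_-=0$ this is identically zero; the symmetric argument with $\mathscr{H}\leftrightarrow\mathscr{I}$ and $i_-\leftrightarrow i_+$ kills the other cross term, and both extend to zero by density and boundedness. Then on $\mathcal{D}^{\textup{fin},\z}_{\mathscr{H}}\times\mathcal{D}^{\textup{fin},\z}_{\mathscr{I}}$,
\begin{align*}
\OO^{\pm}\mathbb{W}^{\pm}(\phi_0,\phi_1)&=\Psi_{\mathscr{H}}^{-1}\boldsymbol{\Omega}^{f/p}_{\mathscr{H}}\boldsymbol{W}_{\mathscr{H}}^{f/p}\Psi_{\mathscr{H}}(\phi_0,0)+\Psi_{\mathscr{I}}^{-1}\boldsymbol{\Omega}^{f/p}_{\mathscr{I}}\boldsymbol{W}_{\mathscr{I}}^{f/p}\Psi_{\mathscr{I}}(0,\phi_1)=(\phi_0,0)+(0,\phi_1),
\end{align*}
using \eqref{Semi inversion in} and the fact that $\Psi_{\mathscr{H}}(\phi_0,0)\in\Psi_{\mathscr{H}}(\dot{\mathcal{H}}^{1,\z}_{\mathscr{H}}\times\{0\})$. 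Density of these subspaces in $\dot{\mathcal{H}}^{1,\z}_{\mathscr{H}}\times\dot{\mathcal{H}}^{1,\z}_{\mathscr{I}}$ (which was established inside the proof of Lemma \ref{Extension W(t)}, cf. \eqref{Density HxH}) plus continuity of all operators yields \eqref{OmegaW}.

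\textbf{Step 2: $\ker\OO^{\pm}=\{0\}$, hence \eqref{WOmega}.} By \eqref{OmegaW}, $\mathbb{W}^{\pm}$ is injective and $\OO^{\pm}$ is surjective, and $\mathbb{W}^{\pm}\OO^{\pm}$ is a bounded idempotent-like operator; \eqref{WOmega} is equivalent to $\OO^{\pm}$ being injective. By Remark \ref{Remark kernels}, $\ker\OO^{\pm}=\ker\boldsymbol{\Omega}^{f/p}_{\mathscr{H}}\cap\ker\boldsymbol{\Omega}^{f/p}_{\mathscr{I}}$, so it suffices to show that $\boldsymbol{\Omega}_{\mathscr{H}}^{f}u=\boldsymbol{\Omega}_{\mathscr{I}}^{f}u=0$ forces $u=0$ in $\dot{\mathcal{E}}^{\z}$. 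The idea is a Huygens-type energy conservation: for $u\in\dot{\mathcal{E}}^{\z}$ with $-\Delta_{\mathbb{S}^2}u=\sum_{\ell\le\ell_0}\ell(\ell+1)u$ (the general case by density, using $\chi(\dot H^\z)$ and the absence of real resonances via \cite[Theorem 3.8]{Be18}), combine the uniform boundedness of $\mathrm{e}^{\mathrm{i}t\dot{H}^{\z}}$ (Theorem \ref{Uniform boundedness of the evolution}), the propagation estimate \cite[Proposition 6.8]{GGH17}, and Lemma \ref{Difference norms}, which says $\big\|i_{-\!/\!+}\mathrm{e}^{\mathrm{i}t\dot{H}^{\z}}\chi(\dot H^\z)u\big\|_{\dot{\mathcal{E}}^{\z}}-\big\|i_{-\!/\!+}\mathrm{e}^{\mathrm{i}t\dot{H}^{\z}}\chi(\dot H^\z)u\big\|_{\dot{\mathcal{E}}_{\mathscr{H}\!/\!\mathscr{I}}^{\z}}\to 0$. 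Using $i_-^2+i_+^2=1$ and the propagation estimate to discard the interaction region, one gets
\begin{align*}
\|\chi(\dot H^\z)u\|_{\dot{\mathcal{E}}^{\z}}^2&=\lim_{t\to+\infty}\Big(\big\|i_{-}\mathrm{e}^{\mathrm{i}t\dot{H}^{\z}}\chi(\dot H^\z)u\big\|_{\dot{\mathcal{E}}_{\mathscr{H}}^{\z}}^2+\big\|i_{+}\mathrm{e}^{\mathrm{i}t\dot{H}^{\z}}\chi(\dot H^\z)u\big\|_{\dot{\mathcal{E}}_{\mathscr{I}}^{\z}}^2\Big),
\end{align*}
and by boundedness of $\mathrm{e}^{-\mathrm{i}t\dot H^\z_{\mathscr{H}\!/\!\mathscr{I}}}$ the right side is controlled by $\|\boldsymbol{\Omega}_{\mathscr{H}}^{f}u\|^2_{\dot{\mathcal{E}}_{\mathscr{H}}^\z}+\|\boldsymbol{\Omega}_{\mathscr{I}}^{f}u\|^2_{\dot{\mathcal{E}}_{\mathscr{I}}^\z}$, which vanishes by hypothesis; letting $\chi\nearrow 1$ via \eqref{Strong limit chi(H/L)} gives $u=0$. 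This shows $\OO^{\pm}$ is injective, so from $\OO^{\pm}(\mathbb{W}^{\pm}\OO^{\pm}-\mathds{1}_{\dot{\mathcal{E}}^{\z}})=(\OO^{\pm}\mathbb{W}^{\pm})\OO^{\pm}-\OO^{\pm}=0$ we conclude $\mathbb{W}^{\pm}\OO^{\pm}=\mathds{1}_{\dot{\mathcal{E}}^{\z}}$.

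\textbf{Main obstacle.} The delicate point is Step 2: turning the vanishing of both inverse wave operators into $u=0$ requires a clean Huygens/flux identity, i.e. that no energy is ``lost'' in the interaction region as $t\to+\infty$. This is exactly where Lemma \ref{Difference norms}, the propagation estimate \cite[Proposition 6.8]{GGH17}, and the decay estimate \cite[Proposition 6.7]{GGH17} must be orchestrated carefully, and where the hypothesis that $s$ is small (so that $\dot H^\z$ has real spectrum and no real resonances, hence $\chi(\dot H^\z)\to\mathds{1}$ strongly) is genuinely used. Everything else is formal manipulation with the already-established semi-inversions and density statements.
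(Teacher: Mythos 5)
Your proposal is correct and takes essentially the same route as the paper: \eqref{OmegaW} comes from the semi-inversions and range inclusions of Proposition \ref{Geo wave op prop} (your explicit vanishing of the cross terms via $j_{\mp}i_{\pm}=0$ just spells out what the paper treats as immediate), and \eqref{WOmega} is obtained, exactly as in the paper, by reducing to injectivity of $\OO^{\pm}$ through Remark \ref{Remark kernels} and then combining uniform boundedness of the evolutions, $i_{-}^{2}+i_{+}^{2}=1$, Lemma \ref{Difference norms}, the definition of the inverse wave operators and the strong limit \eqref{Strong limit chi(H/L)}. One cosmetic point: your displayed energy relation should be stated as the one-sided estimate $\big\|\chi(\dot{H}^{\z})u\big\|_{\dot{\mathcal{E}}^{\z}}^{2}\lesssim\limsup_{t\to+\infty}\big(\|i_{-}\mathrm{e}^{\mathrm{i}t\dot{H}^{\z}}\chi(\dot{H}^{\z})u\|_{\dot{\mathcal{E}}^{\z}_{\mathscr{H}}}^{2}+\|i_{+}\mathrm{e}^{\mathrm{i}t\dot{H}^{\z}}\chi(\dot{H}^{\z})u\|_{\dot{\mathcal{E}}^{\z}_{\mathscr{I}}}^{2}\big)$, which is all your argument (and the paper's) actually uses, rather than an exact limit identity.
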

\begin{proof}
	Let us show \eqref{OmegaW}. Since $\sqrt{2}\Psi_{\mathscr{H}}\Pi_{0}$ (respectively $\sqrt{2}\Psi_{\mathscr{I}}\Pi_{1}$) is the projection onto $\Psi_{\mathscr{H}}\big(\dot{\mathcal{H}}^{1,\z}_{\mathscr{H}}\times\{0\}\big)$ (respectively onto $\Psi_{\mathscr{H}}\big(\{0\}\times\dot{\mathcal{H}}^{1,\z}_{\mathscr{I}}\big)$), this identity directly follows from Proposition \ref{Geo wave op prop}.

Let us show \eqref{WOmega}. It is sufficient to show that $\OO^{\pm}$ is one-to-one, since then the right-inverse is also a left-inverse\footnote{Indeed, assume that $AB=\mathds{1}$ and $A$ has been shown to be one-to-one. Then $A=(AB)A$ and the injectivity allows us to simplify the equation on its left, that is $\mathds{1}=BA$.}. By Remark \ref{Remark kernels}, it is sufficient to show that
	\begin{align}
	\label{Sufficient condition kernel}
		\ker\boldsymbol{\Omega}^{f/p}_{\mathscr{H}\!/\!\mathscr{I}}&=\{0\}.
	\end{align}
	Let $u\in\dot{\mathcal{E}}^{\z}$ such that
	\begin{align}
	\label{Assumption injectivity}
		\boldsymbol{\Omega}^{f/p}_{\mathscr{H}}u&=\boldsymbol{\Omega}^{f/p}_{\mathscr{I}}u=0.
	\end{align}
	Pick $\varepsilon>0$, $t\in\mathbb{R}$ and $\chi\in\mathcal{C}^{\infty}_{\mathrm{c}}(\mathbb{R})$ as in Lemma \ref{Difference norms} then write:
	\begin{align}
	\label{Injectivity first inequality}
		\|u\|_{\dot{\mathcal{E}}^{\z}}&\lesssim\big\|\chi(\dot{H}^{\z})u\big\|_{\dot{\mathcal{E}}^{\z}}+\big\|(1-\chi(\dot{H}^{\z}))u\big\|_{\dot{\mathcal{E}}^{\z}},
	\end{align}
	\begin{align*}
		\big\|\chi(\dot{H}^{\z})u\big\|_{\dot{\mathcal{E}}^{\z}}&\lesssim\big\|\mathrm{e}^{\mathrm{i}t\dot{H}^{\z}}\chi(\dot{H}^{\z})u\big\|_{\dot{\mathcal{E}}^{\z}}\\
		&\lesssim\big\|(i_{-}^{2}+i_{+}^{2})\mathrm{e}^{\mathrm{i}t\dot{H}^{\z}}\chi(\dot{H}^{\z})u\big\|_{\dot{\mathcal{E}}^{\z}}\\
		&\lesssim\big\|i_{-}\mathrm{e}^{\mathrm{i}t\dot{H}^{\z}}\chi(\dot{H}^{\z})u\big\|_{\dot{\mathcal{E}}^{\z}}+\big\|i_{+}\mathrm{e}^{\mathrm{i}t\dot{H}^{\z}}\chi(\dot{H}^{\z})u\big\|_{\dot{\mathcal{E}}^{\z}}.
	\end{align*}
	We have used above the boundedness on $\dot{\mathcal{E}}^{\z}$ of $\mathrm{e}^{\mathrm{i}t\dot{H}^{\z}}$ (\textit{cf.} Theorem \ref{Uniform boundedness of the evolution}) and $i_{\pm}$ (\textit{cf.} \cite[Lemma 5.4]{GGH17}). Next, using Lemma \ref{Difference norms} as well as the unitarity of $\mathrm{e}^{\mathrm{i}t\dot{H}_{\mathscr{H}\!/\!\mathscr{I}}^{\z}}$, we get for $|t|\gg 0$:
	\begin{align}
	\label{Injection inequality 2}
		\big\|\chi(\dot{H}^{\z})u\big\|_{\dot{\mathcal{E}}^{\z}}&\lesssim\big\|i_{-}\mathrm{e}^{\mathrm{i}t\dot{H}^{\z}}\chi(\dot{H}^{\z})u\big\|_{\dot{\mathcal{E}}_{\mathscr{H}}^{\z}}+\big\|i_{+}\mathrm{e}^{\mathrm{i}t\dot{H}^{\z}}\chi(\dot{H}^{\z})u\big\|_{\dot{\mathcal{E}}_{\mathscr{I}}^{\z}}+\frac{\varepsilon}{6}\nonumber\\
		&\lesssim\big\|\mathrm{e}^{-\mathrm{i}t\dot{H}_{\mathscr{H}}^{\z}}i_{-}\mathrm{e}^{\mathrm{i}t\dot{H}^{\z}}\chi(\dot{H}^{\z})u\big\|_{\dot{\mathcal{E}}_{\mathscr{H}}^{\z}}+\big\|\mathrm{e}^{-\mathrm{i}t\dot{H}_{\mathscr{I}}^{\z}}i_{+}\mathrm{e}^{\mathrm{i}t\dot{H}^{\z}}\chi(\dot{H}^{\z})u\big\|_{\dot{\mathcal{E}}_{\mathscr{I}}^{\z}}+\frac{\varepsilon}{6}\nonumber\\
		&\lesssim\big\|\mathrm{e}^{-\mathrm{i}t\dot{H}_{\mathscr{H}}^{\z}}i_{-}\mathrm{e}^{\mathrm{i}t\dot{H}^{\z}}u\big\|_{\dot{\mathcal{E}}_{\mathscr{H}}^{\z}}+\big\|\mathrm{e}^{-\mathrm{i}t\dot{H}_{\mathscr{I}}^{\z}}i_{+}\mathrm{e}^{\mathrm{i}t\dot{H}^{\z}}u\big\|_{\dot{\mathcal{E}}_{\mathscr{I}}^{\z}}\nonumber\\
		&+\|\mathrm{e}^{-\mathrm{i}t\dot{H}_{\mathscr{H}}^{\z}}i_{-}\mathrm{e}^{\mathrm{i}t\dot{H}^{\z}}(1-\chi(\dot{H}^{\z}))u\big\|_{\dot{\mathcal{E}}_{\mathscr{H}}^{\z}}+\big\|\mathrm{e}^{-\mathrm{i}t\dot{H}_{\mathscr{I}}^{\z}}i_{+}\mathrm{e}^{\mathrm{i}t\dot{H}^{\z}}(1-\chi(\dot{H}^{\z}))u\big\|_{\dot{\mathcal{E}}_{\mathscr{I}}^{\z}}+\frac{\varepsilon}{6}.
	\end{align}
	By part 2. of Theorem \ref{Asymptotic completeness, geometric profiles}, we have for $t$ sufficiently large:
	\begin{align*}
		\|\mathrm{e}^{-\mathrm{i}t\dot{H}_{\mathscr{H}}^{\z}}i_{-}\mathrm{e}^{\mathrm{i}t\dot{H}^{\z}}u\big\|_{\dot{\mathcal{E}}_{\mathscr{H}}^{\z}}&\leq\big\|\boldsymbol{\Omega}^{f/p}_{\mathscr{H}}u\big\|_{\dot{\mathcal{E}}_{\mathscr{H}}^{\z}}+\frac{\varepsilon}{6},\qquad\qquad\big\|\mathrm{e}^{-\mathrm{i}t\dot{H}_{\mathscr{I}}^{\z}}i_{+}\mathrm{e}^{\mathrm{i}t\dot{H}^{\z}}u\big\|_{\dot{\mathcal{E}}_{\mathscr{I}}^{\z}}\leq\big\|\boldsymbol{\Omega}^{f/p}_{\mathscr{I}}u\big\|_{\dot{\mathcal{E}}_{\mathscr{I}}^{\z}}+\frac{\varepsilon}{6}.
	\end{align*}
	Furthermore,
	\begin{align*}
		\|\mathrm{e}^{-\mathrm{i}t\dot{H}_{\mathscr{H}\!/\!\mathscr{I}}^{\z}}i_{-\!/\!+}\mathrm{e}^{\mathrm{i}t\dot{H}^{\z}}(1-\chi(\dot{H}^{\z}))u\big\|_{\dot{\mathcal{E}}_{\mathscr{H}\!/\!\mathscr{I}}^{\z}}&\lesssim\|(1-\chi(\dot{H}^{\z}))u\big\|_{\dot{\mathcal{E}}^{\z}}
	\end{align*}
	by uniform boundedness of the evolutions as well as boundedness of $i_{-\!/\!+}:\dot{\mathcal{E}}^{\z}\to\dot{\mathcal{E}}^{\z}_{\mathscr{H}\!/\!\mathscr{I}}$ (\textit{cf.} Lemma \ref{Injection lemma}). Back into \eqref{Injection inequality 2}, we obtain:
	\begin{align}
	\label{Injection inequality 3}
		\big\|\chi(\dot{H}^{\z})u\big\|_{\dot{\mathcal{E}}^{\z}}&\lesssim\big\|\boldsymbol{\Omega}^{f/p}_{\mathscr{H}}u\big\|_{\dot{\mathcal{E}}_{\mathscr{H}}^{\z}}+\big\|\boldsymbol{\Omega}^{f/p}_{\mathscr{I}}u\big\|_{\dot{\mathcal{E}}_{\mathscr{I}}^{\z}}+2\|(1-\chi(\dot{H}^{\z}))u\big\|_{\dot{\mathcal{E}}^{\z}}+\frac{\varepsilon}{2}.
	\end{align}
	Plugging \eqref{Injection inequality 3} into \eqref{Injectivity first inequality} and letting the support of $\chi$ widespread enough in order to use \eqref{Strong limit chi(H/L)}, we obtain with assumption \eqref{Assumption injectivity}:
	\begin{align*}
		\big\|u\big\|_{\dot{\mathcal{E}}^{\z}}&\lesssim\big\|\boldsymbol{\Omega}^{f/p}_{\mathscr{H}}u\big\|_{\dot{\mathcal{E}}_{\mathscr{H}}^{\z}}+\big\|\boldsymbol{\Omega}^{f/p}_{\mathscr{I}}u\big\|_{\dot{\mathcal{E}}_{\mathscr{I}}^{\z}}+\varepsilon\lesssim\varepsilon.
	\end{align*}
	As $\varepsilon$ was arbitrary, we have shown \eqref{Sufficient condition kernel}. This completes the proof.
\end{proof}
%
%
%
%
%
%
\subsection{Traces on the energy spaces}
\label{Traces on the energy spaces}
Let $(\phi_{0},\phi_{1})\in\mathcal{C}^{\infty}_{\mathrm{c}}(\Sigma_{0})\times\mathcal{C}^{\infty}_{\mathrm{c}}(\Sigma_{0})$. By Leray's theorem (\textit{cf.} \cite{Le53}), there exists an unique solution $\phi\in\mathcal{C}^{\infty}(\M)$ of
\begin{align}
\label{IVP Trace}
\begin{cases}
\Box_{\g}\phi=0\\
\phi_{\vert\Sigma_{0}}=\phi_{0}\\
(-\mathrm{i}\partial_{t}\phi)_{\vert\Sigma_{0}}=\phi_{1}
\end{cases}.
\end{align}
Moreover, $\phi$ extends to a smooth function $\hat{\phi}\in\mathcal{C}^{\infty}(\MM)$. In particular, $\hat{\phi}$ has traces $(\xi^{\pm},\zeta^{\pm})\in\mathcal{C}^{\infty}(\mathscr{H}^{\pm})\times\mathcal{C}^{\infty}(\mathscr{I}^{\pm})$. The \textit{future/past trace operators} are then defined on smooth compactly supported data by
\begin{align}
\label{Trace operators def}
\mathscr{T}^{\pm}&:\mathcal{C}^{\infty}_{\mathrm{c}}(\Sigma_{0})\times\mathcal{C}^{\infty}_{\mathrm{c}}(\Sigma_{0})\ni(\phi_{0},\phi_{1})\longmapsto(\xi^{\pm},\zeta^{\pm})\in\mathcal{C}^{\infty}(\mathscr{H}^{\pm})\times\mathcal{C}^{\infty}(\mathscr{I}^{\pm}).
\end{align}
The purpose of this Subsection is to extend the traces on the asymptotic energy spaces $\dot{\mathscr{E}}_{\pm}^{\z}$. To do so, we will use completeness of wave operators.

Using the identification diffeomorphisms of Subsection \ref{Energy spaces on the horizons}, we first link traces on the horizons to the operators $\OO^{\pm}$:
\begin{lemma}[Pointwise traces]
	\label{Link with inverse wave operators}
	Define the isometries
	\begin{align*}
	\mathcal{U}^{\pm}&:=\begin{pmatrix}
	(\mathscr{F}_{\mathscr{H}}^{\pm})^{*}&0\\0&(\mathscr{F}_{\mathscr{I}}^{\pm})^{*}
	\end{pmatrix}\in\mathcal{B}\big(\dot{\mathcal{H}}^{1}_{\mathscr{H}}\times\dot{\mathcal{H}}^{1}_{\mathscr{I}},\dot{\mathscr{E}}_{\pm}\big).
	\end{align*}
	For all $\phi=(\phi_{0},\phi_{1})\in\mathcal{C}^{\infty}_{\mathrm{c}}(\Sigma_{0})\times\mathcal{C}^{\infty}_{\mathrm{c}}(\Sigma_{0})$,
	\begin{align*}
	\mathscr{T}^{\pm}\begin{pmatrix}
	\phi_{0}\\\phi_{1}
	\end{pmatrix}&=\mathcal{U}^{\pm}\OO^{\pm}\begin{pmatrix}
	\phi_{0}\\\phi_{1}
	\end{pmatrix}.
	\end{align*}
\end{lemma}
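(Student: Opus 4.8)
The plan is to unwind the definitions and reduce the identity $\mathscr{T}^{\pm}=\mathcal{U}^{\pm}\OO^{\pm}$ on smooth compactly supported data to a statement about the asymptotic behaviour of the solution $\phi$ of \eqref{IVP Trace} near the horizons, which is then controlled by the geometric inverse wave operators $\boldsymbol{\Omega}^{f/p}_{\mathscr{H}\!/\!\mathscr{I}}$. Concretely, I would only treat the future case $\mathscr{T}^{+}=\mathcal{U}^{+}\OO^{+}$, the past case being symmetric (reversing the time orientation). First I would recall from Lemma \ref{Extension W(t)} that $\OO^{+}=\Psi_{\mathscr{H}}^{-1}\boldsymbol{\Omega}^{f}_{\mathscr{H}}+\Psi_{\mathscr{I}}^{-1}\boldsymbol{\Omega}^{f}_{\mathscr{I}}$, and from Proposition \ref{Geo wave op prop} that the ranges of $\boldsymbol{\Omega}^{f}_{\mathscr{H}}$ and $\boldsymbol{\Omega}^{f}_{\mathscr{I}}$ sit inside $\Psi_{\mathscr{H}}(\dot{\mathcal{H}}^{1,\z}_{\mathscr{H}}\times\{0\})$ and $\Psi_{\mathscr{I}}(\{0\}\times\dot{\mathcal{H}}^{1,\z}_{\mathscr{I}})$ respectively, so that $\OO^{+}(\phi_{0},\phi_{1})$ is determined by the two first components $\big(\boldsymbol{\Omega}^{f}_{\mathscr{H}}(\phi_{0},\phi_{1})\big)_{0}\in\dot{\mathcal{H}}^{1,\z}_{\mathscr{H}}$ and $\big(\boldsymbol{\Omega}^{f}_{\mathscr{I}}(\phi_{0},\phi_{1})\big)_{0}\in\dot{\mathcal{H}}^{1,\z}_{\mathscr{I}}$. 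The claim then splits into two independent assertions: $(\mathscr{F}_{\mathscr{H}}^{+})^{*}\big(\boldsymbol{\Omega}^{f}_{\mathscr{H}}(\phi_{0},\phi_{1})\big)_{0}=\xi^{+}$ on $\mathscr{H}^{+}$, and likewise for $\mathscr{I}^{+}$.

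Next I would identify $\big(\boldsymbol{\Omega}^{f}_{\mathscr{H}}(\phi_{0},\phi_{1})\big)_{0}$ with the pointwise limit of the first component of $\mathrm{e}^{-\mathrm{i}t\dot{H}^{\z}_{\mathscr{H}}}i_{-}\mathrm{e}^{\mathrm{i}t\dot{H}^{\z}}(\phi_{0},\phi_{1})$ as $t\to+\infty$. Using the explicit transport action of $\dot{H}^{\z}_{\mathscr{H}}$ on incoming data from Remark \ref{In/out remark} (or, equivalently, the diagonalisation of Lemma \ref{Link Lemma}), one has $\big(\mathrm{e}^{-\mathrm{i}t\dot{H}^{\z}_{\mathscr{H}}}i_{-}\psi\big)_{0}(z,x,\omega)=\mathrm{e}^{-\mathrm{i}s\z\int_{x}^{x-t}V(x')\mathrm{d}x'}(i_{-}\psi)_{0}(z,x-t,\omega)$ up to the artificial-curve piece which falls off by the support of $i_{-}$; hence the limit, evaluated at a point $p_{0}\in\Sigma_{0}$, reads off the value of $(i_{-}\mathrm{e}^{\mathrm{i}t\dot{H}^{\z}}(\phi_{0},\phi_{1}))_{0}$ along the incoming principal null geodesic $\gamma_{\textup{in}}$ through $p_{0}$ as it reaches $r=r_{-}$, i.e. $\mathscr{H}^{+}$. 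On the other hand $\big(\mathrm{e}^{\mathrm{i}t\dot{H}^{\z}}(\phi_{0},\phi_{1})\big)_{0}$ is precisely the solution $\phi(t)$ of the extended wave equation \eqref{Extended wave equation} restricted to $\ker(\mathrm{i}\partial_{z}+\z)$ with the conjugation $u\leftrightarrow r^{-1}u$ built into $h=r\hat Pr^{-1}$. So, after undoing the conjugation and the gauge transform $v=\mathrm{e}^{-\mathrm{i}sV_{+}t\partial_{z}}u$ of \eqref{Extended wave}, the limit is the trace $\hat\phi_{\vert\mathscr{H}^{+}}=\xi^{+}$ composed with the identification $\mathscr{F}_{\mathscr{H}}^{+}:\mathscr{H}^{+}\to\Sigma_{0}$ constructed in Subsection \ref{Energy spaces on the horizons}. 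The same argument with $i_{+}$, $\gamma_{\textup{out}}$ and $r=r_{+}$ gives the $\mathscr{I}^{+}$ statement. Combining and using the definitions of $\mathcal{U}^{+}$ and $\dot{\mathscr{E}}_{\pm}^{\z}$ yields $\mathscr{T}^{+}(\phi_{0},\phi_{1})=\mathcal{U}^{+}\OO^{+}(\phi_{0},\phi_{1})$.

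The step I expect to be the main obstacle is making rigorous the passage from the strong-limit (energy-norm) definition of $\boldsymbol{\Omega}^{f}_{\mathscr{H}\!/\!\mathscr{I}}$ to a \emph{pointwise} identification with the trace of $\hat\phi$; a priori the inverse wave operators are only defined as limits in $\dot{\mathcal{H}}^{1,\z}_{\mathscr{H}\!/\!\mathscr{I}}$, whereas the classical traces $\xi^{\pm},\zeta^{\pm}$ live pointwise on the horizons. I would handle this by exploiting the smoothness and finite speed of propagation: for $(\phi_{0},\phi_{1})$ smooth compactly supported, Leray's theorem gives $\phi\in\mathcal{C}^{\infty}(\M)$ extending smoothly to $\MM$, and in the extended-star / star-extended coordinates the wave equation near $\{r=r_{\mp}\}$ becomes a regular hyperbolic equation transverse to the horizon, so $\mathrm{e}^{\mathrm{i}t\dot{H}^{\z}_{\mathscr{H}}}$-transported data converge locally uniformly to the restriction of $\hat\phi$; the finite-propagation-speed bounds of Remark \ref{In/out remark} ensure the support of the transported data stays in a fixed neighbourhood of any given point of $\Sigma_{0}$ for $t$ large, so one may integrate the Cook-type error estimate \eqref{Rest AS abstract} locally and upgrade $\dot{\mathcal{H}}^{1}$-convergence to pointwise convergence there. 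Once the identity holds for smooth compactly supported data — the only case asserted in the lemma — no further density argument is needed, although it is what later allows $\mathscr{T}^{\pm}$ to be extended to $\dot{\mathscr{E}}_{\pm}^{\z}$ via the boundedness of $\OO^{\pm}$.
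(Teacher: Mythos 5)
Your proposal is correct and follows essentially the paper's own route: the paper likewise evaluates $\Omega(t)$ (equivalently $\Psi_{\mathscr{H}}^{-1}\boldsymbol{\Omega}^{f}_{\mathscr{H}}+\Psi_{\mathscr{I}}^{-1}\boldsymbol{\Omega}^{f}_{\mathscr{I}}$) on smooth compactly supported data, interprets the backward profile evolution as pull-back along the principal null geodesics of the cut-off solution $v_{0}(t)$, and identifies the large-$t$ limit with the trace of the Leray solution via the maps $\mathscr{F}^{\pm}_{\mathscr{H}\!/\!\mathscr{I}}$, using that $j_{-\!/\!+}\equiv 1$ near the horizons. Your extra care in upgrading the energy-norm limit to pointwise convergence (smoothness of the extension to $\MM$ plus finite propagation speed) only fills in a step the paper treats tersely with ``for $t\gg0$''.
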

\begin{proof}
	We only treat the $+$ case. Let $\phi=(\phi_{0},\phi_{1})\in\mathcal{C}^{\infty}_{\mathrm{c}}(\Sigma_{0})\times\mathcal{C}^{\infty}_{\mathrm{c}}(\Sigma_{0})$ and set $(v_{0}(t),v_{1}(t)):=\mathrm{e}^{\mathrm{i}t\dot{H}^{\z}}(\phi_{0},\phi_{1})$. The operators $(\mathscr{F}_{\mathscr{H}\!/\!\mathscr{I}}^{+})^{*}\mathrm{e}^{-\mathrm{i}t\dot{\mathbb{H}}_{\mathscr{H}\!/\!\mathscr{I}}}$ carry data onto the future horizons along principal null geodesics, so we have
	\begin{align*}
	\Omega(t)\begin{pmatrix}
	\phi_{0}\\\phi_{1}
	\end{pmatrix}&=\begin{pmatrix}
	(\mathscr{F}_{\mathscr{H}}^{+})^{*}(j_{-}v_{0}(t))\circ\gamma_{\textup{in}}(-t)\\
	(\mathscr{F}_{\mathscr{I}}^{+})^{*}(j_{+}v_{0}(t))\circ\gamma_{\textup{out}}(-t)
	\end{pmatrix}\\
	&=\begin{pmatrix}
	(v_{0})_{\vert\mathscr{H}^{+}}\\(v_{0})_{\vert\mathscr{I}^{+}}
	\end{pmatrix}\\
	&=\mathscr{T}^{+}\begin{pmatrix}
	\phi_{0}\\\phi_{1}
	\end{pmatrix}
	\end{align*}
	for $t\gg 0$ since $i_{-\!/\!+}\equiv 1$ near $\mathscr{H}\!/\!\mathscr{I}$.
\end{proof}
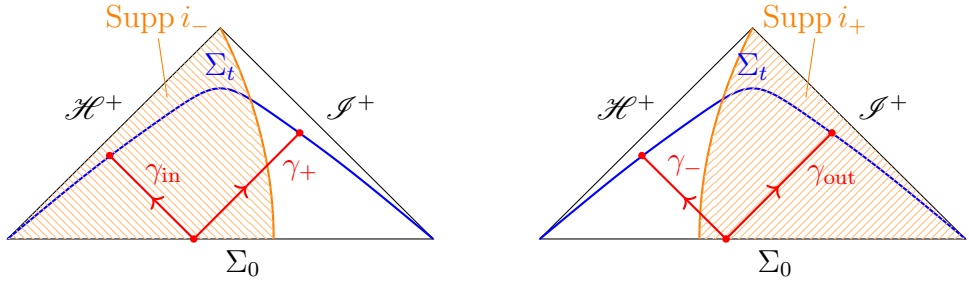
\begin{figure}[!h]
	\centering
	\captionsetup{justification=centering,margin=1.25cm}
	\begin{tikzpicture}[scale=0.70]
	\draw[](-4,0)--(0,4)--(4,0)--cycle;
	\draw[orange](-1.175,4.16)node[]{$\mathrm{Supp\,}i_{-}$};
	\draw[orange, -](-1.165,3.8)--(-1.0,2.68);
	%
	%
	\begin{scope} [shift={(0.0,0.0)}]
	\draw [blue, thick] plot [rotate  = 0.0, domain = -4.0:0.0, samples = 120] (\x,{0.84*(4-sqrt(2)*sqrt(ln(exp(\x*\x/2)+0.2*\x*\x+0.2)))});
	\draw [blue, thick] plot [rotate  = 0.0, domain = 0.0:4.0, samples = 120] (\x,{0.84*(4-sqrt(2)*sqrt(ln(exp(\x*\x/2)+0.2*\x*\x+0.2)))});
	\end{scope}
	%
	%
	\begin{scope} [shift={(0.0,0.0)}]
	\draw [orange, thick] plot [cm={cos(-90) ,-sin(-90) ,sin(-90) ,cos(-90) ,(0.0, 0.0)}, domain = 0.0:4.0, samples = 80] (\x, {(\x*\x-16)/16});
	\end{scope}
	\path[pattern = north west lines, pattern color = orange!50] plot [cm={cos(-90) ,-sin(-90) ,sin(-90) ,cos(-90) ,(0.0, 0.0)}, domain = 0.0:4.0, samples = 80] (\x, {(\x*\x-16)/16}) -- (-4,0) -- cycle;
	\draw[red](-1.07,1.235)node[]{$\gamma_{\textup{in}}$};
	\draw[red](1.5,1.30)node[]{$\gamma_{+}$};
	\draw[blue](0.0,2.8)node[above]{$\Sigma_{t}$};
	\draw(0.425,0.0)node[below]{$\Sigma_{0}$};
	\draw(-1.6,2.5)node[left]{$\mathscr{H}^{+}$};
	\draw(1.77,2.5)node[right]{$\mathscr{I}^{+}$};
	\begin{scope}[very thick,decoration={
		markings,
		mark=at position 0.5 with {\arrow{>}}}
	] 
	\draw[red, thick, postaction={decorate}](-0.5,0)--(-2.08,1.58);
	\draw[red, thick, postaction={decorate}](-0.5,0)--(1.49,2.01);
	\end{scope}
	\filldraw[red, thick](-0.5,0)circle[x radius=0.05, y radius=0.05];
	\filldraw[red, thick](-2.08,1.58)circle[x radius=0.05, y radius=0.05];
	\filldraw[red, thick](1.49,2.01)circle[x radius=0.05, y radius=0.05];
	%
	%
	%
	%
	\draw[](6,0)--(10,4)--(14,0)--cycle;
	\draw[orange](1.175++10.0,4.16)node[]{$\mathrm{Supp\,}i_{+}$};
	\draw[orange, -](1.165++10.0,3.8)--(1.0+10.0,2.68);
	%
	%
	\begin{scope} [shift={(10.0,0.0)}]
	\draw [blue, thick] plot [rotate  = 0.0, domain = -4.0:0.0, samples = 120] (\x,{0.84*(4-sqrt(2)*sqrt(ln(exp(\x*\x/2)+0.2*\x*\x+0.2)))});
	\draw [blue, thick] plot [rotate  = 0.0, domain = 0.0:4.0, samples = 120] (\x,{0.84*(4-sqrt(2)*sqrt(ln(exp(\x*\x/2)+0.2*\x*\x+0.2)))});
	\end{scope}
	%
	%
	\begin{scope} [shift={(10.0,0.0)}]
	\draw [orange, thick] plot [cm={cos(-90) ,-sin(-90) ,sin(-90) ,cos(-90) ,(0.0, 0.0)}, domain = 0.0:4.0, samples = 80] (\x, {-(\x*\x-16)/16});
	\end{scope}
	\path[pattern = north east lines, pattern color = orange!50] plot [cm={cos(-90) ,-sin(-90) ,sin(-90) ,cos(-90) ,(10.0, 0.0)}, domain = 0.0:4.0, samples = 80] (\x, {-(\x*\x-16)/16}) -- (4+10.0,0) -- cycle;
	\draw[red](-1.07+9.8,1.385)node[]{$\gamma_{-}$};
	\draw[red](1.5+10.0,1.2)node[]{$\gamma_{\textup{out}}$};
	\draw[blue](0.0+10.0,2.8)node[above]{$\Sigma_{t}$};
	\draw(0.425+10.0,0.0)node[below]{$\Sigma_{0}$};
	\draw(-1.6+10.0,2.5)node[left]{$\mathscr{H}^{+}$};
	\draw(1.77+10.0,2.5)node[right]{$\mathscr{I}^{+}$};
	\begin{scope}[very thick,decoration={
		markings,
		mark=at position 0.5 with {\arrow{>}}}
	] 
	\draw[red, thick, postaction={decorate}](-0.5+10.0,0)--(-2.08+10.0,1.58);
	\draw[red, thick, postaction={decorate}](-0.5+10.0,0)--(1.49+10.0,2.01);
	\end{scope}
	\filldraw[red, thick](-0.5+10.0,0)circle[x radius=0.05, y radius=0.05];
	\filldraw[red, thick](-2.08+10.0,1.58)circle[x radius=0.05, y radius=0.05];
	\filldraw[red, thick](1.49+10.0,2.01)circle[x radius=0.05, y radius=0.05];
	\end{tikzpicture}
	\caption{\label{Explanation Lemma smooth}Transports from $\Sigma_{0}$ onto $\Sigma_{t}$ along the principal null geodesics $\gamma_{\textup{in/out}}$ and the curves $\gamma_{+\!/\!-}$. Data reaching horizons are carried only by $\gamma_{\textup{in/out}}$.}
\end{figure}
Combining Lemma \ref{Link with inverse wave operators} with Theorem \ref{Inversion of full wave operators}, we obtain:
\begin{theorem}[Extension of the traces]
	\label{Extension of the traces}
	Let $\z\in\mathbb{Z}\setminus\{0\}$. There exists $s_{0}>0$ such that for all $s\in\left]-s_{0},s_{0}\right[$, the traces extend to energy spaces as bounded invertible operators:
	\begin{align*}
		\mathscr{T}^{\pm}&=\mathcal{U}^{\pm}\OO^{\pm}\in\mathcal{B}\big(\dot{\mathcal{E}}^{\z},\dot{\mathscr{E}}_{\pm}^{\z}\big),\\
		(\mathscr{T}^{\pm})^{-1}&=\mathbb{W}^{\pm}(\mathcal{U}^{\pm})^{-1}\in\mathcal{B}\big(\dot{\mathscr{E}}_{\pm}^{\z},\dot{\mathcal{E}}^{\z}\big).
	\end{align*}
\end{theorem}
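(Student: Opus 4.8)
The plan is to combine the pointwise identity of Lemma~\ref{Link with inverse wave operators} with the boundedness and invertibility already established for the full (inverse) wave operators, and then to pass to the completion by density. All smallness conditions on $s$ are simply the ones inherited from Theorem~\ref{Asymptotic completeness, geometric profiles} and Theorem~\ref{Inversion of full wave operators}, so I would fix $s_{0}$ accordingly at the outset.

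First I would record two elementary facts: $\mathcal{C}^{\infty}_{\mathrm{c}}(\Sigma_{0})\times\mathcal{C}^{\infty}_{\mathrm{c}}(\Sigma_{0})$ is dense in $\dot{\mathcal{E}}^{\z}$ (this is how the homogeneous energy space was defined in Subsection~\ref{Energy spaces for the full dynamics}), and the operators $\mathcal{U}^{\pm}$ of Lemma~\ref{Link with inverse wave operators} are isometric isomorphisms of $\dot{\mathcal{H}}^{1,\z}_{\mathscr{H}}\times\dot{\mathcal{H}}^{1,\z}_{\mathscr{I}}$ onto $\dot{\mathscr{E}}^{\z}_{\pm}$, by the very definition of the asymptotic energy spaces on the horizons and the fact that $\mathscr{F}^{\pm}_{\mathscr{H}\!/\!\mathscr{I}}$ are diffeomorphisms. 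Since $\OO^{\pm}\in\mathcal{B}\big(\dot{\mathcal{E}}^{\z},\dot{\mathcal{H}}^{1,\z}_{\mathscr{H}}\times\dot{\mathcal{H}}^{1,\z}_{\mathscr{I}}\big)$ by part~2 of Lemma~\ref{Extension W(t)}, the composition $\mathcal{U}^{\pm}\OO^{\pm}$ is bounded from $\dot{\mathcal{E}}^{\z}$ to $\dot{\mathscr{E}}^{\z}_{\pm}$. By Lemma~\ref{Link with inverse wave operators}, $\mathscr{T}^{\pm}$ and $\mathcal{U}^{\pm}\OO^{\pm}$ agree on the dense subspace $\mathcal{C}^{\infty}_{\mathrm{c}}(\Sigma_{0})\times\mathcal{C}^{\infty}_{\mathrm{c}}(\Sigma_{0})$; hence $\mathcal{U}^{\pm}\OO^{\pm}$ is the unique continuous extension of $\mathscr{T}^{\pm}$, which legitimizes writing $\mathscr{T}^{\pm}=\mathcal{U}^{\pm}\OO^{\pm}\in\mathcal{B}\big(\dot{\mathcal{E}}^{\z},\dot{\mathscr{E}}^{\z}_{\pm}\big)$.

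For invertibility I would set $\mathscr{S}^{\pm}:=\mathbb{W}^{\pm}(\mathcal{U}^{\pm})^{-1}$, which lies in $\mathcal{B}\big(\dot{\mathscr{E}}^{\z}_{\pm},\dot{\mathcal{E}}^{\z}\big)$ since $\mathbb{W}^{\pm}\in\mathcal{B}\big(\dot{\mathcal{H}}^{1,\z}_{\mathscr{H}}\times\dot{\mathcal{H}}^{1,\z}_{\mathscr{I}},\dot{\mathcal{E}}^{\z}\big)$ by part~1 of Lemma~\ref{Extension W(t)} and $(\mathcal{U}^{\pm})^{-1}$ is an isometry. Using \eqref{OmegaW}--\eqref{WOmega} from Theorem~\ref{Inversion of full wave operators} one then computes directly
\begin{align*}
	\mathscr{T}^{\pm}\mathscr{S}^{\pm}&=\mathcal{U}^{\pm}\OO^{\pm}\mathbb{W}^{\pm}(\mathcal{U}^{\pm})^{-1}=\mathcal{U}^{\pm}(\mathcal{U}^{\pm})^{-1}=\mathds{1}_{\dot{\mathscr{E}}^{\z}_{\pm}},\\
	\mathscr{S}^{\pm}\mathscr{T}^{\pm}&=\mathbb{W}^{\pm}(\mathcal{U}^{\pm})^{-1}\mathcal{U}^{\pm}\OO^{\pm}=\mathbb{W}^{\pm}\OO^{\pm}=\mathds{1}_{\dot{\mathcal{E}}^{\z}},
\end{align*}
so that $\mathscr{T}^{\pm}$ is a bounded invertible operator with $(\mathscr{T}^{\pm})^{-1}=\mathbb{W}^{\pm}(\mathcal{U}^{\pm})^{-1}$, as claimed.

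The argument is essentially formal once Theorem~\ref{Inversion of full wave operators} is available, so the only point deserving care — and the main obstacle — is the passage to the completion: one must be sure that the trace map, a priori defined only on smooth compactly supported Cauchy data through the classical solution of \eqref{IVP Trace} and its smooth extension to $\MM$, really does extend continuously, i.e. that the identity of Lemma~\ref{Link with inverse wave operators} holds on a dense subset of $\dot{\mathcal{E}}^{\z}$ with values in the complete space $\dot{\mathscr{E}}^{\z}_{\pm}$ (which is complete since it is by definition a completion). No new estimate is needed beyond those already used in the proofs of Theorem~\ref{Asymptotic completeness, geometric profiles} and Theorem~\ref{Inversion of full wave operators}.
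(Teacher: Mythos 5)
Your proposal is correct and follows essentially the same route as the paper, which obtains the theorem precisely by combining Lemma \ref{Link with inverse wave operators} (the pointwise identity $\mathscr{T}^{\pm}=\mathcal{U}^{\pm}\OO^{\pm}$ on smooth compactly supported data) with the boundedness statements of Lemma \ref{Extension W(t)} and the inversion identities of Theorem \ref{Inversion of full wave operators}, extending by density to $\dot{\mathcal{E}}^{\z}$. Your explicit verification that $\mathscr{T}^{\pm}\mathscr{S}^{\pm}=\mathds{1}_{\dot{\mathscr{E}}^{\z}_{\pm}}$ and $\mathscr{S}^{\pm}\mathscr{T}^{\pm}=\mathds{1}_{\dot{\mathcal{E}}^{\z}}$ is exactly the computation the paper leaves implicit.
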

\begin{remark}
\label{Geometric interpretation of full wave operators}
	The regularity of elements in the energy spaces does \textup{a priori} not ensure the existence of the traces for general solutions of the extended wave equation. Theorem \ref{Extension of the traces} shows that they exist thanks to the completeness of the wave operators.
	
	Theorem \ref{Extension of the traces} also provides the geometric interpretation of the full wave operators as inverses of the traces on horizons. Both are linked by the transformations $\mathcal{U}^{\pm}$ which identify points on horizons and $\Sigma_{0}$ via transport along principal null geodesics.
\end{remark}
%
%
%
%
%
\subsection{Solution of the Goursat problem}
\label{Solution to the Goursat problem}
The Goursat problem consists in an inverse problem on the global outer space $(\MM,\g)$ constructed in Subsection \ref{Crossing rings}. Given boundary data $(\xi^{\pm},\zeta^{\pm})\in\mathcal{C}^{\infty}_{\mathrm{c}}(\mathscr{H}^{\pm})\times\mathcal{C}^{\infty}_{\mathrm{c}}(\mathscr{I}^{\pm})$, we are asked to find $\phi:\MM\to\mathbb{C}^{2}$ solving the wave equation \eqref{Extended wave equation} and such that
\begin{align}
\label{Boundary data}
\phi_{\vert\mathscr{H}^{\pm}}&=\xi^{\pm},\qquad\qquad\phi_{\vert\mathscr{I}^{\pm}}=\zeta^{\pm}.
\end{align}
The Goursat problem is linked to the trace operators as being the inverse procedure of taking the trace of a solution of equation \eqref{Extended wave equation}. The analytic scattering theory solves this problem by constructing the inverse wave operators. See the paper of Nicolas \cite[Remark 1.1 \& Section 4]{Ni15} for some discussions about the different points of view of the scattering.

Theorem \ref{Extension of the traces} allows us to solve the following abstract Goursat problem:
\begin{theorem}
	\label{Abstract Goursat problem}
	Let $\z\in\mathbb{Z}\setminus\{0\}$. There exists $s_{0}>0$ such that for all $s\in\left]-s_{0},s_{0}\right[$ the following property: there exist homeomorphisms
	\begin{align*}
	\mathbb{T}^{\pm}&:\dot{\mathcal{E}}^{\z}\longrightarrow\dot{\mathscr{E}}^{\z}_{\pm}
	\end{align*}
	solving the Goursat problem \eqref{Boundary data} in the energy spaces, that is, for all $(\xi^{\pm},\zeta^{\pm})\in\dot{\mathscr{E}}^{\z}_{\pm}$, there exists an unique $\phi\in\mathcal{C}^{0}(\mathbb{R}_{t};\dot{\mathcal{E}}^{\z})$ solving the wave equation on $(\M,\g)$ with initial data $\phi(0)=(\phi_{0},\phi_{1})$ such that
	\begin{align*}
	(\xi^{\pm},\zeta^{\pm})&=\mathbb{T}^{\pm}(\phi_{0},\phi_{1}).
	\end{align*}
\end{theorem}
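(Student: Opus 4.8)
The plan is to take $\mathbb{T}^{\pm}:=\mathscr{T}^{\pm}$, the extended trace operators, and to read off everything from Theorem \ref{Extension of the traces}. That theorem already states that, for $s\in\left]-s_0,s_0\right[$ (with $s_0$ the threshold produced by Theorem \ref{Inversion of full wave operators}), we have $\mathscr{T}^{\pm}=\mathcal{U}^{\pm}\OO^{\pm}\in\mathcal{B}\big(\dot{\mathcal{E}}^{\z},\dot{\mathscr{E}}^{\z}_{\pm}\big)$ with bounded inverse $(\mathscr{T}^{\pm})^{-1}=\mathbb{W}^{\pm}(\mathcal{U}^{\pm})^{-1}\in\mathcal{B}\big(\dot{\mathscr{E}}^{\z}_{\pm},\dot{\mathcal{E}}^{\z}\big)$; in particular $\mathbb{T}^{\pm}$ are homeomorphisms, which is the first assertion. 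So the substance of the proof is to check that these homeomorphisms genuinely \emph{solve} the Goursat problem in the energy spaces.

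For existence, given $(\xi^{\pm},\zeta^{\pm})\in\dot{\mathscr{E}}^{\z}_{\pm}$ I would set $(\phi_0,\phi_1):=(\mathbb{T}^{\pm})^{-1}(\xi^{\pm},\zeta^{\pm})\in\dot{\mathcal{E}}^{\z}$ and define $\phi(t):=\pi_0\,\mathrm{e}^{\mathrm{i}t\dot{H}^{\z}}(\phi_0,\phi_1)$, where $\pi_0$ is the projection onto the first component. By \cite[Lemma 3.19]{GGH17} (recalled at the end of Subsection \ref{Energy spaces for the full dynamics}), $\phi\in\mathcal{C}^{0}(\mathbb{R}_t;\dot{\mathcal{E}}^{\z})$ solves the extended wave equation \eqref{Extended wave equation} restricted to $\ker(\mathrm{i}\partial_z+\z)$ — hence the charged Klein--Gordon equation on $(\M,\g)$ with mass $m\z$ and charge $s\z$ — with $\phi(0)=(\phi_0,\phi_1)$. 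It remains to verify that $\phi$ has traces $(\xi^{\pm},\zeta^{\pm})$ on $\mathscr{H}^{\pm}\times\mathscr{I}^{\pm}$. On the dense subspace $\mathcal{C}^{\infty}_{\mathrm{c}}(\Sigma_0)\times\mathcal{C}^{\infty}_{\mathrm{c}}(\Sigma_0)$ this is exactly Lemma \ref{Link with inverse wave operators} combined with Leray's theorem \cite{Le53}: there $\phi$ extends smoothly to $\MM$ and its pointwise restrictions to the horizons coincide with $\mathscr{T}^{\pm}(\phi_0,\phi_1)=\mathcal{U}^{\pm}\OO^{\pm}(\phi_0,\phi_1)$. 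For general energy data one defines the trace of $\phi$ as the continuous extension of this smooth-data assignment, and the continuity and invertibility of $\mathscr{T}^{\pm}$ from Theorem \ref{Extension of the traces} force this extension to equal $(\xi^{\pm},\zeta^{\pm})$. Uniqueness is then immediate: if $\psi\in\mathcal{C}^{0}(\mathbb{R}_t;\dot{\mathcal{E}}^{\z})$ is another solution with the same boundary data, then $\mathscr{T}^{\pm}\psi(0)=(\xi^{\pm},\zeta^{\pm})=\mathscr{T}^{\pm}\phi(0)$, and injectivity of $\mathbb{T}^{\pm}=\mathscr{T}^{\pm}$ gives $\psi(0)=\phi(0)$, whence $\psi=\phi$ by uniqueness of the Cauchy problem for $\dot{H}^{\z}$.

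The main obstacle is the identification of the \emph{abstract} energy-space trace with $\mathscr{T}^{\pm}$ at the level of generality of $\dot{\mathscr{E}}^{\z}_{\pm}$: for non-smooth energy data the solution $\phi$ need not be continuous up to $\MM$, so ``$\phi_{\vert\mathscr{H}^{\pm}}$'' and ``$\phi_{\vert\mathscr{I}^{\pm}}$'' only make sense as the continuous extensions of the smooth-data restrictions, and one must check that these extensions are well posed (no dependence on the approximating sequence and agreement of the two a priori definitions of the boundary value). This is precisely where completeness of the wave operators — packaged into the bounded invertibility of $\mathscr{T}^{\pm}$ in Theorem \ref{Extension of the traces} and the geometric reading of Remark \ref{Geometric interpretation of full wave operators} — is indispensable. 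Everything else is a routine transcription of that invertibility, together with standard properties of the group $(\mathrm{e}^{\mathrm{i}t\dot{H}^{\z}})_{t\in\mathbb{R}}$.
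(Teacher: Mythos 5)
Your proposal is correct and follows essentially the same route as the paper: the paper presents Theorem \ref{Abstract Goursat problem} as a direct consequence of Theorem \ref{Extension of the traces}, i.e.\ it takes $\mathbb{T}^{\pm}=\mathscr{T}^{\pm}=\mathcal{U}^{\pm}\OO^{\pm}$ with inverse $\mathbb{W}^{\pm}(\mathcal{U}^{\pm})^{-1}$, exactly as you do. Your added care about defining the trace of a general energy-class solution only as the continuous extension of the smooth-data restriction (via Lemma \ref{Link with inverse wave operators} and density) matches the paper's Remark \ref{Geometric interpretation of full wave operators}, and your uniqueness argument via injectivity of $\mathscr{T}^{\pm}$ and the Cauchy problem for $\dot{H}^{\z}$ is the intended one.
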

\begin{remark}
	In the standard case of the wave equation on De Sitter-Schwarzschild spacetime \cite{Ni15}, the traces extend as unitary operators between energy spaces defined on $\Sigma_0$ and on the horizons. Here, we only have bounded extensions because of the superradiance. In particular, we have the following control of the energies:
	\begin{align*}
		\frac{1}{C}\|\phi(0)\|_{\dot{\mathcal{E}}^{\z}}^{2}&\leq\|(\xi^{\pm},\zeta^{\pm})\|_{\dot{\mathscr{E}}^{\z}_{\pm}}^{2}\leq C\|\phi(0)\|_{\dot{\mathcal{E}}^{\z}}^{2}
	\end{align*}
	for some constant $C>0$.
\end{remark}
%
%
%
%
%
%
%
%
%
%
%
%
%
%
%
%
%
%
%
%
%
%
%
%
%
%
%
%
%
\bibliographystyle{abbrv}
\footnotesize

%
%
%
%
%
%
%
%
%
%
%
%
%
%
%
%
%
%
\end{document}